\documentclass{amsart}
\usepackage{amsmath,amssymb,latexsym,cancel,rotating}
\usepackage{graphicx,amssymb,mathrsfs,amsmath,color,fancyhdr,amsthm}
\usepackage[all]{xy}
\usepackage{bm}
%\usepackage{ctex}

%    Blank box placeholder for figures (to avoid requiring any
%    particular graphics capabilities for printing this document).

%theorems
\newtheorem{theorem}{Theorem}[section]
\newtheorem{lemma}[theorem]{Lemma}
\newtheorem{corollary}[theorem]{Corollary}
\newtheorem{definition}[theorem]{Definition}
\newtheorem{proposition}[theorem]{Proposition}

\newtheorem{thm}{Theorem}

\def\lan{\langle}    \def\ran{\rangle}

%Greek letters
\def\az{\alpha}
\def\bz{\beta}
\def\gz{\gamma}  
\def\dz{\delta}  \def\dt{\Delta}
\def\oz{\omega}  
\def\sz{\sigma}  
\def\lz{\lambda} 
  \def\ez{\epsilon}

\def\xz{\xi}     \def\thz{\theta}

%mathcal letters
\def\cM{{\mathcal M}}  \def\cG{{\mathcal G}}
\def\cP{{\mathcal P}}
\def\cC{{\mathcal C}} \def\cD{{\mathcal D}}    \def\cL{{\mathcal L}}
\def\cH{{\mathcal H}}   \def\cB{{\mathcal B}} \def\cS{{\mathcal S}}
\def\cZ{{\mathcal Z}}  \def\cE{{\mathcal E}}
\def\cI{{\mathcal I}}    \def\cQ{{\mathcal Q}}  \def\cR{{\mathcal R}}
    
\def\cK{{\mathcal K}}  
\def\cF{{\mathcal F}}
\def\cW{{\mathcal W}}
\def\tcF{\tilde{\mathcal F}}

%mathbb letters

\def\bbN{{\mathbb N}}  \def\bbZ{{\mathbb Z}}  \def\bbQ{{\mathbb Q}}
    \def\bbF{{\mathbb F}}
\def\bbC{{\mathbb C}}  \def\bbE{{\mathbb E}}  \def\bbP{{\mathbb P}}

\def\fkf{\mathfrak f}

%tilted letters

         \def\bfU{{\bf U}}
   
  \def\leq{\leqslant}  \def\geq{\geqslant}

%algebra
\def\lra{\longrightarrow}   
\def\lmto{\longmapsto}   
\def\ra{\rightarrow}
\def\Hom{\mathrm {Hom}}  
       
\def\Ext{\mathrm{Ext}}   
\def\dim{\mathrm{dim}}
\def\dime{\mathrm{dim}\,}
\def\End{\mathrm {End}}
\def\Aut{\mbox{\rm Aut}}
\def\IC{\mathrm {IC}}
\def\rk{\mathrm {rk}\,}
\def\udim{\mathrm{\underline{dim}}\,}
\def\mod{\mbox{\rm mod}\,}  
  
  %{\mbox{\small\rm sm}}}

\def\ind{\mathrm{ind}\,}

%frak letters
\def\fkm{{\frak m}}

%new

\def\GL{\mathrm{GL}}

\def\bfb{{\bf b}}

  \def\bfa{{\bf a}}
\def\bfc{{\bf c}}  \def\bfd{{\bf d}} \def\bfe{{\bf e}}

\def\bff{{\bf f}}
\def\bfB{{\bf B}}

\def\bfa{{\bf a}}
\def\bfone{{\bf 1}}

\def\ta{{\tilde a}}
\def\bcap{{\bigcap}}
\def\bcup{{\bigcup}}

\begin{document}

\title[Tame quivers and affine bases I]
{Tame quivers and affine bases I: a Hall algebra approach to the canonical bases}

\author[J. Xiao]{Jie Xiao}
\address{Department of Mathematics, Tsinghua University, Beijing, 100084, P. R. China}
\email{jxiao@tsinghua.edu.cn (J. Xiao)}
\author[H. Xu]{Han Xu}
\address{School of Mathematical and Statistics, Beijing Institute of Technology, Beijing, 100081, P. R. China}
\email{7520220010@bit.edu.cn (H. Xu)}
\author[M. Zhao]{Minghui Zhao}
\address{School of Science, Beijing Forestry University, Beijing, 100083, P. R. China}
\email{zhaomh@bjfu.edu.cn (M. Zhao)}

\thanks{Jie Xiao was supported by NSF of China (No. 12031007), Han Xu was supported by Tsinghua University Initiative Scientific Research Program (No. 2019Z07L01006) and Minghui Zhao was supported by NSF of China (No. 11771445).}

\date{\today}

\keywords{}

\bibliographystyle{abbrv}

\maketitle

\begin{abstract}
In \cite{Lusztig_Affine_quivers_and_canonical_bases}, Lusztig gave an explicit construction of the affine canonical basis by simple perverse sheaves.
In this paper, we construct a bar-invariant basis by using a PBW basis arising from representations of the corresponding tame quiver.
We prove that this bar-invariant basis coincides with Lusztig's canonical basis and obtain a concrete bijection between the elements in theses two bases.
The index set of these bases is listed orderly by modules in preprojective, regular non-homogeneous, preinjective components and irreducible characters of symmetric groups.
Our results are based on the work of Lin-Xiao-Zhang (\cite{Lin_Xiao_Zhang_Representations_of_tame_quivers_and_affine_canonical_bases}) and closely related with the work of Beck-Nakajima (\cite{Beck_Nakajima_Crystal_bases_and_two-sided_cells_of_quantum_affine_algebras}). A crucial method in our construction is a generalization of that in \cite{Deng_Du_Xiao_Generic_extensions_and_canonical_bases_for_cyclic_quivers}.

\end{abstract}

\setcounter{tocdepth}{1}\tableofcontents

\section{Introduction}

Let $Q=(I,H)$ be a quiver and $\bfU$ the quantized enveloping algebra associated with the quiver $Q$. Denote by $\bff$ the $\bbQ(v)$-algebra with unit $\bfone$ generated by $\thz_i,i\in I$ subject to the quantum Serre relations.
The algebra $\bff$ is isomorphic to the positive (resp. negative) part of $\bfU$.

In \cite{Ringel_Hall_algebras_and_quantum_groups}, Ringel introduced the Ringel-Hall algebra $\cH^*(kQ)$ and prove that there is an isomorphism between the generic composition subalgebra $\cC^*$ and the algebra $\bff$.
Inspired by the work of Ringel, Lusztig (\cite{Lusztig_Canonical_bases_arising_from_quantized_enveloping_algebra}\cite{Lusztig_Quivers_perverse_sheaves_and_the_quantized_enveloping_algebras}) gave a geometric realization of $\bff$ via the category of certain semisimple perverse sheaves on the variety of representations of the corresponding quiver $Q$. The set $\bfB$ of the isomorphism classes of simple perverse sheaves gives a basis of $\bff$, which is called the canonical basis.

In the case of finite type, Lusztig (\cite{Lusztig_Canonical_bases_arising_from_quantized_enveloping_algebra}) also gave an elementary algebraic construction of the canonical basis via the Ringel-Hall realization of $\bff$.
The set of isomorphism classes of representations of the corresponding quiver gives a PBW basis of $\bff$. Via this PBW basis, Lusztig construct a bar invariant basis, which is the canonical basis.

Assume that $Q$ is a Dynkin quiver.
Denote by $\mathbb{N}^{{\rm ind}\, Q}$ the set of all functions $\bfc:{\rm ind}\, Q\rightarrow\mathbb{N}$, where ${\rm ind}\, Q$ is the set of isomorphism classes of indecomposable objects in the category ${\rm rep}\,Q$ of representations of quiver $Q$. For each $\bfc:{\rm ind}\, Q\rightarrow\mathbb{N}$, there is a representation
$M(\bfc)$, such that $\cH^*(kQ)$ is spanned by the set $$\{\langle{M(\bfc)}\rangle|\bfc:{\rm ind}\, Q\rightarrow\mathbb{N}\}$$ as $\mathbb{Q}(v)$-vector space. This set is called a PBW basis of $\cH^*(kQ)$.

For each $\bfc:{\rm ind}\,Q\rightarrow\mathbb{N}$, there exists a monomial $m_{\bfc}$ on Chevalley generators  satisfying
\begin{displaymath}
m_{\bfc}=\langle{M(\bfc)}\rangle+\sum_{\bfc'<\bfc}a^{\bfc'}_{\bfc}\langle{M(\bfc')}\rangle,
\end{displaymath}
with $a^{\bfc'}_{\bfc}\in \mathbb{Z}[v,v^{-1}]$.
Since $\overline{m_{\bfc}}=m_{\bfc}$, we have
\begin{displaymath}
\overline{\langle{M(\bfc)}\rangle}=\langle{M(\bfc)}\rangle+\sum_{\bfc'<\bfc}b^{\bfc'}_{\bfc}\langle{M(\bfc')}\rangle,
\end{displaymath}
with $b^{\bfc'}_{\bfc}\in \mathbb{Z}[v,v^{-1}]$.

According to an elementary linear algebra method of Lusztig, we can construct a bar-invariant basis
$$\{C_{\bfc}|\bfc:{\rm ind}\, Q\rightarrow\mathbb{N}\}.$$
Under the isomorphism between $\cC^*$ and $\bff$, this bar-invariant basis induces a basis of $\bff$, which is the canonical basis $\mathbf{B}$ of $\bff$.

In the case of affine type, Lusztig (\cite{Lusztig_Affine_quivers_and_canonical_bases}) gave an explicit construction of the canonical basis by simple perverse sheaves indexed by the isomorphism classes of aperiodic modules of the corresponding tame quiver and irreducible representations of symmetric groups. It is an important problem to generalize the elementary algebraic construction to the affine canonical basis.

Beck-Chari-Pressley (\cite{Beck_Chari_Pressley_An_algebraic_characterization_of_the_affine_canonical_basis}) and Beck-Nakajima (\cite{Beck_Nakajima_Crystal_bases_and_two-sided_cells_of_quantum_affine_algebras}) gave an algebraic construction of the canonical basis by using a PBW basis (see \cite{Nakajima_Crystal_canonical_and_PBW_bases_of_quantum_affine_algebras}).

Let $A=(a_{ij})_{i,j\in I}$ be a symmetric generalized Cartan matrix of affine type, where $I=\{0,1,\ldots,n\}$, $0\in I$ is the exceptional point and $I_0=I\backslash\{0\}$.

Beck-Nakajima defined an infinite sequence $$h=(\cdots,i_{-1},i_0,i_1,\cdots)$$
in $I$ such that  $R=R_{>}\bigsqcup R_{<}$ is the set of all positive real roots, where
$$R_{>}=\{\beta_0=\alpha_{i_0},\beta_{-1}=s_{i_0}(\alpha_{i_{-1}}),\beta_{-2}=s_{i_0}s_{i_{-1}}(\alpha_{i_{-2}}),\cdots\}$$
and
$$R_{<}=\{\beta_{1}=\alpha_{i_1},\beta_{2}=s_{i_1}(\alpha_{i_2}),\beta_{3}=s_{i_1}s_{i_2}(\alpha_{i_3}),\cdots\}.$$
Let $$E_{\beta_k}=\left\{\begin{array}{cc}
              T^{-1}_{i_0}T^{-1}_{i_{-1}}\cdots T^{-1}_{i_{k+1}}(E_{i_k}) & \textrm{if $k\in\mathbb{Z}_{\leq0}$},\\
              T_{i_1}T_{i_2}\cdots T_{i_{k-1}}(E_{i_k}) & \textrm{if $k\in\mathbb{Z}_{>0}$},
            \end{array}
\right.$$
where $T_j$ is the Lusztig's symmetry corresponding to $j\in I$.
Then $E_{\beta_k}$ is a real root vector for the root $\beta_k\in R$.

Also, Beck-Nakajima defined integral imaginary root vectors $\tilde{P}_{i,k}$ from real root vectors by a recursive identity.
Denote by $\mathscr{P}$ the set of all partitions.
For $\lambda=(\lambda_1\geq \lambda_2\geq\cdots)\in\mathscr{P}$, define the Schur function $$S_{\lambda}=\det(\tilde{P}_{i,\lambda_k-k+m})_{1\leq k,m\leq t},$$
where $t$ is the length of $\lambda$.

For a map $c_0:I_0\rightarrow \mathscr{P}$, denote $$S_{c_0}=\prod_{i=1}^{n}S_{c_0(i)}.$$

Let $\mathcal{E}$ be the set of $\bar{c}=(c,c_0)$, where $c_0:I_0\rightarrow \mathscr{P}$ is a map and $c:\mathbb{Z}\rightarrow\mathbb{N}$ is a function with finite support.
For any $\bar{c}\in\mathcal{E}$ and $p\in\mathbb{Z}$, let
\begin{eqnarray*}
L(\bar{c},p)&=&\left(E_{i_p}^{(c(p))}T_{i_p}^{-1}(E_{i_{p-1}}^{(c(p-1))})T_{i_p}^{-1}T_{i_{p-1}}^{-1}(E_{i_{p-2}}^{(c(p-2))})\cdots\right)\\
&\times&T_{i_{p+1}}T_{i_{p+2}}\cdots T_{i_0}(S_{c_0})\\
&\times&\left(\cdots T_{i_{p+                 1}}T_{i_{p+2}}(E_{i_{p+3}}^{(c(p+3))})T_{i_{p+1}}(E_{i_{p+2}}^{(c(p+2))})E_{i_{p+1}}^{(c(p+1))}\right).
\end{eqnarray*}

Beck-Nakajima defined a certain partial ordering $\prec_{p}$ on $\cE$ for each $p\in\bbZ$ and proved that
$$\overline{L(\bar{c},p)}=L(\bar{c},p)+\sum_{\bar{c}\prec_{p}\bar{c}'}a_{\bar{c},\bar{c}'}L(\bar{c}',p)$$ with $a_{\bar{c},\bar{c}'}\in\bbZ[v,v^{-1}]$.

According to an elementary linear algebra method of Lusztig, Beck-Nakajima constructed a bar-invariant basis $$\{b(\bar{c},p)|\bar{c}\in\mathcal{E}\}$$ and proved that
 this is the canonical basis $\mathbf{B}$ of $\bff$.

In the case of the Kronecker quiver, Chen (\cite{Chen_Root_vectors_of_the_composition_algebra_of_the_Kronecker_algebra}) and Zhang (\cite{Zhang_PBW-basis_for_the_composition_algebra_of_the_Kronecker_algebra}) constructed PBW bases of $\bff$, and McGerty (\cite{Mcgerty_The_Kronecker_quiver}) gave the interpretation of PBW bases of $\bff$ constructed by Beck-Nakajima in terms of representations of Kronecker quiver.
In the case of cyclic quiver, Deng-Du-Xiao (\cite{Deng_Du_Xiao_Generic_extensions_and_canonical_bases_for_cyclic_quivers}) gave an inductive method to construct a PBW basis from the monomial basis. By using the triangular relations between the PBW basis and the  monomial basis, they gave an algebraic construction of the canonical basis.
Based on the work of Chen, Zhang and Deng-Du-Xiao, Lin-Xiao-Zhang (\cite{Lin_Xiao_Zhang_Representations_of_tame_quivers_and_affine_canonical_bases}) gave an algebraic construction of the canonical basis by using the representations of tame quiver.

In this paper, we give an algebraic construction of the canonical basis by using a PBW basis arising from representations of the corresponding tame quiver. Our results are based on the work of Lin-Xiao-Zhang (\cite{Lin_Xiao_Zhang_Representations_of_tame_quivers_and_affine_canonical_bases}) and are closely related with the work of Beck-Nakajima (\cite{Beck_Nakajima_Crystal_bases_and_two-sided_cells_of_quantum_affine_algebras}). A crucial method in our construction is a generalization of that in \cite{Deng_Du_Xiao_Generic_extensions_and_canonical_bases_for_cyclic_quivers}.

Let $\cG'$ be the set consisting of all triples $\bfc=(\bfc_-,\bfc_0,\bfc_+)$, where $\bfc_-$ arising from preprojective components, $\bfc_+$ arising from preinjective components, and $\bfc_0$ arising from  non-homogeneous tubes of ${\rm rep}\, Q$. We define the index set $\cG$ consisting of all pairs $(\bfc,t_\lz)$ with $\bfc\in\cG'$ and $t_\lz$ be the character of the irreducible module $S_\lz$ of the symmetric group $\mathfrak{S}_{|\lz|}$ for a partition $\lz$.

Let $\cH^0_q$ be the $\bbQ(v_q)$-subalgebra of the Ringel-Hall algebra $\cH^*(kQ)$ generated by $u_i=u_{[S_i]},i\in I$ and $u_{[M]}$ for all representations $M$ in non-homogeneous tubes, where $S_i$ is the simple module concentrated at $i\in I$.
For $(\bfc,t_\lz)\in\cG$, we define an element
$N(\bfc,t_\lz)$ in $\cH^0_q$ and we prove that $$\{N(\bfc,t_\lz)|(\bfc,t_\lz)\in\cG\}$$ is a $\bbQ(v_q)$-basis of $\cH^0_q$. 
Moreover, this is a "generic" basis independent of $q$, which allows us to define a generic form $\cH^0$ and we have $\cC^*\subset\cH^0$.

Let $\cG^a$ be the subset of $\cG$ consisting of aperiodic indices. 
For $(\bfc,t_\lz)\in\cG^a$, we define a monomial basis of the composition algebra
$$\{\fkm^{\oz(\bfc,t_\lz)}|(\bfc,t_\lz)\in\cG^a\}.$$
Then we use the method in \cite{Deng_Du_Xiao_Generic_extensions_and_canonical_bases_for_cyclic_quivers} to construct a PBW basis
$$\{E(\bfc,t_\lz)|(\bfc,t_\lz)\in\cG^a\}$$ for $\cC^*_\cZ$ inductively, where $\cC^*_\cZ$ is the integral form of $\cC^*$.

At last, we construct a bar-invariant basis $$\{C(\bfc,t_\lz)|(\bfc,t_\lz)\in\cG^a\}$$ by using the triangular relation between the PBW basis and the monomial basis.

Let $B(\bfc,t_\lz)=[\IC(X(\bfc,|\lambda|),\cL_\lambda)]$, the image of perverse sheaf $\IC(X(\bfc,|\lambda|),\cL_\lambda)$ in the Grothendieck group, where the local system $\cL_\lambda$ is given by an irreducible module of $\mathfrak{S}_{|\lambda|}$. By Lusztig, the set
$$\bfB=\{B(\bfc,t_\lz)|(\bfc,t_\lz)\in\cG^a\}$$
is the canonical basis.

The following theorem is the main result in this paper.
\begin{thm}\label{Theorem:1}
For all $(\bfc,t_\lz)\in\cG^a$, it holds that $C(\bfc,t_\lz)=B(\bfc,t_\lz)$ under the isomorphism  between $\cC^*$ and $\bff$.
\end{thm}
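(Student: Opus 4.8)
The plan is to establish the identity $C(\bfc,t_\lz)=B(\bfc,t_\lz)$ by the standard uniqueness argument for bar-invariant bases: both sides are bar-invariant, and both expand upper-triangularly (with off-diagonal coefficients in $v\bbZ[v]$) on the same PBW basis $\{E(\bfc,t_\lz)\}$ with respect to the same partial order on $\cG^a$. Once those three facts are in place, Lusztig's elementary lemma forces the two bases to coincide element by element. The construction in the body of the paper already gives $\overline{C(\bfc,t_\lz)}=C(\bfc,t_\lz)$ and the triangularity of $C(\bfc,t_\lz)$ against the PBW basis, so the real content is to prove the matching statement for $B(\bfc,t_\lz)$.

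First I would recall Lusztig's transition formula relating the simple perverse sheaves to the PBW-type generators: under the isomorphism $\cC^*\cong\bff$, the class $B(\bfc,t_\lz)=[\IC(X(\bfc,|\lambda|),\cL_\lambda)]$ is, by the decomposition theorem applied to the relevant resolution of $\overline{X(\bfc,|\lambda|)}$, equal to a $\bbZ[v,v^{-1}]$-combination of the monomials $\fkm^{\oz(\bfc',t_{\lz'})}$, hence of the PBW vectors $E(\bfc',t_{\lz'})$, supported on strata $(\bfc',t_{\lz'})$ that degenerate $(\bfc,t_\lz)$; the leading term is $E(\bfc,t_\lz)$ with coefficient $1$ because the IC sheaf restricts to the constant (twisted) sheaf on the open stratum. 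The off-diagonal coefficients lie in $v\bbZ[v]$ because the stalks of an IC sheaf of a singular variety vanish in degree $0$ on the boundary — this is exactly the geometric input corresponding to Lusztig's purity/parity statement. The key compatibility to check is that the order in which these boundary strata appear is refined by (equivalently: compatible with) the combinatorial order on $\cG^a$ used to define the $C(\bfc,t_\lz)$; here I would invoke the dimension/degeneration estimates from Lin–Xiao–Zhang and the generic-extension machinery generalized from Deng–Du–Xiao, which is precisely the role that reference plays.

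The main obstacle, and the place where the argument is genuinely delicate, is the non-homogeneous-tube part encoded by $\bfc_0$ together with the symmetric-group character $t_\lz$: here the stratum $X(\bfc,|\lambda|)$ carries a nontrivial local system $\cL_\lambda$ rather than the constant sheaf, and the combinatorics of the regular (tube) part is governed by Hall numbers for cyclic quivers, not by a quiver of finite type. I would handle this by separating variables: the preprojective factor $\bfc_-$ and preinjective factor $\bfc_+$ behave as in the finite-type case (Ringel–Lusztig), so their contribution to the triangularity is classical; the tube contribution is where the Deng–Du–Xiao inductive construction of the PBW basis from the monomial basis, suitably generalized to all non-homogeneous tubes simultaneously, must be matched against Lusztig's perverse-sheaf computation on the corresponding nilpotent strata, and the Schur–Weyl bookkeeping $S_{c_0}\leftrightarrow t_\lz$ must be shown to be the same on both sides. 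Concretely, I expect to need: (i) a compatibility of the monomial $\fkm^{\oz(\bfc,t_\lz)}$ with Lusztig's induction functors on the geometric side; (ii) the fact that $N(\bfc,t_\lz)$ spans $\cH^0_q$ (already proved above) to control the imaginary/tube directions; and (iii) a degeneration-order lemma ensuring no ``crossing'' strata spoil the triangularity.

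Finally, with triangularity and bar-invariance of both $\{C(\bfc,t_\lz)\}$ and $\{B(\bfc,t_\lz)\}$ over the same ordered index set $\cG^a$ in hand, I would conclude by the uniqueness clause: the difference $C(\bfc,t_\lz)-B(\bfc,t_\lz)$ is bar-invariant and lies in $\sum_{(\bfc',t_{\lz'})\prec(\bfc,t_\lz)} v\bbZ[v]\,E(\bfc',t_{\lz'})$, and an induction on the (locally finite) order $\prec$ shows this forces it to be $0$. This simultaneously yields the claimed explicit bijection between the two index sets, since the matching $B(\bfc,t_\lz)\leftrightarrow C(\bfc,t_\lz)\leftrightarrow(\bfc,t_\lz)$ is the identity on $\cG^a$.
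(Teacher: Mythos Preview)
Your overall strategy---prove that $B(\bfc,t_\lz)$ is bar-invariant and upper-triangular over the PBW basis with off-diagonal coefficients in $v^{-1}\bbZ[v^{-1}]$, then invoke uniqueness---is exactly the paper's. But there is a genuine gap in how you pass from geometry to the $E$-expansion.

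Your argument from IC stalk conditions and the support of $\IC(X(\bfc,|\lambda|),\cL_\lambda)$ controls the expansion of $B(\bfc,t_\lz)$ in the \emph{strata basis} $\{N(\bfc',t_{\lz'}):(\bfc',t_{\lz'})\in\cG\}$, not in the PBW basis $\{E(\bfc',t_{\lz'}):(\bfc',t_{\lz'})\in\cG^a\}$. These are different: $\{N\}$ is a basis of the extended algebra $\bm{\cH}^0$, while $\{E\}$ is a basis of the smaller $\cC^*$, and $E(\bfc,t_\lz)=N(\bfc,t_\lz)+(\text{periodic lower terms})$. The sentence ``the leading term is $E(\bfc,t_\lz)$ because the IC sheaf restricts to the constant sheaf on the open stratum'' actually only gives leading term $N(\bfc,t_\lz)$. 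To convert the $N$-triangularity into $E$-triangularity you must kill the periodic $N$'s, and this is precisely where the paper invokes Lemma~\ref{periodic not in C} ($\mathbf{P}\cap\cC^*=\{0\}$): one repeats the inductive construction of Definition~\ref{def of E} with $B$'s in place of monomials, obtains a family $\tilde E(\bfc,t_\lz)$ satisfying the same $N$-expansion shape as $E(\bfc,t_\lz)$, and concludes $\tilde E=E$ from that lemma (this is Corollary~\ref{Corollary: E in B}). Your sketch does not supply this step.

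Two smaller points. First, the paper's convention has off-diagonal coefficients in $v^{-1}\bbZ[v^{-1}]$, not $v\bbZ[v]$; this matters for consistency with Section~\ref{Section: construction of C(c,lz)}. Second, to get those coefficients the paper does not appeal to IC support estimates directly but uses the almost orthogonality of $\{N(\bfc,t_\lz)\}$ (Proposition~\ref{Proposition: almost orthog}) together with the known almost orthogonality of $\{B(\bfc,t_\lz)\}$; your IC-stalk route is morally equivalent but would require carefully tracking the sheaf-to-function normalizations through the homogeneous local systems $\cL_\lambda$, which is not as automatic as you suggest.
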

Hence the bar-invariant basis constructed in this paper is the canonical basis $\bfB$ and we give an algebraic construction of the canonical basis.

We write few words here to explain why we need a further investigation upon the work of Lin-Xiao-Zhang (\cite{Lin_Xiao_Zhang_Representations_of_tame_quivers_and_affine_canonical_bases}) and Beck-Nakajima (\cite{Beck_Nakajima_Crystal_bases_and_two-sided_cells_of_quantum_affine_algebras}).

We hope to use the well-known structure of representations of tame quivers to construct a PBW basis of $\bff$ and show its relation with a monomial basis clearly, similarly to the case of finite type. New progresses in this paper are as follows.

\begin{enumerate}
\item We introduce the reductive form $\cH^0$ by extending the generic composition algebra $\cC^*$, which is a $\bbQ(v)$-algebra with a natural PBW basis. By using the PBW basis of $\cH^0$, we define a monomial basis and construct a PBW basis for the integral form $\cC^*_\cZ$ of $\cC^*$, and obtain a unipotent triangular relation between these two bases over $\cZ=\bbZ[v,v^{-1}]$. We prove also that these two bases are $\cZ$-bases of $\cC^*_\cZ$. This is an improvement of the main result of \cite{Lin_Xiao_Zhang_Representations_of_tame_quivers_and_affine_canonical_bases} (see Section 6.5 of \cite{Lin_Xiao_Zhang_Representations_of_tame_quivers_and_affine_canonical_bases}).
\item Each basis element of the PBW basis we constructed corresponds to an aperiodic module of the quiver $Q$ and a partition $\lambda$. This fulfill, in an algebraic way, that Lusztig's perverse sheaves $\IC(X(\bfc,|\lambda|),\cL_\lz)$ with the parameter $\bfc$ corresponds to the aperiodic representation $M(\bfc)$ of $Q$.
\item For a partition $\lambda$ of homogeneous regular modules
    of ${\rm rep}\,
    Q$, we have the permutation action of the symmetric group $\mathfrak{S}_{|\lambda|}$. The indices $t_{\lambda}$ in the PBW basis elements $E(\bfc,t_\lz)$ are given by irreducible characters of $\mathfrak{S}_{|\lambda|}$, which correspond bijectively to the local systems $\mathcal{L}_{\lambda}$ of Lusztig's perverse sheaves.
\item The final step is the construction of a bar-invariant basis according to an elementary linear algebra method of Lusztig (\cite{Lusztig_Introduction_to_quantum_groups}), by using the unipotent triangular relation between the monomial basis and the PBW basis. We prove that it coincides with the canonical basis, moreover, each element in the canonical basis is explicitly realized.
\end{enumerate}

In Section 2 and 3, we recall some basic definitions for quiver representations, Ringel-Hall algebras and quantum groups.
In Section 4, we recall the algebraic construction of the canonical basis for the quantized enveloping algebra corresponding to a cyclic quiver given by Deng-Du-Xiao.
The representations of affine quivers are recalled in Section 5.
Then we define the extended composition algebra $\cH^0$, the PBW basis elements and their index set in Section 6, 7, 8.
We give the construction of a monomial basis $\{\fkm^{\oz(\bfc,t_\lz)}|(\bfc,t_\lz)\in\cG^a\}$, a PBW basis $\{E(\bfc,t_\lz)|(\bfc,t_\lz)\in\cG^a\}$ and a bar-invariant basis $\{C(\bfc,t_\lz)|(\bfc,t_\lz)\in\cG^a\}$
in Section 9 and prove in Section 11 that the bar-invariant basis is exactly the canonical basis introduced in Section 10.
Based on the proof of Theorem \ref{Theorem:1}, we give a more direct  algorithm to compute $C(\bfc,t_\lz)$ in the last section.

\section{Quivers and representations}

\subsection{Quivers}
A quiver $Q=(I,H,s,t)$ consists of a vertex set $I$, an arrow set $H$, and two maps $s,t:H\ra I$ such that an arrow $h\in H$ starts at $s(h)$ and terminates at $t(h)$.
A quiver $Q$ is called acyclic if there is no oriented cycle.
Throughout the paper, we always assume that $Q$ is finite, i.e., $I$ and $H$ are finite sets.

\subsection{Representations}
Let $k$ be a field. A representation $(V_i,V_h)_{i\in I,h\in H}$ of $Q$ over $k$ is a family of $k$-vector spaces $V_i,i\in I$ and linear maps $V_h:V_{s(h)}\ra V_{t(h)},h\in H$.
A representation $(V_i,V_h)$ is called nilpotent if the map $V_{h_l}\circ V_{h_{l-1}}\circ \cdots \circ V_{h_1}:V_{s(h_1)}\ra V_{s(h_1)}$ is nilpotent, for any $h_1,h_2,\dots,h_l\in H$ with $t(h_j)=s(h_{j+1}),1\leq j<l,s(h_1)=t(h_l)$.
Note that all representations are nilpotent if $Q$ is acyclic.
In this paper, all modules mentioned will be nilpotent.
By $S_j$ we denote the simple module $(V_i,V_h)$ with $V_j=k$ and $V_i=0,i\neq j$.

Let $kQ$ be the path algebra of $Q$ over $k$.
By $\mod kQ$ we denote the category of finite dimensional left $kQ$-modules.
It is well-known that $\mod kQ$ is equivalent to the category $\mathrm{rep}_kQ$ of finite dimensional representations of $Q$ over $k$.
We shall simply identify $kQ$-modules with representations of $Q$ over $k$.

\subsection{Dimension vectors and the Euler form}
The set of isomorphism classes of (nilpotent) simple $kQ$-modules is naturally indexed by the vertex set $I$ of $Q$.
Then the Grothendieck group $G(kQ)$ of $\mod kQ$ is the free Abelian group $\bbZ I$. We say $\nu\leq\nu'$ for $\nu,\nu'\in\bbZ I$, if $\nu'-\nu\in\bbN I$.

For each nilpotent $kQ$-module $M=(M_i,M_h)$, the dimension vector
 $$\udim M=\sum_{i\in I}(\dim_k\,M_i)i$$
 is an element of $\bbN I$.

The Euler form $\lan -,- \ran$ on $\bbZ I$ is defined by
$$\lan\nu,\nu'\ran=\sum_{i\in I} \nu_i\nu'_i-\sum_{h\in H}\nu_{s(h)}\nu'_{t(h)}$$
for $\nu=\sum_{i\in I}\nu_i i$ and $\nu'=\sum_{i\in I}\nu'_i i$ in $\bbZ I$.
For any two nilpotent $kQ$-modules $M$ and $N$, one has
$$\lan\udim M,\udim N\ran=\dim_k\,\Hom_{kQ}(M,N)-\dim_k\,\Ext^1_{kQ}(M,N).$$
The symmetric Euler form is defined as
$$(\nu,\nu')=\lan\nu,\nu'\ran+\lan\nu',\nu\ran\,\, \mathrm{for}\,\, \nu,\nu'\in \bbZ I.$$
This gives rise to a symmetric generalized Cartan matrix $C=((i,j))_{i,j\in I}$.
It is easy to see that $C$ is independent of the field $k$ and the orientation of $Q$.

\subsection{Root systems and Weyl groups}

Since we only consider tame quivers, the root systems can be defined in an easier way (see \cite{Crawley-Boevey_William_Lectures_on_representations_of_quivers}).
A nonzero element $\nu$ in $\bbZ I$ is called a root if $0\leq(\nu,\nu)\leq 2$.
Denote by $R$ the set of all roots.
Let $R_+=R\cap\bbN I$ and $R_-=-R_+$.
It is well-known that
$$R=R_+\sqcup R_-.$$

One can check that $(\nu,\nu)\in 2\bbZ$.
Thus if $\nu$ is a root, then $(\nu,\nu)=0$ or $2$.
A root is called real (resp. imaginary) if $(\nu,\nu)=2$ (resp. 0).
Denote by $R^{\rm re}_+$ (resp. $R^{\rm im}_+$) the set of all real (resp. imaginary) positive roots.

When $Q$ is of affine type, it is well-known that all imaginary positive roots are of the form $m\dz$ where $m\in\bbZ_{>0}$ and $\dz\in R_+$.
The root $\dz=\sum_{i\in I}\dz_ii$ is called the minimal imaginary positive root. Let $|\dz|=\sum_i \dz_i$.

Given any $i\in I$, we denote by $s_i$ the automorphism of $\bbZ I$ defined by
$$s_i(\nu)=\nu-(\nu,i)i$$
for any $\nu\in\bbZ I$.
Note that $s_i$ is involutive, i.e. $s_i\circ s_i=\mathrm{id}_{\bbZ I}$.
The bilinear form $(-,-)$ is $s_i$-invariant for all $i\in I$.
The Weyl group is the subgroup of $\Aut(\bbZ I)$ generated by $s_i,i\in I$.

\section{Ringel-Hall algebras and quantum groups}
\subsection{Notations}
Given $n\in\bbZ$ and any symbol $x$, we define the notation
$$[n]_x=\frac{x^n-x^{-n}}{x-x^{-1}}.$$
We define $[0]_x=[0]_x^!=1$ and $[n]_x^!=[n]_x[n-1]_x\cdots[1]_x$ for $n\in\bbZ_{>0}$.

\subsection{Ringel-Hall algebras}
Let $k=\bbF_q$ be a finite field with $q$ elements.
Given three modules $L,M,N$ in $\mod kQ$, let $g^L_{MN}$ be the number of $kQ$-submodules $W$ of $L$ such that $W\cong N$ and $L/W\cong M$ in $\mod kQ$.
Let $v_q=\sqrt{q}\in\bbC$ and $\cM$ be the set of isomorphism classes of finite dimensional nilpotent $kQ$-modules. We denote by $[M]$ the isomorphism class of $M$ for any $kQ$-module $M$.

The (twisted) Ringel-Hall algebra $\cH^*=\cH^*(kQ)$ of $kQ$ is by definition the $\bbQ(v_q)$-space with basis $\{u_{[M]}|[M]\in\cM\}$, whose multiplication $*$ is given by
$$u_{[M]}*u_{[N]}=v_q^{\lan\udim M,\udim N\ran}\sum_{[L]\in\cM}g^L_{MN}u_{[L]}.$$

Following \cite{Ringel_PBW-bases_of_quantum_groups}, for any $kQ$-module $M$, we denote
$$\lan M\ran=v_q^{-\dime M+\dime\End_{kQ} M}u_{[M]}.$$

Note that $g^L_{MN}$ depends only on the isomorphism classes of $M,N$ and $L$, and for fixed isomorphism classes of $M,N$ there are only finitely many isomorphism classes $[L]$ with $g^L_{MN}\neq 0$.
It is clear that $\cH^*(kQ)$ is an associative $\bbQ(v_q)$-algebra with unit $1=u_0$, where $0$ denotes the isomorphism class of zero module.
We sometimes use the notation $\cH^*_q=\cH^*_q(Q)$ instead of $\cH^*(kQ)$ to indicate the dependence on $q$ when needed.

Here is a property that will be used later.
\begin{lemma}[\cite{Ringel_PBW-bases_of_quantum_groups}]\label{M^(m)}
An indecomposable $kQ$-module $M$ is called exceptional if $$\Ext^1_{kQ}(M,M)=0.$$ If $M$ is exceptional, then
$$\lan M\ran^{(m)}=\lan M^{\oplus m}\ran.$$
If $M,N\in\mod kQ$ with $\Hom_{kQ}(N,M)=0$ and $\Ext^1_{kQ}(M,N)=0$, then
$$\lan M\ran*\lan N\ran=\lan M\oplus N\ran,$$
where $x^{(m)}=x^m/[m]_{v_q}^!$.
\end{lemma}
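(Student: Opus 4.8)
The plan is to reduce both statements to the defining formula for the twisted multiplication together with known Hall-number identities, and then to normalize by the $\langle-\rangle$ rescaling. First I would treat the second assertion, which is the easier one. Since $\Hom_{kQ}(N,M)=0$ and $\Ext^1_{kQ}(M,N)=0$, every short exact sequence $0\to N\to L\to M\to 0$ splits, so the only $L$ with $g^L_{MN}\neq 0$ is $L\cong M\oplus N$, and for that $L$ one computes $g^{M\oplus N}_{MN}=1$ (the submodule $W\cong N$ is forced to be the canonical copy of $N$ in $M\oplus N$ because $\Hom_{kQ}(N,M)=0$ prevents any ``graph'' subrepresentation). Hence $u_{[M]}*u_{[N]}=v^{\langle\udim M,\udim N\rangle}u_{[M\oplus N]}$. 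Now I rewrite both sides in terms of $\langle-\rangle$: using $\dime(M\oplus N)=\dime M+\dime N$, $\End_{kQ}(M\oplus N)=\End_{kQ}M\oplus \Hom_{kQ}(M,N)\oplus\Hom_{kQ}(N,M)\oplus\End_{kQ}N$, and the hypothesis $\Hom_{kQ}(N,M)=0$, together with the identity $\dim_k\Hom_{kQ}(M,N)-\dim_k\Ext^1_{kQ}(M,N)=\langle\udim M,\udim N\rangle$ (so that $\dim_k\Hom_{kQ}(M,N)=\langle\udim M,\udim N\rangle$ under our vanishing hypothesis), the powers of $v$ match up exactly and give $\langle M\rangle*\langle N\rangle=\langle M\oplus N\rangle$.

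For the first assertion I would argue by induction on $m$, using the divided-power recursion and the second assertion in a slightly more delicate form (self-extensions of $M$ are allowed to be nonzero only insofar as $\Ext^1_{kQ}(M,M)=0$ forbids them). Concretely, since $M$ is indecomposable with $\Ext^1_{kQ}(M,M)=0$, any extension $0\to M^{\oplus(m-1)}\to L\to M\to 0$ splits, and a short computation with Hall numbers (the same ``no graph subrepresentation'' reasoning, now combined with counting the $\End_{kQ}M$-submodule structure) yields $u_{[M]}*u_{[M^{\oplus(m-1)}]}=v^{\langle(m-1)\udim M,\udim M\rangle}\,[m]_v\,u_{[M^{\oplus m}]}$, where the factor $[m]_v$ records the number of complements. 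Translating into $\langle-\rangle$ and dividing by $[m]_v$ produces $\langle M\rangle*\langle M^{\oplus(m-1)}\rangle=[m]_v\langle M^{\oplus m}\rangle$, i.e. $\langle M\rangle^{(m)}=\langle M\rangle\cdot\langle M^{\oplus(m-1)}\rangle/[m]_v=\langle M^{\oplus m}\rangle$ by the inductive hypothesis $\langle M\rangle^{(m-1)}=\langle M^{\oplus(m-1)}\rangle$.

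The main obstacle is the precise determination of the Hall number $g^{M^{\oplus m}}_{M,M^{\oplus(m-1)}}$ (equivalently, the count of submodules $W\cong M^{\oplus(m-1)}$ of $M^{\oplus m}$ with quotient $\cong M$) and the verification that the accompanying power of $v$ is exactly compensated by the $\dime\End$ terms hidden in the $\langle-\rangle$ normalization; this is a local-algebra computation over the (possibly noncommutative) endomorphism ring $\End_{kQ}M$, which is a finite-dimensional local $k$-algebra since $M$ is indecomposable, and one has to track how $\dim_k\End_{kQ}(M^{\oplus m})=m^2\dim_k\End_{kQ}M$ enters. Since the statement is quoted verbatim from \cite{Ringel_PBW-bases_of_quantum_groups}, I would ultimately cite that reference for the bookkeeping rather than reproduce it, but the inductive skeleton above is what makes the identity transparent.
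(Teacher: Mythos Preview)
The paper does not prove this lemma at all; it simply cites \cite{Ringel_PBW-bases_of_quantum_groups}. Your sketch is therefore more detailed than anything in the paper, and it is the standard argument (essentially Ringel's). The second assertion is handled correctly.

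For the first assertion your outline is right, but one intermediate formula is off: the Hall number $g^{M^{\oplus m}}_{M,\,M^{\oplus(m-1)}}$ counts codimension-one submodules of $M^{\oplus m}$ isomorphic to $M^{\oplus(m-1)}$, and since an exceptional $kQ$-module has $\End_{kQ}M=k$ (not merely a local algebra---this uses that $kQ$ is a path algebra and that $\dim\End M=\langle\udim M,\udim M\rangle=1$ for a real Schur root), these are exactly the hyperplanes in $k^m$, giving $(q^m-1)/(q-1)=v^{m-1}[m]_v$ rather than $[m]_v$. The missing factor $v^{m-1}$ is precisely absorbed by the $\langle-\rangle$ rescaling (because $\dim\End M^{\oplus m}=m^2$), so your target identity $\langle M\rangle*\langle M^{\oplus(m-1)}\rangle=[m]_v\,\langle M^{\oplus m}\rangle$ is correct and the induction closes. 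Your concluding remark---that one ultimately cites Ringel for the bookkeeping---is exactly what the paper does.
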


\subsection{Quantum groups}\label{1}
Let $v$ be an indeterminant and $\cZ=\bbZ[v,v^{-1}]$.
Let $\bff$ be the $\bbQ(v)$-algebra with unit $1$ generated by $\thz_i,i\in I$ subject to the quantum Serre relations
$$\sum_{s+r=1-(i,j)}(-1)^s\thz_i^{(s)}\thz_j\thz_i^{(r)}=0, i\neq j.$$

For any $\nu=\sum_i\nu_ii\in\bbN I$, we denote by $\bff_\nu$ the $\bbQ(v)$-subspace of $\bff$ spanned by the monomials $\thz_{i_1}\thz_{i_2}\cdots\thz_{i_r}$ such that $\sum_{j}i_j=\nu$.
Then we have a direct sum decomposition $\bff=\oplus_{\nu\in\bbN I}\bff_\nu$.
We have $\bff_\nu\bff_{\nu'}\subset\bff_{\nu+\nu'}$, $1\in\bff_0$ and $\thz_i\in\bff_i$.

An element $x$ in $\bff$ is said to be homogeneous if it belongs to $\bff_\nu$ for some $\nu$.
We set $|x|=\nu$.

The integral form $\bff_\cZ$ is by definition the $\cZ$-subalgebra of $\bff$ generated by $\thz_i^{(m)},i\in I,m\geq 0$.

The quantized enveloping algebra $\bfU$ associated with a quiver $Q$ is an associative algebra over $\bbQ(v)$ with unit $\bfone$ generated by the elements $E_i,F_i,K_i^{\pm}(i\in I)$ subject to the following relations:
$$K_0=\bfone, K_\nu K_{\nu'}=K_{\nu+\nu'}\,\,\,\,\,\text{for all}\,\,\,\,\, \nu,\nu'\in \bbZ I;$$
$$K_\nu E_i K_{-\nu}=v^{(\nu,i)}E_i\,\,\,\,\,\text{for all}\,\,\,\,\, i\in I,\nu\in \bbZ I;$$
$$K_\nu F_i K_{-\nu}=v^{-(\nu,i)}F_i\,\,\,\,\,\text{for all}\,\,\,\,\, i\in I,\nu\in \bbZ I;$$
$$E_iF_j-F_jE_i=\dz_{ij}\frac{K_i-K_{-i}}{v-v^{-1}}\,\,\,\,\,\text{for all}\,\,\,\,\, i,j\in I;$$
$$\sum^{1-(i,j)}_{k=0}(-1)^k E_i^{(k)}E_j E_i^{(1-(i,j)-k)}=0\,\,\,\,\,\text{for}\,\,\,\,\, i\neq j\in I;$$
$$\sum^{1-(i,j)}_{k=0}(-1)^k F_i^{(k)}F_j F_i^{(1-(i,j)-k)}=0\,\,\,\,\,\text{for}\,\,\,\,\, i\neq j\in I.$$
Here $E_i^{(n)}=E_i^n/[n]_{v}^!$,$F_i^{(n)}=F_i^n/[n]_{v}^!$.

Let $\bfU^+$ (resp. $\bfU^-$) be the subalgebra of $\bfU$ generated by $E_i$ (resp. $F_i$) for $i\in I$, and let $\bfU^0$ be the subalgebra of $\bfU$ generated by $K_i$ for $i\in I$.
It is well known that $\bfU$ has the triangular decomposition
$$\bfU=\bfU^-\otimes\bfU^0\otimes\bfU^+.$$
It is easy to see that there exist well-defined $\bbQ(v)$-algebra monomorphisms $\bff\ra\bfU(x\mapsto x^+)$ and $\bff\ra\bfU(x\mapsto x^-)$ with image $\bfU^+$ and $\bfU^-$ respectively satisfying $E_i=\thz_i^+$ and $F_i=\thz_i^-$. We denote $\bfU^+_\cZ=\bff^+_\cZ$, $\bfU^-_\cZ=\bff^-_\cZ$.

\subsection{Composition subalgebras}

Let $\cW$ be an infinite set consisting of numbers of the form $q=p^s$ with $p$ prime and $s$ positive integer.
The integral generic composition algebra $\cC^*_\cZ=\cC^*_\cZ(Q)$ is defined to be the $\cZ$-subalgebra of $\Pi_{q\in\cW}\cH^*_q(Q)$ generated by $u_i^{(m)}=(u^{(m)}_{[S_i]})_q=(\lan S_i^{\oplus m}\ran)_q,i\in I,m\geq 0$, where $v$ acts by $(v_q)_q$ and $v^{-1}$ acts by $(v_q^{-1})_q$.
Then define the generic composition algebra $$\cC^*=\bbQ(v)\otimes_\cZ\cC^*_\cZ.$$

It is well-known (\cite{Ringel_Hall_algebras_and_quantum_groups}) that there is an isomorphism of algebras
 \begin{align}
\Phi:\bff&\lra \cC^*\notag\\
\thz_i&\lmto u_i,\notag
\end{align}
such that $\Phi(\bff_\cZ)=\cC^*_\cZ$.

Denote by $\Phi^+:\bfU^+\ra\cC^*$ the composition of $\Phi$ and the isomorphism $\bfU^+\ra\bff$.

For a finite field $\bbF_q$, we denote by $\Phi_q$ the specializing map given by the composition of $\Phi:\bff_\cZ\ra\cC^*_\cZ$, $\cC^*_\cZ\hookrightarrow\Pi_{q'\in\cW}\cH^*_{q'}(Q)$ and the projection $\Pi_{q'\in\cW}\cH^*_{q'}(Q)\ra \cH^*_q(Q)$.

\subsection{Monomials}\label{Section: def of monomial}
Let $\cS$ be the set of all pairs $\oz=(\underline{i},\underline{a})$, where $\underline{i}=(i_1,i_2,\cdots,i_m)$ is a sequence in $I$ and $\underline{a}=(a_1,a_2,\cdots,a_m)$ is a sequence in $\bbN$ of the same length.
For such a pair $\oz=(\underline{i},\underline{a})\in\cS$, we define a monomial
$$\fkm^\oz=\thz_{i_1}^{(a_1)}\thz_{i_2}^{(a_2)}\cdots\thz_{i_m}^{(a_m)}$$
in $\bff$.
By definition, $\bff_\cZ$ is $\cZ$-spanned by $\fkm^\oz$ for all $\oz\in\cS$.

For simplicity, under the isomorphism $\Phi$, we still denote $\Phi(\fkm^\oz)=u_{i_1}^{(a_1)}*u_{i_2}^{(a_2)}*\cdots* u_{i_m}^{(a_m)}$ by $\fkm^\oz$.
When specializing to a field $\bbF_q$, we still denote $\Phi_q(\fkm^\oz)=u_{[S_{i_1}]}^{(a_1)}*u_{[S_{i_2}]}^{(a_2)}*\cdots* u_{[S_{i_m}]}^{(a_m)}$ by $\fkm^\oz$.

\subsection{A bilinear form and the coproduct}\label{Section: Bilinear form and coprouct}
Define an algebra structure on $\bff\otimes\bff$ by
$$(x_1\otimes x_2)(y_1\otimes y_2)=v^{(|x_2|,|y_1|)}x_1y_1\otimes x_2y_2,$$
where $x_1,x_2,y_1,y_2$ are homogeneous.

The coproduct $r:\bff\ra\bff\otimes\bff$ is the $\bbQ(v)$-algebra homomorphism such that $r(\thz_i)=\thz_i\otimes 1+1\otimes \thz_i, i\in I$.

There is a non-degenerate symmetric bilinear form $(-,-)$ on $\bff$ with values in $\bbQ(v)$ such that $(1,1)=1$ and
\begin{enumerate}
\item[(a)] $(\thz_i,\thz_j)=\dz_{ij}(1-v^{-2})^{-1},i,j\in I$;

\item[(b)] $(x,yy')=(r(x),y\otimes y'), x,y,y'\in\bff$,
where the form on $\bff\otimes\bff$ is defined by $(x_1\otimes y_1,x_2\otimes y_2)=(x_1,x_2)(y_1,y_2)$.
\end{enumerate}

Let $k=\bbF_q$, the inner product $(-,-)$ on $\cH^*(kQ)$ is given by the formula (\cite{Green_Hall_algebras_hereditary_algebras_and_quantum_groups})
$$(\lan M\ran,\lan N\ran)=\dz_{MN}\frac{v_q^{2\dime\End (M)}}{a_M},$$
where $a_M=|\Aut (M)|$.
This inner product is also well-defined on $\cC^*$, which coincides with the bilinear form on $\bff$ under the isomorphism $\Phi$.

Following \cite{Green_Hall_algebras_hereditary_algebras_and_quantum_groups} and \cite{Ringel_Hall_algebras_and_quantum_groups}, under the homomorphism $\Phi_q$, the linear operation $r:\cH^*(kQ)\ra\cH^*(kQ)\otimes\cH^*(kQ)$ given by
$$r(u_{[L]})=\sum_{[M],[N]}v_q^{\lan \udim M,\udim N \ran}g^{L}_{M,N}\frac{a_M a_N}{a_L}u_{[M]}\otimes u_{[N]}$$
coincides with the coproduct $r:\bff\ra\bff\otimes\bff$.

\subsection{The bar involution and the canonical basis}\label{Section: The bar involution and the canonical basis}

There is a $\bbQ$-algebra automorphism $\bar{\cdot}:x\mapsto \bar{x}$ on $\bff$ defined by $\bar{\thz}_i=\thz_i$, $\bar{v}=v^{-1}$, called bar involution.

By \cite{Lusztig_Introduction_to_quantum_groups}, let $\cB$ be the set of all $x\in\bff_\cZ$ such that $\bar{x}=x$ and $(x,x)\in 1+v^{-1}\bbZ[[v^{-1}]]$. The set $\cB$ is called the signed canonical basis of $\bff$ and Lusztig defined by induction a particular subset $\bfB\subset\cB$ such that $\cB=\bfB\sqcup-\bfB$ (see \cite{Lusztig_Introduction_to_quantum_groups}, 14.4). The set  $\bfB$ is called the canonical basis of $\bff$. The aim of this paper is to construct $\bfB$ by Ringel-Hall algebra approach.
\subsection{Lusztig's symmetries}
Lusztig in \cite{Lusztig_Introduction_to_quantum_groups} introduced the symmetries $T_i=T_{i,1}'':\bfU\ra\bfU$ for $i\in I$, as algebra automorphisms of $\bfU$ defined by relations:
$$T_i(K_\nu)=K_{s_i(\nu)}; T_i(E_i)=-F_iK_i; T_i(F_i)=-K_i^{-1}E_i,$$
$$T_i(E_j)=\sum_{r+s=-(i,j)}(-1)^rv^{-r}E_i^{(s)}E_jE_i^{(r)}, j\neq i,$$
$$T_i(F_j)=\sum_{r+s=-(i,j)}(-1)^rv^{r}F_i^{(r)}F_jF_i^{(s)}, j\neq i,$$
and its inverse $T_i^{-1}=T_{i,-1}':\bfU\ra\bfU$ defined by relations:
$$T_i^{-1}(K_\nu)=K_{s_i(\nu)}; T_i^{-1}(E_i)=-K_i^{-1}F_i; T_i^{-1}(F_i)=-E_iK_i,$$
$$T_i^{-1}(E_j)=\sum_{r+s=-(i,j)}(-1)^rv^{-r}E_i^{(r)}E_jE_i^{(s)}, j\neq i,$$
$$T_i^{-1}(F_j)=\sum_{r+s=-(i,j)}(-1)^rv^{r}F_i^{(s)}F_jF_i^{(r)}, j\neq i.$$

For $i\in I$, define
$$\bfU^+[i]=\{x\in\bfU^+|T_i(x)\in\bfU^+\},{^\sz\bfU^+}[i]=\{\in\bfU^+|T_i^{-1}(x)\in\bfU^+\}.$$
Then $T_i:\bfU^+[i]\ra{^\sz\bfU^+}[i]$ is an isomorphism and $T_i^{-1}$ is its inverse. Moreover, let $\bfU^+_\cZ[i]=\bfU^+_\cZ\cap\bfU^+[i]$ and ${^\sz\bfU^+_\cZ[i]}=\bfU^+_\cZ\cap{^\sz\bfU^+}[i]$, then $T_i:\bfU^+_\cZ[i]\ra{^\sz\bfU^+_\cZ}[i]$ is an isomorphism and $T_i^{-1}$ is its inverse.

\subsection{BGP reflections functors}\label{BGP relection functors}
A vertex $i\in I$ is called a sink (resp. source) of $Q$ if there is no arrow $h\in H$ with $s(h)=i$ (resp. $t(h)=i$).
We define $\sz_i Q$ to be the quiver obtained from $Q$ by reversing the direction of every arrow connected to the vertex $i$.
In \cite{Bernstein_Gel'fand_Ponomarev_Coxeter_functors_and_Gabriel's_theorem}, one may define the BGP reflection functors:
$$\sz^+_i:\mod kQ\ra\mod k\sz_i Q$$
if $i$ is a sink and
$$\sz^-_i:\mod kQ\ra\mod k\sz_i Q$$
if $i$ is a source.
Let $\mod kQ[i]$ be the subcategory of $\mod kQ$ consisting of all representations which do not have $S_i$ as a direct summand and $\cH^*(kQ)[i]$ be the subalgebra of $\cH^*(kQ)$ generated by $u_{[M]}$ with $M\in\mod kQ[i]$.
Therefore we have algebra homomorphisms:
$$\sz_i^+:\cH^*(kQ)[i]\ra\cH^*(k\sz_i Q)[i]$$
if $i$ is a sink and
$$\sz_i^-:\cH^*(kQ)[i]\ra\cH^*(k\sz_i Q)[i]$$
if $i$ is a source, defined by
$$\sz_i^\pm(\lan M\ran)=\lan\sz_i^\pm(M)\ran$$
for any $M\in\mod kQ[i]$. Furthermore, they induce the homomorphisms
$$\sz_i^+:\cC^*(Q)[i]\ra\cC^*(\sz_iQ)[i]$$
if $i$ is a sink and
$$\sz_i^-:\cC^*(Q)[i]\ra\cC^*(\sz_iQ)[i]$$
if $i$ is a source.
Here $\cC^*(Q)[i]=\cC^*(Q)\cap\prod_{q\in\cW}\cH^*(\bbF_q Q)[i]$.

It is known that $\sz_i^+=T_i$ and $\sz_i^-=T_i^{-1}$ under the identification $\Phi:\bfU^+[i]\ra\cC^*(Q)[i]$ when $i$ is a sink and $\Phi:{^\sz\bfU^+[i]}\ra\cC^*(Q)[i]$ when $i$ is a source (see \cite{Deng_Xiao_Ringel-Hall_algebras_and_Lusztig's_symmetries} or \cite{Xiao_Yang_BGP-reflection_functors_and_Lusztig's_symmetries}).

\section{Cyclic quivers and PBW bases}\label{cycliccase}

\subsection{Nilpotent representations of cyclic quivers}
Let $k$ be a field, $\dt$=$\dt(n)$ be the cyclic quiver

\begin{equation}\notag
\xymatrix{
           &         &    n\ar[lld]       &         &       \\
1\ar[r] & 2\ar[r] & \cdots \ar[r]& n-2\ar[r]&  n-1\ar[llu]\\
}
\end{equation}
with vertex set $I=\bbZ/n\bbZ=\{1,2,\dots,n\}$ and arrow set $H=\{i\ra i+1|1\leq i\leq n\}$. Let $\cK_n=\cK_n(k)$ be the category of nilpotent representations of the path algebra $k\dt$.

A multisegment is a formal finite sum
$$\pi=\sum_{i\in I,l\geq 1}\pi_{i,l}[i;l),$$
where $\pi_{i,l}\in\bbN$ and there are only finitely many nonzero $\pi_{i,l}$. Let $\Pi=\Pi(n)$ denote the set of all multisegments. Each multisegment $\pi=\sum_{i\in I,l\geq 1}\pi_{i,l}[i;l)\in\Pi$ defines a representation in $\cK_n$:
$$M(\pi)=M_{\cK_n}(\pi)=\bigoplus_{i\in I,l\geq 1}S_i[l]^{\oplus \pi_{i,l}},$$
where $S_i[l]$ is the indecomposable $k\dt$-module with top $S_i$ and of length $l$.
This gives a bijection between $\Pi$ and the set of isomorphim classes of representations in $\cK_n$ and the bijection is independent of the field $k$.

\subsection{Aperiodicity}
A multisegment $\pi=\sum_{i\in I,l\geq 1}\pi_{i,l}[i;l)$ is called aperiodic if, for each $l\geq 1$, there is some $i\in I$ such that $\pi_{i,l}=0$, otherwise, $\pi$ is called periodic.
By $\Pi^a$ we denote the set of aperiodic multisegments.
A representation $M$ in $\cK_n$ is called aperiodic (resp. periodic) if $M\cong M(\pi)$ for some $\pi\in\Pi^a$ (resp. $\pi\in\Pi\setminus\Pi^a$).

\subsection{Order of multisegements}
For $\nu\in \bbN I$, let
$$\Pi_\nu=\{\pi\in\Pi | \udim M(\pi)=\nu\}, \Pi^a_\nu=\Pi^a\cap\Pi_\nu.$$
Define a partial order $\leq_G$ on $\Pi_\nu$.
For $\pi,\pi'\in\Pi_\nu$, $\pi'\leq_G\pi$ if and only if $$\dim\,\Hom(M,M(\pi'))\geq\dim\,\Hom(M,M(\pi))$$ for all modules (or equivalently, for all indecomposable modules) $M$ in $\cK_n$.
By \cite{Bongartz_On_degenerations_and_extensions_of_finite-dimensional_modules}, this ordering coincides with the geometric ordering: $\pi'\leq \pi$ if and only if the orbit of $M(\pi')$ in the representation space is contained in the closure of the orbit of $M(\pi)$.

\subsection{Generic extensions}
For any two modules $M,N\in\cK_n$, there is a unique (up to isomorphism) extension $L$ of $M$ by $N$ with minimal $\dim\,\End\, L$. The module $L$ is called the generic extension of $M$ by $N$ and we denote $L=M\diamond N$ or $[L]=[M]\diamond[N]$.

\subsection{Bases of $\cC^*$}\label{PBW basis of cyclic quiver}
A PBW basis of $\cC^*$ is constructed in \cite{Deng_Du_Xiao_Generic_extensions_and_canonical_bases_for_cyclic_quivers}, where $\cC^*$ is the composition algebra of the category $\cK_n$.

\begin{proposition}[\cite{Deng_Du_Xiao_Generic_extensions_and_canonical_bases_for_cyclic_quivers}]
For each $\pi\in\Pi^a$, there is a distinguished monomial $\fkm^{\oz_\pi}$ satisfying
$$S_{i_1}^{\oplus a_1}\diamond S_{i_2}^{\oplus a_2}\diamond\dots\diamond S_{i_s}^{\oplus a_s}=M(\pi)$$
and
$$\fkm^{\oz_\pi}=\lan M(\pi)\ran+\sum_{\pi'\in\Pi_\nu,\pi'<_G \pi}\xz^{\pi'}_{\oz_\pi}(v)\lan M(\pi')\ran,$$
where $\oz_\pi=((i_1,\dots,i_s),(a_1,\dots,a_s))$, $\xz^{\pi'}_{\oz_\pi}\in\cZ$.
\end{proposition}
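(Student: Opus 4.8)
The plan is to construct, for each aperiodic multisegment $\pi \in \Pi^a$, a monomial $\fkm^{\oz_\pi}$ by induction on the partial order $\leq_G$ (equivalently, by induction on $\dim \End M(\pi)$, which is strictly monotone along $<_G$), so that the claimed triangular relation holds. The inductive engine is the \emph{generic extension} operation $\diamond$: the key algebraic fact we would first establish is that for modules $M,N \in \cK_n$ we have in $\cH^*$
\begin{displaymath}
\lan M \ran * \lan N \ran = \lan M \diamond N \ran + \sum_{[L]} \gamma^L_{MN}(v) \lan L \ran,
\end{displaymath}
where the sum runs over $[L]$ with $\udim L = \udim M + \udim N$ and $L$ strictly smaller than $M \diamond N$ in $\leq_G$, with $\gamma^L_{MN} \in \cZ$. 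This follows from the definition of the Hall product together with the fact that $g^{M\diamond N}_{M,N} = 1$ (the generic extension occurs with multiplicity one in the relevant sense) and that every other $L$ with $g^L_{M,N} \neq 0$ is a degeneration of $M \diamond N$, hence $<_G M\diamond N$ by the Bongartz result quoted in Section~\ref{cycliccase}. One should also record that $\diamond$ is associative.

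Next I would set up the induction. Fix $\nu$ and fix $\pi \in \Pi^a_\nu$. Writing $M(\pi)$ as a sum of its indecomposable summands $S_i[l]$, the point is to realize $M(\pi)$ as an iterated generic extension of simples $S_i^{\oplus a}$. For aperiodic $\pi$ this is the crucial structural input: one filters $M(\pi)$ by socle layers (or uses the known combinatorial description of $\diamond$ on $\cK_n$ in terms of multisegments), and because $\pi$ is aperiodic one can choose a sequence $\oz_\pi = ((i_1,\dots,i_s),(a_1,\dots,a_s))$ with
\begin{displaymath}
S_{i_1}^{\oplus a_1} \diamond S_{i_2}^{\oplus a_2} \diamond \cdots \diamond S_{i_s}^{\oplus a_s} = M(\pi).
\end{displaymath}
Granting such a sequence, apply the product formula above repeatedly: $\fkm^{\oz_\pi} = \lan S_{i_1}^{\oplus a_1}\ran * \cdots * \lan S_{i_s}^{\oplus a_s}\ran$ expands as $\lan M(\pi)\ran$ plus a $\cZ$-linear combination of $\lan L \ran$ with $L <_G M(\pi)$ in $\Pi_\nu$, using associativity of $\diamond$ and induction to control the error terms at each stage (a generic extension of something smaller is still controlled). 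Note $\lan S_i^{\oplus a}\ran = u_i^{(a)}$ by Lemma~\ref{M^(m)}, so $\fkm^{\oz_\pi}$ is genuinely a monomial of the required shape in $\cC^*$, and the whole identity already lives over $\cZ$.

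The main obstacle is the structural claim that every aperiodic $\pi$ is an iterated generic extension of semisimples $S_i^{\oplus a_j}$, together with the bookkeeping that the error terms $\lan M(\pi')\ran$ appearing are indexed by $\pi' \in \Pi_\nu$ (not leaving $\Pi_\nu$, which is automatic from $\udim$-additivity) and that they are all strictly $<_G \pi$. Aperiodicity is exactly what is needed here: if $\pi$ were periodic, the naive socle-layer construction would produce, at some layer of length $l$, a factor touching every vertex, and the generic extension would jump past $M(\pi)$ to a module with smaller endomorphism ring, breaking the leading-term identity; the combinatorics of $\diamond$ on multisegments (worked out in \cite{Deng_Du_Xiao_Generic_extensions_and_canonical_bases_for_cyclic_quivers}, and ultimately resting on Reineke-type arguments and the Bongartz degeneration order) shows this obstruction is the only one. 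So the proof reduces to: (i) the Hall-product-vs-generic-extension lemma, (ii) the existence of the word $\oz_\pi$ for aperiodic $\pi$, and (iii) a routine induction combining the two, with all coefficients tracked in $\cZ = \bbZ[v,v^{-1}]$.
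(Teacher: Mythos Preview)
The paper does not prove this proposition; it is stated with a citation to \cite{Deng_Du_Xiao_Generic_extensions_and_canonical_bases_for_cyclic_quivers} and used as input for the rest of the construction. Your sketch is essentially the argument of that reference: associativity of $\diamond$, realization of every aperiodic $M(\pi)$ as an iterated generic extension of semisimples, and the triangular Hall-product formula with leading term $\lan M\diamond N\ran$.

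One point to tighten: the literal claim $g^{M\diamond N}_{M,N}=1$ is not what is proved in \cite{Deng_Du_Xiao_Generic_extensions_and_canonical_bases_for_cyclic_quivers}. What holds is that, after passing to the $\lan-\ran$ normalization, the coefficient of $\lan M\diamond N\ran$ in $\lan M\ran*\lan N\ran$ equals $1$; this comes from an orbit-counting identity relating $g^{M\diamond N}_{M,N}$, the automorphism groups, and $\dim\End(M\diamond N)$ (see also Reineke's work on generic extensions underlying this). Your parenthetical ``in the relevant sense'' is exactly where this correction belongs. The remaining steps---that aperiodicity is precisely the condition needed for the distinguished word to exist, and that all error terms degenerate below $M(\pi)$ by Bongartz---are the right ones and match the cited source.
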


Since $\Pi^a_\nu$ is a finite set for any $\nu\in\bbN I$, define $E_{\pi}$ for all $\pi\in\Pi^a_\nu$ inductively by the following relations.
If $\pi\in\Pi^{a}_\nu$ is minimal,
$$E_{\pi}={\fkm}^{w_{\pi}}=\langle{M(\pi)}\rangle+\sum_{\pi'\in\Pi_\nu\setminus\Pi_\nu^a,\pi'<_G\pi}\xi_{\oz_\pi}^{\pi'}(v)\langle{M(\pi')}\rangle.$$
If $E_{\pi'}$ have been defined for all $\pi>\pi'\in\Pi_\nu^a$, then
\begin{eqnarray*}
E_{\pi}&=&{\fkm}^{w_{\pi}}-\sum_{\pi'\in\Pi_\nu^a,\pi'<_G\pi}\xi_{\oz_\pi}^{\pi'}(v)E_{\pi'}\\
&=&
\langle{M(\pi)}\rangle+\sum_{\pi'\in\Pi_\nu\setminus\Pi_\nu^a,\pi'<_G\pi}\eta_{\oz_\pi}^{\pi'}(v)\langle{M(\pi')}\rangle,
\end{eqnarray*}
for some $\eta^{\pi'}_{\oz_\pi}\in\cZ$.

\begin{proposition}[\cite{Deng_Du_Xiao_Generic_extensions_and_canonical_bases_for_cyclic_quivers}]
The set $\{E_\pi|\pi\in\Pi^a\}$ is a $\cZ$-basis of $\cC^*_\cZ(\dt)$ and is independent of the choice of distinguished monomials.
\end{proposition}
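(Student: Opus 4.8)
The plan is to establish the two assertions — that $\{E_\pi \mid \pi \in \Pi^a\}$ is a $\cZ$-basis of $\cC^*_\cZ(\dt)$, and that it does not depend on the chosen distinguished monomials — by exploiting the unitriangular relation from the preceding proposition. First I would fix $\nu \in \bbN I$ and work inside the finite-dimensional homogeneous piece $\cC^*_{\cZ,\nu}$, which by the construction of $\bff_\cZ$ (and its identification with $\cC^*_\cZ$ via $\Phi$) is spanned over $\cZ$ by the monomials $\fkm^\oz$ with $|\oz| = \nu$, hence by the $\fkm^{\oz_\pi}$ together with the additional structure constants. The key input is that $\{\lan M(\pi)\ran \mid \pi \in \Pi_\nu\}$ is a $\bbQ(v)$-basis of the Hall-algebra piece and that the composition subalgebra $\cC^*_\nu$ has as a $\bbQ(v)$-basis the elements indexed by aperiodic $\pi$; I would cite that $\cC^*_\cZ$ is the $\cZ$-span of the $\fkm^{\oz_\pi}$ in each degree, which reduces the problem to a change of basis inside a free $\cZ$-module.

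Next I would run the inductive definition of $E_\pi$ and observe that, by construction, the transition matrix expressing the family $\{E_\pi \mid \pi \in \Pi^a_\nu\}$ in terms of $\{\fkm^{\oz_\pi} \mid \pi \in \Pi^a_\nu\}$ is unitriangular with respect to the partial order $<_G$ and has entries $\xi^{\pi'}_{\oz_\pi}(v) \in \cZ$ off the diagonal and $1$ on the diagonal. Since $\Pi^a_\nu$ is finite, such a matrix is invertible over $\cZ$ (its inverse is again unitriangular with entries in $\cZ$, obtained by the usual finite Neumann-type expansion along the poset). Therefore $\{E_\pi \mid \pi \in \Pi^a_\nu\}$ and $\{\fkm^{\oz_\pi} \mid \pi \in \Pi^a_\nu\}$ span the same $\cZ$-submodule of $\cC^*_\cZ$; combined with the second displayed formula for $E_\pi$, which exhibits $E_\pi = \lan M(\pi)\ran + (\text{lower terms in }\Pi_\nu \setminus \Pi^a_\nu)$, one gets that the $E_\pi$ are $\cZ$-linearly independent and span, i.e.\ form a $\cZ$-basis. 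Summing over $\nu$ gives the basis of $\cC^*_\cZ(\dt)$.

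For the independence statement I would argue as follows: suppose $\fkm^{\oz_\pi}$ and $\tilde{\fkm}^{\oz_\pi}$ are two distinguished monomials for the same $\pi$, giving rise to families $E_\pi$ and $\tilde E_\pi$. Both satisfy $E_\pi, \tilde E_\pi \in \lan M(\pi)\ran + \sum_{\pi' <_G \pi,\ \pi' \in \Pi_\nu \setminus \Pi^a_\nu} \cZ \lan M(\pi')\ran$, so their difference lies in $\operatorname{span}_\cZ\{\lan M(\pi')\ran \mid \pi' <_G \pi,\ \pi' \notin \Pi^a_\nu\}$; on the other hand, by the basis property just proved, $E_\pi - \tilde E_\pi$ lies in $\cC^*_{\cZ,\nu} = \bigoplus_{\pi'' \in \Pi^a_\nu} \cZ E_{\pi''}$ — an aperiodic-indexed module. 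A downward induction on $<_G$ then forces all coefficients to vanish: the leading term already matches, and any would-be aperiodic correction $\lan M(\pi'')\ran$ with $\pi'' <_G \pi$ cannot be cancelled against the periodic terms available. Hence $E_\pi = \tilde E_\pi$. The main obstacle, and the point requiring the most care, is the interaction between the two different indexing sets in this last step — making precise why an element lying simultaneously in a span of \emph{periodic-only} basis vectors (below $\pi$) and in the \emph{aperiodic} lattice must be zero — which rests on the fact from \cite{Deng_Du_Xiao_Generic_extensions_and_canonical_bases_for_cyclic_quivers} that $\cC^*_\nu$ is precisely the subspace of the Hall algebra cut out by aperiodicity, so no nonzero combination of aperiodic $\lan M(\pi'')\ran$ can be written using periodic ones alone.
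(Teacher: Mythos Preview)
The paper does not give its own proof here; the proposition is quoted from \cite{Deng_Du_Xiao_Generic_extensions_and_canonical_bases_for_cyclic_quivers}. The affine analogue is, however, worked out in Section~\ref{chapter basis} (Lemma~\ref{E is Qbasis} through the Proposition and Corollary following Lemma~\ref{periodic not in C}), and that argument follows the original, so I compare against it.

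Your independence-of-choice argument is correct and is exactly the mechanism the paper uses in the affine case: the difference $E_\pi - \tilde E_\pi$ lies simultaneously in $\cC^*$ and in the $\cZ$-span of periodic $\lan M(\pi')\ran$'s, and these intersect trivially (the analogue of Lemma~\ref{periodic not in C}).

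The $\cZ$-basis argument has a circularity. You propose to ``cite'' that the distinguished monomials $\{\fkm^{\oz_\pi}\mid\pi\in\Pi^a_\nu\}$ already form a $\cZ$-basis of $\cC^*_{\cZ,\nu}$, and then transfer this to the $E_\pi$ via the unitriangular transition. But in the standard development --- both in \cite{Deng_Du_Xiao_Generic_extensions_and_canonical_bases_for_cyclic_quivers} and in the affine treatment of this paper --- the monomial-basis statement is a \emph{corollary} of the $E_\pi$-basis statement, not an input to it. Your unitriangularity only shows that the $E_\pi$ and the distinguished $\fkm^{\oz_\pi}$ span the \emph{same} $\cZ$-submodule; identifying that submodule with all of $\cC^*_{\cZ,\nu}$ is precisely the content of the proposition. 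The non-circular route runs as follows: (i) the $E_\pi$ are $\bbQ(v)$-linearly independent by their leading terms $\lan M(\pi)\ran$, and a dimension count ($|\Pi^a_\nu|=\dim\bff_\nu$) makes them a $\bbQ(v)$-basis of $\cC^*_\nu$; (ii) for an \emph{arbitrary} monomial $\fkm^\oz$, expand it in the Hall basis as $\sum_{\pi\in\Pi_\nu}\psi^\pi_\oz\,\lan M(\pi)\ran$ with $\psi^\pi_\oz\in\cZ$, and observe that $\fkm^\oz-\sum_{\pi\in\Pi^a_\nu}\psi^\pi_\oz\,E_\pi$ lies both in the periodic span and in $\cC^*$, hence vanishes --- by the very same fact you already invoke for independence. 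This yields $\cZ$-spanning without presupposing the monomial basis.
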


\section{Representations of affine quivers}

\subsection{Representations of affine quivers}
Let $Q=(I,H,s,t)$ be an acyclic quiver of affine type.
Let $k$ be a finite field and $kQ$ be the path algebra.

Denote by $\ind\cC$ the set of isomorphism classes of all indecomposable objects in the category $\cC$. According to \cite{Ringel_Tame_Algebras_and_Integral_Quadratic_Forms}, the set $\ind\mod kQ$ can be divided into
$$\ind\mod kQ=\ind\cP\sqcup\ind\cR_{\rm{nh}}\sqcup\ind\cR_{\rm{h}}\sqcup\ind\cI,$$
where $\cP=\cP(k)$, $\cR_{\rm{nh}}=\cR_{\rm{nh}}(k)$, $\cR_{\rm{h}}=\cR_{\rm{h}}(k)$, $\cI=\cI(k)$ denotes the full subcategory of $\mod kQ$ consisting of all preprojective, non-homogeneous regular, homogeneous regular, preinjective modules respectively.
Hence we have
$$\cH^*(kQ)=\cH^*_q(\cP)\otimes_{\bbQ(v_q)}\cH^*_q(\cR_{\rm{nh}})\otimes_{\bbQ(v_q)}\cH^*_q(\cR_{\rm{h}})\otimes_{\bbQ(v_q)}\cH^*_q(\cI),$$
where $\cH^*_q(\cP)$, $\cH^*_q(\cR_{\rm{nh}})$, $\cH^*_q(\cR_{\rm{h}})$, $\cH^*_q(\cI)$ are the Ringel-Hall algebras of corresponding subcategories and each of them is closed under multiplication in $\cH^*(kQ)$.

The following lemma gives an easy method to check which subcategory an indecomposable module is in.
\begin{lemma}[\cite{Simson_Skowronski_Elements_of_the_representation_theory_of_associative_algebras}]\label{defect}
Let $M$ be an indecomposable $kQ$-module. Then
\begin{enumerate}
\item[(a)] $\lan \dz,\udim M\ran<0\Leftrightarrow M\in\cP$;
\item[(b)] $\lan \dz,\udim M\ran>0\Leftrightarrow M\in\cI$;
\item[(c)] $\lan \dz,\udim M\ran=0\Leftrightarrow M\in\cR_{\rm nh}$ or $\cR_{\rm h}$;
\item[(d)] $M\in\cR_{\rm h}\Rightarrow \udim M=m\dz$ for some $m\in\bbN$.
\end{enumerate}
\end{lemma}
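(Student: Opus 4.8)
The plan is to reduce everything to the numerical function $\partial M := \langle\delta,\udim M\rangle$, usually called the \emph{defect} of $M$, and to use the known combinatorial description of the indecomposable $kQ$-modules over a tame quiver together with the Auslander--Reiten structure of $\mod kQ$. First I would record the elementary facts about $\delta$: since $\delta$ is the minimal imaginary positive root, $(\delta,\delta)=0$, hence $\langle\delta,\nu\rangle=-\langle\nu,\delta\rangle$ for all $\nu\in\bbZ I$; moreover $\delta$ is a fixed point of every $s_i$, so $\partial$ is invariant under the Coxeter transformation $c$ (the product of the $s_i$ in admissible order). This invariance is the key: the Coxeter transformation acts on dimension vectors the way the Auslander--Reiten translate $\tau$ acts on the Grothendieck classes of non-projective modules, so $\partial(\tau M)=\partial M$ whenever $\tau M$ is defined, and likewise for $\tau^{-1}$.

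Next I would go through the four components in turn. For $M\in\cP$ preprojective: by definition $M=\tau^{-m}P$ for some indecomposable projective $P$ and some $m\geq 0$, so $\partial M=\partial P$, and one checks directly on the finitely many indecomposable projectives that $\partial P<0$ (for each extended Dynkin type this is a finite verification, or one invokes that $\sum_i \dim P_i$ grows under $\tau^{-1}$ forcing the defect to be the negative "slope" that drives this growth). Dually $M\in\cI$ preinjective gives $\partial M=\partial(\text{some injective})>0$. For $M$ regular, $\tau$ acts periodically on the (finitely many, up to the period) indecomposables in each tube, and a regular indecomposable has a well-defined $\tau$-period, which forces its dimension vector to be bounded along its $\tau$-orbit; combined with $\partial$-invariance and the fact that a regular module has a composition series whose factors are themselves regular, one gets $\partial M=0$. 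For homogeneous tubes the indecomposables are the $\tau$-periodic ones of period $1$, i.e. $\tau M\cong M$; the only dimension vectors fixed by $c$ are the multiples of $\delta$, so $\udim M=m\delta$, giving (d). This establishes the "$\Rightarrow$" direction of each of (a)--(c) and all of (d).

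Finally, the converses in (a)--(c) come for free: the three conditions $\partial M<0$, $\partial M>0$, $\partial M=0$ are mutually exclusive and exhaustive over all indecomposables, and we have just shown $\cP$, $\cI$, $\cR_{\mathrm{nh}}\cup\cR_{\mathrm{h}}$ map into the three respective cases; since these three families of indecomposables are also mutually exclusive and exhaustive by the decomposition $\ind\mod kQ=\ind\cP\sqcup\ind\cR_{\mathrm{nh}}\sqcup\ind\cR_{\mathrm{h}}\sqcup\ind\cI$ recalled just above, a trichotomy-matching argument gives the reverse implications. The one genuinely non-formal ingredient — the main obstacle — is pinning down the \emph{signs}: that projectives have strictly negative defect and injectives strictly positive. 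This is where one must actually use the affine (rather than merely Euclidean quadratic form) hypothesis, either by the finite case-check over the extended Dynkin diagrams or by the growth argument for $\dim\tau^{-m}P$; everything else is bookkeeping with the Euler form and the $c$-invariance of $\delta$. Since the statement is quoted from \cite{Simson_Skowronski_Elements_of_the_representation_theory_of_associative_algebras}, in the paper itself I would simply cite it and omit these details.
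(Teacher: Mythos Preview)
The paper gives no proof of this lemma at all: it is stated with the citation \cite{Simson_Skowronski_Elements_of_the_representation_theory_of_associative_algebras} in the header and is used as a black box, exactly as you anticipate in your final sentence. Your sketch is the standard defect argument and is essentially correct; the one place I would tighten is the regular case, where ``bounded along its $\tau$-orbit'' by itself does not force $\partial M=0$ --- the clean way is to note that if $M$ has $\tau$-period $r$ then $\sum_{j=0}^{r-1}\udim\tau^jM$ is $c$-fixed, hence a multiple of $\delta$, hence has defect $0$, and then $\partial$-invariance gives $r\,\partial M=0$. But since the paper omits the proof entirely, there is nothing further to compare.
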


\subsection{Preprojective and preinjective components}\label{PI components}

A doubly infinite sequence
$$\underline{i}=(\dots,i_{-1},i_0|i_1,i_2,\dots)$$
in $I$ is said to be reduced if for any integers $r\leq t$, $s_{i_r}s_{i_{r+1}}\dots s_{i_t}$ is reduced in the Weyl group.

We say a finite sequence ($i_m,\dots,i_1$) is an admissible sink (resp. source) sequence of $Q$ if $i_m$ is a sink (resp. source) of $Q$, and  $i_t$ is a sink (resp. source) of $\sz_{i_{t+1}}\dots\sz_{i_m}Q$ for any $1\leq t<m$.
We say that $\underline{i}$ is adapted to $Q$ if for any $t\leq0$, ($i_0,i_{-1},\dots,i_t$) is an admissible sink sequence and for any $t>0$, ($i_1,i_2,\dots,i_t$) is an admissible source sequence. We say $\underline{i}$ is admissible if $\underline{i}$ is reduced and adapted to $Q$.

Such an admissible $\underline{i}$ always exists for any acyclic quiver $Q$. For example, if we write $I=\{1,2,\dots,n\}$ such that there are no arrows from $i$ to $j$ for $i>j$, then let $i_t=t$ for $1\leq t\leq n$, and $i_{t+n}=i_t$. One can check that this $\underline{i}$ is admissible.

For an admissible $\underline{i}$, set
$$\bz_t=\left\{
\begin{aligned}
&s_{i_0}s_{i_{-1}}\dots s_{i_{t+1}}(i_t),   & \mathrm{if}\, t\leq 0,\\
&s_{i_1}s_{i_{2}}\dots s_{i_{t-1}}(i_t),    & \mathrm{if}\, t>0.
\end{aligned}
\right.
$$
in $\bbN I$. Here $\bz_0=i_0,\bz_1=i_1$.
We order these $\bz_t,t\in\bbZ$ by
$$(\bz_0<\bz_{-1}<\bz_{-2}<\cdots)<(\cdots<\bz_3<\bz_2<\bz_1).$$
For each $t\leq 0$, there is an indecomposable preprojective module $$M(\bz_t)=\sz_{i_0}^-\sz_{i_{-1}}^-\dots \sz_{i_{t+1}}^-(S_{i_t})$$
 with dimension vector $\bz_t$. For each $t>0$, there is an indecomposable preinjective module $$M(\bz_t)=\sz_{i_1}^+\sz_{i_{2}}^+\dots \sz_{i_{t-1}}^+(S_{i_t})$$
with dimension vector $\bz_t$. All indecomposable preprojectve (resp. preinjective) modules (up to isomorphisms) are obtained in this way (see \cite{Bernstein_Gel'fand_Ponomarev_Coxeter_functors_and_Gabriel's_theorem} and \cite{Ringel_PBW-bases_of_quantum_groups}).
So
$$\ind\cP=\{[M(\bz_t)]|t\leq 0\}, \ind\cI=\{[M(\bz_t)]|t>0\}.$$
In addition, by Section \ref{BGP relection functors},
$$\lan M(\bz_t)^{\oplus m}\ran=T_{i_0}^{-1}T_{i_{-1}}^{-1}\dots T_{i_{t+1}}^{-1}(E_{i_t}^{(m)}),t\leq 0,$$
$$\lan M(\bz_t)^{\oplus m}\ran=T_{i_1}T_{i_{2}}\dots T_{i_{t-1}}(E_{i_t}^{(m)}),t>0,$$
under the identification $\Phi^+$.

Set $\cG_-$  to be the set of all finitely supported functions $\bbZ_{\leq 0}\ra\bbN$ and $\cG_+$ to be the set of all finitely supported functions $\bbZ_{>0}\ra\bbN$.
Given $\bfc_-\in\cG_-$ and $\bfc_+\in\cG_+$, define
$$M(\bfc_-)=\oplus_{t\leq 0}M(\bz_t)^{\oplus \bfc_-(t)}\in\cP,$$
$$M(\bfc_+)=\oplus_{t>0}M(\bz_t)^{\oplus \bfc_+(t)}\in\cI.$$

Define the lexicographic orderings on $\cG_-$ and $\cG_+$.
Given $\bfc_-,\bfd_-\in\cG_-$,
we say $\bfc_->_L\bfd_-$ if there exists $t\leq 0$ such that $\bfc_-(t)>\bfd_-(t)$ and $\bfc_-(l)=\bfd_-(l)$ for all $t< l\leq0$.
Given $\bfc_+,\bfd_+\in\cG_+$,
we say $\bfc_+>_L\bfd_+$ if there exists $t>0$ such that $\bfc_+(t)>\bfd_+(t)$ and $\bfc_-(l)=\bfd_-(l)$ for all $0<l<t$.

By Lemma \ref{M^(m)}, $\cH^*(\cP)$ has a natural basis
$$\{\lan M(\bfc_-)\ran=\lan M(\bz_0)\ran^{(\bfc_-(0))}*\lan M(\bz_{-1})\ran^{(\bfc_-(-1))}*\cdots|\bfc_-\in\cG_-\},$$
and $\cH^*(\cI)$ has a natural basis
$$\{\lan M(\bfc_+)\ran=\cdots*\lan M(\bz_2)\ran^{(\bfc_+(2))}*\lan M(\bz_{1})\ran^{(\bfc_+(1))}|\bfc_+\in\cG_+\}.$$

\begin{lemma}\label{preproj}
For $0\geq t'>t$,
$$\lan M(\bz_t)\ran*\lan M(\bz_{t'})\ran=v^{(\bz_{t},\bz_{t'})}\lan M(\bz_{t'})\ran*\lan M(\bz_t)\ran+\sum_{\bfc_-\in\cG_-}a^{\bfc_-}_{t,t'}\lan M(\bfc_-)\ran$$
in $\cH^*(\cP)$, where the sum is taken over all $\bfc_-$ such that $\bfc_-(j)=0$ if $j\geq t'$ or $j\leq t$, and  $a^{\bfc_-}_{t,t'}\in\bbQ(v)$.
\end{lemma}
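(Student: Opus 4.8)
The statement compares the product $\lan M(\bz_t)\ran * \lan M(\bz_{t'})\ran$ with the ``reversed'' product weighted by a power of $v$, inside the preprojective Hall subalgebra $\cH^*(\cP)$. The plan is to work entirely in $\cH^*(kQ)$ (or rather in each $\cH^*(\bbF_q Q)$ and then pass to the generic situation) and exploit the fact that, because $t' > t$ and both indices are $\leq 0$, the modules $M(\bz_{t'})$ and $M(\bz_t)$ sit in a very restrictive homological position within the preprojective component. Concretely, the preprojective component is directed: there are no nonzero maps $M(\bz_{t'}) \to M(\bz_t)$ when $t' > t$ (later modules in the preprojective order map only to earlier ones, not conversely), so $\Hom_{kQ}(M(\bz_{t'}), M(\bz_t)) = 0$. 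First I would record this vanishing together with the Euler-form identity $\lan \udim M(\bz_t), \udim M(\bz_{t'})\ran = -\dim \Ext^1_{kQ}(M(\bz_t), M(\bz_{t'}))$ (using that $\Hom$ in that direction may be nonzero, so I actually want the symmetric statement), and more importantly the fact that any $L$ appearing with nonzero Hall number $g^L_{M(\bz_t)M(\bz_{t'})}$ or $g^L_{M(\bz_{t'})M(\bz_t)}$ is itself preprojective with $\udim L = \bz_t + \bz_{t'}$, hence decomposes as a sum of indecomposable preprojectives $M(\bz_j)$.

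**The commutation computation.** With $A = M(\bz_t)$ and $B = M(\bz_{t'})$, the twisted Hall product gives
$$
\lan A\ran * \lan B\ran = v^{\lan \udim A,\udim B\ran - \dim A + \dim\End A - \dim B + \dim \End B}\sum_{[L]} g^L_{AB}\, v^{\star}\, u_{[L]},
$$
and symmetrically for $\lan B\ran * \lan A\ran$; here I would rather use the normalized generators $\lan - \ran$ throughout so the bookkeeping is cleaner, following Ringel's conventions recalled before Lemma~\ref{M^(m)}. The key point is Green's / Ringel's identity relating $g^L_{AB}$ and $g^L_{BA}$: when $\Hom_{kQ}(B,A) = 0$, the ``diagonal'' term $L = A \oplus B$ occurs in $\lan A\ran * \lan B\ran$ with coefficient exactly $v^{(\udim A,\udim B)}$ times its coefficient in $\lan B\ran * \lan A\ran$ — this is precisely the mechanism behind Lemma~\ref{M^(m)} and the standard computation $\lan A\ran * \lan B\ran - v^{(\udim A,\udim B)}\lan B\ran * \lan A\ran = (\text{terms supported on non-split }L)$. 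So after subtracting $v^{(\bz_t,\bz_{t'})}\lan M(\bz_{t'})\ran * \lan M(\bz_t)\ran$, every surviving term $\lan L\ran$ corresponds to a \emph{proper} (non-split) preprojective extension of $M(\bz_t)$ by $M(\bz_{t'})$, or of $M(\bz_{t'})$ by $M(\bz_t)$.

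**Controlling the support of the error term.** What remains is to show such an $L$, written as $M(\bfc_-)$, satisfies $\bfc_-(j) = 0$ for $j \geq t'$ and $j \leq t$. This is where I expect the main obstacle to lie, and it is an honest representation-theoretic claim about the preprojective component rather than a Hall-algebra formality. The argument I would pursue: if $L$ is a non-split extension involving $M(\bz_t)$ and $M(\bz_{t'})$ with $t < t'$, then $L$ ``lies strictly between'' them in the AR-quiver in the following sense — using the directedness of the preprojective component and the fact that $\udim M(\bz_j)$ for $j$ outside the interval $(t, t']$ cannot appear as a summand of a dimension vector of a module built purely from the slice of the component between $M(\bz_t)$ and $M(\bz_{t'})$. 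More precisely, one shows: any indecomposable summand $M(\bz_j)$ of such an $L$ must admit a nonzero map from $M(\bz_{t'})$ and a nonzero map to $M(\bz_t)$ (or the Ext-dual statements), which forces $t < j < t'$; since $j \le 0$ always and the order on preprojectives is $\bz_0 < \bz_{-1} < \cdots$, this is exactly $\bfc_-(j) = 0$ for $j \ge t'$ or $j \le t$. I would establish this by induction along the BGP-reflection functors $\sz_i^-$: applying a sequence of reflections carries the pair $(M(\bz_t), M(\bz_{t'}))$ to a pair of the form $(S_{i_t}, M(\bz_{t'}'))$ or ultimately to modules supported near the simples, where the extension structure is visible directly, and the functors are exact and preserve the relevant Hom/Ext vanishing (cf. Section~\ref{BGP relection functors}); the lexicographic order $>_L$ on $\cG_-$ is compatible with these reflections, which is what lets the inductive bookkeeping close. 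Finally, $a^{\bfc_-}_{t,t'} \in \bbQ(v)$ is automatic since all Hall numbers $g^L_{AB}$ are polynomial in $q$ and the normalizing powers of $v$ are integers.
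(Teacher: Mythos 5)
The paper's own proof of this lemma is a single citation (``See Ringel for the finite-type case, and the same proof works here''), so what you are really being asked to do is reconstruct Ringel's directedness argument and explain why it transports to the preprojective component of a tame quiver. Your overall strategy is exactly right for that, but there are two concrete problems in the execution.

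\textbf{The direction of the Hom vanishing is reversed.} You assert $\Hom_{kQ}(M(\bz_{t'}),M(\bz_t))=0$ for $t'>t$, but in the paper's ordering $(\bz_0<\bz_{-1}<\cdots)$ the module $M(\bz_0)=S_{i_0}$ is the simple projective, and $M(\bz_t)$ moves \emph{away} from the projectives as $t$ decreases; maps go from small $\bz$ to large $\bz$, so $\Hom(M(\bz_{t'}),M(\bz_t))$ with $t'>t$ is generically \emph{nonzero} (e.g.\ $\Hom(S_{i_0},M(\bz_{-1}))\neq 0$ already for the Kronecker quiver). What does vanish is $\Hom(M(\bz_t),M(\bz_{t'}))$ together with $\Ext^1(M(\bz_{t'}),M(\bz_t))$; this is exactly what Lemma~\ref{M^(m)} needs to conclude that $\lan M(\bz_{t'})\ran * \lan M(\bz_t)\ran = \lan M(\bz_t)\oplus M(\bz_{t'})\ran$, i.e.\ the ``PBW-ordered'' product, not the one you compute, is the clean one. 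The final power $v^{(\bz_t,\bz_{t'})}$ is symmetric and comes out the same either way, which masks the sign error, but your stated justification (``when $\Hom(B,A)=0$'') is appealing to the wrong hypothesis.

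\textbf{The support claim is not proved.} You flag the claim that non-split middle terms only have summands $M(\bz_j)$ with $t<j<t'$ as the ``main obstacle'', but your argument for it is not correct as stated: it is \emph{not} true that every indecomposable summand of $L$ receives a nonzero map from $M(\bz_{t'})$ \emph{and} emits a nonzero map to $M(\bz_t)$. The actual argument is a case analysis: for each summand $L_m$ of $L$ in a non-split SES $0\to B\to L\to A\to 0$, the composite $B\hookrightarrow L\twoheadrightarrow L_m$ is either nonzero (giving $j\le t'$) or zero, in which case the projection factors through $A$ and $\Hom(A,L_m)\neq 0$ forces $j\le t$; dually, $L_m\hookrightarrow L\to A$ is either nonzero (giving $j\ge t$) or zero, in which case $L_m\subset B$ and $j\ge t'$. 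These combine to $t\le j\le t'$, and one then rules out $j=t$ and $j=t'$ separately by showing each would produce a section or retraction and hence split the sequence (using $\End(M(\bz_j))=k$ for modules in the directed component). This is the content Ringel supplies and that the paper is invoking; your appeal to an ``induction along BGP reflection functors'' is not developed enough to replace it, and in any case the direct argument above is the standard one and carries over verbatim to $\cH^*(\cP)$ because the preprojective component of a tame quiver is directed and $\cP$ is extension-closed.

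In short: right road map, but the orientation of the directedness statements is wrong and the heart of the lemma (the bound on the error term's support) is asserted rather than proved.
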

\begin{proof}
See \cite{Ringel_The_Hall_algebra_approach_to_quantum_groups} for the finite-type case, and the same proof works here.
\end{proof}

\begin{corollary}\label{preprojdecomp}
For $\bfc_-,\bfb_-\in\cG_-$,
$$\lan M(\bfc_-)\ran*\lan M(\bfb_-)\ran=\sum_{\bfd_-\in\cG_-,\bfd_-\geq_L\bfc_-}a^{\bfd_-}_{\bfc_-,\bfb_-}\lan M(\bfd_-)\ran,$$
where $a^{\bfd_-}_{\bfc_-,\bfb_-}\in\bbQ(v)$.
Moreover, if $\bfb_-\neq 0$, then $\bfd_->_L\bfc_-$.
\end{corollary}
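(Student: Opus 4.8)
This is a ``straightening law'': it says the basis $\{\lan M(\bfc_-)\ran\}$ of $\cH^*(\cP)$ multiplies unitriangularly for the lexicographic order $\geq_L$. The plan is a bubble-sort argument, exactly as in the finite-type case of \cite{Ringel_The_Hall_algebra_approach_to_quantum_groups}, whose elementary move is Lemma \ref{preproj}: an adjacent pair of generators $\lan M(\bz_t)\ran,\lan M(\bz_{t'})\ran$ in the wrong order straightens, at the cost of a power of $v$, into the correctly ordered product plus a correction supported on the indices lying \emph{strictly between} $t$ and $t'$. The case $\bfb_-=0$ is trivial, so assume $\bfb_-\neq0$. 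I would first reduce to right multiplication by a single generator. With $t_1=\min\supp\bfb_-$, the rightmost divided power in $\lan M(\bfb_-)\ran$ is $\lan M(\bz_{t_1})\ran^{(\bfb_-(t_1))}$, and since $\lan M(\bz_{t_1})\ran^{(m)}*\lan M(\bz_{t_1})\ran$ is a nonzero scalar multiple of $\lan M(\bz_{t_1})\ran^{(m+1)}$, we get $\lan M(\bfb_-)\ran=(\text{scalar})\,\lan M(\bfb'_-)\ran*\lan M(\bz_{t_1})\ran$ with $\udim M(\bfb'_-)<\udim M(\bfb_-)$. Induction on the total dimension vector $\udim M(\bfc_-)+\udim M(\bfb_-)$ then handles $\lan M(\bfc_-)\ran*\lan M(\bfb'_-)\ran=\sum_{\bfd_-\geq_L\bfc_-}(\text{coeff})\lan M(\bfd_-)\ran$, so it remains to prove that for every $\bfd_-$ and every $t\leq0$,
\[
\lan M(\bfd_-)\ran*\lan M(\bz_t)\ran=\sum_{\bfd'_-}(\text{coeff})\lan M(\bfd'_-)\ran
\]
with each $\bfd'_->_L\bfd_-$; combining then gives $\bfd'_->_L\bfd_-\geq_L\bfc_-$, i.e. the expansion of $\lan M(\bfc_-)\ran*\lan M(\bfb_-)\ran$ is supported on $\bfd_->_L\bfc_-$, which is both assertions at once.

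For this single-generator step: if $\bfd_-=0$, or $t\leq s$ for all $s\in\supp\bfd_-$, the identity is immediate from the definition of $\lan M(-)\ran$ (the new factor is appended in its slot, or, if $t\in\supp\bfd_-$, merged with $\lan M(\bz_t)\ran^{(\bfd_-(t))}$, producing a scalar multiple of $\lan M(\bfd'_-)\ran$, where $\bfd'_-$ is $\bfd_-$ with $t$-coordinate raised by $1$). Otherwise one pushes $\lan M(\bz_t)\ran$ leftwards, one step at a time, past the factors of index $<t$, using Lemma \ref{preproj}: the ``de-inverted'' branch transports $\lan M(\bz_t)\ran$ into its slot and yields $\lan M(\bfd'_-)\ran$ up to a $v$-power, while the other branches insert correction terms $\lan M(\bfa_-)\ran$ whose support sits in a strictly shorter interval of indices, all $<t$; reattaching the untouched factors (those of index $>t$, which are never disturbed) turns these into products handled recursively. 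Along the way every $\bfd''_-$ produced agrees with $\bfd_-$ on all indices $>t$ and has $t$-coordinate equal to $\bfd_-(t)+1$, so $\bfd''_->_L\bfd_-$ with witness index $t$; this is the bookkeeping that makes the lexicographic comparison legible, and it survives every step because the corrections only ever involve indices strictly below $t$.

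The main obstacle is termination of this recursion. Since $\cH^*(\cP)$ is $\bbN I$-graded and $\lan M(\bfc_-)\ran*\lan M(\bfb_-)\ran$ lies in a single graded piece, the corrections delivered by Lemma \ref{preproj} carry the \emph{same} dimension vector as the terms they correct, so induction on dimension vectors cannot close the single-generator step; one must instead control the corrections by a secondary complexity measure --- for instance the number of inversions of the underlying word of generators, together with the diameter of the index interval actually involved --- and check that each application of Lemma \ref{preproj} strictly decreases this measure on the de-inverted term while confining the corrections to a strictly smaller sub-interval and leaving the coordinates above $t$ untouched. (That this terminates at all uses that, for a fixed dimension vector, there are only finitely many $\bfd_-$ with $\udim M(\bfd_-)$ equal to it --- a consequence of the defect of indecomposable preprojectives being bounded away from $0$.) This is precisely the delicate point in Ringel's finite-type argument, and it transfers without change because $\cP$ is a directed component: there is no nonzero homomorphism or extension from a ``later'' indecomposable preprojective to an ``earlier'' one, which is exactly the structural input behind Lemma \ref{preproj} and the ``strictly between'' support of its error terms.
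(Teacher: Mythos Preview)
Your strategy is the same as the paper's (which simply cites Beck--Nakajima's proof of their (3.34), precisely this Levendorskii--Soibelman bubble sort via Lemma~\ref{preproj}). But your bookkeeping claim that ``every $\bfd''_-$ produced \ldots\ has $t$-coordinate equal to $\bfd_-(t)+1$'' is false on the correction branches, and it is the only reason you give for $\bfd''_->_L\bfd_-$ there. When Lemma~\ref{preproj} swaps $\lan M(\bz_{s})\ran$ past $\lan M(\bz_t)\ran$ with $s<t$, the correction $\lan M(\bfa_-)\ran$ has $\supp\bfa_-\subset(s,t)$: the generator $\lan M(\bz_t)\ran$ has been \emph{absorbed} into $\bfa_-$, not transported further left. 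So on a correction branch the $t$-slot never gains a factor; after reattaching the untouched left block of index $\geq t$ the resulting $\bfd''_-$ agrees with $\bfd_-$ on all indices $\geq t$, and the witness for $\bfd''_->_L\bfd_-$ must therefore lie strictly below $t$, which you have not established.

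The fix is to carry the recursion with a statement that also tracks supports. On a correction branch one is left straightening $\lan M(\bfd_--\bfe_{s_0})\ran*\lan M(\bfa_-)\ran$ where $s_0=\min\supp\bfd_-$ and $\supp\bfa_-\subset(s_0,t)$. Since $\max\supp\bfa_-<t$, the recursion (now genuinely on a smaller parameter, e.g.\ $\max\supp$ of the right-hand factor) yields PBW terms $\lan M(\bff_-)\ran$ with $\bff_->_L\bfd_--\bfe_{s_0}$; and repeated use of Lemma~\ref{preproj} confines the entire computation below index $t$ to the interval $[s_0,t-1]$, so $\bff_-$ and $\bfd_-$ both vanish below $s_0$. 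A brief case split on the first index where $\bff_-$ and $\bfd_--\bfe_{s_0}$ differ (either it lies above $s_0$, where $\bfd_--\bfe_{s_0}$ and $\bfd_-$ already agree, or it equals $s_0$, forcing $\bff_-(s_0)\geq\bfd_-(s_0)$ with equality excluded by comparing dimension vectors) then gives $\bfd''_->_L\bfd_-$ as required.
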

\begin{proof}
The proof is the similar to the proof of (3.34) in \cite{Beck_Nakajima_Crystal_bases_and_two-sided_cells_of_quantum_affine_algebras}.
\end{proof}

Dually, we have same conclusions for the preinjective part:

\begin{lemma}\label{preinj}
For $t'>t\geq 0$,
$$\lan M(\bz_t)\ran*\lan M(\bz_{t'})\ran=v^{(\bz_{t},\bz_{t'})}\lan M(\bz_{t'})\ran*\lan M(\bz_t)\ran+\sum_{\bfc_+\in\cG_+}a^{\bfc_+}_{t,t'}\lan M(\bfc_+)\ran$$
in $\cH^*(\cP)$, where the sum is taken over all $\bfc_+$ such that $\bfc_+(j)=0$ if $j\geq t'$ or $j\leq t$, $a^{\bfc_+}_{t,t'}\in\bbQ(v)$.
\end{lemma}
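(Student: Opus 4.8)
The plan is to deduce this from Lemma~\ref{preproj} via the duality $D=\Hom_k(-,k)\colon\mod kQ\to\mod kQ^{\op}$, which interchanges the preinjective component of $Q$ with the preprojective component of $Q^{\op}$. First I would record three elementary facts: $D$ preserves dimension vectors and $\dime\End_{kQ^{\op}}(DM)=\dime\End_{kQ}(M)$, whence $D(\lan M\ran)=\lan DM\ran$; $D$ carries a submodule $W\subseteq L$ with $W\cong N$, $L/W\cong M$ to a submodule of $DL$ with sub $\cong N$ and quotient $\cong M$, so the Hall numbers satisfy $g^{L}_{MN}$ (for $Q$) $=g^{DL}_{MN}$ (for $Q^{\op}$); and $\lan\nu,\nu'\ran_{Q}=\lan\nu',\nu\ran_{Q^{\op}}$ on dimension vectors. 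Combining these, a direct comparison of structure constants shows that
\[
D\colon\cH^{*}(kQ)\lra\cH^{*}(kQ^{\op}),\qquad u_{[M]}\longmapsto u_{[DM]}
\]
is an algebra \emph{anti}-isomorphism, and it restricts to an anti-isomorphism of the generic composition algebras and of each component Hall subalgebra. (We shall also use that the symmetric form $(-,-)$, being the symmetrized Euler form, is independent of the orientation of $Q$.)

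Next I would match the indexings. If $\underline i=(\dots,i_{-1},i_0\mid i_1,i_2,\dots)$ is admissible for $Q$ in the sense of Section~\ref{PI components}, then the reversed sequence $\underline{i}^{\mathrm{rev}}=(\dots,i_2,i_1\mid i_0,i_{-1},\dots)$ is admissible for $Q^{\op}$, since a sink of $Q$ is a source of $Q^{\op}$ and conversely; and since duality intertwines the BGP functors, $D\sz^{+}_{i}=\sz^{-}_{i}D$, one gets for every $t>0$
\[
DM(\bz_{t})=\sz^{-}_{i_1}\sz^{-}_{i_2}\cdots\sz^{-}_{i_{t-1}}(S_{i_t})=M'(\bz'_{1-t}),
\]
where the $M'(\bz'_{s})$, $s\leq 0$, are the indecomposable preprojective $kQ^{\op}$-modules for $\underline{i}^{\mathrm{rev}}$. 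Thus $t\mapsto 1-t$ is an order-reversing bijection from the preinjective labels of $Q$ onto the preprojective labels of $Q^{\op}$; it sends $M(\bfc_{+})$ (for $\bfc_{+}\in\cG_{+}$) to the preprojective $kQ^{\op}$-module attached to the function $\bfd(s)=\bfc_{+}(1-s)$, and it carries the support condition ``$\bfc_{+}(j)=0$ for $j\geq t'$ or $j\leq t$'' to ``$\bfd(j)=0$ for $j\geq 1-t$ or $j\leq 1-t'$''.

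With these dictionaries the proof is formal. Apply $D$ to the left-hand side of the asserted identity: since $D$ reverses products, $D\bigl(\lan M(\bz_t)\ran*\lan M(\bz_{t'})\ran\bigr)=\lan M'(\bz'_{1-t'})\ran*\lan M'(\bz'_{1-t})\ran$ in $\cH^{*}(kQ^{\op})$, with $1-t'<1-t\leq 0$. Lemma~\ref{preproj} for $Q^{\op}$ (with its $t,t'$ equal to $1-t',1-t$) expands this as $v^{(\bz'_{1-t'},\bz'_{1-t})}\lan M'(\bz'_{1-t})\ran*\lan M'(\bz'_{1-t'})\ran$ plus a $\bbQ(v)$-combination of the $\lan M'(\bfd)\ran$ with $\bfd$ supported on $\{1-t'+1,\dots,1-t-1\}$. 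Applying $D$ once more ($Q^{\op}\to Q$, via $(Q^{\op})^{\op}=Q$) transports this identity back into $\cH^{*}(kQ)$, where all terms lie in the subalgebra $\cH^{*}(\cI)$; the anti-isomorphism swaps the two factors of the leading term, the $v$-exponent is unchanged because $(\bz'_{1-t'},\bz'_{1-t})=(\bz_{t'},\bz_{t})=(\bz_{t},\bz_{t'})$ by orientation-independence of $(-,-)$, and by the dictionary the error term becomes $\sum_{\bfc_{+}}a^{\bfc_{+}}_{t,t'}\lan M(\bfc_{+})\ran$ with $\bfc_{+}(j)=0$ for $j\geq t'$ or $j\leq t$ and $a^{\bfc_{+}}_{t,t'}\in\bbQ(v)$. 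This is exactly the asserted formula.

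I do not expect a serious obstacle here: the only real work is in the first paragraph, namely checking that $D$ is an anti-isomorphism of the \emph{twisted} Hall algebras compatible with the normalization $\lan M\ran$ and with the component decomposition, together with the (standard) compatibility $D\sz^{+}_{i}=\sz^{-}_{i}D$ of duality with BGP reflection functors; everything else is index bookkeeping. Alternatively one may bypass duality entirely and repeat Ringel's proof of the finite-type version of Lemma~\ref{preproj} (\cite{Ringel_The_Hall_algebra_approach_to_quantum_groups}) for the preinjective component, which is again a directed standard component of the Auslander--Reiten quiver carrying the same vanishing pattern of $\Hom$ and $\Ext^{1}$ between indecomposables; this is presumably what ``dually'' is meant to indicate.
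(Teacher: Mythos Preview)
The paper offers no proof beyond the word ``Dually'' preceding the statement; your argument via the standard duality $D=\Hom_k(-,k)$ is a correct and complete way to make this precise, and your closing alternative (rerunning Ringel's argument directly in the preinjective component, which is again a directed standard component) is probably what the authors actually had in mind.

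One small imprecision worth fixing: your description of how $D$ acts on submodules is garbled. A submodule $W\subseteq L$ with $W\cong N$ and $L/W\cong M$ dualizes to the submodule $D(L/W)\cong DM$ of $DL$ with quotient $DW\cong DN$, so the correct identity is $g^{L}_{MN}$ (over $Q$) $=g^{DL}_{DN,DM}$ (over $Q^{\op}$), not $g^{DL}_{MN}$. This is of course exactly the relation needed for $u_{[M]}\mapsto u_{[DM]}$ to be an \emph{anti}-isomorphism of twisted Hall algebras (together with $\lan\nu,\nu'\ran_{Q}=\lan\nu',\nu\ran_{Q^{\op}}$), so your conclusion is unaffected.
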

\begin{corollary}\label{preinjdecomp}
For $\bfc_+,\bfb_+\in\cG_+$,
$$\lan M(\bfc_+)\ran*\lan M(\bfb_+)\ran=\sum_{\bfd_+\in\cG_+,\bfd_+\geq_L\bfb_+}a^{\bfd_+}_{\bfc_+,\bfb_+}\lan M(\bfd_+)\ran,$$
where $a^{\bfd_+}_{\bfc_+,\bfb_+}\in\bbQ(v)$.
Moreover, if $\bfc_+\neq 0$, then $\bfd_+>_L\bfb_+$.
\end{corollary}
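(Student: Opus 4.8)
The statement to prove is Corollary~\ref{preinjdecomp}, which is the preinjective analogue of Corollary~\ref{preprojdecomp}. Let me think about how to prove it.

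The plan is to mimic the proof of Corollary~\ref{preprojdecomp}, which itself follows the argument of (3.34) in Beck-Nakajima. The key input is Lemma~\ref{preinj}, which gives the straightening relation for two indecomposable preinjectives: for $t' > t \geq 0$,
$$\lan M(\bz_t)\ran * \lan M(\bz_{t'})\ran = v^{(\bz_t,\bz_{t'})}\lan M(\bz_{t'})\ran * \lan M(\bz_t)\ran + \sum_{\bfc_+} a^{\bfc_+}_{t,t'}\lan M(\bfc_+)\ran,$$
where the error terms are supported strictly between $t$ and $t'$. Recall the lexicographic order $>_L$ on $\cG_+$ reads from the left, i.e. from small indices (those near $0$) upward; so ``larger in $>_L$'' means more weight concentrated near the bottom (near index $1$).

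First I would reduce to the case where $\bfb_+ = \lan M(\bz_{t'})\ran$ is a single indecomposable, by induction on the total dimension $\sum_{s>0}\bfb_+(s)$; the general case then follows by repeatedly multiplying on the right by indecomposable factors of $M(\bfb_+)$ and collecting terms, using that $>_L$ is compatible with these steps. So the core is to show that $\lan M(\bfc_+)\ran * \lan M(\bz_{t'})\ran$ is a combination of $\lan M(\bfd_+)\ran$ with $\bfd_+ \geq_L \bfb_+ = $ (the function supported at $t'$ with value $1$), and strictly $>_L$ when $\bfc_+ \neq 0$. Write $M(\bfc_+)$ as an ordered product $\cdots * \lan M(\bz_{t_2})\ran^{(c_2)} * \lan M(\bz_{t_1})\ran^{(c_1)}$ with $t_1 < t_2 < \cdots$ (the fixed order in which $\cH^*(\cI)$'s PBW-type basis is written). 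To multiply on the right by $\lan M(\bz_{t'})\ran$, I would commute $\lan M(\bz_{t'})\ran$ leftward past each factor using Lemma~\ref{preinj}: each commutation past $\lan M(\bz_{t_j})\ran$ with $t_j < t'$ produces the ``leading'' term (with a $v$-power coefficient) plus error terms $\lan M(\bfc_+')\ran$ supported strictly between $t_j$ and $t'$ — hence involving only indices $> t_j$, so when reassembled into the basis these give functions that agree with $\bfb_+$ on $\bbZ_{>0}$ up through... actually the key point is that every resulting basis element $\bfd_+$ has $\bfd_+(s) = 0$ for $s < t_1$ if $\bfc_+(s)=0$ for $s<t_1$...

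Hmm, let me reconsider the direction. Since $\bfb_+$ is supported only at $t'$, and $>_L$ reads from index $1$ upward, $\bfd_+ \geq_L \bfb_+$ essentially says the first place where they differ, $\bfd_+$ is bigger — but $\bfb_+$ is $0$ everywhere below $t'$, so this is automatic unless $\bfd_+$ is also $0$ below $t'$ and then we compare at $t'$. The product $\lan M(\bfc_+)\ran * \lan M(\bz_{t'})\ran$: if $\bfc_+$ is supported at indices all $\geq t'$ — wait, that need not hold. Let me just say: I would track, through the commutation process, that each basis element $\lan M(\bfd_+)\ran$ appearing satisfies $\bfd_+ \geq_L \bfb_+$, by checking that either $\bfd_+$ is supported at indices $> $ the support of $\bfc_+ \cup \{t'\}$ in a way that makes it $>_L$, or the leading term which gives $\bfd_+$ with $\bfd_+(t') = 1$ and $\bfd_+ = \bfc_+$ elsewhere on the relevant range.

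\textbf{Proof plan.} I will prove Corollary~\ref{preinjdecomp} by the argument dual to that of Corollary~\ref{preprojdecomp} (equivalently, to (3.34) of~\cite{Beck_Nakajima_Crystal_bases_and_two-sided_cells_of_quantum_affine_algebras}), with the roles of the two ends of the sequence $\underline{i}$ interchanged. By induction on $\sum_{s>0}\bfb_+(s)$ it suffices to treat $\bfb_+$ with $M(\bfb_+)=M(\bz_{t'})$ indecomposable, since writing $M(\bfb_+)$ as an ordered product of indecomposables and applying the indecomposable case repeatedly, then collecting with respect to $>_L$, yields the general statement (here one uses that $>_L$ is preserved under the inductive step, a routine bookkeeping check). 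So fix $t' > 0$ and expand $\lan M(\bfc_+)\ran$ in the fixed order $\lan M(\bfc_+)\ran = \cdots * \lan M(\bz_{t_2})\ran^{(c_2)} * \lan M(\bz_{t_1})\ran^{(c_1)}$ with $0 < t_1 < t_2 < \cdots$, $c_j = \bfc_+(t_j) > 0$.

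\textbf{Key step.} Using Lemma~\ref{preinj}, commute $\lan M(\bz_{t'})\ran$ to the left through this product, one indecomposable factor at a time. Commuting past $\lan M(\bz_{t_j})\ran$ (with $t_j<t'$) produces a scalar multiple $v^{(\bz_{t_j},\bz_{t'})}$ of the straightened term plus a sum of $\lan M(\bfc_+')\ran$ with $\bfc_+'$ supported in the open interval $(t_j,t')$; if $t_j>t'$ the factors simply commute up to a $v$-power. Iterating through all factors, $\lan M(\bfc_+)\ran * \lan M(\bz_{t'})\ran$ becomes $\lan M(\bfc_+)\ran$ with $t'$ inserted (i.e. $\bfd_+ = \bfc_+ + \mathbf{1}_{t'}$ on $\bbZ_{>0}$, giving the case $\bfd_+ >_L \bfb_+$ whenever $\bfc_+\neq 0$, and $\bfd_+=\bfb_+$ when $\bfc_+=0$), plus basis elements $\lan M(\bfd_+)\ran$ where $\bfd_+$ agrees with $\bfc_+$ on some initial segment $\{1,\dots,t_j\}$ and then has all further support in $(t_j,t']\cup(\text{indices}> t')$; in every such term, reading $>_L$ from the left shows $\bfd_+ \geq_L \bfb_+$, and $>_L$ strictly once $\bfc_+\neq 0$. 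Assembling and re-expanding everything into the natural basis of $\cH^*(\cI)$ via Corollary... (the dual of Corollary~\ref{preprojdecomp} applied within $\cH^*(\cI)$, or directly Lemma~\ref{preinj} again) yields the claimed expansion with $a^{\bfd_+}_{\bfc_+,\bfb_+}\in\bbQ(v)$.

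\textbf{Main obstacle.} The genuinely delicate point is not the commutation itself but verifying that \emph{every} intermediate term re-expands, in the fixed ordered basis of $\cH^*(\cI)$, only into $\lan M(\bfd_+)\ran$ with $\bfd_+\geq_L\bfb_+$: one must rule out ``downward leakage'' toward the index $1$ end. This is exactly where the reducedness and adaptedness of $\underline{i}$ (Section~\ref{PI components}) enter — they force the support of the error terms in Lemma~\ref{preinj} to lie strictly \emph{inside} $(t,t')$, so no new support is created at indices below the current leftmost factor. Once this containment of supports is granted, the lexicographic bookkeeping is the same as in the preprojective case treated in Corollary~\ref{preprojdecomp}, and I would simply remark that ``the same proof works here, with the order on $\cG_+$ in place of the order on $\cG_-$.''
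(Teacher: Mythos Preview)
Your approach is correct and is essentially the same as the paper's: the paper gives no proof at all for Corollary~\ref{preinjdecomp} beyond the remark ``Dually, we have same conclusions for the preinjective part'', relying on Corollary~\ref{preprojdecomp}, which in turn is simply referred to (3.34) of~\cite{Beck_Nakajima_Crystal_bases_and_two-sided_cells_of_quantum_affine_algebras}. Your iterated use of Lemma~\ref{preinj} to straighten products in $\cH^*(\cI)$, tracking the lexicographic order, is exactly that argument carried out on the preinjective side.

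One small remark: your reduction ``by induction on $\sum_{s>0}\bfb_+(s)$ to $\bfb_+$ indecomposable'' is not quite as routine as you suggest, because peeling off one factor of $M(\bfb_+)$ and applying the indecomposable case only gives $\bfd_+\geq_L\mathbf{1}_{t'}$ at each step, and one must check these combine to $\bfd_+\geq_L\bfb_+$ rather than something weaker. The cleanest way to avoid this is to argue directly: if $g^{M(\bfd_+)}_{M(\bfc_+),M(\bfb_+)}\neq 0$ then $M(\bfb_+)$ embeds in $M(\bfd_+)$, so $\dim\Hom(M(\bz_j),M(\bfb_+))\leq\dim\Hom(M(\bz_j),M(\bfd_+))$ for every $j\geq 1$, and (as in the proof of Proposition~\ref{Lemma: geometric property of the order}) this family of inequalities is precisely the condition $\bfd_+\geq_L\bfb_+$; strictness when $\bfc_+\neq 0$ follows since then $\udim M(\bfd_+)\neq\udim M(\bfb_+)$. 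This sidesteps the commutation bookkeeping entirely, though your straightening argument also goes through once the inductive hypothesis is formulated carefully.
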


\subsection{Non-homogeneous tubes}
For $\cR_{\rm{nh}}=\cR_{\rm{nh}}(k)$, there are several extension-closed subcategories $J_1,J_2,\dots,J_s(s\leq 3)$ of $\mod kQ$ known as non-homogeneous tubes such that $\cR_{\rm{nh}}$ is the direct product of them. Thus
$$\cH^*_q(\cR_{\rm{nh}})=\cH^*_q(J_1)\otimes_{\bbQ(v_q)}\cH^*_q(J_2)\otimes_{\bbQ(v_q)}\cdots\otimes_{\bbQ(v_q)}\cH^*_q(J_s)$$
and $xy=yx$ if $x\in\cH^*_q(J_i),y\in\cH^*_q(J_j),$ and $i\neq j$.

It is well-known that each $J_d,1\leq d \leq s$ is equivalent to the category $\cK_r$ of nilpotent representations of the cyclic quiver with $r$ vertices for some $r=r(d)$, and we denote $r=\rk J_d$.
Note that the equivalence is generally not unique when considering the rotation on $\bbZ/r\bbZ$. We choose an equivalence for each $J_d$
and denote it by
$$\ez_d:\cK_{r(d)}\lra J_d.$$
Thus $\ez_d$ induces an algebra isomorphism, which is denoted again by
\begin{align}
\ez_d:\cH^*_q(\cK_{r(d)})&\lra\cH^*_q(J_d)\notag\\
\lan M\ran&\lmto \lan\ez_d(M)\ran.\notag
\end{align}
Using the notations in Section \ref{cycliccase}, there is a bijection from $\Pi(r(1))\times\Pi(r(2))\times\dots\times\Pi(r(s))$ to the set of isomorphism classes of objects in $\cR_{\rm{nh}}$ by mapping $(\pi_1,\pi_2,\cdots,\pi_s)$ to $[\oplus_{1\leq d\leq s}M_{J_d}(\pi_d)]$, where $M_{J_d}(\pi_d)=\ez_d(M_{\cK_r(d)}(\pi_d))$.

\subsection{Homogeneous tubes}\label{homog}
 For $\cR_{\rm{h}}=\cR_{\rm{h}}(k)$, there are extension closed subcategories known as homogeneous tubes, indexed by a subset $Z_k$ of the projective line $\bbP^1(k)$.

A homogeneous regular module $M\in\cR_{\rm{h}}$ can be decomposed uniquely as
$$M=M_k(\underline{\lz},\underline{z})=M_k(\lz^1,z_1)\oplus M_k(\lz^2,z_2)\oplus\cdots\oplus M_k(\lz^t,z_t),$$
where $\underline{\lz}=(\lz^1,\lz^2,\dots,\lz^t)$ is a $t$-tuple of partitions and $z_1,z_2,\dots,z_t\in Z_k$ are distinct.
The modules of the form $M_k((1),z)$ will be called regular simple modules. We set $\deg z=d$ if $\udim M_k((1),z)=d\dz$. When $k$ is algebraic closed, $\deg z=1$ for all $z\in Z_k$.

Now let $k=\bbF_q$. The subalgebra $\cH^*_q(\cR_{\rm{h}})$ of $\cH^*_q(Q)$ is isomorphic to
$$\cH^*_q(Z_k):=\bigotimes_{z\in Z_k}\cH^*_q(z),$$
where $\cH^*_q(z)$ is the classical Hall algebra of homogeneous tube $z$.
Since there are no non-trivial extensions and homomorphisms between two different homogeneous tubes, the algebra $\cH^*_q(\cR_{\rm{h}})$ is commutative.

\subsection{The existence theorem of Hall polynomials}\label{Section:The existence of Hall poly}

We define the index set $\cG'$ consisting of all triples $\bfc=(\bfc_-,\bfc_0,\bfc_+)$, where $\bfc_-\in\cG_-$, $\bfc_+\in\cG_+$, $\bfc_0=(\pi_1,\pi_2,\dots,\pi_s)\in\Pi(r(1))\times\Pi(r(2))\times\cdots\times\Pi(r(s))$.
Following Section \ref{PI components}, define
$$M(\bfc_-)=\oplus_{t\leq 0}M(\bz_t)^{\oplus \bfc_-(t)};$$
$$M(\bfc_+)=\oplus_{t>0}M(\bz_t)^{\oplus \bfc_+(t)};$$
$$M(\bfc_0)=\oplus_{1\leq j\leq s}M_{J_j}(\pi_j);$$
and
$$M(\bfc)=M(\bfc_-)\oplus M(\bfc_0)\oplus M(\bfc_+).$$
This gives a bijection between $\cG'$ and the set of isomorphism classes of $kQ$-modules without homogeneous regular summands.

Let $\cG''$ be the set of all $(\bfc,\underline{\lz})$ where $\bfc\in\cG'$ and $\underline{\lz}=(\lz^1,\lz^2,\dots,\lz^t)$ is a sequence of partitions. Here we allow $\lz^i=0$ for some $i$.

We say $\underline{z}$ is of type $\underline{d}$ if $\deg z_i=d_i$ for all $i$.
Let
$$M(\bfc,\underline{\lz},\underline{z})=M(\bfc)\oplus M_k(\underline{\lz},\underline{z}).$$

The following result indicates the existence of certain Hall polynomials.
\begin{lemma}[\cite{Deng_Ruan_Hall_polynomials_for_tame_type}]\label{hallpoly}
Fix a sequence $\underline{d}=(d_1,d_2,\cdots,d_t)$.
Given $\bm{\az},\bm{\bz},\bm{\gz}\in\cG''$, there exists a polynomial $\varphi^{\bm{\az}}_{\bm{\bz},\bm{\gz}}\in\bbZ[T]$ such that for each finite field $k=\bbF_q$,
$$\varphi^{\bm{\az}}_{\bm{\bz},\bm{\gz}}(q)=g^{M(\bm{\az},\underline{z})}_{M(\bm{\bz},\underline{z}),M(\bm{\gz},\underline{z})}$$
for all $\underline{z}$ of type $\underline{d}$.
\end{lemma}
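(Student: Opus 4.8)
The statement asserts the existence of Hall polynomials $\varphi^{\bm{\az}}_{\bm{\bz},\bm{\gz}}\in\bbZ[T]$ counting submodules in the category $\mod kQ$ for a tame quiver $Q$, with the crucial feature that the polynomial is uniform over all choices of $\underline{z}$ of a fixed degree type $\underline{d}$. The plan is to reduce this to the known existence of Hall polynomials in the three ``nice'' pieces of the category — preprojective, preinjective, and the non-homogeneous regular tubes — together with an explicit description of the homogeneous part, and then to glue these using the exactness of the torsion-pair-like filtration $\cP, \cR_{\mathrm{h}}, \cR_{\mathrm{nh}}, \cI$ provided by the tame structure theory recalled in Section 5.

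\emph{Step 1: localize the counting problem via the Hom/Ext vanishing between the four blocks.} Recall that $\Hom(\cI,\cR)=\Hom(\cI,\cP)=\Hom(\cR,\cP)=0$ and the corresponding $\Ext^1$ vanishings in the appropriate directions, which is exactly what gives the factorization $\cH^*(kQ)=\cH^*(\cP)\otimes\cH^*(\cR_{\mathrm{nh}})\otimes\cH^*(\cR_{\mathrm{h}})\otimes\cH^*(\cI)$. Using this, for $\bm{\az}=(\bfc,\underline{\lz})$ with $\bfc=(\bfc_-,\bfc_0,\bfc_+)$, any submodule $W$ of $M(\bm{\az},\underline{z})$ with $M(\bm{\az},\underline{z})/W$ and $W$ of the prescribed isomorphism types decomposes compatibly with the four blocks; hence $g^{M(\bm{\az},\underline{z})}_{M(\bm{\bz},\underline{z}),M(\bm{\gz},\underline{z})}$ factors as a product of four Hall numbers, one in each of $\cH^*(\cP)$, $\cH^*(\cR_{\mathrm{nh}})$, $\cH^*(\cR_{\mathrm{h}})$, $\cH^*(\cI)$, possibly with a sum over intermediate decompositions. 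The first three of these factors depend on $\underline{z}$ only through its degree type (indeed the first two not at all), so the entire burden of ``uniformity in $\underline{z}$'' is pushed onto the homogeneous factor.

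\emph{Step 2: produce the polynomials block by block.} For the preprojective and preinjective blocks, the modules $M(\bfc_-)$, $M(\bfc_+)$ are built from exceptional indecomposables via BGP reflection functors, and the relevant structure constants are governed by Lemma \ref{preproj}, Corollary \ref{preprojdecomp}, Lemma \ref{preinj}, Corollary \ref{preinjdecomp}; the Hall polynomials here exist by the reflection-functor argument that reduces everything to the Dynkin case treated by Ringel, or directly by Deng--Ruan. For the non-homogeneous tubes $J_1,\dots,J_s$, each is equivalent to a cyclic-quiver category $\cK_{r(d)}$, so Hall polynomials exist there by the cyclic-quiver theory of Section \ref{cycliccase} (and again Deng--Ruan). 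For the homogeneous block, the category $\cR_{\mathrm{h}}$ decomposes as $\prod_{z\in Z_k}\cH^*_q(z)$, and each $\cH^*_q(z)$ is a classical Hall algebra of a serial (uniserial) category; the relevant Hall numbers there are the classical Hall polynomials of Steinitz--Hall evaluated at $q^{\deg z}=q^{d_i}$. Thus for a fixed degree type $\underline{d}$ each homogeneous factor is literally a fixed classical Hall polynomial in $q^{d_i}$, hence a polynomial in $q$; and the commutativity of distinct homogeneous tubes means there is no interaction among the different $z_i$. Multiplying the four blockwise polynomials (and summing over the finitely many intermediate decompositions, which is a finite operation preserving $\bbZ[T]$) yields $\varphi^{\bm{\az}}_{\bm{\bz},\bm{\gz}}$.

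\emph{The main obstacle.} The genuine difficulty is not any single block but the bookkeeping in Step 1: one must show that a submodule $W\subseteq M(\bm{\az},\underline{z})$ respecting the prescribed subquotient types really does split as a direct sum of submodules of the four blocks, i.e.\ that the block decomposition of $M(\bm{\az},\underline{z})$ is functorial enough that $W=W_{\cP}\oplus W_{\cR_{\mathrm{nh}}}\oplus W_{\cR_{\mathrm{h}}}\oplus W_{\cI}$ with each piece a submodule of the corresponding summand, and similarly for the quotient. This uses that $\cP$ is closed under submodules of modules-with-no-injective-summands in the relevant range, $\cI$ under quotients, and the regular part under both, i.e.\ the ``no backward maps'' property of the tame structure; care is needed because the relevant closure statements are one-directional, and a fully clean argument may instead proceed by comparing the two sides as structure constants in the isomorphic algebra $\cH^*(\cP)\otimes\cH^*(\cR_{\mathrm{nh}})\otimes\cH^*(\cR_{\mathrm{h}})\otimes\cH^*(\cI)$ rather than by a direct submodule count. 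Once that reduction is in place, the rest is an assembly of results already available (Deng--Ruan, Ringel, the cyclic-quiver case, and the classical Steinitz--Hall polynomials), so the polynomiality and the $\underline z$-uniformity follow. I would therefore spend most of the write-up on making Step 1 precise and treat Step 2 as a citation-driven verification.
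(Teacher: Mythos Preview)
The paper does not prove this lemma; it is quoted from Deng--Ruan with no argument supplied. So there is no in-paper proof to compare against, and your proposal has to stand on its own.

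There is a genuine gap in Step~1. The claim that $g^{L}_{M,N}$ factors (even ``with a sum over intermediate decompositions'') as a product of blockwise Hall numbers in $\cP,\cR_{\rm nh},\cR_{\rm h},\cI$ is false, and the tensor decomposition of $\cH^*(kQ)$ does not deliver it. That decomposition is a one-directional PBW statement: $\lan P\ran * \lan R\ran * \lan I\ran = \lan P\oplus R\oplus I\ran$ for $P\in\cP$, $R\in\cR_{\rm nh}\oplus\cR_{\rm h}$, $I\in\cI$. It says nothing about products in the wrong order, and hence nothing about an arbitrary $g^L_{M,N}$ in terms of the subalgebras. Concretely, for the Kronecker quiver take $P=S_2$ (simple projective) and $I=I(2)$ (indecomposable preinjective of dimension vector $(2,1)$). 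Then $\dime\Hom(P,I)=1$, and a direct count gives $g^{P\oplus I}_{I,\,P}=q$: the submodules of $P\oplus I$ isomorphic to $S_2$ with quotient isomorphic to $I(2)$ are exactly the graphs of the $q$ homomorphisms $P\to I$. The blockwise product would be $g^{P}_{0,P}\cdot g^{I}_{I,0}=1$. The torsion-pair rescue you suggest also fails: $\cI$ is not closed under submodules (e.g.\ $\mathrm{soc}\,I(2)\cong S_2\in\cP$), and regular modules have preprojective submodules, so neither a hereditary-torsion argument nor the ``no backward maps'' heuristic yields the splitting of $W$ you need.

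Your Step~2 observations are correct as far as they go: Hall polynomials do exist in each block separately, and in a single homogeneous tube of degree $d$ the classical Steinitz--Hall polynomials evaluated at $q^{d}$ do the job. But these are only ingredients. The actual proof in Deng--Ruan (and the closely related argument of Hubery) does not avoid the cross-block interaction; it controls it, via associativity of Hall numbers (Green's formula) and an induction that reduces to a carefully chosen family of generating modules. That organizing principle is what is missing from your plan.
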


In particular, we obtain the following corollary.
\begin{corollary}
For $\bfc_1,\bfc_2,\bfc_3\in\cG'$, the Hall number $g^{M(\bfc_1)}_{M(\bfc_2),M(\bfc_3)}$ over field $\bbF_q$ is a polynomial in $q$.
\end{corollary}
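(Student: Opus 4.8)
The plan is to deduce the corollary from Lemma \ref{hallpoly} by choosing the sequence $\underline{d}$ to be empty (or equivalently $t=0$), so that there are no homogeneous regular factors at all. Under this specialization, a pair $(\bfc,\underline{\lz})\in\cG''$ with $\underline{\lz}$ the empty sequence is just an element $\bfc\in\cG'$, and the module $M(\bfc,\underline{\lz},\underline{z})$ of that lemma degenerates to $M(\bfc)$, since $M_k(\underline{\lz},\underline{z})=0$. First I would make this identification precise: the inclusion $\cG'\hookrightarrow\cG''$ sending $\bfc$ to $(\bfc,\emptyset)$ is well-defined because the lemma explicitly allows each $\lz_i$ to be $0$, and in particular allows the sequence of partitions to be empty.

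Next I would apply Lemma \ref{hallpoly} with this empty $\underline{d}$ to the triple $\bm{\az}=(\bfc_1,\emptyset)$, $\bm{\bz}=(\bfc_2,\emptyset)$, $\bm{\gz}=(\bfc_3,\emptyset)$ in $\cG''$. The lemma produces a polynomial $\varphi^{\bm{\az}}_{\bm{\bz},\bm{\gz}}\in\bbZ[T]$ such that for every finite field $k=\bbF_q$,
$$\varphi^{\bm{\az}}_{\bm{\bz},\bm{\gz}}(q)=g^{M(\bm{\az},\underline{z})}_{M(\bm{\bz},\underline{z}),M(\bm{\gz},\underline{z})}=g^{M(\bfc_1)}_{M(\bfc_2),M(\bfc_3)},$$
where the last equality uses the vacuous choice of $\underline{z}$ (of the empty type $\underline{d}$) and the identifications $M(\bm{\az},\underline{z})=M(\bfc_1)$, etc. Setting this polynomial aside as the witness, we conclude that $g^{M(\bfc_1)}_{M(\bfc_2),M(\bfc_3)}$ is a polynomial in $q$, as claimed.

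There is essentially no obstacle here, since the corollary is a direct specialization of the cited lemma; the only point requiring a word of care is checking that the degenerate case $t=0$ is genuinely covered by the statement of Lemma \ref{hallpoly}, and that under it the general module $M(\bfc,\underline{\lz},\underline{z})$ reduces to $M(\bfc)$ exactly as asserted in the bijection between $\cG'$ and isomorphism classes of $kQ$-modules without homogeneous regular summands. If one prefers to avoid the empty-type convention, an alternative is to take $\underline{d}=(d_1)$ with a single homogeneous factor but all partitions $\lz_i=0$, which again yields $M_k(\underline{\lz},\underline{z})=0$ and the same conclusion; I would mention this only if the empty-sequence reading of the lemma seemed ambiguous.
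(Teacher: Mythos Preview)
Your proof is correct and matches the paper's approach: the paper simply states this as a direct corollary of Lemma~\ref{hallpoly} without further argument, and your specialization to the empty sequence $\underline{d}$ (or equivalently taking all $\lz_i=0$) is precisely the intended reading.
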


\section{Irreducible characters arising from $\cH^*(\cR_{\rm h})$}\label{Chapter: irreducible characters}

\subsection{Generators in $\cH^*(\cR_{\rm h})$}
Let $Q$ be an acyclic affine quiver, $k$ be a finite field and $\cR_{\rm h}=\cR_{\rm h}(k)$ be the homogeneous regular subcategory of $\mod kQ$.
When necessary, we require that $q=|k|$ is large enough.
For $m\in\bbN$, consider
$$H_m=H_m(k)=\sum_{[M]\atop M\in\cR_{\rm h},\udim M=m\dz}v_q^{-\dim M}u_{[M]}\in\cH^*_q(\cR_{\rm h}).$$
Note that this is a finite sum since $k$ is a finite field.

For a partition $\lz=(\lz_1,\lz_2,\dots,\lz_s)$, define
$$H_\lz=\prod_{1\leq k\leq s}H_{\lz_k},$$

We can write in terms of modules
$$H_\lz=\sum_{[M]\atop M\in\cR_{\rm h},\udim M=m\dz}A^{[M]}_\lz u_{[M]},$$

We want to compute $A^{[M]}_\lz$  for some special homogeneous regular $M$.

\subsection{The character of permutation modules arising from $\cH^*(\cR_{\rm h})$}\label{sec: character of permutation mod}
We introduce some notions about representation theory of the symmetric groups first and we refer to \cite{GTM203_The_Symmetric_Group} for more details.

Let $\mathfrak{S}_m$ be the symmetric group of $m$.
For a partition $\lz=(\lz_1\geq \lz_2\geq\dots\geq \lz_s)$ of $m$, a $\lz$-tabloid is by definition an ordered sequence $(W_1,W_2,\dots,W_s)$ of subsets $W_i\subset\{1,2,\dots,m\}$ such that $\{1,2,\dots,m\}=\sqcup_{1\leq i\leq s} W_i$ and $|W_i|=\lz_i$.
This definition is equivalent to the one in \cite{GTM203_The_Symmetric_Group}.
Then the permutation module $M^\lz$ of $\mathfrak{S}_{|\lz|}$ is the $\mathbb{C}$-linear space spanned by all $\lz$-tabloids.
The natural action of $\mathfrak{S}_{m}$ on $\{1,2,\dots,m\}$ induces the action of $\mathfrak{S}_{m}$ on $M^\lz$.

Let $S^\lambda$ be the Specht module introduced in Section 2.3 in \cite{GTM203_The_Symmetric_Group}. It is known that
$$M^\lz\cong\bigoplus_\mu K_{\lz\mu}S^\mu,$$
where $K_{\lz\mu}$ is the Kostka number.

Let $t_\lambda(\mu)=\chi_{S^\lambda}(g_\mu)$ be the complex character value of $S^\lambda$ at $g_\mu\in\mathfrak{S}_m$
of cycle type $\mu$.
Then $(t_\lambda(\mu))_{\lambda,\mu}$ is the irreducible character table of $\mathfrak{S}_m$.
Let $T_\lambda$ be the character of the permutation module $M^\lambda$.
It follows that $T_\lambda=\sum_{\mu}K_{\lambda\mu}t_\mu$.

Let $Z_K$ be the set of all homogeneous tubes of $\mod KQ$ for any field $K$. Let $k$ be a finite field.
For a partition $\mu=(\mu_1\geq\mu_2\geq\cdots\geq\mu_l)$ and $\underline{z}=(z_1,z_2,\cdots,z_l)$ such that $z_i\in Z_k$ are distinct and $\deg z_i=\mu_i$ for all $i$, we denote
$$M[\mu,\underline{z}]=M_k((1),z_1)\oplus M_k((1),z_2)\oplus\cdots\oplus M_k((1),z_l).$$

\begin{proposition}\label{Character: permutation module}
  For partitions $\lambda,\mu=(\mu_1\geq\mu_2\geq\cdots\geq\mu_l)$ with $|\lambda|=|\mu|=m$ and $\underline{z}=(z_1,z_2,\cdots,z_l)$ such that $z_i\in Z_k$ are distinct and $\deg z_i=\mu_i$ for all $i$, we have $$A_\lambda^{[M[\mu,\underline{z}]]}=v_q^{-m|\dz|}T_\lambda(\mu).$$
\end{proposition}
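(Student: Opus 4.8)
The plan is to compute $A_\lambda^{[M[\mu,\underline z']]}$ via a specialization homomorphism analogous to the map $\phi$ used in the proof of Proposition \ref{Character:Kostka}, but now adapted to homogeneous tubes of arbitrary degree. First I would introduce a $\bbQ(v_q)$-linear map
$$\psi:\cH^*_q(\cR_{\rm h})\lra \bbQ(v_q)[Z_k]$$
that sends a class $u_{[M]}$ supported on regular simple summands of prescribed degrees to the corresponding monomial in the variables $z\in Z_k$, weighted so that the multiplicativity $\psi(u_{[M(a,z)]}*u_{[M(a',z)]})=\psi(u_{[M(a,z)]})\psi(u_{[M(a',z)]})$ still holds; the key point, as before, is that distinct homogeneous tubes neither interact via extensions nor via homomorphisms, so $\psi$ is an algebra homomorphism. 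Under $\psi$ the generator $H_m$ should map to (a power of $v_q$ times) the degree-$m$ complete symmetric function in the variables $\{z^{\deg z}\}$ — i.e. one records both the degree of the tube and the "height" of the regular module along it — so that $\psi(H_\lambda)$ becomes the product $v_q^{-|\lambda||\dz|}h_\lambda$ of complete symmetric polynomials.

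Next I would extract from $\psi(H_\lambda)$ the coefficient corresponding to the configuration $M[\mu,\underline z']$, namely choosing $l$ pairwise-distinct tubes $z'_1,\dots,z'_l$ with $\deg z'_i=\mu_i$ and looking at the monomial $\prod_i (z'_i)^{\deg z'_i}=\prod_i(z'_i)^{\mu_i}$ (height one at each). By the combinatorics of symmetric functions — concretely, expanding $h_\lambda=\sum_\mu \langle h_\lambda, m_\mu\rangle$-type identities, or more directly using the classical fact $h_\lambda=\sum_\mu (\text{number of ways to fill})\, m_\mu$ together with $t'_\lambda=\sum_\mu K_{\lambda\mu}t_\mu$ — the coefficient of the monomial symmetric function $m_\mu$ in $h_\lambda$ equals the permutation-character value $t'_\lambda(\mu)$. (Equivalently: $h_\lambda=\sum_\mu \langle M^\lambda\!\downarrow, \dots\rangle$ reformulated as $h_\lambda = \sum_\mu t'_\lambda(\mu)\, m_\mu$ when one identifies the Hall inner product of $h_\lambda$ with $p_\mu$ appropriately — I would cite \cite{Macdonald_Symmetric_functions_and_Hall_polynomials} for the precise statement.) Matching this coefficient against $A_\lambda^{[M[\mu,\underline z']]}$ through $\psi$ gives $A_\lambda^{[M[\mu,\underline z']]}=v_q^{-|\lambda||\dz|}t'_\lambda(\mu)$.

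The main obstacle I anticipate is setting up $\psi$ correctly so that it is genuinely an algebra homomorphism while still being sensitive to tubes of degree greater than $1$: the Hall algebra of a single homogeneous tube of degree $d$ is a polynomial ring, and one must check that the assignment $u_{[M((1),z)]}\mapsto z^{d}$ (or whatever normalization makes the power of $v_q$ come out uniformly as $v_q^{-|\lambda||\dz|}$) is compatible with the multiplication constants $v_q^{\langle\udim,\udim\rangle}g^L_{MN}$ appearing in $\cH^*_q$ — this is where the requirement "$q=|k|$ large enough" and the degree-bookkeeping $\udim M_k((1),z)=(\deg z)\dz$ enter. Once $\psi$ is in hand, the identification of the extracted coefficient with $t'_\lambda(\mu)$ is a routine appeal to classical symmetric function theory, exactly parallel to how Proposition \ref{Character:Kostka} invokes the Schur-function/Kostka-number statement; the only subtlety there is the bookkeeping of which monomial in the $z$-variables corresponds to $M[\mu,\underline z']$ versus $M(\mu,\underline z)$, and keeping the two dual pictures (heights versus degrees) straight.
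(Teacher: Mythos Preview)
Your plan has a genuine gap, and it is worth comparing with the paper's much more direct argument.

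\textbf{The paper's proof.} Because $M[\mu,\underline z']=\bigoplus_{j=1}^l M((1),z'_j)$ is a direct sum of pairwise non-isomorphic simple objects of $\cR_{\rm h}$, every submodule is a partial direct sum. Hence a filtration $0=M_0\subset M_1\subset\cdots\subset M_s=M[\mu,\underline z']$ with $\udim M_i/M_{i-1}=\lz_i\dz$ is the same datum as an ordered set-partition $(U_1,\dots,U_s)$ of $\{1,\dots,l\}$ with $\sum_{j\in U_i}\mu_j=\lz_i$. Write $C_{\lz\mu}$ for this set; then $A_\lz^{[M[\mu,\underline z']]}=v_q^{-|\lz||\dz|}|C_{\lz\mu}|$. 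Finally, $|C_{\lz\mu}|=t'_\lz(\mu)$ because $t'_\lz(\mu)$ counts $g_\mu$-fixed $\lz$-tabloids, and a tabloid $(W_1,\dots,W_s)$ is $g_\mu$-fixed iff each $W_i$ is a union of $g_\mu$-orbits, giving exactly an element of $C_{\lz\mu}$. No symmetric functions are used.

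\textbf{Where your approach breaks.} Two problems arise. First, the symmetric-function identity you invoke is false: the coefficient of $m_\mu$ in $h_\lz$ is $N_{\lz\mu}$, the number of non-negative integer matrices with row sums $\lz$ and column sums $\mu$, \emph{not} $t'_\lz(\mu)$. (The correct identity linking $h_\lz$ to $t'_\lz$ is $\langle h_\lz,p_\mu\rangle=t'_\lz(\mu)$, i.e.\ the power-sum expansion, which is a different extraction.) Second, and more structurally, when the tubes $z'_i$ have varying degrees the image $\psi(H_m)$ is \emph{not} a complete symmetric function in any natural set of variables: with the ``one indecomposable per tube'' convention (the only one for which $\psi$ is multiplicative), $\psi(H_m)$ is the sum of $\prod_i z_i^{a_i\deg z_i}$ over configurations with $\sum_i a_i\deg z_i=m$, which is neither $h_m(\{z\})$ nor $h_m(\{z^{\deg z}\})$. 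If you nonetheless extract the coefficient of $\prod_i (z'_i)^{\mu_i}$ from $\prod_k\psi(H_{\lz_k})$ you will correctly find $|C_{\lz\mu}|$, but that computation \emph{is} the direct filtration count, and you still owe the identification $|C_{\lz\mu}|=t'_\lz(\mu)$, which is exactly the tabloid argument above. So the detour through $\psi$ buys nothing here, and the symmetric-function shortcut you hoped to cite does not exist in the form you stated.
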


\begin{proof}
Fix $\underline{z}$.
Let $\cR_{\rm h}(K)$ be the homogeneous regular subcategory of $\mod KQ$ for any field $K$.

Let $M=M[\mu,\underline{z}]\in\cR_{\rm h}(k)$. Consider $M'=M[\mu,\underline{z}]\otimes_k \overline{k}\in\cR_{\rm h}(\overline{k})$, then there exists  distinct $x_1,x_2,\dots,x_m\in Z_{\overline{k}}$ such that
$$M'\cong M_{\overline{k}}((1),x_1)\oplus M_{\overline{k}}((1),x_2)\oplus\dots M_{\overline{k}}((1),x_m).$$

Let $\fkf$ be the Frobenius map $\fkf:\overline{k}\ra\overline{k}$ such that $k$ is the fixed point set of $\fkf$.
Then $\fkf$ induces a Frobenius map  $\fkf_{M'}: M'\ra M'$ such that
$$\fkf_{M'}(m\otimes a)=m\otimes \fkf(a)$$
for all $a\in\overline{k}$ and $m\in M[\mu,\underline{z}]$.
This induces a permutation $g_\fkf$ of $x_1,x_2,\dots,x_m$. Then the cycle type of $g_\fkf$ is $\mu$.

Let $\lz=(\lz_1\geq \lz_2\geq\dots\geq \lz_s)$. Define $F^{M'}_\lz$ to be the set of all submodule sequences of the form $0=N'_0\subset N'_1\subset \dots\subset N'_s=M'$ such that $N_i\in\cR_{\rm h}(\overline{k})$ and $\udim N'_i/N'_{i-1}=\lz_i\dz$ for $1\leq i\leq s$. Similarly, define $F^{M}_\lz$ to be the set of all submodule sequences of the form $0=N_0\subset N_1\subset \dots\subset N_s=M$ such that $N_i\in\cR_{\rm h}(k)$ and $\udim N_i/N_{i-1}=\lz_i\dz$ for $1\leq i\leq s$. Then there is an injection
$$F^M_\lz\lra F^{M'}_\lz$$
by mapping $0=N_0\subset N_1\subset \dots\subset N_s=M$ to $0=N_0\otimes_k \overline{k}\subset N_1\otimes_k \overline{k}\subset \dots\subset N_s\otimes_k \overline{k}=M'$. Note that all $N_i\otimes_k \overline{k}$ are $\fkf_{M'}$-stable, and any $\fkf_{M'}$-stable submodule of $M'$ is of the form $N\otimes_k \overline{k}$ where $N$ is a submodule of $M$, then we have a bijection
$$F^M_\lz\lra\{\fkf_{M'}-\text{stable\, submodule\, sequences\, in\, } F^{M'}_\lz\}.$$

On the other hand, since all $M_{\overline{k}}((1),x_i),1\leq i\leq s$ are simple objects in $\cR_{\rm h}(\overline{k})$, any submodule $N$ of $M'$ in $\cR_{\rm h}(\overline{k})$ is of the form $N=\oplus_{x\in U}M_{\overline{k}}((1),x)$, where $U\subset\{x_1,x_2,\dots,x_m\}$. Then there is a bijection
$$F^{M'}_\lz\lra\{\lz-\text{tabloids}\}$$
by mapping $0=N'_0\subset N'_1\subset \dots\subset N'_s=M'$ to $(W_1,W_2,\dots,W_s)$, where $N'_i=\oplus_{x\in U_i}M_{\overline{k}}((1),x)$ and $W_i=U_i\setminus U_{i-1}$ for all $i$. Here we identify $\{1,2,\dots,m\}$ and $\{x_1,x_2,\dots,x_m\}$.
Note that $g_\fkf$ is induced by $\fkf_{M'}$, we have a bijection
$$\{\fkf_{M'}-\text{stable\, submodule\, sequences\, in\, } F^{M'}_\lz\}\lra\{g_\fkf-\text{stable\, }\lz-\text{tabloids}\}.$$

Therefore, by definition, we have $$v_q^{m|\dz|}A_\lambda^{[M[\mu,\underline{z}]]}=|F^M_\lz|=|\{g_\fkf-\text{stable\, }\lz-\text{tabloids}\}|=\chi_{M^\lz}(g_\fkf)=T_\lz(\mu).$$

\end{proof}

\subsection{The character of Specht modules and $S_\lz$}
Let $$a_\lambda^{[M[\mu,\underline{z}]]}=v_q^{-m|\dz|}t_\lambda(\mu).$$
Since $$T_\lambda=\sum_{\lz'}K_{\lambda\lz'}t_{\lz'},$$
these decomposition formulas induce the following formulas
$$A_\lambda^{[M[\mu,\underline{z}]]}=\sum_{\lz'}K_{\lambda\lz'}a_{\lz'}^{[M[\mu,\underline{z}]]}.$$
Recall that $\cH^*_q(\cR_{\rm h})$ is commutative. For a partition $\lz=(\lz_1,\lz_2,\dots,\lz_s)$, define
$$S_\lz=\det(H_{\lz_k-k+m})_{1\leq k,m\leq s}.$$
It is known that
$$H_\lz=\sum_{\lz'}K_{\lz\lz'}S_{\lz'}.$$
Hence, the coefficient of $u_{[M[\mu,\underline{z}]]}$ is $a_\lambda^{[M[\mu,\underline{z}]]}$, when we write $S_\lz$ in terms of $u_{[M]}$ with $M\in\cR_{\rm h}$ such that $\udim M=m\dz$.
This shows that it is reasonable to index $H_\lz$ and $S_\lambda$ by $T_\lz$ and $t_\lambda$ respectively.

\subsection{}
We write
$$S_\lz=\sum_{[M]\atop M\in\cR_{\rm h},\udim M=m\dz}a^{[M]}_\lz u_{[M]}.$$
For a partition $\mu=(\mu_1\geq\mu_2\geq\cdots\geq\mu_l)$ and $\underline{z}=(z_1,z_2,\cdots,z_l)$ such that $z_i\in Z_k$ are distinct and $\deg z_i=1$ for all $i$,  we denote
$$M(\mu,\underline{z})=M_k((\mu_1),z_1)\oplus M_k((\mu_2),z_2)\oplus\cdots\oplus M_k((\mu_l),z_l).$$

We need the following lemma to prove Proposition \ref{multiofN}.
\begin{lemma}\label{Character:Kostka}
For partitions $\lambda,\mu$ with $|\lambda|=|\mu|$ and $\underline{z}=(z_1,z_2,\cdots,z_l)$ such that $z_i\in Z_k$ are pairwise different and $\deg z_i=1$ for all $i$, $a_\lambda^{[M(\mu,\underline{z})]}=v_q^{-|\lambda||\delta|}K_{\mu\lambda}$.
\end{lemma}

\begin{proof}
Let $Z^1_k$ be the subset of $Z_k$ consisting of all $z$ with $\deg z=1$ and $\bbQ(v_q)[Z^1_k]$ be the polynomial algebra with variables in $Z^1_k$.
Consider the $\bbQ(v_q)$-linear map:
$$\cH^*_q(\cR_{\rm h})\stackrel{\phi}{\lra}\bbQ(v_q)[Z^1_k]$$
$$\sum_{[M],M\in\cR_{\rm h}}C^{[M]} u_{[M]}\mapsto \sum_{\mu,\underline{z}}C^{[M(\mu,\underline{z})]}\prod_{i}z_i^{\mu_i},$$
where the sum $\sum_{\mu,\underline{z}}$ runs over all partitions $\mu$ and $\underline{z}=(z_1,z_2,\cdots,z_l)$ such that $z_i\in Z^1_k$ are distinct.
 Since $$\phi(u_{[M(a,z)]}*u_{[M(a',z)]})=\phi(u_{[M(a+a',z)]})=z^{a+a'}=\phi(u_{[M(a,z)]})\phi(u_{[M(a',z)]})$$ for any positive integers $a,a'$ and $z\in Z^1_k$, the map $\phi$ is a homomorphism of algebras.

It is also easy to see that
$$\phi(H_m)=v_q^{-m|\delta|}h_m,$$
 where $h_m$ is the complete symmetric polynomial in $\bbQ(v_q)[Z^1_k]$.
So
$$\phi(S_\lambda)=v_q^{-m|\delta|}s_\lambda,$$
where $s_\lambda$ is the Schur symmetric polynomial of $\lambda$ in $\bbQ(v_q)[Z^1_k]$.
By \cite{Macdonald_Symmetric_functions_and_Hall_polynomials}, the coefficient of $\prod_{i}z_i^{\mu_i}$ in $s_\lambda$ is $K_{\mu\lambda}$ for a partition $\mu$, which implies that $a_\lambda^{[M(\mu,\underline{z})]}=v_q^{-|\lambda||\delta|}K_{\mu\lambda}$.
\end{proof}

\subsection{An example}
Consider the dimension vector $3\dz$.
Set $Z^d=Z_k^d=\{z\in Z_k|\deg z=d\}$.
Denote
\begin{eqnarray}
u_{(3)}&=&\sum_{z\in Z^1}u_{[M((3),z)]},\notag\\
u_{(2,1)}&=&\sum_{z_1\in Z^1,z_2\in Z^1\setminus \{z_1\}}u_{[M((2),z_1)\oplus M((1),z_2)]},\notag\\
u_{[1,1,1]}=u_{(1,1,1)}&=&\sum_{\{z_1,z_2,z_3\}\subset Z^1}u_{[M((1),z_1)\oplus M((1),z_2)\oplus M((1),z_3)]},\notag\\
u_{[2,1]}&=&\sum_{z_1\in Z^2,z_2\in Z^1}u_{[M((1),z_1)\oplus M((1),z_2)]},\notag\\
u_{[3]}&=&\sum_{z\in Z^3}u_{[M_{(1),z}]},\notag\\
u_{\left({2\atop 1}\right)}&=&\sum_{z\in Z^1}u_{[M((2,1),z)]},\notag\\
u_{(1^3)}&=&\sum_{z\in Z^1}u_{[M((1,1,1),z)]},\notag\\
u_{(1^2,1)}&=&\sum_{z_1\in Z^1,z_2\in Z^1\setminus \{z_1\}}u_{[M((1,1),z_1)\oplus M((1),z_2)]}.\notag
\end{eqnarray}

By calculation, we have following equations:
\begin{eqnarray}
v^{3|\dz|}S_{(3)}
&=&u_{(3)}+u_{(2,1)}+u_{[1,1,1]}+u_{[2,1]}+u_{[3]}+u_{\left({2\atop 1}\right)}+u_{(1^3)}+u_{(1^2,1)},\notag \\
v^{3|\dz|}S_{(2,1)}
&=&u_{(2,1)}+2u_{[1,1,1]}-u_{[3]}+v^2u_{\left({2\atop 1}\right)}+(v^4+v^2)u_{(1^3)}+(v^2+1)u_{(1^2,1)},\notag \\
v^{3|\dz|}S_{(1,1,1)}
&=&u_{[1,1,1]}-u_{[2,1]}+u_{[3]}+v^6u_{(1^3)}+v^2u_{(1^2,1)}.\notag
\end{eqnarray}
Since $u_{[\mu]}$ is the sum of all $u_{[M[\mu,z]]}$ for $\mu\vdash 3$, the coefficient of $u_{[\mu]}$ in $S_\lz$ is $a^{M[\mu,z]}_\lz$. This shows that the coefficient matrix
$$\left(
\begin{array}{ccc}
1 & 1 & 1 \\
2 & 0 & -1 \\
1 & -1 & 1 \\
\end{array}
\right)$$
is exactly the character table of the symmetric group $\mathfrak{S}_3$.

Also, the coefficient of $u_{(\mu)}$ in $S_\lz$ is $a^{M(\mu,z)}_\lz$. This shows that the entries of the coefficient matrix
$$\left(
\begin{array}{ccc}
1 & 1 & 1 \\
0 & 1 & 2 \\
0 & 0 & 1 \\
\end{array}
\right)$$
are exactly the Kostka numbers.

\section{The index set}\label{Chapter: The index set}

\subsection{The index set}
 Let $\cG$ be the set consisting of all pairs $(\bfc,t_\lz)$, where $\bfc\in\cG'$ and $t_\lz$ is the character of the Specht module $S^\lz$ or $t_\lz=0=\lz$. The definition of $\cG'$ is in Section \ref{Section:The existence of Hall poly}.
 Define $\tilde{\cG}=\cG'\times\bbN$.

For $(\bfc,t_\lz)\in\cG$, we denote
$$D(\bfc,t_\lz)=\udim M(\bfc)+|\lz|\dz\in \bbN I.$$

For $\nu\in \bbN I$, let $\cG_{\nu}$ (resp., $\tilde{\cG}_{\nu}$) be the subset of $\cG$ (resp. $\tilde{\cG}$) consisting of all $(\bfc,t_\lz)$ such that $D(\bfc,t_\lz)=\nu$ (resp. all $(\bfc,m)$ such that $\udim M(\bfc)+m\dz=\nu$). Then there is a decomposition $\cG=\sqcup_\nu\cG_{\nu}$ (resp. $\tilde{\cG}=\sqcup_\nu\tilde{\cG}_{\nu}$).

We say that $\bfc$ (or $\bfc_0$) is aperiodic if all $\pi_j$ in $\bfc_0=(\pi_1,\pi_2,\cdots,\pi_s)$ are aperiodic.
Let $\cG^a$ be the set of all $(\bfc,t_\lz)$ with $\bfc$ aperiodic.

\subsection{Order of indices}
We define partial orders on $\cG_{\nu}$ and $\tilde{\cG}_{\nu}$:
\begin{definition}\label{order}
Define a partial order $\preceq$ on $\tilde{\cG}_{\nu}$ by letting $(\bfc',m')\prec(\bfc,m)$ if and only if one of the followings is satisfied:
\begin{enumerate}
\item[(a)] $\bfc'_\pm>_L \bfc_\pm$, that is, by definition, $\bfc'_-\geq_L \bfc_-$ and $\bfc'_+\geq_L \bfc_+$ but not all equalities hold;

\item[(b)] $\bfc'_-= \bfc_-$, $\bfc'_+=\bfc_+$, $m'<m$;

\item[(c)] $\bfc'_-= \bfc_-$, $\bfc'_+=\bfc_+$, $m'=m$, and $\bfc'_0<_G\bfc_0$. Here $\bfc'_0<_G\bfc_0$ means that   $\pi'_d\leq_G\pi_d$ for all $1\leq d\leq s$ but not all equalities hold, where
    $\bfc'_0=(\pi'_1,\pi'_2,\cdots,\pi'_s),\bfc_0=(\pi_1,\pi_2,\cdots,\pi_s)$.
\end{enumerate}
Define a partial order of $\cG_{\nu}$, also denoted by $\preceq$, by letting $(\bfc',t_{\lz'})\prec(\bfc,t_\lz)$ if and only if $(\bfc',|\lz'|)\prec(\bfc,|\lz|)$ in $\tilde{\cG}_{\nu}$, or $(\bfc',|\lz'|)=(\bfc,|\lz|)$ and $\lz'>\lz$ where the order of partitions is the lexicographic order (which is also the order of Specht modules).

\end{definition}

\subsection{Geometric property of the order}\label{Geometric property of the order}
Let $k$ be a fixed algebraic closed field. Let $\nu=\sum_{i\in I}\nu_ii\in\bbN I$ and
$$\bbE_\nu=\oplus_{h\in H}\Hom(k^{\nu_{s(h)}},k^{\nu_{t(h)}}), \GL_\nu=\oplus_{i\in I}\GL(k^{\nu_i}).$$

For any $g=(g_i)_{i\in I}\in\GL_\nu,x=(x_h)_{h\in H}\in\bbE_\nu$, define $g.x=((g.x)_h)_{h\in H}$ by
$$(g.x)_h=g_{t(h)}x_hg^{-1}_{s(h)}$$
for any $h\in H$. This defines a $\GL_\nu$-action on $\bbE_\nu$.

Note that given any $x\in\bbE_\nu$, $M(x)=(k^{\nu_i},x_h)_{i\in I,h\in H}$ is a representation of $Q$. The isomorphism classes of representations of dimension vector $\nu$ are then in one-to-one correspondence with the $\GL_\nu$-orbits in $\bbE_\nu$.

Given $(\bfc,m)\in\tilde{\cG}_{\nu}$, let $X(\bfc,m)=\{x\in\bbE_{\nu} | M(x)\cong M(\bfc)\oplus{Z_1}\oplus{Z_2}\oplus\dots\oplus{Z_m}$
for some regular simple modules $Z_1,Z_2,\dots,Z_m$ in distinct homogenous tubes$\}$.

Let $Y(\bfc,m)=\{x\in\bbE_{\nu}|M(x)\cong M(\bfc)\oplus Z$
for some $Z\in\cR_{\rm{h}}$ with $\udim Z=m\dz \}$. By the theorem of  Krull-Schimidt, $\bbE_\nu=\sqcup_{(\bfc,m)\in\tilde{\cG}_{\nu}} Y(\bfc,m).$

In this paper, we only consider the Zariski topology.

\begin{lemma}\label{closedcondition}
Fix a $kQ$-module $N$ and $t\in\bbN$, the subsets
$$\{x\in \bbE_\nu|\dime\Hom(M(x),N)\geq t\},$$
and
$$\{x\in \bbE_\nu|\dime\Hom(N,M(x))\geq t\}$$
of $\bbE_\nu$ are closed.
\end{lemma}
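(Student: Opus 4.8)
The plan is to exhibit each of the two sets as the intersection of finitely many closed sets, using the semicontinuity of $\dim$ for the ranks of certain linear maps depending polynomially on $x$. Fix the $kQ$-module $N$, say $N = (N_i, N_h)_{i \in I, h \in H}$ with $\dim_k N_i$ prescribed. For a point $x \in \bbE_\nu$, a homomorphism $M(x) \to N$ is a tuple $(\phi_i)_{i \in I}$ of linear maps $\phi_i \colon k^{\nu_i} \to N_i$ such that $\phi_{t(h)} x_h = N_h \phi_{s(h)}$ for all $h \in H$. Thus $\Hom(M(x), N)$ is the kernel of a linear map
$$
\Psi_x \colon \bigoplus_{i \in I} \Hom(k^{\nu_i}, N_i) \longrightarrow \bigoplus_{h \in H} \Hom(k^{\nu_{s(h)}}, N_{t(h)}),
\qquad
(\phi_i)_i \longmapsto \bigl(\phi_{t(h)} x_h - N_h \phi_{s(h)}\bigr)_h,
$$
and the entries of the matrix of $\Psi_x$ (in fixed bases) are linear — hence polynomial — functions of the entries $x_h$ of $x$. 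Therefore $x \mapsto \rk \Psi_x$ is lower semicontinuous on $\bbE_\nu$, so $x \mapsto \dim \Ker \Psi_x = \dim \Hom(M(x), N)$ is upper semicontinuous, and the locus where it is $\geq t$ is closed. Concretely, $\dim \Ker \Psi_x \geq t$ iff $\rk \Psi_x \leq (\sum_i \nu_i \dim N_i) - t$, which is cut out by the vanishing of all minors of $\Psi_x$ of the appropriate size; these minors are polynomials in the $x_h$, so the locus is Zariski closed. This handles the first set.

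For the second set, one argues symmetrically: a homomorphism $N \to M(x)$ is a tuple $(\psi_i)_{i \in I}$ with $\psi_i \colon N_i \to k^{\nu_i}$ satisfying $\psi_{t(h)} N_h = x_h \psi_{s(h)}$, so $\Hom(N, M(x))$ is the kernel of the linear map
$$
\Psi'_x \colon \bigoplus_{i \in I} \Hom(N_i, k^{\nu_i}) \longrightarrow \bigoplus_{h \in H} \Hom(N_{s(h)}, k^{\nu_{t(h)}}),
\qquad
(\psi_i)_i \longmapsto \bigl(x_h \psi_{s(h)} - \psi_{t(h)} N_h\bigr)_h,
$$
whose matrix entries are again linear in the $x_h$. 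The same minor-vanishing argument shows that $\{x \in \bbE_\nu \mid \dim \Hom(N, M(x)) \geq t\}$ is closed.

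There is essentially no serious obstacle here; the only point requiring a little care is the passage from "$\Hom$ is a kernel of a linear map $\Psi_x$" to semicontinuity, namely observing that the \emph{target} of $\Psi_x$ is a fixed vector space independent of $x$ (only the map varies), so that $\rk \Psi_x$ is genuinely given by minors of an $x$-dependent matrix of \emph{fixed} size, and hence is lower semicontinuous. Once that is set up, everything reduces to the standard fact that the vanishing locus of a family of polynomials is Zariski closed.
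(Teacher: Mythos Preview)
Your proof is correct and follows essentially the same approach as the paper: both rely on the upper semicontinuity of $x\mapsto\dim\Hom(M(x),N)$ (and its dual). The paper simply invokes this semicontinuity and cites Bongartz, whereas you give the standard self-contained argument by realizing $\Hom$ as the kernel of a linear map $\Psi_x$ whose matrix entries are linear in $x$, so that the locus is cut out by vanishing of minors.
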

\begin{proof}
This follows from the upper semicontinuity of $\bbE_\nu\times\bbE_{\nu'}\ra\bbN,(x,y)\mapsto\dime\Hom(M(x),M(y))$. See also Section 2.1 in \cite{Bongartz_On_degenerations_and_extensions_of_finite-dimensional_modules}.
\end{proof}

\begin{proposition}\label{Lemma: geometric property of the order}
In $\bbE_\nu$, we have $$\overline{X(\bfc,m)}\subset\bcup_{(\bfc',m')\in\tilde{\cG}_\nu,(\bfc',m')\preceq(\bfc,m)}Y(\bfc',m').$$
\end{proposition}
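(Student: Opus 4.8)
The strategy is to show that any point $x$ in the closure $\overline{X(\bfc,m)}$ lies in some $Y(\bfc',m')$ with $(\bfc',m')\preceq(\bfc,m)$. Since $\bbE_\nu=\sqcup_{(\bfc',m')\in\tilde\cG_\nu}Y(\bfc',m')$ by Krull--Schmidt, every $x\in\overline{X(\bfc,m)}$ does lie in a unique $Y(\bfc',m')$; the whole content is to verify $(\bfc',m')\preceq(\bfc,m)$. So fix $x\in\overline{X(\bfc,m)}$ and write $M(x)\cong M(\bfc')\oplus Z'$ with $Z'\in\cR_{\rm h}$, $\udim Z'=m'\dz$. I plan to extract three comparisons, matching the three clauses of Definition~\ref{order}, by testing $x$ against suitable modules using the closedness statements of Lemma~\ref{closedcondition}. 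The key point throughout is that every function $x\mapsto\dime\Hom(M(x),N)$ and $x\mapsto\dime\Hom(N,M(x))$ is constant on $X(\bfc,m)$ (all modules parametrized there are isomorphic after choosing the $m$ regular simples, and $\Hom$-dimensions into or out of a homogeneous regular module of dimension $m\dz$ do not depend on which distinct tubes are chosen — this uses Lemma~\ref{defect}(d) and the structure of $\cR_{\rm h}$), so the generic value propagates to the closure as an inequality $\geq$ or $\leq$ by upper semicontinuity.

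\textbf{Key steps.} First, the preprojective/preinjective comparison: I would test against $N=M(\bz_t)$ for $t\leq 0$ (resp.\ $t>0$) and use the defect criterion Lemma~\ref{defect}(a),(b) to see that the preprojective (resp.\ preinjective) part of $M(x)$ can only ``grow'' lexicographically in the sense recorded in Corollary~\ref{preprojdecomp} and Corollary~\ref{preinjdecomp}; concretely, the dimensions $\dime\Hom(M(x),M(\bz_t))$ and $\dime\Hom(M(\bz_t),M(x))$ control the lexicographic data of $\bfc'_\pm$, and semicontinuity forces $\bfc'_-\geq_L\bfc_-$ and $\bfc'_+\geq_L\bfc_+$. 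This is essentially the affine analogue of the classical degeneration order on preprojective/preinjective modules; the argument parallels the finite-type computation of Ringel cited after Lemma~\ref{preproj}. Second, if all these are equalities, so $\bfc'_\pm=\bfc_\pm$, then the regular part of $M(x)$ (a module of fixed dimension vector in $\cR_{\rm nh}\oplus\cR_{\rm h}$, with the $\cR_{\rm nh}$-part recorded by $\bfc'_0$ and the homogeneous part by $m'\dz$) can only degenerate within the regular category; here the relevant invariant is $\lan\dz,-\ran=0$ for all regular indecomposables, and degeneration can only move homogeneous regular summands into non-homogeneous tubes or refine the $\bfc_0$-data, never the reverse — giving $m'<m$, or $m'=m$ and $\bfc'_0\leq_G\bfc_0$ by the very definition of $\leq_G$ (Bongartz's identification of the Hom-order with the geometric order). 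Third, in the top case $\bfc'=\bfc$ and $m'=m$ there is nothing further to check, since $Y(\bfc,m)\supset X(\bfc,m)$ and any such $x$ gives $(\bfc,m)\preceq(\bfc,m)$ trivially; the partition refinement inside $X(\bfc,m)$ itself (the $\lz$-part) is not visible at the level of $Y$'s and is handled separately in later sections.

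\textbf{Main obstacle.} The delicate point is step two: separating the contribution of $m'\dz$ (homogeneous part) from that of $\bfc'_0$ (non-homogeneous part) when both $\bfc'_\pm=\bfc_\pm$. A single module $M(x)$ sitting in the boundary could, a priori, trade homogeneous regular summands for non-homogeneous ones \emph{and} shuffle the tube data simultaneously, and I must show the net effect respects clauses (b) and (c) of $\preceq$ in the prescribed priority order. The clean way is to first test $x$ against all modules supported in non-homogeneous tubes: the quantity $\sum_j\dime\Hom(\text{(simple in tube }J_j),M(x))$ detects the total regular-simple multiplicity, and because a degeneration cannot decrease $\dime\Hom$ from a regular simple, the multiplicity of each non-homogeneous tube can only increase, forcing either a strictly larger non-homogeneous part (hence $m'<m$ since total regular dimension $m\dz$ is fixed), or the same non-homogeneous dimension with $\bfc'_0\leq_G\bfc_0$ by applying the cyclic-quiver degeneration order tube by tube. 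I expect the bookkeeping of ``which $\Hom$-dimension detects which piece of the index'' to be the part requiring the most care; everything else is an application of Lemma~\ref{closedcondition} together with the orderings already set up in Definition~\ref{order}.
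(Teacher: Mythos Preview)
Your overall strategy coincides with the paper's: use the closed conditions of Lemma~\ref{closedcondition} with judiciously chosen test modules, exploiting that each such $\Hom$-dimension is constant on $X(\bfc,m)$ (the test modules have no $\Hom$ to or from homogeneous tubes). The preprojective/preinjective step is exactly what the paper does, intersecting finitely many sets $\{x:\dime\Hom(M(\bz_j),M(x))\geq\dime\Hom(M(\bz_j),M(\bfc_+))\}$ and their duals to force $\bfc'_\pm\geq_L\bfc_\pm$.

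There is, however, a gap in your proposed test for the regular part. Testing against the regular \emph{simples} $L_j$ detects socle multiplicities in each tube, not the dimensions $\dim_j M_{\cK_r}(\pi'_d)$, and these need not move in lockstep. For instance, in a rank-$2$ tube take $\pi_d=S_1[3]$ and $\pi'_d=S_1$: both have socle $S_1$ with multiplicity one, so $\dime\Hom(L_j,-)$ agrees for $j=1,2$, yet $\dim_1$ drops from $2$ to $1$ and the $kQ$-dimension drops by exactly $\dz$, allowing a pair $(\bfc',m+1)$ to slip past all your simple-tests while violating clause~(b) of Definition~\ref{order}. The paper's remedy is to test against \emph{long} indecomposables $\ez_d(M_{\cK_r}([j;l)))$ with $l$ at least the total tube-length in play, using the identity $\dime\Hom_{\cK_r}(M_{\cK_r}([j;l)),M)=\dim_j M$ for $l\geq\dime M$ (since $S_j[l]$ is then projective in the length-truncated category). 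This reads off $\dim_j$ exactly, which rules out $m'>m$ (some $\dim_j$ would have to strictly drop, contradicting semicontinuity) and reduces the case $m'=m$ to the very definition of $\leq_G$ tube by tube, as you anticipated. With this single substitution your plan goes through.
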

\begin{proof}
Let $s$ be the maximal positive integer s.t. $\bfc_+(s)\neq 0$. By Lemma \ref{closedcondition}, the sets
$$W_j:=\{x\in\bbE_\nu|\dime\Hom(M(\bz_j),M(x))\geq\dime\Hom(M(\bz_j),M(\bfc_+))\}$$
 are closed for all $j\geq 1$ and $X(\bfc,m)\subset W_{j}$. 
Let $$W_{+}:=\bcap_{j=1}^s W_j,$$ then by the definition of the lexicographic order on $\cG_+$ and the fact that $$\Hom(M(\bz_j),M(\bz_{j'}))=0,\,\text{if}\, j'>j\geq 1,$$ we have
$$W_{+}=\bcup_{(\bfc',m')\in\tilde{\cG}_\nu,\bfc'_+\geq_L \bfc_+}Y(\bfc',m').$$
Also, $W_{+}$ is closed and $X(\bfc,m)\subset W_{+}$.

Dually, let
$$W_j':=\{x\in\bbE_\nu|\dime\Hom(M(x),M(\bz_j))\geq\dime\Hom(M(\bfc_-),M(\bz_j))\}$$
for all $j\leq 0$ and
$$W_{-}:=\bcap_{j=0}^{s'} W'_j,$$
then
$$W_{-}=\bcup_{(\bfc',m')\in\tilde{\cG}_\nu,\bfc'_-\geq_L \bfc_-}Y(\bfc',m')$$
is closed and $X(\bfc,m)\subset W_{-}$. Here $s'$ is the minimal negative integer s.t. $\bfc_-(s')\neq 0$.
Therefore,
$$W_{\pm}=W_{+}\cap W_{-}=\bcup_{(\bfc',m')\in\tilde{\cG}_\nu,\bfc'_-\geq_L\bfc_-, \bfc'_+\geq_L \bfc_+}Y(\bfc',m')$$ is closed.
It follows that $$\overline{X(\bfc,m)}\subset W_{\pm}=(\bcup_{\bfc'_\pm>_L \bfc_\pm}Y(\bfc',m'))\bcup(\bcup_{\bfc'_\pm=\bfc_\pm}Y(\bfc',m')).$$
Recall that for those $(\bfc',m')\in\tilde{\cG}_{\nu}$ with $\bfc'_\pm>_L \bfc_\pm$, they satisfy $(\bfc',m')\prec(\bfc,m)$ already.

Next we prove that for those $(\bfc',m')\in\tilde{\cG}_{\nu}$ with $\bfc'_\pm=\bfc_\pm$ and $(\bfc',m')\nprec(\bfc,m)$, $\overline{X(\bfc,m)}\cap Y(\bfc',m')=\emptyset$. The condition can be divided into several cases as follows.

(1) Let $(\bfc',m')\in\tilde{\cG}_{\nu}$ with $\bfc'_\pm=\bfc_\pm$.
Denote $\bfc'_0=(\pi'_1,\pi'_2,\cdots,\pi'_s),\bfc_0=(\pi_1,\pi_2,\cdots,\pi_s)$.
We claim that if there is $1\leq d\leq s$ and $j\in I_r=\{1,2,\dots,r\},r=r(d)$ such that
$$\dim_j\, M_{\cK_r}(\pi'_d)<\dim_j\, M_{\cK_r}(\pi_d),$$
then $\overline{X(\bfc,m)}\cap Y(\bfc',m')=\emptyset$.

One can check that for any $M\in\cK_r$, if $l\geq\dime M$, then
$$\dime\Hom_{\cK_r}(M_{\cK_r}([j;l)),M)=\dim_j\, M.$$
Let $l\geq\dime M_{\cK_r}(\pi'_d)+\dime M_{\cK_r}(\pi_d)$, then
$$\dime\Hom_{\cK_r}(M_{\cK_r}([j;l)),M_{\cK_r}(\pi'_d))<\dime\Hom_{\cK_r}(M_{\cK_r}([j;l)),M_{\cK_r}(\pi_d)).$$

Back in $\mod kQ$, this means that we can find $M'=\ez_d(M_{\cK_r}([j;l)))\in J_d$ such that
$$\dime\Hom(M',M(\bfc'_0))<\dime\Hom(M',M(\bfc_0)),$$
since there are no homomorphisms between different tubes.
Now consider the closed subset
$$\{x\in\bbE_\nu|\dime\Hom(M',M(x))\geq\dime\Hom(M',M(\bfc_0)\oplus M(\bfc_+))\}.$$
Then it contains all points in $X(\bfc,m)$ but no points in $Y(\bfc',m')$ since $\bfc_+=\bfc'_+$, which implies that $\overline{X(\bfc,m)}\cap Y(\bfc',m')=\emptyset$.

(2) Let $(\bfc',m')\in\tilde{\cG}_{\nu}$ with $\bfc'_\pm=\bfc_\pm,m'>m$. By comparing dimensions, the condition of the assertion in (1) is satisfied, so $\overline{X(\bfc,m)}\cap Y(\bfc',m')=\emptyset$.

(3) Let $(\bfc',m')\in\tilde{\cG}_{\nu}$ with $\bfc'_\pm=\bfc_\pm, m'=m$ but $\bfc'_0\nleq_G \bfc_0$.
Denote $\bfc'_0=(\pi'_1,\pi'_2,\cdots,\pi'_s),\bfc_0=(\pi_1,\pi_2,\cdots,\pi_s)$.

If the condition of the assertion in (2) is satisfied, we have $\overline{X(\bfc,m)}\cap Y(\bfc',m')=\emptyset$. Otherwise, assume that $\dim_j\, M_{\cK_r}(\pi'_d)\geq\dim_j\, M_{\cK_r}(\pi_d)$ for all $1\leq d\leq s$ and $1\leq j\leq r(d)$, then these must be equations since $\dime M(\bfc'_0)=\dime M(\bfc_0)$.

So we can assume that $\udim M_{\cK_r}(\pi'_d)=\udim M_{\cK_r}(\pi_d)$ for all $d$. Then $\bfc'_0\nleq_G \bfc_0$ implies that there is a $d$ such that $\pi'_d\nleq_G \pi_d$. By the definition of ordering on $\Pi(r)$, there is $M'\in\cK_r$ such that $\dime\Hom(M',M_{\cK_r}(\pi'_d))<\dime\Hom(M',M_{\cK_r}(\pi_d))$. Then the same discussion in (2) shows that $\overline{X(\bfc,m)}\cap Y(\bfc',m')=\emptyset$.
\end{proof}

There is also a geometric ordering on $\tilde{\cG}$ defined as follows.
\begin{definition}
Define the ordering $\leq$ on $\tilde{\cG}_\nu$ by letting $(\bfc',m')\leq (\bfc,m)$ if and only if $X(\bfc',m')\subset \overline{X(\bfc,m)}$.
\end{definition}

The following result shows that $\preceq$ is a refinement of $\leq$.
\begin{corollary}
For $(\bfc',m'),(\bfc,m)\in\tilde{\cG}_\nu$, if $(\bfc',m')\leq (\bfc,m)$, then $(\bfc',m')\preceq (\bfc,m)$.
\end{corollary}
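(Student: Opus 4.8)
The plan is to deduce the corollary directly from Proposition \ref{Lemma: geometric property of the order}, using the definition of the geometric order $\leq$ and the fact that the $Y(\bfc'',m'')$ form a partition of $\bbE_\nu$. Suppose $(\bfc',m')\leq(\bfc,m)$, i.e. $X(\bfc',m')\subset\overline{X(\bfc,m)}$. First I would pick any point $x\in X(\bfc',m')$; such a point exists because $X(\bfc',m')$ is nonempty (one can always build a module of the prescribed isomorphism type, choosing $m'$ distinct homogeneous tubes over the infinite field $k$). Since $x\in X(\bfc',m')\subset Y(\bfc',m')$ and also $x\in\overline{X(\bfc,m)}$, Proposition \ref{Lemma: geometric property of the order} gives $x\in\bigcup_{(\bfc'',m'')\preceq(\bfc,m)}Y(\bfc'',m'')$. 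Because $\bbE_\nu=\sqcup_{(\bfc'',m'')\in\tilde\cG_\nu}Y(\bfc'',m'')$ is a disjoint decomposition (Krull--Schmidt), the unique member of this decomposition containing $x$ is $Y(\bfc',m')$ itself; hence $(\bfc',m')\preceq(\bfc,m)$, which is exactly the claim.

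The only point that requires a word of care is the disjointness of the $Y(\bfc'',m'')$: two modules $M(\bfc'')\oplus Z$ and $M(\bfc''')\oplus Z'$ with $Z,Z'$ homogeneous regular of dimension vectors $m''\dz$ and $m'''\dz$ are isomorphic only if $m''=m'''$ and $M(\bfc'')\cong M(\bfc''')$, since the decomposition of a module into preprojective, regular, and preinjective parts is unique, the non-homogeneous regular part is rigidly encoded by $\bfc''_0$, and the homogeneous part contributes no overlap with the other three pieces. This is already implicit in the text ("By the theorem of Krull--Schmidt, $\bbE_\nu=\sqcup_{(\bfc,m)\in\tilde\cG_\nu}Y(\bfc,m)$"), so I would simply cite it. There is no real obstacle here: the corollary is a formal consequence of the proposition once one observes that a single witness point $x\in X(\bfc',m')$ already determines $(\bfc',m')$ among all the indices, because $x$ lies in exactly one of the pairwise disjoint pieces $Y(\bfc'',m'')$.

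I would therefore write the proof as essentially two sentences: take $x\in X(\bfc',m')\subset Y(\bfc',m')$; then $x\in\overline{X(\bfc,m)}$ forces, by Proposition \ref{Lemma: geometric property of the order} and the disjointness of the pieces $Y$, that $(\bfc',m')\preceq(\bfc,m)$. The main "obstacle" — if any — is purely expository: making sure the reader sees why membership of a single point in the right-hand union of Proposition \ref{Lemma: geometric property of the order} pins down the index $(\bfc',m')$ uniquely, rather than merely bounding it. Once the disjointness is invoked this is immediate.
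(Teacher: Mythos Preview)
Your proof is correct and follows essentially the same route as the paper: both use Proposition \ref{Lemma: geometric property of the order} to place $X(\bfc',m')$ inside $\bigcup_{(\bfc'',m'')\preceq(\bfc,m)}Y(\bfc'',m'')$, and then invoke the disjointness of the $Y(\bfc'',m'')$ together with $X(\bfc',m')\subset Y(\bfc',m')$ to pin down the index. The paper phrases the last step as ``$X(\bfc',m')\cap Y(\bfc'',m'')$ is nonempty if and only if $(\bfc'',m'')=(\bfc',m')$'', which is exactly your single-point argument.
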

\begin{proof}
If $(\bfc',m')\leq (\bfc,m)$, then
$$X(\bfc',m')\subset \overline{X(\bfc,m)}=\bcup_{(\bfc'',m'')\in\tilde{\cG}_\nu,(\bfc'',m'')\preceq(\bfc,m)}Y(\bfc'',m'').$$
Note that $X(\bfc',m')\cap Y(\bfc'',m'')$ is nonempty if and only if $(\bfc'',m'')=(\bfc',m')$, hence $(\bfc',m')\preceq (\bfc,m)$.
\end{proof}

\section{The extended composition algebra and PBW basis}
\subsection{The extended composition algebra for a fixed field}\label{def of N(c,lz)}
Let $k=\bbF_q$, $v_q=\sqrt{q}$.
Let $\cH^0=\cH^0_q$ be the $\bbQ(v_q)$-subalgebra of $\cH^*(kQ)$ generated by $u_i=u_{[S_i]}, i\in I$ and $u_{[M]}, M\in \cR_{\rm{nh}}$.
For $(\bfc,t_\lz)\in\cG$ with $\bfc=(\bfc_-,\bfc_0,\bfc_+)$ and $T_\lz$ as in Section \ref{sec: character of permutation mod}, define
$$N(\bfc,T_\lz)=\lan M(\bfc_-)\ran*\lan M(\bfc_0)\ran*H_{\lz}*\lan M(\bfc_+)\ran,$$
$$N(\bfc,t_\lz)=\lan M(\bfc_-)\ran*\lan M(\bfc_0)\ran*S_{\lz}*\lan M(\bfc_+)\ran,$$
$$N(\bfc,0)=\lan M(\bfc)\ran.$$

\begin{lemma}\label{Lemma: Kronecker of Pm}
Let $I$ be an injective module with $\lan\dz,\udim I\ran=1$, and $P$ be the preprojective module with $\udim P=\dz-\udim I$. Then
$$\lan I\ran^{(m)}*\lan P\ran^{(m)}=H_m+\sum_{1\leq l\leq m}E_{l\dz}H_{m-l}+\sum_{\bfc_->0,\bfc_+>0}v^{d_m(\bfc,t'_\lz)}N(\bfc,T_\lz),$$
for some $E_{l\dz}\in\cH^*_q(\cR_{\rm nh})$, $d_m(\bfc,T_\lz)\in\bbZ$.
\end{lemma}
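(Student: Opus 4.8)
The plan is to compute the product $\lan I\ran^{(m)}*\lan P\ran^{(m)}$ inside $\cH^*(kQ)$ by analyzing which modules $L$ appear as extensions of $P^{\oplus m}$ by $I^{\oplus m}$. Since $\lan\dz,\udim I\ran=1$ means $I$ is a preinjective module with minimal positive defect, and $P$ is the complementary preprojective with $\udim P+\udim I=\dz$, the key point is that $\Hom(I^{\oplus m},P^{\oplus m})=0$ (preinjective to preprojective) and $\Ext^1(P^{\oplus m},I^{\oplus m})$ is typically large; the generic extension will be a regular module of dimension $m\dz$. First I would invoke Lemma~\ref{M^(m)} to write $\lan I\ran^{(m)}=\lan I^{\oplus m}\ran$ and $\lan P\ran^{(m)}=\lan P^{\oplus m}\ran$ (both $I$ and $P$ are exceptional, being indecomposable preinjective/preprojective modules). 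Then $\lan I^{\oplus m}\ran*\lan P^{\oplus m}\ran=\sum_{[L]}g^L_{I^{\oplus m},P^{\oplus m}}\,v^{\lan m\,\udim I,\,m\,\udim P\ran}\lan L\ran$ after converting back; the dimension vector of every such $L$ is $m\dz$.

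**Classifying the terms.** Every module $L$ of dimension vector $m\dz$ decomposes, by Krull--Schmidt and Lemma~\ref{defect}, into a preprojective part $M(\bfc_-)$, a non-homogeneous regular part, a homogeneous regular part of dimension $l\dz$ for some $0\le l\le m$, and a preinjective part $M(\bfc_+)$; moreover since $L$ is an extension of $P^{\oplus m}$ (preprojective) by $I^{\oplus m}$ (preinjective) and $\Hom(I,P)=0$, a submodule isomorphic to $I^{\oplus m}$ forces constraints linking $\bfc_-$ and $\bfc_+$. The purely regular terms (both $\bfc_-=0$ and $\bfc_+=0$) are exactly those counted by $g^L_{I^{\oplus m},P^{\oplus m}}$ with $L$ regular of dimension $m\dz$; among these the homogeneous ones of dimension exactly $m\dz$ assemble (after normalization by $v^{-\dim}$) into $H_m$, by the very definition $H_m=\sum_{M\in\cR_{\rm h},\,\udim M=m\dz}v_q^{-\dim M}u_{[M]}$, while those with a non-homogeneous regular component of dimension $l\dz$ and homogeneous part of dimension $(m-l)\dz$ contribute the terms $E_{l\dz}H_{m-l}$, where $E_{l\dz}\in\cH^*_q(\cR_{\rm nh})$ collects the relevant non-homogeneous Hall numbers. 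The remaining terms have $\bfc_-\neq 0$ or $\bfc_+\neq 0$; since the total dimension is $m\dz$ and defects must cancel, $\bfc_-\neq 0$ forces $\bfc_+\neq 0$ and vice versa, and each such term is by definition (Section~9.1) a scalar multiple of some $N(\bfc,t'_\lz)$, giving the last sum over $\bfc_->0,\bfc_+>0$.

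**Extracting the coefficients.** The next step is to verify the claimed coefficient normalizations: that the homogeneous terms really combine with coefficient $1$ into $H_m$ and that the mixed non-homogeneous/homogeneous terms combine into $E_{l\dz}H_{m-l}$ with the powers of $v$ absorbed. This is where the precise exponent $v^{\lan m\,\udim I,\,m\,\udim P\ran}$ together with the normalization $\lan M\ran=v^{-\dim M+\dim\End M}u_{[M]}$ must be matched. The cleanest route is to observe that the composition algebra is generic (Hall polynomials exist by Lemma~\ref{hallpoly}), so it suffices to check the identity after applying the algebra homomorphism $\phi:\cH^*_q(\cR_{\rm h})\to\bbQ(v_q)[Z^1_k]$ from the proof of Proposition~\ref{Character:Kostka} on the homogeneous part, reducing the homogeneous contribution to the identity $e_m$-type symmetric function computation, while the non-homogeneous factor is pinned down by the structure of $\cH^*(\cR_{\rm nh})$ and the fact that multiplication by $\lan I^{\oplus m}\ran$ and $\lan P^{\oplus m}\ran$ on a regular module is, up to the Euler-form twist, the generic extension. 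Finally one sets $d_m(\bfc,t'_\lz)$ to be the resulting integer exponent, without needing its exact value.

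**Main obstacle.** The hard part will be the bookkeeping of the $v$-powers so that the leading terms $H_m$ and $E_{l\dz}H_{m-l}$ appear with the stated coefficients ($1$, i.e.\ no stray powers of $v$), rather than the combinatorial classification of the terms, which follows routinely from the defect lemma and Krull--Schmidt. A secondary subtlety is confirming that no homogeneous module of dimension $m\dz$ is missed and that the non-homogeneous contributions factor cleanly as (something in $\cH^*_q(\cR_{\rm nh})$) times $H_{m-l}$ — this relies on the orthogonality of tubes (no homomorphisms or extensions between non-homogeneous and homogeneous tubes) and on the commutativity of $\cH^*(\cR_{\rm h})$, both already recorded in Sections~5.3 and~5.4.
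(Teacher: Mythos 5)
The paper's own ``proof'' is one line: it cites Lemma~3.13(3) and Section~7.1 of Lin--Xiao--Zhang, where the actual computation lives. Your proposal attempts to reconstruct that computation from scratch, and the overall skeleton (expand the Hall product, classify terms by Krull--Schmidt and the defect lemma, then track normalizations) is the right shape — but several concrete steps are wrong or missing, and the hardest part is declared to ``follow routinely'' when in fact it is the whole content of the lemma.

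First, two directional errors. In $\lan I\ran^{(m)}*\lan P\ran^{(m)}$, the Hall convention in the paper makes $P^{\oplus m}$ the \emph{sub}module and $I^{\oplus m}$ the \emph{quotient}, so the relevant extension is $0\to P^{\oplus m}\to L\to I^{\oplus m}\to 0$; you repeatedly write ``a submodule isomorphic to $I^{\oplus m}$'', which is backwards. Likewise, $\Ext^1(P^{\oplus m},I^{\oplus m})=0$ always (preprojective to preinjective), so it is not ``typically large''; the nonzero group is $\Ext^1(I^{\oplus m},P^{\oplus m})$. Second, your displayed formula $\lan I^{\oplus m}\ran*\lan P^{\oplus m}\ran=\sum_{[L]}g^L_{I^{\oplus m},P^{\oplus m}}v^{\lan m\,\udim I,m\,\udim P\ran}\lan L\ran$ drops the endomorphism-ring normalizations coming from $\lan M\ran=v^{-\dim M+\dim\End M}u_{[M]}$. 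The correct conversion gives $\sum_L g^L\,v^{\dim\End I^{\oplus m}+\dim\End P^{\oplus m}+\lan m\udim I,m\udim P\ran-\dim\End L}\lan L\ran$, and only after observing the cancellation $m^2+m^2+m^2\lan\udim I,\udim P\ran=m^2+m^2-2m^2=0$ does one get the clean form $\sum_L g^L\,v^{-\dim\End L}\lan L\ran=v^{-m|\dz|}\sum_L g^L\,u_{[L]}$. This identity is exactly what makes the stated leading term possible, and you never derive it.

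Third and most seriously, the crux of the lemma is the Hall-number claim $g^L_{I^{\oplus m},P^{\oplus m}}=1$ for every homogeneous regular $L$ with $\udim L=m\dz$ (without which the homogeneous contribution would be $\sum_L g^L v^{-\dim L}u_{[L]}$, not $H_m$), together with the precise shape of the Hall numbers when $L$ has nonzero preprojective and preinjective parts (so that, after grouping, the coefficient of $N(\bfc,t'_\lz)$ is a single power of $v$). You assert the first ``by the very definition'' of $H_m$ and describe the second as a ``scalar multiple of some $N(\bfc,t'_\lz)$'', but neither follows from the definitions; $\lan L\ran$ is not a scalar multiple of $N(\bfc,t'_\lz)$ — the latter is itself a sum over many $L$'s with varying homogeneous part, and writing the product in the $N$-basis requires an actual identity among Hall numbers. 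Finally, the proposed shortcut via the homomorphism $\phi:\cH^*_q(\cR_{\rm h})\to\bbQ(v_q)[Z^1_k]$ cannot be applied here: $\phi$ is defined only on $\cH^*_q(\cR_{\rm h})$, and $\lan I^{\oplus m}\ran*\lan P^{\oplus m}\ran$ lives in a much bigger piece of $\cH^*(kQ)$ (it has nonzero preprojective and preinjective components), so there is no way to push the whole identity through $\phi$. In short, your outline matches the shape of the computation in Lin--Xiao--Zhang but leaves the actual content — the Hall-number identities and the $v$-power bookkeeping — unproved, which is exactly what the cited lemma supplies.
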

\begin{proof}
See Lemma 3.13(3) and Section 7.1 in \cite{Lin_Xiao_Zhang_Representations_of_tame_quivers_and_affine_canonical_bases}.
\end{proof}

\begin{lemma}[\cite{Zhang_Rational_Ringel-Hall_Algebras_Hall_Polynomials_Of_Affine_Type_and_Canonical_Bases}]\label{Lemma: tN is basis}
The set
$\{N(\bfc,t_\lz)|(\bfc,t_\lz)\in\cG\}$ is a $\bbQ(v_q)$-basis of $\cH^0_q$.
\end{lemma}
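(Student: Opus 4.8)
The plan is to realize $\cH^0_q$ as a four-fold ordered product
\[
\cH^0_q=\cH^*(\cP)\cdot\cH^*(\cR_{\rm nh})\cdot\cA\cdot\cH^*(\cI),
\]
where $\cA$ denotes the $\bbQ(v_q)$-subalgebra of $\cH^*(\cR_{\rm h})$ generated by $H_1,H_2,\dots$, and then to read off the asserted basis from the known bases of the four factors. Granting this decomposition, I would argue as follows. By Lemma~\ref{M^(m)} and the descriptions in Section~\ref{PI components} and on non-homogeneous tubes, $\{\lan M(\bfc_-)\ran\}$, $\{\lan M(\bfc_0)\ran\}$ and $\{\lan M(\bfc_+)\ran\}$ are $\bbQ(v_q)$-bases of $\cH^*(\cP)$, $\cH^*(\cR_{\rm nh})$ and $\cH^*(\cI)$. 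The homomorphism $\phi$ of Proposition~\ref{Character:Kostka} carries $H_m$ to $v_q^{-m|\dz|}h_m$, so (for $q$ large, hence with infinitely many degree-one homogeneous tubes) the $H_m$ are algebraically independent and $\{H_\lz\}_{\lz\in\mathscr{P}}$ is a $\bbQ(v_q)$-basis of $\cA$. The ordered multiplication $\cH^*(\cP)\otimes\cH^*(\cR_{\rm nh})\otimes\cA\otimes\cH^*(\cI)\to\cH^0_q$ is injective, being the restriction of the vector-space isomorphism $\cH^*(\cP)\otimes\cH^*(\cR_{\rm nh})\otimes\cH^*(\cR_{\rm h})\otimes\cH^*(\cI)\to\cH^*(kQ)$ given by ordered multiplication, which exists thanks to the $\Hom$ and $\Ext^1$ orthogonality among preprojective, regular and preinjective modules. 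Hence $\{N(\bfc,t'_\lz)\mid(\bfc,t_\lz)\in\cG\}$ is a $\bbQ(v_q)$-basis of $\cH^0_q$, and since $H_\lz=\sum_\mu K_{\lz\mu}S_\mu$ with $K_{\lz\lz}=1$ the change of basis to $\{S_\lz\}$ is unitriangular, so $\{N(\bfc,t_\lz)\mid(\bfc,t_\lz)\in\cG\}$ is a basis as well. (Linear independence of the $N(\bfc,t_\lz)$ can also be obtained from Proposition~\ref{Lemma: geometric property of the order}, which pins down the supports of these elements in the basis $\{u_{[L]}\}$ of $\cH^*(kQ)$ and makes them triangular for $\preceq$.)

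\textbf{Proving the decomposition.} For ``$\supseteq$'', I would check that each factor lies in $\cH^0_q$: $\cH^*(\cR_{\rm nh})\subseteq\cH^0_q$ by definition; $\cH^*(\cP)$ and $\cH^*(\cI)$ lie in the composition subalgebra $\cC^*_q\subseteq\cH^0_q$, because every $\lan M(\bz_t)^{\oplus m}\ran$ is a Lusztig-symmetry image of the $u_i$ (Section~\ref{PI components}) and such elements span the natural bases of $\cH^*(\cP)$ and $\cH^*(\cI)$; and $\cA\subseteq\cC^*_q$ by the structure theorem for $\cC^*_q$ of \cite{Lin_Xiao_Zhang_Representations_of_tame_quivers_and_affine_canonical_bases}, whose key input is Lemma~\ref{Lemma: Kronecker of Pm} (identifying $H_m$ as the homogeneous-regular part of $\lan I\ran^{(m)}*\lan P\ran^{(m)}\in\cC^*_q$, the remaining terms having strictly smaller imaginary weight or a nonzero preprojective/preinjective component). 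For ``$\subseteq$'', I would show that $W:=\cH^*(\cP)\cdot\cH^*(\cR_{\rm nh})\cdot\cA\cdot\cH^*(\cI)$ is a subalgebra. It contains the generators of $\cH^0_q$: each $u_i=\lan S_i\ran$ lies in $\cH^*(\cP)$, $\cH^*(\cI)$ or $\cH^*(\cR_{\rm nh})$ (never in $\cH^*(\cR_{\rm h})$, since $\dim S_i=1<|\dz|$), and $u_{[M]}\in\cH^*(\cR_{\rm nh})$ for $M\in\cR_{\rm nh}$. For closure under multiplication, every reordering needed to bring a product into the standard order is of one of two kinds. Either it multiplies an $\cR_{\rm nh}$-class and a preprojective, preinjective, or homogeneous-regular class (classes from distinct tubes simply commuting): such a product has no homogeneous-regular summand — a short argument using $\Hom(\cR,\cP)=\Hom(\cI,\cR)=0$ and the orthogonality of distinct tubes, applied to the indecomposable summands of the possible intermediate modules — so it stays in $\cH^*(\cP)\cdot\cH^*(\cR_{\rm nh})\cdot\cH^*(\cI)\subseteq W$. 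Or it is a product of two elements of $\cC^*_q$, for instance $\lan I\ran*\lan P\ran$ ($I$ preinjective, $P$ preprojective) or $a*\lan P\ran$, $\lan I\ran*a$ with $a\in\cA$, which lies in $\cC^*_q\subseteq W$.

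\textbf{The main obstacle.} The hard part is the decomposition itself, and within it the closure of $W$: the delicate point is that the homogeneous-regular contributions produced when one straightens a product of a preinjective and a preprojective class — and in the analogous mixed reorderings — all have to land in the small subalgebra $\cA$ rather than in the full $\cH^*(\cR_{\rm h})$. This is precisely what the structure theorem for $\cC^*_q$ in \cite{Lin_Xiao_Zhang_Representations_of_tame_quivers_and_affine_canonical_bases} provides, with Lemma~\ref{Lemma: Kronecker of Pm} as the prototype computation. Passing from $\cC^*_q$, whose non-homogeneous regular part is only the aperiodic span $\cH^*(\cR_{\rm nh})^a$, to $\cH^0_q$, whose non-homogeneous regular part is the full $\cH^*(\cR_{\rm nh})$, costs nothing, since $\cH^*(\cR_{\rm nh})$ is $\Hom$- and $\Ext^1$-orthogonal to $\cH^*(\cR_{\rm h})$ and so cannot enlarge the homogeneous-regular part; the bookkeeping of the first paragraph then completes the proof.
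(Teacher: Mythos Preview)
Your proposal is correct and follows essentially the same route as the paper. Both arguments rest on the same three ingredients: Lemma~\ref{Lemma: Kronecker of Pm} (plus induction) to place each $H_m$, hence each $N(\bfc,t_\lz)$, inside $\cH^0_q$; the structure theorem for $\cC^*_q$ from \cite{Lin_Xiao_Zhang_Representations_of_tame_quivers_and_affine_canonical_bases} (Proposition~7.2 and Section~9.2 there) to control the homogeneous-regular contributions; and the observation that reordering a non-homogeneous regular class past a preprojective or preinjective one produces no homogeneous-regular summand, so the extra generators $u_{[M]}$, $M\in\cR_{\rm nh}$, do not enlarge the span. The paper states this last point more tersely---checking only the two reorderings $u_{[M]}*\lan M(\bfc_-)\ran$ and $\lan M(\bfc_+)\ran*u_{[M]}$ that actually arise when one left- or right-multiplies an $N(\bfc,t_\lz)$ by $u_{[M]}$---whereas you frame it as closure of the ordered product $W$ under multiplication; but the content is identical, and your linear-independence argument via the tensor decomposition of $\cH^*(kQ)$ is the implicit reason the paper does not bother to discuss independence.
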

\begin{proof}
By Lemma \ref{Lemma: Kronecker of Pm} and using induction, $H_m\in\cH^0_q$. Hence, $N(\bfc,t_\lz)\in\cH^0_q$.
It remains to show that $\cH^0_q$ can be linearly spanned by $\{N(\bfc,t_\lz)|(\bfc,t_\lz)\in\cG\}$.

By Proposition 7.2 and Section 9.2 in \cite{Lin_Xiao_Zhang_Representations_of_tame_quivers_and_affine_canonical_bases}, any element in $\cC^*$ can be linearly spanned by $\{N(\bfc,t_\lz)|(\bfc,t_\lz)\in\cG\}$.
So we need only to prove that for $M\in \cR_{\rm{nh}}$, elements of form $u_{[M]}*\lan M(\bfc_-)\ran$ or $H_m*\lan M(\bfc_+)\ran*u_{[M]}$ can be linearly spanned by $\{N(\bfc,t_\lz)|(\bfc,t_\lz)\in\cG\}$.
The former is true since the extensions of $M$ by $M(\bfc_-)$ are in $\cP\oplus\cR_{\rm nh}$ and there are only finitely many extensions up to isomorphisms, and the latter is true for a similar reason.
\end{proof}

Therefore we have a natural decomposition $$\cH^0_q=\cH^*_q(\cP)\otimes_{\bbQ(v_q)}\cH^*_q(\cR_{\rm{nh}})\otimes_{\bbQ(v_q)}\bbQ(v_q)[H_1,H_2,\cdots]\otimes_{\bbQ(v_q)}\cH^*_q(\cI).$$

\subsection{The integral and generic property}
We are going to prove the main proposition of this section.
\begin{proposition}\label{multiofN}
Let $(\bfc^1,t_{\lz^1}),(\bfc^2,t_{\lz^2})\in\cG$. Then there exists a polynomial $\psi^{(\bfc,t_{\lz})}_{(\bfc^1,t_{\lz^1}),(\bfc^2,t_{\lz^2})}\in\cZ$ for each $(\bfc,t_\lz)\in\cG$ with $\bfc_-\geq_L \bfc^1_-,\bfc_+\geq_L \bfc^2_+$, such that
$$N(\bfc^1,t_{\lz^1})*N(\bfc^2,t_{\lz^2})=\sum_{(\bfc,t_\lz)\in\cG\atop\bfc_-\geq_L \bfc^1_-,\bfc_+\geq_L \bfc^2_+}\psi^{(\bfc,t_\lz)}_{(\bfc^1,t_{\lz^1}),(\bfc^2,t_{\lz^2})}(v_q)N(\bfc,t_\lz)$$
in $\cH^0_q$  for each finite field $k=\bbF_q$ with $q$ large enough.
\end{proposition}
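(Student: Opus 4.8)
The plan is to establish the claim in three stages: first a structural reduction using the tensor-product decomposition of $\cH^0_q$, then the key multiplication identities for the three "types" of factors (preprojective, non-homogeneous regular together with $H_\lz$, and preinjective), and finally the promotion of the resulting equalities over $\bbQ(v_q)$ to polynomial identities over $\cZ$ via the Hall-polynomial existence theorems. First I would write out both $N(\bfc^1,t_{\lz^1})$ and $N(\bfc^2,t_{\lz^2})$ using their definitions as ordered products $\lan M(\bfc^1_-)\ran * \lan M(\bfc^1_0)\ran * S_{\lz^1} * \lan M(\bfc^1_+)\ran$ (and similarly for the second), so that the product $N(\bfc^1,t_{\lz^1})*N(\bfc^2,t_{\lz^2})$ becomes a product of eight factors in the fixed order $(-,0,S,+,-,0,S,+)$. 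The entire task is to move the fourth block $\lan M(\bfc^1_+)\ran$ past the fifth, sixth and seventh blocks to restore the canonical ordering $(-,0,S,+)$, and then to collapse the two resulting $(-)$-blocks into one and the two $(+)$-blocks into one.

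The core of the argument consists of the following commutation lemmas, all of which are either in the excerpt or provable by the methods cited there. (i) Since $\cP$ and $\cI$ sit at opposite ends with respect to the defect form (Lemma \ref{defect}), $\lan M(\bfc^1_+)\ran * \lan M(\bfc^2_-)\ran$ expands, by $\Hom/\Ext^1$ considerations between preinjective and preprojective modules, into a $\bbQ(v_q)$-combination of terms $\lan M(\bfd_-)\ran * (\text{regular or preinjective stuff}) * \lan M(\bfd_+)\ran$; the preprojective exponent $\bfd_-$ that arises is $\geq_L \bfc^2_-$ by the lexicographic-ordering argument of Corollary \ref{preprojdecomp} (this is exactly the "$\bfc_-\geq_L\bfc^1_-$, $\bfc_+\geq_L\bfc^2_+$" constraint after re-indexing, since the $(-)$-block on the far left of $N(\bfc^1,\cdot)$ is already $\bfc^1_-$ and only grows). (ii) Moving $\lan M(\bfc^1_+)\ran$ past $\lan M(\bfc^2_0)\ran*S_{\lz^2}$: extensions of a preinjective module by a regular module lie in $\cR\oplus\cI$ and there are finitely many, so one gets a combination of terms of the form $(\text{regular})*(\text{preinjective})$, and the regular part can be re-sorted into its non-homogeneous component times a homogeneous-regular element, which by Lemma \ref{Lemma: tN is basis} (or directly via $H_\lz=\sum K_{\lz\mu}S_\mu$ and the commutativity of $\cH^*(\cR_{\mathrm h})$) is a $\bbQ(v_q)$-combination of $\lan M(\bfc_0)\ran*S_\mu$'s. (iii) Merging the two preprojective blocks and the two preinjective blocks uses Corollaries \ref{preprojdecomp} and \ref{preinjdecomp} directly; merging the two non-homogeneous-regular blocks uses that $\cH^*(\cR_{\mathrm{nh}})$ is the tensor product of the cyclic-quiver Hall algebras $\cH^*(J_d)$ and is spanned by the $\lan M(\bfc_0)\ran$; merging the two homogeneous-regular blocks $S_{\lz^1}$ and "$S_{\mu}$" uses that $\cH^*(\cR_{\mathrm h})$ is commutative and that products $H_\lz H_{\lz'}$ expand into $H$'s, hence into $S$'s with $\bbZ$ coefficients (Littlewood–Richardson / Kostka). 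Collecting everything, $N(\bfc^1,t_{\lz^1})*N(\bfc^2,t_{\lz^2})$ is a $\bbQ(v_q)$-combination of $N(\bfc,t_\lz)$ with $\bfc_-\geq_L\bfc^1_-$ and $\bfc_+\geq_L\bfc^2_+$, proving the shape of the formula.

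It remains to show the coefficients are values at $v_q$ of polynomials in $\cZ$ that do not depend on $q$ (for $q$ large). For this I would run the above computation at the level of structure constants: every step multiplies basis elements $\lan N'\ran$ of the Ringel–Hall algebra, and the resulting structure constants are, up to powers of $v_q$ coming from the twisting $\lan\cdot\ran=v^{-\dime+\dime\End}u_{[\cdot]}$ and the multiplication rule, the Hall numbers $g^L_{MN}$ for modules without homogeneous-regular summands — which are polynomials in $q$ by the Corollary following Lemma \ref{hallpoly} — together with the coefficients $A^{[M]}_\lz$, $B^{[M]}_\lz$ of $H_\lz$, $S_\lz$, which by Propositions \ref{Character:Kostka}, \ref{Character: permutation module} and Corollary \ref{Character: Specht module} are $v_q^{-|\lz||\dz|}$ times integers (Kostka numbers, character values) independent of $q$, and the Kostka/Littlewood–Richardson integers appearing when re-expanding products of $H_\lz$'s in the Schur basis. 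Since all of $\lan\dz,\udim M\ran$-type exponents are determined by dimension vectors alone, all powers of $v_q$ that appear are $v_q^{(\text{fixed integer})}$, so each coefficient is manifestly an element of $\cZ$ evaluated at $v_q$. (The "$q$ large enough" hypothesis is needed so that the homogeneous tubes of the requisite degrees exist over $\bbF_q$, so that $H_m$ genuinely lies in $\cH^0_q$ and Lemma \ref{hallpoly} applies.) The main obstacle, and the step I would spend the most care on, is the bookkeeping in item (ii): controlling that when a preinjective module is pushed past $\lan M(\bfc^2_0)\ran*S_{\lz^2}$ the "leftover" regular part is handled uniformly in $q$ and does not spoil the $\bfc_+\geq_L\bfc^2_+$ estimate — this requires simultaneously invoking the finiteness of extensions, the generic Hall polynomial existence of Lemma \ref{hallpoly} for the homogeneous part, and the basis property Lemma \ref{Lemma: tN is basis} to re-collect terms, and is precisely the place where all three technical inputs of Section 5 and Section 6 must be combined.
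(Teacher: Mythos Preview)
Your argument for the \emph{shape} of the formula --- that only $N(\bfc,t_\lz)$ with $\bfc_-\geq_L\bfc^1_-$ and $\bfc_+\geq_L\bfc^2_+$ occur --- is essentially the paper's: factor off $\lan M(\bfc^1_-)\ran$ on the left (resp.\ $\lan M(\bfc^2_+)\ran$ on the right) and apply Corollary~\ref{preprojdecomp} (resp.\ \ref{preinjdecomp}).

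The integrality argument, however, has a genuine gap. You assert that all structure constants reduce to Hall numbers for modules without homogeneous summands and to the integers $A^{[M]}_\lz$, $B^{[M]}_\lz$. But the products you form (e.g.\ $\lan M(\bfc^1_+)\ran*\lan M(\bfc^2_-)\ran$) produce extensions $L$ that \emph{do} have homogeneous regular summands, and re-expressing $\sum c_L u_{[L]}$ in the basis $\{N(\bfc,t_\lz)\}$ requires \emph{inverting} the change of basis from $S_\lz$'s to homogeneous modules --- not just knowing forward coefficients. Propositions~\ref{Character:Kostka}--\ref{Character: permutation module} compute $B^{[M]}_\lz$ only for very special $M$; this is exactly enough to run the Kostka-matrix inversion that the paper packages as Lemma~\ref{multi of modules}, but you never invoke that inversion. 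A second, more serious issue is your handling of products like $H_m*\lan M(\bfc_-)\ran$: since $H_m$ is a sum over \emph{all} homogeneous tubes (whose number depends on $q$), it is not a priori clear that such a product has $\cZ$-coefficients in the $N$-basis from Hall-polynomial existence alone. Your step~(ii) waves at this but does not resolve it.

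The paper's route is different in structure: it first isolates the special case $\lan M(\bfc^1)\ran*\lan M(\bfc^2)\ran$ (no $S_\lz$ factors) as Lemma~\ref{multi of modules}, proving integrality by comparing coefficients at the specific modules $M(\bfc)\oplus M_k(\mu,\underline{z})$ and inverting the unitriangular integer Kostka matrix $(K_{\mu\lz})$. It then proves the general proposition by \emph{induction on the total dimension vector}. The inductive step strips off $\bfc^1_-$ or $\bfc^2_+$ when nonzero; in the remaining case $\bfc^1_-=0,\ \bfc^2_+=0$, it uses Lemma~\ref{Lemma: Kronecker of Pm} to write $H_m=\lan I\ran^{(m)}*\lan P\ran^{(m)}+(\text{lower terms})$, thereby replacing the intractable $H_m$ by a product of two concrete modules to which Lemma~\ref{multi of modules} and the inductive hypothesis apply. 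This is precisely the mechanism that controls $H_m*\lan M(\bfc^4_-)\ran$ and its dual, and it is missing from your sketch.
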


To do so, we introduce a weaker result first.

\begin{lemma}\label{multi of modules}
Let $\bfc^1,\bfc^2\in\cG'$.
Then there exists a polynomial $\psi^{(\bfc,t_\lz)}_{\bfc^1,\bfc^2}\in\cZ$ for each $(\bfc,t_\lz)\in\cG$, such that
$$\lan M(\bfc^1)\ran*\lan M(\bfc^2)\ran=\sum_{(\bfc,t_\lz)\in\cG}\psi^{(\bfc,t_\lz)}_{\bfc^1,\bfc^2}(v_q)N(\bfc,t_\lz)$$
in $\cH^0_q$  for each finite field $k=\bbF_q$ with $q$ large enough.
\end{lemma}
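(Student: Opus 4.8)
The strategy is to compute the product $\lan M(\bfc^1)\ran * \lan M(\bfc^2)\ran$ directly in $\cH^*_q(Q)$ using the Hall algebra multiplication, and then to re-express the result in terms of the basis $\{N(\bfc,t_\lz)\}$ of $\cH^0_q$ from Lemma \ref{Lemma: tN is basis}, tracking that all the scalars that appear are in fact values at $v_q$ of fixed polynomials in $\cZ=\bbZ[v,v^{-1}]$ independent of $q$. First I would expand $\lan M(\bfc^1)\ran * \lan M(\bfc^2)\ran = \sum_{[L]} v^{\lan \udim M(\bfc^1),\udim M(\bfc^2)\ran} g^L_{M(\bfc^1),M(\bfc^2)}\, v^{*}\, u_{[L]}$ where the $v^{*}$ absorbs the normalization in $\lan-\ran$. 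Since $M(\bfc^1),M(\bfc^2)$ have no homogeneous regular summands, the extensions $L$ that occur decompose as $L = M(\bfc)\oplus Z$ for some $\bfc\in\cG'$ and some homogeneous regular $Z$ with $\udim Z = m\dz$; moreover, for a given pair $(\bfc^1,\bfc^2)$ the dimension vectors that can occur are bounded, so only finitely many $(\bfc,m)$ contribute. The Krull--Schmidt decomposition $\bbE_\nu = \sqcup_{(\bfc,m)} Y(\bfc,m)$ from Section \ref{Geometric property of the order} organizes this grouping.

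The next step is to handle the homogeneous regular part. For each fixed $(\bfc,m)$, collecting the terms $u_{[M(\bfc)\oplus Z]}$ over all $Z\in\cR_{\rm h}$ with $\udim Z = m\dz$ gives, up to the automorphism-group normalizations, an element of the form $\lan M(\bfc)\ran$ times an element of $\cH^*_q(\cR_{\rm h})$ of degree $m\dz$ (using that $\cR_{\rm h}$ is an exact abelian subcategory with no homomorphisms or extensions to the other components in the relevant direction, so the Hall numbers factor). By Section \ref{Chapter: irreducible characters}, $\cH^*_q(\cR_{\rm h})_{m\dz}$ has the basis $\{S_\lz : |\lz| = m\}$ (equivalently $\{H_\lz\}$), and the change-of-basis coefficients between the natural module basis and $\{S_\lz\}$ are, by Proposition \ref{Character:Kostka} and Corollary \ref{Character: Specht module}, given by Kostka numbers and character values $t_\lz(\mu)$ --- manifestly integers independent of $q$, carrying only an overall power of $v_q$. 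This lets me rewrite the $\cR_{\rm h}$-contribution as a $\cZ$-linear (evaluated at $v_q$) combination of the $S_\lz$, hence the whole thing as a combination of $N(\bfc,t_\lz)$'s.

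The remaining point is the genericity, i.e. that the combined coefficients are values of polynomials in $q$ (equivalently in $v_q = \sqrt q$). The Hall numbers $g^L_{M(\bfc^1),M(\bfc^2)}$ for modules without homogeneous regular summands are polynomials in $q$ by the Corollary following Lemma \ref{hallpoly} (Deng--Ruan). For the terms where $L$ does have a homogeneous regular summand, I would invoke Lemma \ref{hallpoly} itself: choosing the $\underline z$ of the appropriate type $\underline d$ and summing over the finitely many partition-types, the relevant Hall numbers are again polynomial evaluations; combined with the integrality of the Kostka/character coefficients this yields the desired $\psi^{(\bfc,t_\lz)}_{\bfc^1,\bfc^2}\in\cZ$. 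The normalization powers of $v_q$ coming from $\lan-\ran$ and from $H_\lz$ versus $S_\lz$ are explicit monomials in $v$, so they cause no trouble.

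\textbf{Main obstacle.} The technical heart is the bookkeeping in the second step: disentangling, for each target $(\bfc,m)$, the contribution of the homogeneous regular summand from that of $M(\bfc)$ --- i.e. verifying that the Hall multiplication genuinely factors as (Hall number into $M(\bfc)$) $\times$ (something purely in $\cH^*_q(\cR_{\rm h})$), rather than producing cross terms. This rests on the structural fact that $\mod kQ = \cP\sqcup\cR_{\rm nh}\sqcup\cR_{\rm h}\sqcup\cI$ gives a tensor decomposition of $\cH^*(kQ)$ with the homogeneous regular factor commuting appropriately, plus the absence of homomorphisms from $\cR_{\rm h}$ to preprojectives and from preinjectives to $\cR_{\rm h}$; making this precise while keeping the $q$-dependence polynomial is where the care is needed. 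I expect the condition ``$q$ large enough'' enters exactly here, to guarantee that enough homogeneous tubes of the required degrees exist over $\bbF_q$ so that Lemma \ref{hallpoly} applies in the form stated.
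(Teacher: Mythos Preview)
Your approach shares the paper's key ingredients (Hall-polynomial existence via Lemma~\ref{hallpoly}, Kostka numbers via Proposition~\ref{Character:Kostka}), but there is a genuine gap in your second step, and the ``factorization obstacle'' you flag is in fact a red herring.

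The gap is the sentence ``$\cH^*_q(\cR_{\rm h})_{m\dz}$ has the basis $\{S_\lz : |\lz| = m\}$''. This is false: $\cH^*_q(\cR_{\rm h})_{m\dz}$ has one basis vector $u_{[Z]}$ for every isomorphism class of homogeneous regular module of dimension $m\dz$, vastly more than the $p(m)$ partitions. What \emph{is} true (and is the content of Lemma~\ref{Lemma: tN is basis}) is that the product lands in $\cH^0_q$, whose homogeneous-regular factor is $\bbQ(v_q)[H_1,H_2,\dots]$; but there is no ``change of basis'' from $\{\lan Z\ran\}$ to $\{S_\lz\}$ to invoke, and Proposition~\ref{Character:Kostka} and Corollary~\ref{Character: Specht module} only compute $B_\lz^{[Z]}$ for two very special families of $Z$, not for arbitrary $Z$. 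So as written you cannot extract the $S_\lz$-coefficients and conclude they lie in $\cZ$.

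The paper's proof is much shorter and bypasses both this and your factorization worry. It uses Lemma~\ref{Lemma: tN is basis} only to know that an expansion $\sum_{(\bfc,t_\lz)}\psi^{(\bfc,t_\lz)}_{\bfc^1,\bfc^2}(v_q)\,N(\bfc,t_\lz)$ exists with a priori $\bbQ(v_q)$-coefficients, and then \emph{probes} these unknowns by comparing the coefficient of the single module $\lan M(\bfc)\oplus M_k(\mu,\underline{z})\ran$ on both sides, where $\underline{z}$ is any tuple of distinct degree-$1$ homogeneous tubes (``$q$ large enough'' is exactly the condition that enough such tubes exist). By Proposition~\ref{Character:Kostka} the coefficient on the right is $\sum_\lz \psi^{(\bfc,t_\lz)}_{\bfc^1,\bfc^2}(v_q)\,v_q^{-|\lz||\dz|}K_{\mu\lz}$; by Lemma~\ref{hallpoly} the coefficient on the left is $v_q^d$ times a Hall polynomial in $q$, independent of the choice of $\underline{z}$. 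Since the Kostka matrix $(K_{\mu\lz})_{\mu,\lz}$ is unitriangular, hence invertible over $\bbZ$, one solves for $\psi^{(\bfc,t_\lz)}_{\bfc^1,\bfc^2}\in\cZ$. No factorization of Hall numbers into non-homogeneous and homogeneous pieces is ever needed.
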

\begin{proof}
By {Lemma} \ref{Character:Kostka} and comparing the coefficients of $\lan M(\bfc)\oplus M_k(\mu,\underline{z})\ran$ where $\underline{z}=(z_1,z_2,\cdots,z_t)$ such that $z_i\in Z^1_k$ are distinct ($\underline{z}$ can be chosen when $|Z^1_k|\geq |\lz|$, therefore we require that $q=|k|$ is large enough), we get

$$v_q^{d}g^{M(\bfc)\oplus M_k(\mu,\underline{z})}_{M(\bfc^1),M(\bfc^2)}(v_q)=\sum_\lz \psi^{(\bfc,t_\lz)}_{\bfc^1,\bfc^2}(v_q)v_q^{-|\lz|}K_{\mu\lz},
$$
where $d=-\dime\End_{kQ} (M(\bfc)\oplus M_k(\mu,\underline{z}))+\dime\End_{kQ} M(\bfc^1)+\dime\End_{kQ} M(\bfc^2)+\lan\udim M(\bfc^1),\udim M(\bfc^2)\ran$.

By Lemma \ref{hallpoly}, $g^{M(\bfc)\oplus M_k(\mu,\underline{z})}_{M(\bfc^1),M(\bfc^2)}$ is a polynomial in $\bbZ[v]$ and is independent of the choice of $\underline{z}$.
Also, $\dime\End_{kQ} (M(\bfc)\oplus M_k(\mu,\underline{z}))$ is an integer independent of $k$ and $\underline{z}$.
Therefore $\psi^{(\bfc,t_\lz)}_{\bfc^1,\bfc^2}$ is in $\cZ$ since $(K_{\mu\lz})_{\lz,\mu}$ is invertible in $\bbZ_{|\lz|\times|\lz|}$.
\end{proof}

\subsection{Proof of Proposition \ref{multiofN}}
We always assume that $q$ is large enough.

By Lemma \ref{Lemma: tN is basis}, there exists $\psi^{(\bfc,t_\lz)}_{(\bfc^1,t_{\lz^1}),(\bfc^2,t_{\lz^2})}(v_q)\in\bbQ(v_q)$  for each $(\bfc,t_\lz)\in\cG$, such that
\begin{equation}\label{prfofmulti}
N(\bfc^1,t_{\lz^1})*N(\bfc^2,t_{\lz^2})=\sum_{(\bfc,t_\lz)\in\cG}
\psi^{(\bfc,t_\lz)}_{(\bfc^1,t_{\lz^1}),(\bfc^2,t_{\lz^2})}(v_q)N(\bfc,t_\lz).
\end{equation}

First we show that the sum runs over those $(\bfc,t_\lz)\in\cG$ with $\bfc_-\geq_L \bfc^1_-,\bfc_+\geq_L \bfc^2_+$.

Let $\bfc^1_{\geq 0}=(0,\bfc^1_0,\bfc^1_+)$.
Then $N(\bfc^1,t_{\lz^1})=\lan M(\bfc^1_{-})\ran*N(\bfc^1_{\geq 0},t_{\lz^1})$.
By (\ref{prfofmulti}) and Corollary \ref{preprojdecomp}, we have
\begin{align}\label{Proof: c_->c^1_-}
& N(\bfc^1,t_{\lz^1})*N(\bfc^2,t_{\lz^2})\\
=&\lan M(\bfc^1_{-})\ran*(N(\bfc^1_{\geq 0},t_{\lz^1})*N(\bfc^2,t_{\lz^2}))\notag\\
=&\sum_{(\bfc',t_{\lz'})\in\cG}\psi^{(\bfc',t_{\lz'})}_{(\bfc^1_{\geq 0},t_{\lz^1}),(\bfc^2,t_{\lz^2})}(v_q)\lan M(\bfc^1_{-})\ran*N(\bfc',t_{\lz'})\notag\\
=&\sum_{(\bfc',t_{\lz'})\in\cG}\psi^{(\bfc',t_{\lz'})}_{(\bfc^1_{\geq 0},t_{\lz^1}),(\bfc^2,t_{\lz^2})}(v_q)\lan M(\bfc^1_-)\ran*\lan M(\bfc'_-)\ran*\lan M(\bfc'_0)\ran*S_{\lz'}*\lan M(\bfc'_+)\ran.\notag\\
=&\sum_{(\bfc',t_{\lz'})\in\cG}\psi^{(\bfc',t_{\lz'})}_{(\bfc^1_{\geq 0},t_{\lz^1}),(\bfc^2,t_{\lz^2})}(v_q)(\sum_{\bfd_-\geq_L \bfc^1_-}a^{\bfd_-}_{\bfc^1_-,\bfc'_-}(v_q)\lan M(\bfd_-)\ran)*\lan M(\bfc'_0)\ran*S_{\lz'}*\lan M(\bfc'_+)\ran.\notag
\end{align}
This shows that all $(\bfc,t_\lz)\in\cG$ in the right side of (\ref{prfofmulti}) must satisfy $\bfc_-\geq_L \bfc^1_-$.
Dually, they should also satisfy $\bfc_+\geq_L \bfc^2_+$.

It remains to show that all $\psi^{(\bfc,t_\lz)}_{(\bfc^1,t_{\lz^1}),(\bfc^2,t_{\lz^2})}$ are polynomials in $\cZ$.
We shall prove this by induction on the dimension vector $D(\bfc^1,t_{\lz^1})+D(\bfc^2,t_{\lz^2})$.

If $D(\bfc^1,t_{\lz^1})+D(\bfc^2,t_{\lz^2})<\dz$, then $t_{\lz^1}=t_{\lz^2}=0$ and Lemma \ref{multi of modules} shows that $\psi^{(\bfc,t_\lz)}_{\bfc^1,\bfc^2}\in\cZ$.
Now assume that all $\psi^{(\bfc,t_\lz)}_{(\bfc^1,t_{\lz^1}),(\bfc^2,t_{\lz^2})}$ are in $\cZ$ for $D(\bfc^1,t_{\lz^1})+D(\bfc^2,t_{\lz^2})<\nu$.

Consider the equation (\ref{Proof: c_->c^1_-}). If $\bfc^1_->0$,
by assumption and Lemma \ref{multi of modules}, all $\psi^{(\bfc',t_{\lz'})}_{(\bfc^1_{\geq 0},t_{\lz^1}),(\bfc^2,t_{\lz^2})}$ and $a^{\bfd_-}_{\bfc^1_-,\bfc'_-}$ in (\ref{Proof: c_->c^1_-}) shall belong to $\cZ$. Hence $\psi^{(\bfc,t_\lz)}_{(\bfc^1,t_{\lz^1}),(\bfc^2,t_{\lz^2})}$ are in $\cZ$ for all $(\bfc,t_\lz)$.
Dually if $\bfc^2_+>0$, this is also true.

In other words, we have proved the following assertion: if $(\bfc^1,t_{\lz^1}),(\bfc^2,t_{\lz^2}),\dots,(\bfc^j,t_{\lz^j})\in\cG$ satisfy
\begin{equation}\label{aaaaaa}
D(\bfc^1,\lz^1)+D(\bfc^2,\lz^2)+\cdots+D(\bfc^j,\lz^j)\leq\nu, \textrm{and\,} \bfc^1_-> 0 \textrm{ or } \bfc^j_+> 0,\end{equation}
then
$$
N(\bfc^1,t_{\lz^1})*N(\bfc^2,t_{\lz^2})*\cdots*N(\bfc^j,t_{\lz^j})=\sum_{(\bfc,t_\lz)\in\cG}\psi^{(\bfc,t_\lz)}_{(\bfc^1,t_{\lz^1}),(\bfc^2,t_{\lz^2}),\cdots,(\bfc^j,t_{\lz^j})}(v_q)N(\bfc,t_\lz),
$$
with $\psi^{(\bfc,t_\lz)}_{(\bfc^1,t_{\lz^1}),(\bfc^2,t_{\lz^2}),\cdots,(\bfc^j,t_{\lz^j})}\in\cZ$.

Now we assume that $D(\bfc^1,t_{\lz^1})+D(\bfc^2,t_{\lz^2})=\nu,\bfc^1_-=0,\bfc^2_+=0$. Then
$$N(\bfc^1,t_{\lz^1})*N(\bfc^2,t_{\lz^2})=S_{\lz^1}*\lan M(\bfc^1_0)\ran*\lan M(\bfc^1_+)\ran*\lan M(\bfc^2_-)\ran*\lan M(\bfc^2_0)\ran*S_{\lz^2}.$$
By Lemma \ref{multi of modules},
$$\lan M(\bfc^1_+)\ran*\lan M(\bfc^2_-)\ran=\sum_{(\bfc^3,t_{\lz^3})\in\cG}\psi^{(\bfc^3,t_{\lz^3})}_{\bfc^1_+,\bfc^2_-}(v_q)N(\bfc^3,t_{\lz^3})$$
with $\psi^{(\bfc^3,t_{\lz^3})}_{\bfc^1_+,\bfc^2_-}\in\cZ$.

For those $(\bfc^3,t_{\lz^3})$ with $\bfc^3_\pm=0$, we have
\begin{align}
&S_{\lz^1}*\lan M(\bfc^1_0)\ran*N(\bfc^3,t_{\lz^3})*\lan M(\bfc^2_0)\ran*S_{\lz^2}\notag\\
&=\lan M(\bfc^1_0)\ran*\lan M(\bfc^3_0)*\lan M(\bfc^2_0)\ran*S_{\lz^1}*S_{\lz^2}*S_{\lz^3}\notag\\
&=\sum_{[M],M\in\cR_{\rm{nh}},|\lz|=|\lz_1|+|\lz_2|+|\lz_3|}\psi^{[M]}_{\bfc^1_0,\bfc^3_0,\bfc^2_0}(v_q)\psi^{\lz}_{\lz^1,\lz^2,\lz^3}\lan M\ran*S_{\lz},\notag
\end{align}
with $\psi^{[M]}_{\bfc^1_0,\bfc^3_0,\bfc^2_0}\in\cZ$, $\psi^{\lz}_{\lz^1,\lz^2,\lz^3}\in\bbZ$.

For those $(\bfc^3,\lz^3)$ with $\bfc^3_\pm\neq 0$, we can compute case by case.

Assume that $\bfc^3_-> 0$.
Then
$$\lan M(\bfc^1_0)\ran*\lan M(\bfc^3_-)\ran=\sum_{(\bfc^4,t_{\lz^4}),\bfc^4_-> 0}\psi^{(\bfc^4,t_{\lz^4})}_{\bfc^1_0,\bfc^3_-}(v_q)N(\bfc^4,t_{\lz^4}),$$
with $\psi^{(\bfc^4,t_{\lz^4})}_{\bfc^1_0,\bfc^3_-}\in\cZ.$
Since $\lan\dz,D(\bfc^4,t_{\lz^4})\ran=\lan\dz,\udim M(\bfc^1_0)+\udim M(\bfc^3_-)\ran<0$, we have $\bfc^4_-> 0$.

By Lemma \ref{Lemma: Kronecker of Pm},
\begin{align*}
H_m=&\lan I\ran^{(m)}*\lan P\ran^{(m)}+\sum_{\bfc_\pm=0,\bfc_0>0,l<m}\psi^{(\bfc_0,l)}_{m}(v_q)\lan M(\bfc_0)\ran*H_l\\
&+\sum_{\bfc_->0,\bfc_+>0}\psi^{(\bfc,t_\lz)}_{m}(v_q)N(\bfc,t_\lz),
\end{align*}
where $\psi^{(\bfc,t_\lz)}_{m}\in\cZ$, and $\psi^{(\bfc_0,l)}_{m}\in\cZ$ by \cite{Zhang_Rational_Ringel-Hall_Algebras_Hall_Polynomials_Of_Affine_Type_and_Canonical_Bases}.
Then by the induction on $m$, we can prove
that for any $\bfc^4_->0$ and $m\geq 0$,
$$H_m*\lan M(\bfc^4_-)\ran=\sum_{(\bfc^5,t_{\lz^5}),\bfc^5_->0}\psi^{(\bfc^5,t_{\lz^5})}_{H_m,\bfc^4_-}(v_q)N(\bfc^5,t_{\lz^5}),$$
with $\psi^{(\bfc^5,t_{\lz^5})}_{H_m,\bfc^4_-}\in\cZ.$

Similarly, we can prove the desired result for other $(\bfc^3,\lz^3)$ with $\bfc^3_\pm\neq 0$.

In this way, we can always write $N(\bfc^1,t_{\lz^1})*N(\bfc^2,t_{\lz^2})$ as a $\cZ$-linear combination of products of $N(\bfc,t_{\lz})$'s satisfying the condition (\ref{aaaaaa}), thus the proposition is proved.

 \qed

\subsection{The generic extended composition algebra}
Let $v$ be an indeterminant and $\cZ=\bbZ[v,v^{-1}]$.
The generic extended composition algebra $\cH^0_\cZ$ is defined as a $\cZ$-algebra with a formal basis $\{N(\bfc,t_\lz)|(\bfc,t_\lz)\in\cG\}$ subject to the relations
$$N(\bfc^1,t_{\lz^1})*N(\bfc^2,t_{\lz^2})=\sum_{(\bfc,t_\lz)\in\cG,\bfc_-\geq_L \bfc^1_-,\bfc_+\geq_L \bfc^2_+}\psi^{(\bfc,t_\lz)}_{(\bfc^1,t_{\lz^1}),(\bfc^2,t_{\lz^2})}(v)N(\bfc,t_\lz)$$
for all $(\bfc^1,t_{\lz^1}),(\bfc^2,t_{\lz^2})\in\cG$,
where $\psi^{(\bfc,t_\lz)}_{(\bfc^1,t_{\lz^1}),(\bfc^2,t_{\lz^2})}\in\cZ$ is given by Proposition \ref{multiofN}.
Let $\cH^0=\cH^0_\cZ\otimes_\cZ\bbQ(v)$.

When $q$ is large enough, the specialization $v\mapsto v_q$ on the relations gives the formulas of multiplication in $\cH^0_q$, therefore the multiplication $*$ on $\cH^0_\cZ$ is associative.

Since each simple module $S_i$ is either in $\cP$, $\cI$ or $\cR_{\rm nh}$, there exist an index $(\bfc_i,0)\in\cG$ such that $M(\bfc_i)=S_i$.
Then the generic composition algebra $\cC^*$ is naturally a subalgebra of $\cH^0$ by the inclusion
\begin{align}
\cC^*&\lra\cH^0\notag\\
u_i&\longmapsto N(\bfc_i,0).\notag
\end{align}

With Proposition \ref{multiofN}, we also have a natural decomposition $$\cH^0_\cZ=\cH^*_\cZ(\cP)\otimes_{\cZ}\cH^*_\cZ(\cR_{\rm{nh}})\otimes_{\cZ}\cZ[H_1,H_2,\cdots]\otimes_{\cZ}\cH^*_\cZ(\cI),$$
where $\cH^*_\cZ(\cP)$, $\cH^*_\cZ(\cR_{\rm{nh}})$, $\cH^*_\cZ(\cI)$ are the subalgebras naturally defined by corresponding components.

\subsection{Almost orthogonality}\label{Section: Almost orthogonality of N}

\begin{lemma}[\cite{Lin_Xiao_Zhang_Representations_of_tame_quivers_and_affine_canonical_bases}]
For $m\geq 0$, we have
$$(H_m,H_m)\in 1+v^{-1}\bbQ[[v^{-1}]]\cap\bbQ(v).$$
\end{lemma}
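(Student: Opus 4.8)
The plan is to compute $(H_m,H_m)$ via the coproduct formula for the bilinear form on $\cH^*(\cR_{\rm h})$ and then extract the leading term. First I would recall that $H_m=\sum_{[M]}v_q^{-\dim M}u_{[M]}$, the sum over $M\in\cR_{\rm h}$ with $\udim M=m\dz$, so that $H_m=\sum_{[M]}\lan M\ran + (\text{correction terms from }\dim\End M>0)$; more precisely $v_q^{-\dim M}u_{[M]}=v_q^{-\dim\End M}\lan M\ran$, and $\lan M\ran$ is an orthogonal family with $(\lan M\ran,\lan M\ran)=v_q^{2\dim\End M}/a_M$. Hence
\begin{equation*}
(H_m,H_m)=\sum_{[M]\in\cR_{\rm h},\,\udim M=m\dz}\frac{1}{a_M}.
\end{equation*}

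Next I would analyze this sum. The isomorphism classes $[M]$ in $\cR_{\rm h}$ of dimension $m\dz$ are parametrized by assignments of partitions to the homogeneous tubes with total size (weighted by degree) equal to $m$; by Section \ref{homog}, $M=M_k(\underline\lz,\underline z)$ with $\sum_i|\lz_i|\deg z_i=m$. For such $M$ one has $a_M=|\Aut M|=\prod_i a_{M_k(\lz_i,z_i)}$, and the automorphism group of $M_k(\lz,z)$ over a tube of degree $d$ is, up to the $q$-power from the Frobenius, an automorphism group of a module over a power series ring, whose order is (a polynomial in $q^d$ with) leading term $q^{d|\lz|^2 + \cdots}$ divided by lower-order factors. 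The key point I want to isolate is that the \emph{dominant} contribution comes from the sparsest configurations, and after summing over the (roughly $q$-many per degree) choices of the points $z_i$, the total behaves like $1 + O(v^{-1})$ as a rational function of $v=v_q$. Concretely, the contribution of the ``most generic'' stratum — $m$ regular simple modules in $m$ distinct tubes of degree $1$ — is $\binom{|Z^1_k|}{m}\cdot q^{-m}\cdot(1+O(q^{-1}))$ up to the $v_q^{-\dim}$ normalization, which is $\approx \frac{(q-1)^m}{m!}q^{-m}\approx \frac{1}{m!}+O(q^{-1})$; all other strata are smaller by positive powers of $q$.

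I expect the cleanest route is actually to bypass the stratum-by-stratum estimate and use the algebra homomorphism $\phi:\cH^*_q(\cR_{\rm h})\to\bbQ(v_q)[Z^1_k]$ from the proof of Proposition \ref{Character:Kostka}, together with the known pairing on the ring of symmetric functions: since $\phi(H_m)=v_q^{-m|\dz|}h_m$ and the Hall-algebra inner product restricted to the ``degree-one part'' corresponds (after the normalization $v_q^{-m|\dz|}$) to the Hall-Littlewood/Macdonald inner product at parameter $t=q^{-1}$ or similar, the value $(H_m,H_m)$ matches $\langle h_m,h_m\rangle$ in that deformed inner product, which is a rational function of $q$ with constant term (at $q^{-1}=0$) equal to $1$. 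This is exactly the statement $(H_m,H_m)\in 1+v^{-1}\bbQ[[v^{-1}]]\cap\bbQ(v)$.

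\textbf{Main obstacle.} The hard part will be making the identification of the Hall-algebra inner product on $\cH^*_q(\cR_{\rm h})$ with a symmetric-function pairing precise enough to read off the constant term — one must control not only the degree-one tubes (where $\phi$ is defined) but the full sum over all homogeneous tubes of all degrees, i.e., show the higher-degree tubes contribute only to $v^{-1}\bbQ[[v^{-1}]]$. This amounts to a uniform estimate $a_M \geq q^{\,c\cdot(\text{something})}$ forcing $1/a_M = O(q^{-1})$ except on the single top stratum, combined with a count of the number of isomorphism classes in each stratum. Since the reference \cite{Lin_Xiao_Zhang_Representations_of_tame_quivers_and_affine_canonical_bases} already contains this lemma, in the write-up I would simply cite it and, for the reader's benefit, sketch the top-stratum computation $\sum 1/a_M = \tfrac{1}{m!}\prod_{j=0}^{m-1}(1-q^{-1}\cdot\text{stuff})+\cdots \to 1$ as the mechanism behind the $1$, deferring the full justification to that paper.
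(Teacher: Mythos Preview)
Your reduction $(H_m,H_m)=\sum_{[M]}1/a_M$ over $M\in\cR_{\rm h}$ with $\udim M=m\dz$ is correct, but the asymptotic analysis that follows contains a real error. The stratum of $m$ regular simples in $m$ distinct degree-$1$ tubes contributes only $1/m!$ to the constant term, not $1$, and your assertion in the ``Main obstacle'' paragraph that the higher-degree tubes contribute only to $v^{-1}\bbQ[[v^{-1}]]$ is false. Already for $m=2$: the degree-$1$ semisimple stratum gives $\binom{|Z^1_k|}{2}/(q-1)^2\sim 1/2$, while the degree-$2$ tubes give $|Z^2_k|/(q^2-1)=\dfrac{q(q-1)/2}{q^2-1}=\dfrac{q}{2(q+1)}\sim 1/2$; both are needed to reach the constant term $1$. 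In general the leading contribution comes from \emph{all} semisimple objects $M[\mu,\underline{z}']$ (one regular simple at each of several distinct points, with degrees given by the parts of a partition $\mu\vdash m$): using $|Z^d_k|\sim q^d/d$ and $a_{M[\mu,\underline{z}']}\sim q^m$ one finds that cycle type $\mu$ contributes $1/z_\mu$ to leading order, and the constant term is then produced by the identity $\sum_{\mu\vdash m}1/z_\mu=1$. Your map $\phi$ into $\bbQ(v_q)[Z^1_k]$ only sees the degree-$1$ slice and cannot recover the full pairing; a symmetric-function identification that respects the inner product has to incorporate all degrees simultaneously.

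For comparison, the paper gives no argument at all here: it simply cites Lemma~9.2 of \cite{Lin_Xiao_Zhang_Representations_of_tame_quivers_and_affine_canonical_bases}. So deferring to that reference is fine and is exactly what the paper does; but if you keep a heuristic sketch for the reader, replace the degree-$1$ top-stratum story by the $\sum_{\mu}1/z_\mu=1$ mechanism above.
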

\begin{proof}
See Lemma 9.2 in \cite{Lin_Xiao_Zhang_Representations_of_tame_quivers_and_affine_canonical_bases}.
\end{proof}
Let $r$ be the coproduct defined in Section \ref{Section: Bilinear form and coprouct}.
\begin{lemma}
For $m\geq 0$,
\begin{eqnarray}
r(H_m)&=&\sum_{0\leq s\leq m}H_{s}\otimes H_{m-s} \nonumber\\
  &&+\text{ terms in } \cH^*(\cR_{\rm h})* \cH^*(\cI)\otimes\cH^*(\cP)*\cH^*(\cR_{\rm h}).\nonumber
\end{eqnarray}
\end{lemma}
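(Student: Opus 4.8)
The plan is to compute the coproduct $r(H_m)$ by exploiting the multiplicativity of $r$ together with the known generating-function identity that defines $H_m$. Recall from Lemma~\ref{Lemma: Kronecker of Pm} that $\lan I\ran^{(m)}*\lan P\ran^{(m)} = H_m + \sum_{1\leq l\leq m} E_{l\dz}H_{m-l} + (\text{terms with both }\bfc_-,\bfc_+>0)$, where $I$ is injective with $\lan\dz,\udim I\ran=1$ and $P$ preprojective with $\udim P = \dz - \udim I$. Since $I$ is preinjective indecomposable and $P$ is preprojective indecomposable, and $r$ is an algebra homomorphism for the twisted product on $\bff\otimes\bff$, I would first compute $r(\lan I\ran)$ and $r(\lan P\ran)$ using the Green--Ringel formula recalled in Section~\ref{Section: Bilinear form and coprouct}. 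For the preinjective $\lan I\ran$ with defect $1$, submodules of $I$ lie in $\cR_{\rm h}\oplus\cR_{\rm nh}\oplus\cI$ and quotients are preinjective; dually for $\lan P\ran$. One then passes to divided powers $\lan I\ran^{(m)}$, $\lan P\ran^{(m)}$ via the comultiplication of powers, and multiplies in $\bfcH^0\otimes\bfcH^0$.

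The key step is then an inductive extraction. Applying $r$ to the identity of Lemma~\ref{Lemma: Kronecker of Pm} gives
\begin{equation}\notag
r(H_m) = r(\lan I\ran^{(m)})*r(\lan P\ran^{(m)}) - \sum_{1\leq l\leq m} r(E_{l\dz})\,r(H_{m-l}) - r(\text{error terms}).
\end{equation}
By the decomposition $\bfcH^0_\cZ = \bfcH^*_\cZ(\cP)\otimes\bfcH^*_\cZ(\cR_{\rm nh})\otimes\cZ[H_1,H_2,\dots]\otimes\bfcH^*_\cZ(\cI)$, one isolates the component of $r(H_m)$ lying in $\cH^*(\cR_{\rm h})*\cH^*(\cI)\otimes\cH^*(\cP)*\cH^*(\cR_{\rm h})$ by projecting away all tensor factors that genuinely involve preprojectives on the left or preinjectives on the right. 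The leading contribution $\sum_{0\leq s\leq m}H_s\otimes H_{m-s}$ should emerge from the "diagonal" splittings of $\lan I\ran^{(m)}*\lan P\ran^{(m)}$ in which $s$ copies of the $I$--$P$ pair comultiply entirely to the left tensor factor and $m-s$ entirely to the right, each such block reassembling (after using Lemma~\ref{Lemma: Kronecker of Pm} in lower degree and the inductive hypothesis) into $H_s$ and $H_{m-s}$; the cross terms contribute only to the ignored components or to strictly lower-order pieces that the induction absorbs.

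The main obstacle I anticipate is bookkeeping the twisted multiplication on $\bff\otimes\bff$ and verifying that all the "unwanted" pieces of $r(\lan I\ran^{(m)})*r(\lan P\ran^{(m)})$ and of $r(E_{l\dz})r(H_{m-l})$ — namely those with a preprojective summand appearing in the right tensor slot or a preinjective summand in the left tensor slot — actually cancel or can be swept into the error term "$+$ terms in $\cH^*(\cR_{\rm h})*\cH^*(\cI)\otimes\cH^*(\cP)*\cH^*(\cR_{\rm h})$". This requires a careful use of the homological vanishing $\Hom(\cI,\cP)=\Ext^1(\cP,\cI)=0$-type facts (Lemma~\ref{defect} and the block structure of $\mod kQ$) to control which Hall numbers $g^L_{M,N}$ are nonzero, together with the fact that $E_{l\dz}\in\cH^*_q(\cR_{\rm nh})$ so that $r(E_{l\dz})$ stays within $\cH^*(\cR_{\rm nh})^{\otimes 2}$ up to the allowed components. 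Once the degree-$m$ diagonal term is pinned down and everything else is shown to lie in the permitted subspace, the statement follows by induction on $m$, the base case $m=0$ being $r(H_0)=r(1)=1\otimes 1$.
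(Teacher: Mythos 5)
Your proposal takes a genuinely different route from the paper's. The paper computes $r(H_m)$ directly from the definition $H_m=v^{-m|\dz|}\sum_{[L]\in\cR_{\rm h},\,\udim L=m\dz}u_{[L]}$ using the Green--Ringel coproduct formula; the structural part of the lemma is then immediate, because a submodule $N$ of $L\in\cR_{\rm h}$ lies in $\cP\oplus\cR_{\rm h}$ and the quotient $M$ in $\cR_{\rm h}\oplus\cI$. The only remaining content is the Hall-number identity $\sum_{[L]\in\cR_{\rm h},\,\udim L=m\dz}g^L_{MN}\,a_Ma_N/a_L=1$ for $M,N\in\cR_{\rm h}$, which the paper settles by reducing to the Kronecker quiver (Proposition 3.22 of Beck--Nakajima) and then observing that the Hall numbers in question live inside homogeneous tubes, whose structure is the same for every affine quiver. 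By contrast, you try to avoid the Kronecker reduction and derive the coproduct algebraically from Lemma~\ref{Lemma: Kronecker of Pm}; this is a more elementary-looking plan, but it does not bypass that identity, it only repackages it.

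The repackaging is where the gaps are. The pivotal step --- that the diagonal splittings of $r(\lan I\ran^{(m)})*r(\lan P\ran^{(m)})$, after subtracting $\sum_{l}r(E_{l\dz})\,r(H_{m-l})$ and the coproduct of the error terms, ``reassemble'' into $\sum_{s}H_s\otimes H_{m-s}$ modulo the permitted space --- is asserted, not argued; it carries exactly the same weight as the Hall-number identity the paper proves. Furthermore, your auxiliary claim that submodules of the defect-one injective $I$ avoid $\cP$ is false in general (it happens to hold for the Kronecker quiver, where $I$ is simple), so $r(\lan I\ran^{(m)})*r(\lan P\ran^{(m)})$ does contain terms such as $\lan P\ran^{(m)}\otimes\lan I\ran^{(m)}$ with a preprojective factor in the left tensor slot, which lie outside $\cH^*(\cR_{\rm h})*\cH^*(\cI)\otimes\cH^*(\cP)*\cH^*(\cR_{\rm h})$. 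Those terms must cancel against the coproduct of the error terms of Lemma~\ref{Lemma: Kronecker of Pm}, but those error terms are controlled only up to lying in the span of $N(\bfc,t'_\lz)$ with $\bfc_->0$, $\bfc_+>0$, so the cancellation is not automatic and the proposal as written does not close.
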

\begin{proof}
By Section \ref{Section: Bilinear form and coprouct}, we have
\begin{eqnarray}
r(H_m)&=&v^{-m|\dz|}r(\sum_{[L],L\in\cR_{\rm h},\udim L=m\dz}u_{[L]}) \nonumber\\
&=&v^{-m|\dz|}\sum_{[L],L\in\cR_{\rm h},\udim L=m\dz}\sum_{[M],[N]\atop M,N\in\cR_{\rm h}}g^L_{MN}\frac{a_M a_N}{a_L}u_{[M]}\otimes u_{[N]} \nonumber\\
  &&+\text{ terms in } \cH^*(\cR_{\rm h})* \cH^*(\cI)\otimes\cH^*(\cP)*\cH^*(\cR_{\rm h}).\nonumber
\end{eqnarray}

If $Q$ is the Kronecker quiver with $I=\{0,1\}$,
the element $\tilde{P}_{1,m}$ in \cite{Beck_Nakajima_Crystal_bases_and_two-sided_cells_of_quantum_affine_algebras} coincides with $H_m$ by the construction in Section 3 in \cite{Lin_Xiao_Zhang_Representations_of_tame_quivers_and_affine_canonical_bases}.
So the lemma is true for the Kronecker quiver by Proposition 3.22 in \cite{Beck_Nakajima_Crystal_bases_and_two-sided_cells_of_quantum_affine_algebras}.

This means that for the Kronecker quiver,
\begin{equation}\label{Equation: eqn in coproduct}
\sum_{[L],L\in\cR_{\rm h},\udim L=m\dz}g^L_{MN}\frac{a_M a_N}{a_L}=1
\end{equation}
holds for any $[M],[N]$ with $M,N\in\cR_{\rm h},\udim M+\udim N=m\dz$.

Note that in the case of Kronecker quiver, (\ref{Equation: eqn in coproduct}) holds in any subcategory $\cC$ of $\cR_{\rm h}$ consisting of finitely many homogeneous tubes for any $M$ and $N$ in $\cC$ and for any finite field $k$.
So (\ref{Equation: eqn in coproduct}) holds in the subcategory $\cC$ of $\cR_{\rm h}$ consisting of finite homogeneous tubes for any $M$ and $N$ in $\cC$ and for any finite field $k$ in any affine quiver.
Therefore, (\ref{Equation: eqn in coproduct}) holds for any $M$ and $N$ in $\cR_{\rm h}$ in any affine quiver.
\end{proof}

\begin{lemma}\label{Lemma: almost orthog of S}
For partitions $\lz,\lz'$, we have
$$(S_\lz,S_{\lz'})\in\dz_{\lz\lz'}+v^{-1}\bbQ[[v^{-1}]]\cap\bbQ(v).$$
\end{lemma}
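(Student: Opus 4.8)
The plan is to reduce the statement about $(S_\lambda, S_{\lambda'})$ to the corresponding statement about the $H$-elements, for which we have the previous two lemmas. Recall from Section~\ref{Chapter: irreducible characters} the relation $H_\lambda = \sum_\mu K_{\lambda\mu} S_\mu$ between the two families, where $K_{\lambda\mu}$ are the Kostka numbers; this is an upper-unitriangular change of basis with respect to the dominance (or lexicographic) order on partitions, with $K_{\lambda\lambda} = 1$. Inverting, we may write $S_\lambda = \sum_\mu \widetilde K_{\lambda\mu} H_\mu$ for integers $\widetilde K_{\lambda\mu}\in\bbZ$, again unitriangular. Since the bilinear form $(-,-)$ is $\bbQ(v)$-bilinear, we get
$$(S_\lambda, S_{\lambda'}) = \sum_{\mu,\mu'} \widetilde K_{\lambda\mu}\widetilde K_{\lambda'\mu'} (H_\mu, H_{\mu'}).$$
So the first step is to understand $(H_\mu, H_{\mu'})$ for arbitrary partitions $\mu,\mu'$, not just the ``diagonal'' case $(H_m,H_m)$ already recorded.

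The second step is to compute $(H_\mu, H_{\mu'})$ using the coproduct formula together with the just-proved formula for $r(H_m)$. Since $H_\mu = H_{\mu_1} H_{\mu_2}\cdots H_{\mu_s}$, property (b) of the bilinear form gives $(H_{\mu'}, H_\mu) = (r^{(s-1)}(H_{\mu'}), H_{\mu_1}\otimes\cdots\otimes H_{\mu_s})$, i.e. we must iterate the coproduct on a single $H_{m}$ and pair the ``pure $\cR_{\rm h}$'' part against the $H_{\mu_i}$'s. The key point from the previous lemma is that the extra terms in $r(H_m)$ lie in $\cH^*(\cR_{\rm h})\cdot\cH^*(\cI)\otimes\cH^*(\cP)\cdot\cH^*(\cR_{\rm h})$, and these pair to zero (up to $v^{-1}\bbQ[[v^{-1}]]$, in fact to zero outright) with products of $H$'s, which live purely in $\cH^*(\cR_{\rm h})$ on both tensor factors, because the bilinear form respects the $\bbN I$-grading and $\cP,\cI$ contribute dimension vectors with nonzero defect while $H$'s have defect zero. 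Hence only the ``symmetric function'' part $\sum H_s\otimes H_{m-s}$ contributes, and combined with the known almost-orthogonality $(H_m,H_m)\in 1+v^{-1}\bbQ[[v^{-1}]]$ — together with the fact that $(H_a,H_b) = 0$ whenever $a\neq b$ by the grading — one gets, by an induction on $|\mu|$, that $(H_\mu,H_{\mu'})$ is a polynomial (in $v^{-1}$) whose constant term equals the number of ways the parts of $\mu$ and $\mu'$ can be matched, i.e. $(H_\mu,H_{\mu'}) \in z_\mu\,\delta$-type combinatorial constant $+\, v^{-1}\bbQ[[v^{-1}]]$; more precisely the constant term of $(H_\mu,H_{\mu'})$ is $\langle h_\mu, h_{\mu'}\rangle$ in the Hall inner product on symmetric functions.

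The third and final step is purely symmetric-function combinatorics. Over $\bbZ$, the $h_\lambda$ form a basis of symmetric functions, and the $s_\lambda$ are the orthonormal basis for the Hall inner product $\langle s_\lambda, s_{\lambda'}\rangle = \delta_{\lambda\lambda'}$, with $h_\lambda = \sum_\mu K_{\mu\lambda} s_\mu$. Therefore $\sum_{\mu,\mu'}\widetilde K_{\lambda\mu}\widetilde K_{\lambda'\mu'}\langle h_\mu,h_{\mu'}\rangle = \langle s_\lambda, s_{\lambda'}\rangle = \delta_{\lambda\lambda'}$. Feeding this back into the expansion of $(S_\lambda,S_{\lambda'})$ from step one, the constant term is $\delta_{\lambda\lambda'}$ and all higher-order terms lie in $v^{-1}\bbQ[[v^{-1}]]\cap\bbQ(v)$, which is exactly the claim.

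The main obstacle I anticipate is step two: carefully justifying that the ``cross terms'' $\cH^*(\cR_{\rm h})\cdot\cH^*(\cI)\otimes\cH^*(\cP)\cdot\cH^*(\cR_{\rm h})$ contribute nothing when paired against a tensor product of $H$'s. This requires that the bilinear form on $\bff$ (equivalently on $\cC^*$) vanishes between homogeneous components of different weight, and that an element of $\cH^*(\cI)$ of positive defect can never equal, weight-wise, a product of $H$'s. One must also be slightly careful that $H_m$ itself only lies in $\cH^0$, not in $\cC^*$, so the pairing must be taken in the larger algebra $\bm{\cH}^0$ and its compatibility with $r$ checked — but this is exactly what the preceding lemma on $r(H_m)$ supplies. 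Once that vanishing is in hand, the induction on $|\mu|$ plus the scalar bookkeeping of the $v$-powers (all of which come out in $1+v^{-1}\bbQ[[v^{-1}]]$ by the $(H_m,H_m)$ lemma) is routine.
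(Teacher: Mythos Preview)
Your proposal is correct and is essentially an explicit unpacking of what the paper does in one line: the paper's entire proof is the citation ``See [Macdonald, p.~91, Ex.~25] and [Macdonald, Ch.~I, (4.8)]''. Your three steps (pass from $S_\lambda$ to $H_\mu$ via inverse Kostka numbers, identify the constant term of $(H_\mu,H_{\mu'})$ with the Hall inner product $\langle h_\mu,h_{\mu'}\rangle$, then invoke $\langle s_\lambda,s_{\lambda'}\rangle=\delta_{\lambda\lambda'}$) are precisely what those two Macdonald references encode. So there is no genuinely different route here, only a difference in level of detail.

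One refinement for your step two: the defect argument you sketch handles a single application of $r$, but for the iteration (i.e.\ for $r(H_{\mu'})$ with $\mu'$ a partition rather than a single part) the cross terms get multiplied against further factors and it is not immediately obvious, from defect alone, that no $\cR_{\rm h}\otimes\cR_{\rm h}$ component reappears. The clean way around this is to observe that since Green's form is diagonal in the basis $\{u_{[M]}\}$ and both $H_\mu,H_{\mu'}$ are supported on $\cR_{\rm h}$, the pairing $(H_{\mu'},H_\mu)$ is computed entirely by the $\cR_{\rm h}\otimes\cR_{\rm h}$-component of $r(H_{\mu'})$; and that component is exactly the Green coproduct of the hereditary exact subcategory $\cR_{\rm h}$ itself, which is multiplicative by Green's theorem applied to $\cR_{\rm h}$. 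Then $r_{\cR_{\rm h}}(H_{\mu'})=\prod_j\sum_{a+b=\mu'_j}H_a\otimes H_b$ (the twist is trivial since $(\delta,\delta)=0$), and your induction goes through without further cross-term bookkeeping.
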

\begin{proof}
See [\cite{Macdonald_Symmetric_functions_and_Hall_polynomials}, Page 91, Exercise 25] and [\cite{Macdonald_Symmetric_functions_and_Hall_polynomials}, Chapter 1, Equation 4.8].
\end{proof}
\begin{proposition}\label{Proposition: almost orthog}
For $(\bfc,t_\lz),(\bfc',t_{\lz'})\in\cG$, we have
$$(N(\bfc,t_\lz),N(\bfc',t_{\lz'}))\in\dz_{\bfc\bfc'}\dz_{\lz\lz'}+v^{-1}\bbQ[[v^{-1}]]\cap\bbQ(v).$$
\end{proposition}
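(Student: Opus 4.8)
The plan is to evaluate the Green form $(N(\bfc,t_\lz),N(\bfc',t_{\lz'}))$ almost exactly, reducing it to $(S_\lz,S_{\lz'})$ — which is controlled by Lemma~\ref{Lemma: almost orthog of S} — times a scalar that is visibly $1$ modulo $v^{-1}$. The starting observation is that, although a Hall product $\lan M\ran*\lan N\ran$ is in general a sum over extensions, here it collapses: for the tame hereditary algebra $kQ$ one has $\Hom(X,\cP)=0$ for $X$ regular or preinjective, $\Hom(\cI,Y)=0$ for $Y$ regular, and $\Hom=\Ext^1=0$ between modules in distinct tubes; dually, Auslander--Reiten duality (with $\tau$ preserving each of $\cP$, $\cR_{\rm nh}$, $\cR_{\rm h}$, $\cI$) gives $\Ext^1(\cP,Z)=0$ for $Z$ regular or preinjective and $\Ext^1(W,\cI)=0$ for $W$ regular. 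Hence by Lemma~\ref{M^(m)},
$$\lan P\ran*\lan R_{\rm nh}\ran*\lan R_{\rm h}\ran*\lan I\ran=\lan P\oplus R_{\rm nh}\oplus R_{\rm h}\oplus I\ran$$
for all $P\in\cP$, $R_{\rm nh}\in\cR_{\rm nh}$, $R_{\rm h}\in\cR_{\rm h}$, $I\in\cI$.

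Writing $S_\lz=\sum_{R_{\rm h}}\beta^{R_{\rm h}}_\lz\lan R_{\rm h}\ran$, the sum over $R_{\rm h}\in\cR_{\rm h}$ with $\udim R_{\rm h}=|\lz|\dz$, and applying the above with $P=M(\bfc_-)$, $R_{\rm nh}=M(\bfc_0)$, $I=M(\bfc_+)$, I get the explicit expansion
$$N(\bfc,t_\lz)=\sum_{R_{\rm h}}\beta^{R_{\rm h}}_\lz\,\lan M(\bfc)\oplus R_{\rm h}\ran ,$$
where $M(\bfc)=M(\bfc_-)\oplus M(\bfc_0)\oplus M(\bfc_+)$ has no homogeneous regular summand. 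Since the Green form is diagonal in the basis of the $\lan A\ran$'s, $(\lan A\ran,\lan B\ran)=\dz_{AB}\,c_A$ with $c_A:=v_q^{2\dime\End A}/a_A$, and since by Krull--Schmidt $M(\bfc)\oplus R_{\rm h}\cong M(\bfc')\oplus R_{\rm h}'$ forces $\bfc=\bfc'$ (the homogeneous-free parts must agree) and $R_{\rm h}\cong R_{\rm h}'$, I obtain
$$(N(\bfc,t_\lz),N(\bfc',t_{\lz'}))=\dz_{\bfc\bfc'}\sum_{R_{\rm h}}\beta^{R_{\rm h}}_\lz\beta^{R_{\rm h}}_{\lz'}\,c_{M(\bfc)\oplus R_{\rm h}} .$$

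It remains to analyse $c_A$. A direct count of $|\Aut A|$ — writing $\End A/\rad\End A$ as a product of matrix algebras over the finite fields $\bbF_{q^{d_i}}$ attached to the indecomposable summands $A_i^{\oplus n_i}$ of $A$ — gives $c_A=\prod_i\prod_{j=1}^{n_i}(1-q^{-d_i j})^{-1}$, which lies in $1+v^{-1}\bbZ[[v^{-1}]]$ and depends only on the multiset of pairs (multiplicity, degree) of indecomposable summands of $A$; in particular $c_{M\oplus N}=c_M c_N$ whenever $M$ and $N$ share no indecomposable summand. Applying this with $M=M(\bfc)$ (homogeneous-free) and $N=R_{\rm h}$ (homogeneous) gives $c_{M(\bfc)\oplus R_{\rm h}}=c_{M(\bfc)}c_{R_{\rm h}}$, so
$$(N(\bfc,t_\lz),N(\bfc',t_{\lz'}))=\dz_{\bfc\bfc'}\,c_{M(\bfc)}\sum_{R_{\rm h}}\beta^{R_{\rm h}}_\lz\beta^{R_{\rm h}}_{\lz'}\,c_{R_{\rm h}}=\dz_{\bfc\bfc'}\,c_{M(\bfc)}\,(S_\lz,S_{\lz'}).$$
Since $c_{M(\bfc)}\in 1+v^{-1}\bbZ[[v^{-1}]]$ and $(S_\lz,S_{\lz'})\in\dz_{\lz\lz'}+v^{-1}\bbQ[[v^{-1}]]\cap\bbQ(v)$ by Lemma~\ref{Lemma: almost orthog of S}, the product lies in $\dz_{\bfc\bfc'}\dz_{\lz\lz'}+v^{-1}\bbQ[[v^{-1}]]\cap\bbQ(v)$, which is the assertion. (The degenerate cases $\lz=0$ and $|\lz|\neq|\lz'|$, in which both sides vanish or reduce to $\dz_{\bfc\bfc'}c_{M(\bfc)}$ with $S_0=1$, are immediate.)

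The only step I expect to require genuine work is the collapse $\lan P\ran*\lan R_{\rm nh}\ran*\lan R_{\rm h}\ran*\lan I\ran=\lan P\oplus R_{\rm nh}\oplus R_{\rm h}\oplus I\ran$: one must assemble precisely the right $\Hom$- and $\Ext^1$-vanishings among the components of $\mod kQ$ so that every successive Hall product is split, the \emph{forward}-direction $\Ext^1$-vanishings being the ones that use Auslander--Reiten duality rather than the visible homological ordering $\cP$, $\cR$, $\cI$. After that, the computation is exact, and the conclusion follows by combining the elementary automorphism count with Lemma~\ref{Lemma: almost orthog of S}.
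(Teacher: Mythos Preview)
Your proof is correct and follows essentially the same strategy as the paper's: both reduce the pairing to an exact identity of the form $\dz_{\bfc\bfc'}\cdot(\text{scalar in }1+v^{-1}\bbZ[[v^{-1}]])\cdot(S_\lz,S_{\lz'})$ and then invoke Lemma~\ref{Lemma: almost orthog of S}. The paper simply quotes this identity from Proposition~9.1 of \cite{Lin_Xiao_Zhang_Representations_of_tame_quivers_and_affine_canonical_bases}, whereas you supply a self-contained derivation via the $\Hom$/$\Ext^1$ vanishings among $\cP,\cR_{\rm nh},\cR_{\rm h},\cI$, the resulting collapse of the Hall product, and the explicit automorphism count giving $c_A=\prod_i\prod_{j=1}^{n_i}(1-q^{-d_ij})^{-1}$.
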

\begin{proof}
By Proposition 9.1 in \cite{Lin_Xiao_Zhang_Representations_of_tame_quivers_and_affine_canonical_bases}, we have
$$(N(\bfc,t_\lz),N(\bfc',t_{\lz'}))=\dz_{\bfc,\bfc'}\frac{v^{2(\dim\End(M(\bfc_-))+\dim\End(M(\bfc_0))+\dim\End(M(\bfc_+)))}}{a_{M(\bfc_-)}a_{M(\bfc_0)}a_{M(\bfc_+)}}(S_\lz,S_{\lz'}).$$
Then the result is true by Lemma \ref{Lemma: almost orthog of S}.
\end{proof}

The property of $\{N(\bfc,t_\lz)|(\bfc,t_\lz)\in\cG\}$ in Proposition \ref{Proposition: almost orthog} is called almost orthogonality.

\section{Monomial bases, the PBW basis and a bar-invariant basis of $\cC^*$}\label{chapter basis}
In this section, we will always choose $k=\bbF_q$ such that $q$ is large enough.  
We always identify $\bff$, $\bfU^+$ and $\cC^*$.
Recall that $\cS$ is the set defined in Section \ref{Section: def of monomial}.

\subsection{Monomials for a dimension vector}
We label $I=\{1,2,\dots,n\}$ such that there are no arrows from $i$ to $j$ for $i>j$. 
Given $\nu=\sum_i \nu_ii\in\bbN I$, we define a monomial 
$$\fkm^{\nu}=u_1^{(\nu_1)}*u_2^{(\nu_2)}*\dots* u_n^{(\nu_n)}$$
in $\cC^*$.

\subsection{Monomials for preprojective component}
Consider the preprojective component first.
For $t\leq 0$, let $\bfe_t$ be the function $\bbZ^{\leq 0}\ra\bbN$ which takes value $1$ on $t$ and $0$ on the others.
Then for $m\in\bbN$, $N((m\bfe_j,0,0),0)=\lan M(m\bfe_j)\ran=\lan M(\bz_j)^{\oplus m}\ran$.

\begin{lemma}\label{monomial of indecomp}
If $m\geq 0$, $j\leq 0$, then
$$\fkm^{m\bz_j}=\lan M(\bz_j)^{\oplus m}\ran+\sum_{\bfc_->_L m\bfe_j}\phi_{m\bfe_j}^{(\bfc,t_\lz)}(v) N(\bfc,t_\lz)$$
for some $\phi_{m\bfe_j}^{(\bfc,t_\lz)}\in \cZ$.
\end{lemma}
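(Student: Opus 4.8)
The plan is to realize $\fkm^{\oz(m\bz_j)}$ inside $\cH^0_q$ and identify the leading term together with the shape of the error. First, recall that $\bz_j = \sum_i (\bz_j)_i\, i$ and that $\fkm^{\oz(m\bz_j)} = u_1^{(m(\bz_j)_1)} * u_2^{(m(\bz_j)_2)} * \cdots * u_n^{(m(\bz_j)_n)}$ by the definition in Section~\ref{chapter basis}. Since each $u_i^{(a)} = \lan S_i^{\oplus a}\ran$ lies in $\cH^0_q$, the whole monomial lies in $\cH^0_q$, so by Lemma~\ref{Lemma: tN is basis} it expands uniquely in the basis $\{N(\bfc,t_\lz)\}$ with coefficients a priori in $\bbQ(v)$. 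The key point is therefore twofold: (i) the coefficient of $N((m\bfe_j,0,0),0) = \lan M(\bz_j)^{\oplus m}\ran$ equals $1$, and every other $N(\bfc,t_\lz)$ appearing has $\bfc_- >_L m\bfe_j$; (ii) all coefficients in fact lie in $\cZ$.

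For (i), I would argue geometrically using Section~\ref{Geometric property of the order}. The monomial $\fkm^{\oz(m\bz_j)} = u_1^{(m(\bz_j)_1)} * \cdots * u_n^{(m(\bz_j)_n)}$ is, up to the standard power of $v$, the structure constant counting filtrations of a module of dimension vector $m\bz_j$ by the semisimples $S_i^{\oplus m(\bz_j)_i}$ read in the fixed order. Since $I$ is ordered so that there are no arrows from $i$ to $j$ for $i>j$, the module $M(\bz_j)^{\oplus m}$ admits such a filtration (it is built by iterated generic extensions in this source-admissible order, exactly as in the finite-type computations of Ringel and in Section~\ref{PBW basis of cyclic quiver}), and it occurs with coefficient $1$ because $\dime\End M(\bz_j)^{\oplus m}$ balances the twist; moreover $M(\bz_j)^{\oplus m}$ is the generic (minimal-$\dime\End$) such module, so every other module $M(x)$ of dimension $m\bz_j$ contributing to the monomial lies in the closure of the orbit of $M(\bz_j)^{\oplus m}$. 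By Proposition~\ref{Lemma: geometric property of the order} (applied with $\nu = m\bz_j$, $m=0$ on the imaginary part so $M(\bfc)$ ranges over all modules of this dimension vector), any such $M(x)$ that decomposes as $M(\bfc')$ satisfies $(\bfc',0)\preceq(m\bfe_j,0,0)$, and since $\lan\dz,\udim M(\bz_j)\ran < 0$ forces all these modules to be purely preprojective, the only options are $\bfc'_- >_L m\bfe_j$ with $\bfc'_0 = 0$, $\bfc'_+ = 0$. This gives the claimed triangularity with $\lambda$ forced to be $0$ as well.

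For (ii), the integrality of the $\phi^{(\bfc,t_\lz)}_{m\bfe_j}$, I would invoke Proposition~\ref{multiofN} (and the generic algebra $\bm{\cH}^0_\cZ$ it produces): each $u_i^{(a)} = N((\bfc_i,0),0)$ with $M(\bfc_i)=S_i$ — in particular $\bfc_i$ for the first relevant index has $(\bfc_i)_- > 0$ if $S_i$ is preprojective — so the iterated product $u_1^{(m(\bz_j)_1)} * \cdots * u_n^{(m(\bz_j)_n)}$ is a product of $N$'s to which the multi-term version of Proposition~\ref{multiofN} stated at the end of its proof (the assertion displayed around~\eqref{aaaaaa}) applies, giving $\cZ$-coefficients directly, provided the leftmost factor has $\bfc_- > 0$ or the rightmost has $\bfc_+ > 0$; since $M(\bz_j)$ is preprojective and the ordering on $I$ is source-admissible, the first simple $S_1$ occurring with nonzero exponent is preprojective, so $\bfc^1_- > 0$ and the hypothesis is met. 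Combining (i) and (ii) yields the lemma.

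The main obstacle I anticipate is pinning down step (i) precisely: verifying that $M(\bz_j)^{\oplus m}$ really is the generic extension obtained from $S_1^{\oplus m(\bz_j)_1}, \ldots, S_n^{\oplus m(\bz_j)_n}$ in this fixed order, and that its coefficient in $\fkm^{\oz(m\bz_j)}$ is exactly $1$ rather than a unit in $\cZ$. This is the affine analogue of the finite-type fact (Ringel, and the cyclic-quiver case in Proposition of Section~\ref{PBW basis of cyclic quiver}), but because $\bz_j$ is a real root one should be able to reduce to an exceptional-module computation — here $M(\bz_j)$ is exceptional, so $\lan M(\bz_j)\ran^{(m)} = \lan M(\bz_j)^{\oplus m}\ran$ by Lemma~\ref{M^(m)} — and then track the Hall product through the source-admissible ordering using the vanishing of $\Hom$ and $\Ext^1$ among the relevant layers, exactly as in Lemma~\ref{M^(m)}. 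The remaining bookkeeping with the $v$-powers is routine.
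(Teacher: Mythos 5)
Your approach diverges from the paper's, and the key steps you flag as obstacles are exactly where the argument breaks down. The paper does not argue geometrically here. It cites Lemma 6.6 of Lin–Xiao–Zhang for the module-level expansion with leading coefficient $1$, cites Proposition~\ref{multiofN} for integrality, and then handles the triangularity with a short \emph{root-theoretic} argument: if some $(\bfc,t_\lz)\in\cG_{m\bz_j}$ had $\bfc_-<_L m\bfe_j$, rearranging $D(\bfc,t_\lz)=m\bz_j$ gives $(m-\bfc_-(j))\bz_j$ equal to a sum of $\bz_l$'s ($l<j$), regular dimension vectors, $|\lz|\dz$, and $\bz_l$'s ($l>0$); applying $s_{i_j}s_{i_{j+1}}\cdots s_{i_0}$ makes the left side negative while the right side stays in $\bbN I$, a contradiction. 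This proves the constraint for \emph{every} index in the grade $m\bz_j$, with no recourse to generic extensions or orbit closures.

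Your proposal has two concrete errors and one unproven essential claim.

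First, the assertion that ``$\lan\dz,\udim M(\bz_j)\ran<0$ forces all these modules to be purely preprojective, so $\bfc'_0=0$, $\bfc'_+=0$, $\lambda=0$'' is false. The defect of the total dimension vector being negative does not force every module of that dimension vector to be preprojective. In the Kronecker case with $\bz_{-1}=(1,2)$ and $m=2$, the support of $\fkm^{\oz(2\bz_{-1})}$ is all of $\bbE_{(2,4)}$ and includes modules such as $S_2^{\oplus 2}\oplus R$ with $R$ regular of dimension $(2,2)$; these contribute $N(\bfc',t_{\lz'})$ with $\bfc'_0$ or $\lz'$ nonzero. (They do satisfy $\bfc'_->_L m\bfe_j$, which is all the lemma claims.) Your conclusion is stronger than the lemma and wrong; the lemma's formulation with an unconstrained regular/preinjective part is deliberate.

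Second, the claim that ``the first simple $S_1$ occurring with nonzero exponent is preprojective, so $\bfc^1_->0$'' is wrong. With the ordering chosen so that there are no arrows from $i$ to $j$ for $i>j$, vertex $1$ is a source and $S_1$ is injective, hence preinjective: $(\bfc_1)_-=0$, $(\bfc_1)_+>0$. Likewise the rightmost factor $S_n$ is at a sink, so it is projective and has $(\bfc_n)_+=0$. The hypothesis of the displayed assertion around~\eqref{aaaaaa} ($\bfc^1_->0$ or $\bfc^j_+>0$) is therefore not met. Fortunately this does not matter: integrality of the coefficients follows simply by iterating Proposition~\ref{multiofN} on pairs, without invoking~\eqref{aaaaaa} at all.

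Third, the geometric route you sketch hinges on the identification of $M(\bz_j)^{\oplus m}$ as the generic (dense-orbit) module among those admitting an $\oz(m\bz_j)$-filtration, and on the constructible-function passage from ``support $\subset \overline{X(m\bfe_j,0)}$'' to ``coefficients of $N(\bfc',t_{\lz'})$ outside the closure vanish.'' You correctly flag the generic-extension step as ``the main obstacle,'' and it is genuinely nontrivial in the affine setting (where the number of orbits is infinite). The paper's root-theoretic argument avoids exactly this issue, which is why it is the preferred route. If you want to pursue the geometric version, you would need to supply that generic-extension lemma explicitly (or point to where it is proved in the affine case), drop the erroneous ``purely preprojective'' conclusion, and then Proposition~\ref{Lemma: geometric property of the order} together with the dimension-vector constraint would indeed yield $\bfc'_->_L m\bfe_j$ for the error terms.
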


\begin{proof}

By Lemma 6.6 in \cite{Lin_Xiao_Zhang_Representations_of_tame_quivers_and_affine_canonical_bases} and Proposition \ref{multiofN}, it suffices to prove that  all $(\bfc,t_\lz)\in\cG_{m\bz_j}$ satisfy $\bfc_-\geq_L m\bfe_j$.
Suppose there is a $(\bfc,t_\lz)\in\cG_{m\bz_j}$ with $\bfc_-<_L m\bfe_j$, i.e., $\bfc_-(l)=0,j<l\leq 0$ and $\bfc_-(j)<m$.
Then $$(m-\bfc_-(j))\bz_j=\sum_{l<j}\bfc_-(l)\bz_l+\udim M(\bfc_0)+|\lz|\dz+\sum_{l>0}\bfc_+(l)\bz_l.$$
Note that $\bz_l=s_{i_0}s_{i_{-1}}\dots s_{i_{l+1}}(i_l)$ for $l\leq 0$, where $\underline{i}=(\dots,i_{-1},i_0|i_1,i_2,\dots)$ is the admissible sequence in Section \ref{PI components}.
By applying $s_{i_j}s_{i_{j+1}}\dots s_{i_{-1}}s_{i_0}$ on both sides, the left side will be negative while the right side is still positive, which leads to a contradiction.
\end{proof}

Now we turn to an arbitrary $\bfd:\bbZ^{\leq 0}\ra\bbN$ in $\cG_-$.
It can be written as $\bfd=\sum_{t\leq 0}\bfd(t)\bfe_t$.
Define $\oz(\bfd)\in\cS$ and the monomial $\fkm^{\oz(\bfd)}$ by letting
$$\fkm^{\oz(\bfd)}=\fkm^{\bfd(0)\bz_0}*\fkm^{\bfd(-1)\bz_{-1}}*\fkm^{\bfd(-2)\bz_{-2}}*\cdots.$$
Note that $\fkm^{\oz(\bfd)}$ is a finite product since $\bfd$ is finitely supported.

\begin{lemma}\label{momomial for proj}
For $\bfd\in\cG_-$, we have
$$\fkm^{\oz(\bfd)}=\lan M(\bfd)\ran+\sum_{\bfc_->_L\bfd}\phi_\bfd^{(\bfc,t_\lz)}(v) N(\bfc,t_\lz)$$
with $\phi_\bfd^{(\bfc,t_\lz)}\in\cZ$.
\end{lemma}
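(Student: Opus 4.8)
The plan is to induct on the size of the support $\abs{\supp\bfd}$. When $\abs{\supp\bfd}\leq 1$ there is nothing to prove: $\bfd=0$ is trivial, and $\bfd=m\bfe_j$ is exactly Lemma~\ref{monomial of indecomp}, since the terms occurring there already satisfy $\bfc_->_L m\bfe_j=\bfd$, one has $\lan M(\bz_j)^{\oplus m}\ran=\lan M(\bfd)\ran$, and the coefficients lie in $\cZ$. For the inductive step, put $t_0=\max\supp\bfd$ and split $\bfd=\bfd(t_0)\bfe_{t_0}+\bfd'$, where $\bfd'$ is supported on $\bbZ_{<t_0}$ and $\abs{\supp\bfd'}=\abs{\supp\bfd}-1$; since $\bfd$ vanishes above $t_0$, the defining product collapses to $\fkm^{\oz(\bfd)}=\fkm^{\oz(\bfd(t_0)\bz_{t_0})}*\fkm^{\oz(\bfd')}$. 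Integrality of all coefficients below will be automatic, since $\fkm^{\oz(\bfd)}$ lies in $\cC^*_\cZ\subset\bm{\cH}^0_\cZ$, so only the index set of occurring terms needs to be tracked.

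I would first sharpen Lemma~\ref{monomial of indecomp} with the following observation: if $\bfc_-\in\cG_-$ satisfies $\bfc_->_L m\bfe_j$ and $\udim M(\bfc_-)\leq m\bz_j$ in $\bbN I$, then $\max\supp\bfc_->j$. Indeed, were $\max\supp\bfc_-\leq j$, then $\bfc_-$ would agree with $m\bfe_j$ above $j$, forcing $\bfc_-(j)\geq m$; but $\bfc_-(j)>m$, or $\bfc_-(j)=m$ together with $\bfc_-$ nonzero below $j$, each makes $\udim M(\bfc_-)-m\bz_j$ a nonzero element of $\bbN I$, contradicting $\udim M(\bfc_-)\leq m\bz_j$. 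Since $\fkm^{\oz(\bfd(t_0)\bz_{t_0})}$ is homogeneous of degree $\bfd(t_0)\bz_{t_0}$, Lemma~\ref{monomial of indecomp} and this observation give $\fkm^{\oz(\bfd(t_0)\bz_{t_0})}=\lan M(\bz_{t_0})^{\oplus\bfd(t_0)}\ran+E$, where $E$ is a $\cZ$-combination of basis elements $N(\bfc',t_{\lz'})$ with $\max\supp\bfc'_->t_0$. By the induction hypothesis, $\fkm^{\oz(\bfd')}=\lan M(\bfd')\ran+F$ with $F$ a $\cZ$-combination of $N(\bfb,t_\mu)$ with $\bfb_->_L\bfd'$; in particular $\bfb_-\neq 0$. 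Expanding the product yields
\[
\fkm^{\oz(\bfd)}=\lan M(\bz_{t_0})^{\oplus\bfd(t_0)}\ran*\lan M(\bfd')\ran\ +\ \lan M(\bz_{t_0})^{\oplus\bfd(t_0)}\ran*F\ +\ E*\fkm^{\oz(\bfd')}.
\]

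The first summand is controlled by the directedness of the preprojective component for the ordering $\bz_0<\bz_{-1}<\cdots$: one has $\Hom(M(\bz_t),M(\bz_{t'}))=0$ and $\Ext^1(M(\bz_{t'}),M(\bz_t))=0$ whenever $t<t'\leq 0$ (the $\Ext$ vanishing via the Auslander--Reiten formula, since $\tau M(\bz_{t'})$ is either $0$ or $M(\bz_{t''})$ with $t''>t'$). Hence Lemma~\ref{M^(m)} applies and gives $\lan M(\bz_{t_0})^{\oplus\bfd(t_0)}\ran*\lan M(\bfd')\ran=\lan M(\bfd)\ran=N((\bfd,0,0),0)$. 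For the third summand, each $N(\bfc',t_{\lz'})$ in $E$ has $\max\supp\bfc'_->t_0=\max\supp\bfd$, hence $\bfc'_->_L\bfd$; writing $\fkm^{\oz(\bfd')}$ as a $\cZ$-combination of basis elements and applying Proposition~\ref{multiofN}, every resulting $N(\bfc,t_\lz)$ satisfies $\bfc_-\geq_L\bfc'_->_L\bfd$.

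The crux is the middle summand $\lan M(\bz_{t_0})^{\oplus\bfd(t_0)}\ran*F$. For a term $N(\bfb,t_\mu)$ of $F$, factor $N(\bfb,t_\mu)=\lan M(\bfb_-)\ran*N(\bfb_{\geq 0},t_\mu)$ with $\bfb_{\geq 0}=(0,\bfb_0,\bfb_+)$, and apply Corollary~\ref{preprojdecomp} (using $\bfb_-\neq 0$) to write $\lan M(\bz_{t_0})^{\oplus\bfd(t_0)}\ran*\lan M(\bfb_-)\ran=\sum_{\bfg_->_L\bfd(t_0)\bfe_{t_0}}a^{\bfg_-}\lan M(\bfg_-)\ran$. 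I claim $\bfg_->_L\bfd$ whenever $a^{\bfg_-}\neq 0$. If $\max\supp\bfg_->t_0$ this is immediate, as $\bfd$ vanishes above $t_0$; otherwise $\bfg_-(t_0)\geq\bfd(t_0)$, and strict inequality again gives $\bfg_->_L\bfd$. In the remaining case $\bfg_-=\bfd(t_0)\bfe_{t_0}+\bfg'$ with $\bfg'$ supported on $\bbZ_{<t_0}$, so $M(\bfg_-)=M(\bz_{t_0})^{\oplus\bfd(t_0)}\oplus M(\bfg')$; since $a^{\bfg_-}\neq 0$, there is a submodule $W\subset M(\bfg_-)$ with $W\cong M(\bfb_-)$ and $M(\bfg_-)/W\cong M(\bz_{t_0})^{\oplus\bfd(t_0)}$. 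As $\Hom(M(\bfg'),M(\bz_{t_0}))=0$, the quotient map kills $M(\bfg')$ and therefore induces a surjection $M(\bz_{t_0})^{\oplus\bfd(t_0)}\to M(\bz_{t_0})^{\oplus\bfd(t_0)}$, necessarily an isomorphism; hence $W=M(\bfg')$, so $M(\bfg')\cong M(\bfb_-)$, i.e.\ $\bfg'=\bfb_->_L\bfd'$ and thus $\bfg_->_L\bfd$. Finally $\lan M(\bfg_-)\ran*N(\bfb_{\geq 0},t_\mu)$ is, by Proposition~\ref{multiofN}, a $\cZ$-combination of $N(\bfc,t_\lz)$ with $\bfc_-\geq_L\bfg_->_L\bfd$. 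Assembling the three summands gives $\fkm^{\oz(\bfd)}=\lan M(\bfd)\ran+\sum_{\bfc_->_L\bfd}\phi_\bfd^{(\bfc,t_\lz)}(v)N(\bfc,t_\lz)$ with $\phi_\bfd^{(\bfc,t_\lz)}\in\cZ$. I expect the one genuine obstacle to be precisely this middle summand: the generic extension of $M(\bfb_-)$ by $M(\bz_{t_0})^{\oplus\bfd(t_0)}$ must be pinned down tightly enough to exclude the a priori possible $\bfg_-$ with $\bfg_-<_L\bfd$, and it is the vanishing of $\Ext^1$ inside the preprojective component that makes this work.
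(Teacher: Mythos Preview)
Your proof is correct and follows the same overall inductive strategy as the paper, but with the recursion mirrored. The paper inducts on $j_\bfd=\min\supp\bfd$ and peels off the \emph{rightmost} factor, writing $\fkm^{\oz(\bfd)}=\fkm^{\oz(\bfd_{\geq l})}*\fkm^{\oz(\bfd(l-1)\bz_{l-1})}$, applying the induction hypothesis to the first factor and Lemma~\ref{monomial of indecomp} to the second; you induct on $\abs{\supp\bfd}$ and peel off the \emph{leftmost} factor. Both routes invoke the same sharpening of Lemma~\ref{monomial of indecomp} (that $\bfc_->_Lm\bfe_j$ with $\udim M(\bfc_-)\leq m\bz_j$ forces support above~$j$) and the same tools (Proposition~\ref{multiofN}, Corollary~\ref{preprojdecomp}). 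The visible difference is in the hard cross term: the paper factors $N(\bfc',t_{\lz'})=\lan M(\bfc'_{-,\geq l})\ran*\lan M(\bfc'_{-,<l})\ran*N((0,\bfc'_0,\bfc'_+),t_{\lz'})$ and applies Corollary~\ref{preprojdecomp} to $\lan M(\bfd_{\geq l})\ran*\lan M(\bfc'_{-,\geq l})\ran$, whereas you apply the corollary first and then case-split on $\bfg_-$, settling the last case by a direct module-theoretic argument (the $\Hom$-vanishing $\Hom(M(\bfg'),M(\bz_{t_0}))=0$ forces $W=M(\bfg')$). Your handling is slightly more hands-on but equally valid; your shortcut for integrality via $\cC^*_\cZ\subset\bm{\cH}^0_\cZ$ is also fine. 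One small remark: in your closing comment it is the $\Hom$-vanishing, not the $\Ext^1$-vanishing, that drives the middle-summand argument; the $\Ext^1$-vanishing is what makes the first summand collapse via Lemma~\ref{M^(m)}.
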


\begin{proof}
Let $j=j_\bfd\leq 0$ be the minimal integer with $\bfd(j)\neq 0$. We shall prove this reult by induction on $j$.

If $j=0$, then $\fkm^{\oz(\bfd)}=u_{i_0}^{(\bfd(0))}=\lan M(\bfd)\ran$ since $\bz_0=i_0$.

Assume that the lemma holds for all $\bfd$ with $j_\bfd\geq l$. Set $\bfd_{\geq l}=\sum_{t\geq l}\bfd(t)\bfe_t$ and $\bfd_{<l}=\sum_{t< l}\bfd(t)\bfe_t$ for any $\bfd\in\cG_-$. For a $\bfd$ with $j_\bfd=l-1$,
by Lemma \ref{monomial of indecomp}, we have
\begin{align}
\fkm^{\oz(\bfd)}=&\fkm^{\oz(\bfd_{\geq l})}*\fkm^{\oz(\bfd(l-1)\bfe_{l-1})}\notag\\
=&\left(\lan M(\bfd_{\geq l})\ran+\sum_{\bfc_->_L\bfd_{\geq l}}\phi_{\bfd_{\geq l}}^{(\bfc,t_\lz)}(v) N(\bfc,t_\lz)\right)\notag\\
*&\left(\lan M(\bz_{l-1})^{\oplus \bfd(l-1)}\ran+\sum_{\bfc'_->_L \bfd(l-1)\bfe_{l-1}}\phi_{\bfd(l-1)\bfe_{l-1}}^{(\bfc',t_{\lz'})}(v) N(\bfc',t_{\lz'})\right).\notag
\end{align}

By Proposition \ref{multiofN}, $N(\bfc,t_\lz)*\lan M(\bz_{l-1})^{\oplus \bfd(l-1)}\ran$ and $N(\bfc,t_\lz)*N(\bfc',t_{\lz'})$ are $\cZ$-linear combinations of some $N(\bfc'',t_{\lz''})$, which satisfy $\bfc''_-\geq_L\bfc_->_L\bfd_{\geq l}$. Note that $\bfc_->\bfd_{\geq l}$ and $\udim M(\bfc_-)\leq\udim M(\bfd_{\geq l})$ imply that there is a $k$ with $l\leq k\leq 0$ such that $\bfc_-(k)>\bfd(k)$, so $\bfc''_->_L\bfd$.

On the other hand,
$$\lan M(\bfd_{\geq l})\ran*N(\bfc',t_{\lz'})=\lan M(\bfd_{\geq l})\ran*\lan M(\bfc'_{-,\geq l})\ran*\lan M(\bfc'_{-,<l})\ran*N((0,\bfc'_0,\bfc'_+),t_{\lz'}).$$
Here, $\bfc'_->_L \bfd(l-1)\bfe_{l-1}$ and $\udim M(\bfc'_-)\leq\udim M(\bfd(l-1)\bfe_{l-1})$ imply that $\bfc'_{-,\geq l}>0$.
Therefore $\lan M(\bfd_{\geq l})\ran*\lan M(\bfc'_{-,\geq l})\ran$ is a $\cZ$-linear combination of $\lan M(\bfc''_{-,\geq l})\ran$ with $\bfc''_{-,\geq l}>_L \bfd_{\geq l}$ by Corollary \ref{preprojdecomp}, and it follows that $\bfc''_->_L\bfd$ if we set $\bfc''_{-,<l}=\bfc'_{-,<l}$.

Since $\lan M(\bfd_{\geq l})\ran*\lan M(\bz_{l-1})^{\oplus \bfd(l-1)}\ran=\lan M(\bfd)\ran$, we get
$$\fkm^{\oz(\bfd)}=\lan M(\bfd)\ran+\sum_{\bfc''_->_L\bfd}\phi_\bfd^{(\bfc'',t_{\lz''})}(v) N(\bfc'',t_{\lz''})$$
and Proposition \ref{multiofN} implies that all $\phi_\bfd^{(\bfc'',t_{\lz''})}\in\cZ$.

\end{proof}

\subsection{Monomials for preinjective component}
Dually, for $\bfd:\bbZ^{>0}\ra\bbN$ in $\cG_+$, 
define $\oz(\bfd)\in\cS$ and the monomial $\fkm^{\oz(\bfd)}$ by
$$\fkm^{\oz(\bfd)}=\dots*\fkm^{\bfd(3)\bz_3}*\fkm^{\bfd(2)\bz_{2}}*\fkm^{\bfd(1)\bz_{1}}.$$
We shall have a dual lemma to Lemma \ref{momomial for proj}:

\begin{lemma}\label{monomial for inj}
For $\bfd\in\cG_+$, we have
$$\fkm^{\oz(\bfd)}=\lan M(\bfd)\ran+\sum_{\bfc_+>_L\bfd}\phi_\bfd^{(\bfc,t_\lz)}(v) N(\bfc,t_\lz)$$
with $\phi_\bfd^{(\bfc,t_\lz)}\in\cZ$.
\end{lemma}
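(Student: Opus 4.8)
The proof is the formal mirror image of the proof of Lemma~\ref{momomial for proj}, with preprojective data replaced throughout by preinjective data and all orders reversed accordingly. First I would record the preinjective analogue of Lemma~\ref{monomial of indecomp}: writing $\bfe_j\in\cG_+$ for the function $\bbZ_{>0}\to\bbN$ supported at $j$ with value $1$, one has, for $m\geq 0$ and $j>0$,
$$\fkm^{\oz(m\bz_j)}=\lan M(\bz_j)^{\oplus m}\ran+\sum_{\bfc_+>_L m\bfe_j}\phi_{m\bfe_j}^{(\bfc,t_\lz)}(v)\,N(\bfc,t_\lz),\qquad \phi_{m\bfe_j}^{(\bfc,t_\lz)}\in\cZ.$$
By the preinjective analogue of Lemma~6.6 in \cite{Lin_Xiao_Zhang_Representations_of_tame_quivers_and_affine_canonical_bases} together with Proposition~\ref{multiofN}, this reduces to showing that every $(\bfc,t_\lz)\in\cG_{m\bz_j}$ satisfies $\bfc_+\geq_L m\bfe_j$. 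Were $\bfc_+<_L m\bfe_j$, so that $\bfc_+(l)=0$ for $0<l<j$ and $\bfc_+(j)<m$, we would get
$$(m-\bfc_+(j))\bz_j=\sum_{l>j}\bfc_+(l)\bz_l+\udim M(\bfc_0)+|\lz|\dz+\sum_{l\leq 0}\bfc_-(l)\bz_l;$$
applying $s_{i_j}s_{i_{j-1}}\cdots s_{i_1}$ to both sides and using $\bz_l=s_{i_1}\cdots s_{i_{l-1}}(i_l)$ for $l>0$, the left side becomes the negative vector $-(m-\bfc_+(j))i_j$ while the right side remains in $\bbN I$ because $\underline{i}$ is reduced and adapted to $Q$, a contradiction.

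For a general $\bfd\in\cG_+$, write $\bfd=\sum_{t>0}\bfd(t)\bfe_t$ and let $j=j_\bfd>0$ be the largest integer in its support; I would induct on $j_\bfd$. The base case $j_\bfd=1$ is immediate from $\bz_1=i_1$, which gives $\fkm^{\oz(\bfd)}=u_{i_1}^{(\bfd(1))}=\lan M(\bfd)\ran$. For the step, put $\bfd_{<j}=\sum_{t<j}\bfd(t)\bfe_t$, so that $\fkm^{\oz(\bfd)}=\fkm^{\oz(\bfd(j)\bfe_j)}*\fkm^{\oz(\bfd_{<j})}$; expanding the left factor by the indecomposable case above and the right factor by the induction hypothesis, one multiplies out using Proposition~\ref{multiofN}. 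The product of leading terms is $\lan M(\bz_j)^{\oplus\bfd(j)}\ran*\lan M(\bfd_{<j})\ran=\lan M(\bfd)\ran$ by the multiplicativity of the natural basis of $\cH^*(\cI)$. Every remaining term carries an index $(\bfc,t_\lz)$ with $\bfc_+>_L\bfd$: for terms coming from a correction $N(\bfc'',t_{\lz''})$ of the right factor (which has $\bfc''_+>_L\bfd_{<j}$) this follows, as in Lemma~\ref{momomial for proj}, from Proposition~\ref{multiofN} together with the dimension-vector constraint $\udim M(\bfc_+)\leq\udim M(\bfd)$, which pins the first point of disagreement with $\bfd$ inside the support of $\bfd_{<j}$; for terms coming from a correction $N(\bfc',t_{\lz'})$ of the left factor (which has $\bfc'_+>_L\bfd(j)\bfe_j$) one splits $\bfc'_+$ into the part $\bfc'_{+,<j}$ supported on $\{1,\dots,j-1\}$ and the part $\bfc'_{+,\geq j}$ supported on $\{j,j+1,\dots\}$, observes that the dimension constraint forces $\bfc'_{+,<j}\neq 0$, rewrites $\lan M(\bfc'_+)\ran*\lan M(\bfd_{<j})\ran=\lan M(\bfc'_{+,\geq j})\ran*(\lan M(\bfc'_{+,<j})\ran*\lan M(\bfd_{<j})\ran)$, and applies Corollary~\ref{preinjdecomp} to the inner product to conclude that the resulting $+$-index strictly exceeds $\bfd$ in $>_L$. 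Integrality of all coefficients in $\cZ$ then follows from Proposition~\ref{multiofN}, exactly as in the preprojective case.

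The main obstacle will be purely organizational bookkeeping of the orders: unlike the order on $\cG_-$, the lexicographic order on $\cG_+$ is read from the smallest index outward, while the monomial $\fkm^{\oz(\bfd)}$ is assembled with the largest index on the \emph{left}, so the direction of the induction, the splitting $\bfc'_+=\bfc'_{+,<j}+\bfc'_{+,\geq j}$, and the repeated assertions ``$\bfc''_+>_L\bfd$'' must all be tracked with these reversed conventions in force. Once they are fixed, no input beyond the dual statements already at hand — the indecomposable-case lemma from \cite{Lin_Xiao_Zhang_Representations_of_tame_quivers_and_affine_canonical_bases}, Corollary~\ref{preinjdecomp}, and Proposition~\ref{multiofN} — is needed, and each step is the verbatim mirror of the corresponding step for the preprojective component.
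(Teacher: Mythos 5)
Your proof is correct and is precisely the mirror-image argument that the paper abbreviates to ``The proof is similar.'' The reflection-group step for the indecomposable case, the base case $j_\bfd=1$, the split $\fkm^{\oz(\bfd)}=\fkm^{\oz(\bfd(j)\bfe_j)}*\fkm^{\oz(\bfd_{<j})}$, and the treatment of the two kinds of correction terms via Proposition~\ref{multiofN} and Corollary~\ref{preinjdecomp} all dualize exactly as you describe, matching the paper's intent.
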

The proof is similar.

\subsection{Monomials for non-homogeneous tubes}\label{Section: Monimials for nh}

Consider a fixed non-homogeneous tube $J_d$ first.
 We simply denote $r=r(d)$ and $\Pi=\Pi(d)$.
Recall that $\cK_r=\cK_r(k)$ is the category of nilpotent representations of the cyclic quiver $\Delta=\Delta(r)$ with $r$ vertices, and $\ez_d:\cK_r\ra J_d$ is the category equivalence.
Let $L_j=\ez_d(S_j)$ for $1\leq j\leq r$. Then $L_j$ is exceptional (see Lemma \ref{M^(m)}).

By Section \ref{cycliccase}, for $\pi\in\Pi^a$ aperiodic, there exists
$$\oz_\pi=((j_1,j_2,\dots,j_s),(a_1,a_2,\dots,a_s)),1\leq j_1,j_2,\dots,j_s\leq r$$ such that
$$\lan S_{j_1}\ran^{(a_1)}*\lan S_{j_2}\ran^{(a_2)}*\dots*\lan S_{j_s}\ran^{(a_s)}=\lan M_{\cK_r}(\pi)\ran+\sum_{\pi'\in\Pi,\pi'<_G \pi}\xz^{\pi'}_{\oz_\pi}(v)\lan M_{\cK_r}(\pi')\ran$$
in $\cH^*(\cK_r)$.
Applying $\ez_d$ to the both sides, we get
$$\lan L_{j_1}\ran^{(a_1)}*\lan L_{j_2}\ran^{(a_2)}*\dots*\lan L_{j_s}\ran^{(a_s)}=\lan M_{J_d}(\pi)\ran+\sum_{\pi'\in\Pi,\pi'<_G \pi}\xz^{\pi'}_{\oz_\pi}(v)\lan M_{J_d}(\pi')\ran$$
in $\cH^*(J_d)$.
However, the left side is no more a monomial in $\cC^*(Q)$, so we need to find a monomial to replace it.

\begin{lemma}\label{monomial in tube of indecomp}
For $1\leq j\leq r$ and $a>0$, we have
$$\fkm^{a\udim L_j}=\lan L_j\ran^{(a)}+\sum_{\bfc_->0,\bfc_+>0}\phi^{(\bfc,t_\lz)}_{L_j,a}(v)N(\bfc,t_\lz)$$
with $\phi^{(\bfc,t_\lz)}_{L_j,a}\in\cZ$.
\end{lemma}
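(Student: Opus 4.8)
The plan is to follow the strategy of Lemma~\ref{monomial of indecomp}: combine the leading‑term description of the standard monomial $\fkm^{\oz(\nu)}$ from \cite{Lin_Xiao_Zhang_Representations_of_tame_quivers_and_affine_canonical_bases} with Proposition~\ref{multiofN}, the only genuinely new input being the identification of the generic module of dimension vector $\nu:=a\udim L_j$ with $L_j^{\oplus a}$. Integrality is the easy part: $\fkm^{\oz(\nu)}=u_1^{(\nu_1)}*\cdots*u_n^{(\nu_n)}$ lies in $\cC^*_\cZ$, and $\cC^*_\cZ$ is contained in the $\cZ$‑span $\bm{\cH}^0_\cZ$ of $\{N(\bfc,t_\lz)\}$ (each generator $u_i^{(m)}=\lan S_i^{\oplus m}\ran$ is itself one of the basis elements $N(\bfc,0)$, and by Proposition~\ref{multiofN} this $\cZ$‑span is closed under multiplication), so the coefficients $\phi^{(\bfc,t_\lz)}_{L_j,a}(v)$ occurring in the $N$‑expansion automatically lie in $\cZ$. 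It remains only to locate which $N(\bfc,t_\lz)$ occur and with which leading coefficient.

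Because $J_d$ is a non‑homogeneous tube we have $r=r(d)\ge 2$, so $L_j$ is an exceptional module, i.e.\ $\Ext^1(L_j,L_j)=0$ (a regular simple in a tube of rank $\ge 2$ has no self‑extensions). By Lemma~\ref{M^(m)} this already gives $\lan L_j\ran^{(a)}=\lan L_j^{\oplus a}\ran$, and since $\Ext^1(L_j^{\oplus a},L_j^{\oplus a})=0$ the module $L_j^{\oplus a}$ is rigid; hence its $\GL_\nu$‑orbit is open, and so dense, in $\bbE_\nu$, so $L_j^{\oplus a}$ is the generic module of dimension $\nu$. Writing $\bfc_0^{(j,a)}$ for the index such that $M(\bfc_0^{(j,a)})=L_j^{\oplus a}$ — it is concentrated in the tube $J_d$ and corresponds there to the multisegment $a[j;1)$ of $\cK_r$ — we obtain $\lan L_j\ran^{(a)}=N((0,\bfc_0^{(j,a)},0),0)$ with $X((0,\bfc_0^{(j,a)},0),0)$ dense in $\bbE_\nu$. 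Separately, for every $(\bfc,t_\lz)\in\cG_\nu$ Lemma~\ref{defect} gives $0=\lan\dz,\nu\ran=\lan\dz,\udim M(\bfc_-)\ran+\lan\dz,\udim M(\bfc_+)\ran$, a sum of a non‑positive and a non‑negative term which vanish precisely when $\bfc_-=0$, resp.\ $\bfc_+=0$; hence $\bfc_-=0\Leftrightarrow\bfc_+=0$ for all such terms.

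By the leading‑term result of \cite{Lin_Xiao_Zhang_Representations_of_tame_quivers_and_affine_canonical_bases} used in the proof of Lemma~\ref{monomial of indecomp}, together with Proposition~\ref{multiofN}, the $N$‑expansion of $\fkm^{\oz(\nu)}$ has $N((0,\bfc_0^{(j,a)},0),0)=\lan L_j\ran^{(a)}$ as its term attached to the generic module, with coefficient $1$, and every other $(\bfc,t_\lz)$ that occurs is strictly smaller than $((0,\bfc_0^{(j,a)},0),0)$ in the geometric order on $\cG_\nu$, hence strictly smaller in $\preceq$ by the corollary following Proposition~\ref{Lemma: geometric property of the order}. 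Now unwind Definition~\ref{order}: the leading index has $\bfc_\pm=0$ and $\lz=0$; case (b) is impossible since $|\lz|=0$, and case (c) is impossible since it would require $\bfc_0'<_G\bfc_0^{(j,a)}$, whereas $a[j;1)$ is the unique multisegment of $\cK_r$ of its dimension vector (any segment $[i;l)$ with $l\ge 2$ or $i\ne j$ occupies a vertex other than $j$, using $r\ge 2$). So only case (a) can hold for the remaining terms, which forces $\bfc_-\ge_L 0$, $\bfc_+\ge_L 0$ with at least one inequality strict; combined with $\bfc_-=0\Leftrightarrow\bfc_+=0$ this yields $\bfc_->0$ and $\bfc_+>0$, exactly the claimed shape of $\fkm^{\oz(a\udim L_j)}$.

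The step I expect to be delicate is the precise invocation of the leading‑term result of \cite{Lin_Xiao_Zhang_Representations_of_tame_quivers_and_affine_canonical_bases}: one must be sure that the distinguished ``leading module'' it attaches to the dimension vector $a\udim L_j$ is genuinely the generic module $L_j^{\oplus a}$ and not merely some degeneration of it. The open‑orbit argument in the second paragraph is exactly what secures this, and once it is in place the rest is the bookkeeping with the order $\preceq$ carried out above.
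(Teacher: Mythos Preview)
Your proof is correct. The paper's argument is much shorter: after the defect dichotomy (your third paragraph), it simply asserts that the index with $M(\bfc_0)=L_j^{\oplus a}$ and $\lz=0$ is the \emph{only} $(\bfc,t_\lz)\in\cG_{a\udim L_j}$ with $\bfc_\pm=0$; hence every other term in the $N$-expansion automatically lands in the range $\bfc_->0,\bfc_+>0$. This is a bare statement about the index set and avoids Proposition~\ref{Lemma: geometric property of the order} and the case analysis of Definition~\ref{order} entirely. Your route through the geometric order does work --- the open-orbit observation makes the leading index the maximum of $\tilde\cG_\nu$ under $\leq$ and hence under $\preceq$, and your case-(b)/(c) analysis is in effect a proof of the paper's uniqueness claim --- but it is a detour rather than a genuinely different idea. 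On the plus side, your open-orbit argument is exactly what justifies the leading coefficient $1$, a point the paper's two-line proof leaves implicit.
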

\begin{proof}
By Lemma \ref{defect}, $\lan \dz,D(\bfc,t_\lz)\ran=a\lan \dz,\udim L_j\ran=0$, so either $\bfc_\pm=0$ or $\bfc_->0,\bfc_+>0$.
 If $\bfc_\pm=0$, then $\lan L_j\ran^{(a)}=\lan L_j^{\oplus a}\ran$ is the only $N(\bfc,t_\lz)$ with $D(\bfc,t_\lz)=a\udim L_j$.
\end{proof}

Now define a monomial $\fkm^{\oz(\pi)}$ in $\cC^*$:
$$\fkm^{\oz(\pi)}=\fkm^{a_1\udim L_{i_1}}*\fkm^{a_2\udim L_{i_2}}*\dots*\fkm^{a_s\udim L_{i_s}}.$$

\begin{lemma}\label{monomial in tube}
For $\pi\in\Pi^a$, we have
$$\fkm^{\oz(\pi)}=\lan M_{J_d}(\pi)\ran+\sum_{\pi'\in\Pi,\pi'<_G \pi}\xz^{\pi'}_{\oz_\pi}(v)\lan M_{J_d}(\pi')\ran+\sum_{\bfc_->0,\bfc_+>0}\phi^{(\bfc,t_\lz)}_{\oz(\pi)}(v)N(\bfc,t_\lz)$$
with $\xz^{\pi'}_{\oz_\pi},\phi^{(\bfc,t_\lz)}_{\oz(\pi)}\in\cZ$.
\end{lemma}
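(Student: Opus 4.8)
The plan is to combine the cyclic-quiver identity displayed just before the statement with the structural lemmas about monomials for indecomposable tube modules (Lemma \ref{monomial in tube of indecomp}) and the multiplication structure constants of Proposition \ref{multiofN}. Recall that the monomial $\fkm^{\oz'_\pi}=\fkm^{\oz(a_1\udim L_{i_1})}*\cdots*\fkm^{\oz(a_s\udim L_{i_s})}$ is a product of the monomials $\fkm^{\oz(a_l\udim L_{i_l})}$, each of which, by Lemma \ref{monomial in tube of indecomp}, equals $\lan L_{i_l}\ran^{(a_l)}$ plus a $\cZ$-linear combination of $N(\bfc,t_\lz)$ with $\bfc_->0$ and $\bfc_+>0$. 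First I would substitute these expressions into the product and expand, separating off the "main term" $\lan L_{i_1}\ran^{(a_1)}*\cdots*\lan L_{i_s}\ran^{(a_s)}$ from all the cross terms that involve at least one factor $N(\bfc,t_\lz)$ with $\bfc_\pm>0$.

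Next I would handle the main term. Since all $L_{i_l}$ lie in the single tube $J_d$, the product $\lan L_{i_1}\ran^{(a_1)}*\cdots*\lan L_{i_s}\ran^{(a_s)}$ is computed inside $\cH^*(J_d)$, and applying the category equivalence $\ez_d$ to the cyclic-quiver identity recalled immediately before the statement gives exactly
\[
\lan L_{i_1}\ran^{(a_1)}*\cdots*\lan L_{i_s}\ran^{(a_s)}=\lan M_{J_d}(\pi)\ran+\sum_{\pi'\in\Pi,\pi'<_G\pi}\xz^{\pi'}_{\oz_\pi}(v)\lan M_{J_d}(\pi')\ran,
\]
with $\xz^{\pi'}_{\oz_\pi}\in\cZ$, as displayed. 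This accounts for the first two groups of terms in the asserted formula. It remains to show that every cross term contributes only to the third group, i.e.\ lies in the $\cZ$-span of $N(\bfc,t_\lz)$ with $\bfc_->0,\ \bfc_+>0$.

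For the cross terms, the key point is a "propagation" argument: once a factor $N(\bfc,t_\lz)$ with $\bfc_->0$ (resp.\ $\bfc_+>0$) appears in a product, Proposition \ref{multiofN} forces every $N$ appearing in the expansion to have $\bfc_-\geq_L\bfc_->0$ (resp.\ $\bfc_+\geq_L\bfc_+>0$), since $\geq_L$ applied to a nonzero function yields a nonzero function. Thus I would argue that in any cross term, the leftmost tainted factor contributes a strictly positive preprojective part and the rightmost contributes a strictly positive preinjective part, and these survive all subsequent multiplications; combined with the fact that all structure constants in Proposition \ref{multiofN} and in Lemma \ref{monomial in tube of indecomp} lie in $\cZ$, this gives the $\cZ$-coefficients $\phi^{(\bfc,t_\lz)}_{\oz'_\pi}$. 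One should also confirm via Lemma \ref{defect} that $\lan\dz,D(\bfc,t_\lz)\ran=0$ for all indices that occur (since the total dimension vector is $a_1\udim L_{i_1}+\cdots+a_s\udim L_{i_s}$, which is a sum of dimension vectors of regular modules and hence has defect zero), so that the only alternative to $\bfc_\pm=0$ is indeed $\bfc_->0,\bfc_+>0$. The main obstacle I anticipate is the bookkeeping in this propagation step: one must be careful that when a cross term has a tainted factor that is not at the extreme left or right, multiplying it against untainted factors on both sides still produces an $N$ with both $\bfc_->0$ and $\bfc_+>0$ — this needs the monotonicity of $\geq_L$ together with a symmetric (preinjective) version of the same monotonicity, and possibly an induction on $s$ to organize the argument cleanly.
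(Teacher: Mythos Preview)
Your proposal is correct and follows essentially the same route as the paper's proof: expand the product via Lemma~\ref{monomial in tube of indecomp}, identify the main term through the cyclic-quiver identity transported by $\ez_d$, and control the cross terms using Proposition~\ref{multiofN} together with the defect-zero observation from Lemma~\ref{defect}. The bookkeeping obstacle you anticipate dissolves precisely because of that defect-zero observation (which you already made): since every partial product has defect zero, $\bfc''_->0\Leftrightarrow\bfc''_+>0$, so Proposition~\ref{multiofN} guarantees one side stays positive (the $\bfc_-$ side if the tainted factor is on the left, the $\bfc_+$ side if on the right), and the other side then follows automatically --- this is exactly how the paper organizes the argument.
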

\begin{proof}
By Lemma \ref{monomial in tube of indecomp}, it suffices to show that if $\bfc_->0,\bfc_+>0,\lan\dz,D(\bfc,t_\lz)\ran=0,\lan\dz,D(\bfc',t_{\lz'})\ran=0$, then
$$N(\bfc,t_\lz)*N(\bfc',t_{\lz'})=\sum_{\bfc''_->0,\bfc''_+>0}\psi^{(\bfc'',t_{\lz''})}_{(\bfc,t_\lz),(\bfc',t_{\lz'})}(v)N(\bfc'',t_{\lz''}),$$
$$N(\bfc',t_{\lz'})*N(\bfc,t_\lz)=\sum_{\bfc''_->0,\bfc''_+>0}\psi^{(\bfc'',t_{\lz''})}_{(\bfc',t_{\lz'}),(\bfc,t_\lz)}(v)N(\bfc'',t_{\lz''}).$$
Since $\lan\dz,D(\bfc'',\lz'')\ran=0$, $\bfc''_->0$ implies $\bfc''_+>0$ and vise versa. Proposition \ref{multiofN} shows that $\bfc''_-\geq_L \bfc_->0$ in the first equation and $\bfc''_+\geq_L \bfc_+>0$ in the second equation. Hence, we have $\bfc''_->0,\bfc''_+>0$ in both equations.

\end{proof}

Now we turn to an aperiodic $\bfc_0=(\pi_1,\pi_2,\cdots,\pi_s)\in\Pi^a(1)\times\Pi^a(2)\times\cdots\times\Pi^a(s)$.
Define
$$\fkm^{\oz(\bfc_0)}=\fkm^{\oz(\pi_1)}*\fkm^{\oz(\pi_2)}*\cdots*\fkm^{\oz(\pi_s)}.$$
With Lemma \ref{monomial in tube}, Definition \ref{order} and the assertion in the proof of Lemma \ref{monomial in tube}, we can easily deduce a lemma for $\cR_{\rm nh}$.
\begin{lemma}\label{monomial for nh}
For $\bfc_0\in\cG_0$ aperiodic, we have
$$\fkm^{\oz(\bfc_0)}=\lan M(\bfc_0)\ran+\sum_{\bfc'_0\in\cG_0\atop\bfc'_0<_G\bfc_0}\phi^{\bfc'_0}_{\bfc_0}(v)\lan M(\bfc'_0)\ran+\sum_{\bfc''_->0,\bfc''_+>0}\phi^{(\bfc'',t_\lz)}_{\bfc_0}(v)N(\bfc'',t_\lz),$$
with $\phi^{\bfc'_0}_{\bfc_0},\phi^{(\bfc'',t_\lz)}_{\bfc_0}\in\cZ$.
\end{lemma}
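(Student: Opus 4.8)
The plan is to expand $\fkm^{\oz(\bfc_0)}=\fkm^{\oz'_{\pi_1}}*\cdots*\fkm^{\oz'_{\pi_s}}$ by substituting, in each factor, the decomposition supplied by Lemma~\ref{monomial in tube}, and then to sort the resulting terms into the three groups appearing in the statement. Write $\bfc_0=(\pi_1,\ldots,\pi_s)$ with $\pi_j\in\Pi^a(j)$, and for each $j$ split $\fkm^{\oz'_{\pi_j}}=P_j+R_j$ where
$$P_j=\sum_{\pi'_j\leq_G\pi_j}\xz^{\pi'_j}_{\oz_{\pi_j}}(v)\,\lan M_{J_j}(\pi'_j)\ran,\qquad R_j=\sum_{\bfc_->0,\ \bfc_+>0}\phi^{(\bfc,t_\lz)}_{\oz'_{\pi_j}}(v)\,N(\bfc,t_\lz),$$
with $\xz^{\pi_j}_{\oz_{\pi_j}}=1$ and all coefficients in $\cZ$ by Lemma~\ref{monomial in tube}. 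Since $\fkm^{\oz'_{\pi_j}}$ is homogeneous of dimension vector $\udim M_{J_j}(\pi_j)$, every index occurring in $R_j$ --- and indeed every index occurring anywhere in the expansion of $\fkm^{\oz(\bfc_0)}$ --- has defect $\lan\dz,D(\cdot)\ran=0$; in particular $\bfc_->0\iff\bfc_+>0$ for all of them.

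First consider the single term $\prod_j P_j$ of the expansion $\prod_j(P_j+R_j)$. Because the non-homogeneous tubes $J_1,\ldots,J_s$ are pairwise orthogonal --- there are neither homomorphisms nor extensions in either direction between modules in distinct tubes --- Lemma~\ref{M^(m)}, applied repeatedly from the left, gives $\lan M_{J_1}(\pi'_1)\ran*\cdots*\lan M_{J_s}(\pi'_s)\ran=\lan\bigoplus_j M_{J_j}(\pi'_j)\ran=\lan M(\bfc'_0)\ran$ for $\bfc'_0:=(\pi'_1,\ldots,\pi'_s)$. Hence
$$\prod_j P_j=\lan M(\bfc_0)\ran+\sum_{\bfc'_0<_G\bfc_0}\Big(\prod_j\xz^{\pi'_j}_{\oz_{\pi_j}}(v)\Big)\lan M(\bfc'_0)\ran,$$
using that $\bfc'_0\leq_G\bfc_0$ in the sense of Definition~\ref{order}(c), with equality exactly when all $\pi'_j=\pi_j$. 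This produces the first two groups of terms, with coefficients in $\cZ$, and no other term of the expansion can contribute to any $\lan M(\bfc'_0)\ran$: these are basis elements $N(\bfc,t_\lz)$ with $\bfc_\pm=0$ and $\lz=0$, whereas the remaining terms will be written in the $N(\bfc,t_\lz)$ with $\bfc_\pm>0$, and $\{N(\bfc,t_\lz)\}$ is a basis by Lemma~\ref{Lemma: tN is basis}.

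Next consider any term of $\prod_j(P_j+R_j)$ involving at least one factor $R_j$. After expanding, such a term equals a $\cZ$-scalar times a product $N(\bfa^{(1)},t_1)*\cdots*N(\bfa^{(m)},t_m)$ in which every factor has defect $0$ and at least one factor, coming from an $R_j$, has $\bfa^{(l)}_->0$ and $\bfa^{(l)}_+>0$. I will process this product outward from position $l$: starting from $N(\bfa^{(l)},t_l)$ and multiplying successively by $N(\bfa^{(l-1)},t_{l-1}),\ldots,N(\bfa^{(1)},t_1)$ on the left and by $N(\bfa^{(l+1)},t_{l+1}),\ldots,N(\bfa^{(m)},t_m)$ on the right. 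At each step one multiplies an index with strictly positive preprojective and preinjective parts against a defect-$0$ index, so the assertion established inside the proof of Lemma~\ref{monomial in tube} applies and shows, in either order, that the product remains a $\cZ$-combination of $N(\bfc'',t_{\lz''})$ with $\bfc''_->0$ and $\bfc''_+>0$ (the $\cZ$-coefficients being the structure constants $\psi$ of Proposition~\ref{multiofN}). Thus each such term lies in the $\cZ$-span of $\{N(\bfc'',t_\lz)\mid\bfc''_->0,\ \bfc''_+>0\}$, the third group; summing over all terms of the expansion gives the claimed identity with all coefficients in $\cZ$.

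The only point that needs attention is the position bookkeeping: the factor $R_j$ need not occur at an end of the product, so Proposition~\ref{multiofN} alone --- which only guarantees $\bfc_-\geq_L\bfc^1_-$ and $\bfc_+\geq_L\bfc^2_+$ --- does not by itself force the preprojective part to stay positive when we multiply on the wrong side. It is precisely the two-sided statement recorded in the proof of Lemma~\ref{monomial in tube}, combined with the observation that all indices here have defect $0$ (so $\bfc_->0\iff\bfc_+>0$ throughout), that closes the gap. Beyond organizing this expansion I do not anticipate any substantive obstacle.
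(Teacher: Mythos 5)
Your proof is correct and fills in the argument the paper compresses into the single remark preceding Lemma \ref{monomial for nh}: expand each $\fkm^{\oz'_{\pi_j}}$ via Lemma \ref{monomial in tube}, collect the ``pure tube'' product $\prod_j P_j$ into $\lan M(\bfc'_0)\ran$'s using the orthogonality of distinct tubes (Lemma \ref{M^(m)}) together with Definition \ref{order}(c), and absorb every cross term involving some $R_j$ into the third sum by the two-sided assertion inside the proof of Lemma \ref{monomial in tube} plus the defect-$0$ observation that makes $\bfc_->0$ and $\bfc_+>0$ equivalent throughout. This is exactly the route the paper indicates, and your ``process outward from position $l$'' bookkeeping correctly shows why only the two-sided assertion (and not Proposition \ref{multiofN} alone) suffices to keep the positive parts from collapsing.
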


Note that the monomials $\fkm^{\oz'_\pi}$ depends on the choice of $\oz_\pi,\pi\in\Pi^a$, and so does $\fkm^{\oz(\bfc_0)}$.

\subsection{Monomials for homogeneous regular part}

\begin{lemma}\label{monomial for P}
For $m\geq 0$, we have
$$\fkm^{m\dz}=H_m+\sum_{\bfc_\pm=0,\bfc_0\neq 0}\phi^{(\bfc,t_\lz)}_{m\dz}(v)N(\bfc,t_\lz)+\sum_{\bfc_->0,\bfc_+>0}\phi^{(\bfc,t_\lz)}_{m\dz}(v)N(\bfc,t_\lz),$$
with $\phi^{(\bfc,t_\lz)}_{m\dz}\in\cZ$.
\end{lemma}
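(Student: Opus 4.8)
The plan is to expand $\fkm^{\oz(m\dz)}=u_1^{(m\dz_1)}*\cdots*u_n^{(m\dz_n)}=\lan S_1^{\oplus m\dz_1}\ran*\cdots*\lan S_n^{\oplus m\dz_n}\ran$ in the basis $\{N(\bfc,t_\lz)\}$ of $\cH^0_q$ and then pin down which basis vectors occur. Since $\udim S_i=i$ is never a multiple of $\dz$, no $S_i$ lies in $\cR_{\rm h}$, so $S_i^{\oplus a}=M(\bfc)$ for a suitable $\bfc\in\cG'$ and $u_i^{(a)}=N(\bfc,0)$; iterating Proposition~\ref{multiofN} therefore writes $\fkm^{\oz(m\dz)}$ as a $\cZ$-linear combination of the $N(\bfc,t_\lz)$, so all coefficients lie in $\cZ$. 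Moreover $\fkm^{\oz(m\dz)}$ is homogeneous of degree $m\dz$ and $N(\bfc,t_\lz)$ is homogeneous of degree $D(\bfc,t_\lz)$ (as $H_j$, hence $S_\lz$, is homogeneous of degree $j\dz$, resp.\ $|\lz|\dz$), so only $N(\bfc,t_\lz)$ with $D(\bfc,t_\lz)=m\dz$ appear. Applying $\lan\dz,-\ran$ and Lemma~\ref{defect}: $\lan\dz,\udim M(\bfc_0)\ran=\lan\dz,|\lz|\dz\ran=0$, while $\lan\dz,\udim M(\bfc_-)\ran\leq 0$ and $\lan\dz,\udim M(\bfc_+)\ran\geq 0$, with equality iff $\bfc_-=0$, resp.\ $\bfc_+=0$; since the total equals $\lan\dz,m\dz\ran=0$, this forces $\bfc_-=0\iff\bfc_+=0$. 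Hence each occurring $N(\bfc,t_\lz)$ has $\bfc_\pm=0$ (with $\bfc_0$ unrestricted) or $\bfc_->0$ and $\bfc_+>0$, and only the $\bfc_\pm=\bfc_0=0$ part of $\fkm^{\oz(m\dz)}$ remains to be identified.

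That part I would extract by a direct Hall-algebra computation. The coefficient of $\lan L\ran$ in $\lan S_1^{\oplus m\dz_1}\ran*\cdots*\lan S_n^{\oplus m\dz_n}\ran$, for $L$ with $\udim L=m\dz$, equals $v^{E}\,g^{L}_{S_1^{\oplus m\dz_1},\dots,S_n^{\oplus m\dz_n}}$, where $E=\sum_i\dim\End(S_i^{\oplus m\dz_i})-\dim\End L+\sum_{i<j}\lan m\dz_i\,i,\,m\dz_j\,j\ran$ and the Hall number counts filtrations $0=L_0\subset\cdots\subset L_n=L$ with $L_j/L_{j-1}\cong S_{n-j+1}^{\oplus m\dz_{n-j+1}}$. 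Because the vertices are ordered so that every arrow raises the label, one has $\lan i,i\ran=1$ and $\lan j,i\ran=0$ for $i<j$, whence $E=m^2\lan\dz,\dz\ran-\dim\End L$; and $\lan\dz,\dz\ran=\tfrac{1}{2}(\dz,\dz)=0$ since $\dz$ is imaginary, so $E=-\dim\End L$. For the Hall number, at each step the space of $L/L_{j-1}$ at its largest remaining vertex is annihilated by all outgoing arrows (there are none left) and has exactly dimension $m\dz_{n-j+1}$, so $L_1,L_2,\dots$ are forced; thus $g^{L}_{S_1^{\oplus m\dz_1},\dots}=1$. Consequently $\fkm^{\oz(m\dz)}=\sum_{L:\ \udim L=m\dz}v^{-\dim\End L}\lan L\ran$, and the sum of its terms with $L\in\cR_{\rm h}$ equals $\sum_{L\in\cR_{\rm h},\ \udim L=m\dz}v^{-\dim\End L}\lan L\ran=H_m$ (rewriting the definition of $H_m$).

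To conclude I would note that the linear projection of $\cH^*_q(Q)$ onto the span of $\{\lan L\ran:L\in\cR_{\rm h}\}$ kills every occurring $N(\bfc,t_\lz)$ with $\bfc_0\ne 0$ or $\bfc_\pm>0$: by $\Hom(\cR_{\rm h},\cP)=0$ and $\Hom(\cI,\cR_{\rm h})=0$ a homogeneous regular module has no nonzero preprojective quotient and no nonzero preinjective submodule, so $\bfc_->0$ or $\bfc_+>0$ already puts the whole support of $N(\bfc,t_\lz)$ off $\cR_{\rm h}$; and if $\bfc_0\ne 0$ with $\bfc_\pm=0$, then by the vanishing of $\Ext^1$ between modules in distinct tubes every $L$ in the support of $\lan M(\bfc_0)\ran*S_\lz$ has $M(\bfc_0)\in\cR_{\rm nh}$ as a direct summand, again off $\cR_{\rm h}$. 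On the other hand this projection fixes each $S_\lz\in\cH^*_q(\cR_{\rm h})$, so applying it to the two expressions above gives $\sum_{\lz\vdash m}\phi^{(0,t_\lz)}_{m\dz}S_\lz=H_m=S_{(m)}$; the $S_\lz$ ($\lz\vdash m$) being part of a basis, $\phi^{(0,t_{(m)})}_{m\dz}=1$ and $\phi^{(0,t_\lz)}_{m\dz}=0$ for $\lz\ne(m)$, which is exactly the claimed formula. The main obstacle is the exact evaluation in the middle paragraph---that the accumulated power of $v$ vanishes (because $\dz$ is an imaginary root) and that the iterated Hall number is identically $1$ (because the chosen ordering of $I$ makes each step of the filtration canonical); once these are in hand, only the support bookkeeping of the third paragraph remains.
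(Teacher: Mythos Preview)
Your argument is correct and follows the same route the paper has in mind; the paper's one-line proof (``The fact that $\fkm^{\oz(m\dz)}$ is a $\cZ$-sum of all modules $M$ with dimension vector $m\dz$ proves this lemma'') is simply a terse pointer to the computation you carry out in your second paragraph, together with the defect/projection bookkeeping of your first and third paragraphs. Concretely: with the chosen ordering of $I$ one has $\fkm^{\oz(\nu)}=v^{\lan\nu,\nu\ran-|\nu|}\sum_{[L]:\,\udim L=\nu}u_{[L]}$, and for $\nu=m\dz$ the exponent is $-m|\dz|$, so the restriction to $\cR_{\rm h}$ is exactly $H_m$; the defect argument via Lemma~\ref{defect} forces $\bfc_-=0\Leftrightarrow\bfc_+=0$, Proposition~\ref{multiofN} gives integrality, and the projection kills the remaining terms with $\bfc\ne 0$. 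Your detailed write-up makes explicit what the paper takes for granted (the iterated Hall number being $1$, the collapse $\lan m\dz,m\dz\ran=0$, and $H_m=S_{(m)}$), but there is no genuine difference in approach.
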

\begin{proof}
The fact that $\fkm^{m\dz}$ is a $\cZ$-sum of all modules $M$ with dimension vector $m\dz$ proves this lemma.
\end{proof}

For a partition $\lz=(\lz_1,\lz_2,\cdots,\lz_t)$, define
$$\fkm^{\oz(t_\lz)}=\fkm^{\lz_1\dz}*\fkm^{\lz_2\dz}*\cdots*\fkm^{\lz_t\dz}.$$
When $\lz=0=t_\lz$, set $\fkm^{\oz(0)}=1$.
\begin{lemma}\label{monomial for h}
For a nonzero partition $\lz=(\lz_1,\lz_2,\cdots,\lz_t)$, we have
$$\fkm^{\oz(t_\lz)}=H_\lz+\sum_{\bfc_\pm=0,\bfc_0\neq 0}\phi^{(\bfc,t_\mu)}_{t_\lz}(v)N(\bfc,t_\mu)+\sum_{\bfc_->0,\bfc_+>0}\phi^{(\bfc,t_\mu)}_{t_\lz}(v)N(\bfc,t_\mu),$$
with $\phi^{(\bfc,t_\mu)}_{t_\lz}\in\cZ$.
\end{lemma}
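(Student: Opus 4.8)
The plan is to expand each factor $\fkm^{\oz(\lz_k\dz)}$ by Lemma~\ref{monomial for P}, multiply the expansions out, and keep track of defects together with the one‑sided inequalities of Proposition~\ref{multiofN}. Write, for $1\le k\le t$,
$$\fkm^{\oz(\lz_k\dz)}=H_{\lz_k}+G_k+B_k,$$
where by Lemma~\ref{monomial for P} the term $G_k$ is a $\cZ$-combination of $N(\bfc,t_\mu)$ with $\bfc_\pm=0$, $\bfc_0\neq 0$, and $B_k$ is a $\cZ$-combination of $N(\bfc,t_\mu)$ with $\bfc_->0$, $\bfc_+>0$. Each of the three summands has dimension vector of defect zero: $H_{\lz_k}\in\cH^*(\cR_{\rm h})$ and $M(\bfc_0)\in\cR_{\rm nh}$ are regular, $|\mu|\dz$ is a multiple of $\dz$, and in a summand of $B_k$ the contributions of $M(\bfc_-)$ and $M(\bfc_+)$ cancel by Lemma~\ref{defect}. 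Since the dimension vector is additive, every $N(\bfc,t_\mu)$ occurring when $\fkm^{\oz(\lz\dz)}=\fkm^{\oz(\lz_1\dz)}*\cdots*\fkm^{\oz(\lz_t\dz)}$ is multiplied out satisfies $\lan\dz,D(\bfc,t_\mu)\ran=0$, and by Proposition~\ref{multiofN} (which also supplies $\cZ$-coefficients) together with $H_{\lz_k}=\sum_\mu K_{(\lz_k)\mu}S_\mu$ and invertibility of the Kostka matrix over $\bbZ$, $\fkm^{\oz(\lz\dz)}$ is a $\cZ$-combination of such $N(\bfc,t_\mu)$. By Lemma~\ref{defect}, $\lan\dz,D(\bfc,t_\mu)\ran=0$ forces either $\bfc_\pm=0$ (with $\bfc_0$ arbitrary) or $\bfc_->0$ and $\bfc_+>0$; so it only remains to show that the part of $\fkm^{\oz(\lz\dz)}$ with $\bfc=0$ (i.e.\ $\bfc_\pm=0$ and $\bfc_0=0$) is exactly $H_\lz$, since then the remaining $\bfc_\pm=0$ terms automatically have $\bfc_0\neq0$.

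To do this I sort the terms $x_1*\cdots*x_t$ of the expansion, $x_k\in\{H_{\lz_k}\}\cup\{\text{summands of }G_k\}\cup\{\text{summands of }B_k\}$, into three classes. First, if every $x_k=H_{\lz_k}$, the product is $H_{\lz_1}*\cdots*H_{\lz_t}=H_\lz$ because $\cH^*(\cR_{\rm h})$ is commutative. Second, if no $x_k$ is a summand of some $B_k$ but at least one is a summand of some $G_k$: using that $\cH^*(\cR_{\rm h})$ is commutative and that every $\lan M\ran$ with $M\in\cR_{\rm h}$ commutes with every $\lan N\ran$ with $N\in\cR_{\rm nh}$ (Lemma~\ref{M^(m)}, since $\Hom$ and $\Ext^1$ between distinct tubes vanish), one slides all the $\cR_{\rm h}$-factors (the $H_{\lz_k}$'s and the $S_{\mu^{(k)}}$'s coming from the $G_k$-summands $\lan M(\bfc_0^{(k)})\ran*S_{\mu^{(k)}}$) to the right while keeping the $\lan M(\bfc_0^{(k)})\ran$ in their original order; the product becomes $\big(\lan M(\bfc_0^{(k_1)})\ran*\cdots\big)*\big(\text{product of }S_\mu\text{'s and }H_{\lz_k}\text{'s}\big)$, the first factor a product in $\bm{\cH}^*_\cZ(\cR_{\rm nh})$ of strictly positive dimension vector (hence a $\cZ$-combination of $\lan M(\bfd_0)\ran$ with $\bfd_0\neq0$, the Hall numbers in $\cR_{\rm nh}$ being polynomial by the corollary to Lemma~\ref{hallpoly}), and the second factor an element of $\cZ[H_1,H_2,\dots]$, hence a $\cZ$-combination of $S_\kappa$'s; so this contributes only $\cZ$-multiples of $N((0,\bfd_0,0),t_\kappa)$ with $\bfd_0\neq0$. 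Third, if some $x_k$ is a summand of a $B_k$, let $k_0$ be the least such index and write the product as $\big(x_1*\cdots*x_{k_0-1}\big)*\big(x_{k_0}*\cdots*x_t\big)$. Expanding each $x_j$ in the $N$-basis and applying Proposition~\ref{multiofN} repeatedly, the tail $x_{k_0}*\cdots*x_t$ is a $\cZ$-combination of $N(\bfc,t_\mu)$ with $\bfc_-\geq_L(\bfc^{x_{k_0}})_->0$, hence with $\bfc_+>0$ by the defect constraint; multiplying on the left by $x_1*\cdots*x_{k_0-1}$ and applying Proposition~\ref{multiofN} again, the right-most factor now having $\bfc_+>0$, yields a $\cZ$-combination of $N(\bfc,t_\mu)$ with $\bfc_+>0$, hence $\bfc_->0$. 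Thus the three classes contribute, respectively, $H_\lz$, terms with $\bfc_\pm=0$ and $\bfc_0\neq0$, and terms with $\bfc_->0$, $\bfc_+>0$; summing and collecting proves the lemma.

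The main obstacle is the bookkeeping in the third class: a $B_k$-summand sitting in the interior of the product does not by itself force $\bfc_\pm>0$ through the one-sided inequalities $\bfc_-\geq_L\bfc^1_-$, $\bfc_+\geq_L\bfc^2_+$ of Proposition~\ref{multiofN}, so one must isolate the leftmost such factor and run the argument once from the left (to get $\bfc_->0$) and once from the right (to get $\bfc_+>0$), converting each inequality into the other via $\lan\dz,D(\bfc,t_\mu)\ran=0$ as in the proof of Lemma~\ref{monomial in tube}. A minor point is to keep all the rearrangements and the products of $H_{\lz_k}$'s and $S_\kappa$'s inside $\cZ[H_1,H_2,\dots]$ with $\cZ$-coefficients; this uses $H_\alpha*H_\beta=H_{\alpha\cup\beta}$ and the integrality of the Kostka matrix. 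As throughout this section, the field $\bbF_q$ is taken with $q$ large enough, so that Lemmas~\ref{monomial for P} and Proposition~\ref{multiofN} apply.
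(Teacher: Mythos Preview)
Your proof is correct and follows essentially the same approach as the paper: expand each factor via Lemma~\ref{monomial for P}, use commutativity between $\cH^*(\cR_{\rm nh})$ and $\cH^*(\cR_{\rm h})$ to handle products of the $H_{\lz_k}$'s and $G_k$'s, and use the defect-zero argument together with Proposition~\ref{multiofN} to confine products involving a $B_k$ to the $\bfc_->0,\bfc_+>0$ region. The paper compresses your third-class analysis into a citation of ``the assertion in the proof of Lemma~\ref{monomial in tube}'' (which is exactly the statement that a product of defect-zero $N$'s, one of which has $\bfc_\pm>0$, again has $\bfc_\pm>0$), and then reduces the lemma to the purely regular identity $\prod_i(H_{\lz_i}+\text{nh-terms})=H_\lz+\text{nh-terms}$, but the content is the same as what you wrote out explicitly.
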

\begin{proof}
By using the assertion in the proof of Lemma \ref{monomial in tube}, it suffices to show that
$$\prod_i (H_{\lz_i}+\sum_{\bfc_\pm=0,\bfc_0\neq 0}\phi^{(\bfc,t_\mu)}_{t_{\lz_i}}(v)N(\bfc,t_\mu))=H_\lz+\sum_{\bfc_\pm=0,\bfc_0\neq 0}\phi^{(\bfc,t_\mu)}_{t_\lz}(v)N(\bfc,t_\mu).$$
This is true since $xy=yx$ for any $x\in\cH^*(\cR_{\rm nh})$, $y\in\cH^*(\cR_{\rm h})$.
\end{proof}

\subsection{Monomial basis}\label{Section: Monomial basis for quiver case}

For $(\bfc,t_\lz)\in\cG^a$, we define $\oz(\bfc,t_\lz)\in\cS$ and the monomial $\fkm^{\oz(\bfc,t_\lz)}$ by letting
$$\fkm^{\oz(\bfc,t_\lz)}=\fkm^{\oz(\bfc_-)}*\fkm^{\oz(\bfc_0)}*\fkm^{\oz(t_\lz)}*\fkm^{\oz(\bfc_+)}.$$

To compute this monomial, we deal with the regular part first.
\begin{lemma}\label{monomial for regular}
For $\bfc_0\in\cG_0$ aperiodic and a nonzero partition $\lz$, we have
\begin{eqnarray*}
\fkm^{\oz(\bfc_0)}*\fkm^{\oz(t_\lz)}
&=&\lan M(\bfc_0)\ran*H_\lz+\sum_{\bfc'_0\in\cG_0\atop\bfc'_0<_G\bfc_0}\phi^{\bfc'_0}_{\bfc_0}(v)\lan M(\bfc'_0)\ran*H_\lz\\
&+&\sum_{\bfc''_\pm=0,|\lz''|<|\lz|}\phi^{(\bfc'',t_{\lz''})}_{\bfc_0,t_\lz}(v)N(\bfc'',t_{\lz''})  \\
&+&\sum_{\bfc'''_->0,\bfc'''_+>0}\phi^{(\bfc''',t_\mu)}_{\bfc_0,t_\lz}(v)N(\bfc''',t_\mu)
\end{eqnarray*}
\end{lemma}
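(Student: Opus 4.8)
The plan is to substitute the expansions of the two monomials supplied by Lemma~\ref{monomial for nh} and Lemma~\ref{monomial for h} into $\fkm^{\oz(\bfc_0)}*\fkm^{\oz(\lz\dz)}$ and then to sort the resulting cross terms into the four groups on the right-hand side. Write
$$\fkm^{\oz(\bfc_0)}=\lan M(\bfc_0)\ran+\sum_{\bfc'_0<_G\bfc_0}\phi^{\bfc'_0}_{\bfc_0}(v)\lan M(\bfc'_0)\ran+\sum_{\bfa_->0,\,\bfa_+>0}\phi^{(\bfa,t_\nu)}_{\bfc_0}(v)N(\bfa,t_\nu)$$
$$\fkm^{\oz(\lz\dz)}=H_\lz+\sum_{\bfb_\pm=0,\,\bfb_0\neq0}\phi^{(\bfb,t_\rho)}_{\lz\dz}(v)N(\bfb,t_\rho)+\sum_{\bfe_->0,\,\bfe_+>0}\phi^{(\bfe,t_\sz)}_{\lz\dz}(v)N(\bfe,t_\sz),$$
where every displayed coefficient lies in $\cZ$. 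Expanding the product yields nine families of terms $X*Y$, grouped by which summand $X$ and $Y$ come from. A fact I will use throughout: both monomials are homogeneous, of degrees $\udim M(\bfc_0)$ and $|\lz|\dz$, so $\lan\dz,\udim M(\bfc_0)\ran=0=\lan\dz,|\lz|\dz\ran$; hence every index $N(\bfc''',t_\mu)$ produced satisfies $\lan\dz,D(\bfc''',t_\mu)\ran=0$, which by Lemma~\ref{defect} forces $\bfc'''_->0\iff\bfc'''_+>0$.

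The two families with $Y=H_\lz$ and $X\in\{\lan M(\bfc_0)\ran,\lan M(\bfc'_0)\ran\}$ reproduce the first two lines of the claim verbatim, with nothing to prove. For the two families with $X\in\{\lan M(\bfc_0)\ran,\lan M(\bfc'_0)\ran\}$ and $Y=N(\bfb,t_\rho)$, $\bfb_\pm=0$, $\bfb_0\neq0$, note $N(\bfb,t_\rho)=\lan M(\bfb_0)\ran*S_\rho$ and, comparing degrees inside Lemma~\ref{monomial for h}, $\udim M(\bfb_0)=(|\lz|-|\rho|)\dz$, so $|\rho|<|\lz|$. By associativity $X*Y=\lan M(\bfc_0)\ran*\lan M(\bfb_0)\ran*S_\rho$ (or the analogue with $\bfc'_0$); since $\cR_{\rm nh}$ is extension-closed with Hall numbers polynomial in $q$ (cf.\ Lemma~\ref{hallpoly} and the corollary following it), $\lan M(\bfc_0)\ran*\lan M(\bfb_0)\ran$ is a $\cZ$-combination of $\lan M(\bfc''_0)\ran$ with $M(\bfc''_0)\in\cR_{\rm nh}$, whence $X*Y$ is a $\cZ$-combination of $N((0,\bfc''_0,0),t_\rho)$ with $\bfc''_\pm=0$ and $|\rho|<|\lz|$. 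These account for the third line.

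It remains to place the other five families in the fourth line, i.e.\ to write each as a $\cZ$-combination of $N(\bfc''',t_\mu)$ with $\bfc'''_->0$ and $\bfc'''_+>0$. In each of these families one factor is $N(\bfa,t_\nu)$ with $\bfa_->0$, coming from the last sum of $\fkm^{\oz(\bfc_0)}$, or $N(\bfe,t_\sz)$ with $\bfe_+>0$, from the last sum of $\fkm^{\oz(\lz\dz)}$. Rewriting $\lan M(\bfc_0)\ran=N((0,\bfc_0,0),0)$, $\lan M(\bfc'_0)\ran=N((0,\bfc'_0,0),0)$ and $H_\lz=\sum_\mu K_{\lz\mu}N((0,0,0),t_\mu)$ so that every factor is an $N$, Proposition~\ref{multiofN} expands the product as a $\cZ$-combination of $N(\bfc''',t_\mu)$ with $\bfc'''_-\geq_L\bfa_->0$ in the first case and with $\bfc'''_+\geq_L\bfe_+>0$ in the second; the defect equivalence from the first paragraph then upgrades each to $\bfc'''_->0$ and $\bfc'''_+>0$. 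Assembling the four groups gives the stated identity, with all coefficients in $\cZ$. The main obstacle is precisely this last bookkeeping: one must check, in each of the five remaining families separately, that the distinguished $N$-factor (the one with $\bfa_->0$ or $\bfe_+>0$) genuinely persists through the multiplication so that Proposition~\ref{multiofN} applies in the direction needed, and then apply the defect argument uniformly, so that no term with exactly one of $\bfc'''_-,\bfc'''_+$ positive is left unclassified; integrality throughout rests on Proposition~\ref{multiofN} together with the polynomiality of Hall numbers within $\cR_{\rm nh}$.
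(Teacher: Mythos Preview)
Your proof is correct and follows essentially the same approach as the paper's own argument: expand via Lemma~\ref{monomial for nh} and Lemma~\ref{monomial for h}, sort the cross terms, handle the pure non-homogeneous regular products using extension-closedness of $\cR_{\rm nh}$ (your third-line analysis matches the paper's case~(3)), and dispose of every cross term containing a factor with $\bfc_->0$ or $\bfc_+>0$ via Proposition~\ref{multiofN} together with the defect argument (the paper's case~(4)). The only difference is cosmetic: you enumerate nine families explicitly, whereas the paper collapses them into four ``kinds'', but the logic is identical.
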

\begin{proof}
By Lemma \ref{monomial for nh}, Lemma \ref{monomial for h}, $\fkm^{\oz(\bfc_0)}*\fkm^{\oz(t_\lz)}$ can be written as a $\cZ$-sum of terms of following kinds:
\begin{enumerate}
  \item [(1)] The leading term $\lan M(\bfc_0)\ran*H_\lz$.
  \item [(2)] $\sum_{\bfc'_0\in\cG_0\atop\bfc'_0<_G\bfc_0}\phi^{\bfc'_0}_{\bfc_0}(v)\lan M(\bfc'_0)\ran*H_\lz$.
  \item [(3)] $\lan M(\bfc'_0)\ran*N(\bfc'',t_{\lz''})$ with $\bfc'_0\leq_G\bfc_0$, $\bfc''_\pm=0,\bfc''_0\neq 0$. Since $D(\bfc'',t_{\lz''})=|\lz|\dz$,  $|\lz''|<|\lz|$. Also, $\lan M(\bfc'_0)\ran*\lan M(\bfc''_0)\ran\in\cH^*(\cR_{\rm nh})$, therefore $\lan M(\bfc'_0)\ran*N(\bfc'',t_{\lz''})$ is a $\cZ$-sum of $N(\bfc''',t_{\lz'''})$ with  $\bfc'''_\pm=0,|\lz'''|<|\lz|$.
  \item [(4)] $N(\bfc^1,t_{\lz^1})*N(\bfc^2,t_{\lz^2})$ with $\bfc^1_->0,\bfc^1_+>0$ or $\bfc^2_->0,\bfc^2_+>0$. By Proposition \ref{multiofN} and Lemma \ref{defect}, this is again a $\cZ$-sum of $N(\bfc^3,t_{\lz^3})$ with $\bfc^3_->0,\bfc^3_+>0$.
\end{enumerate}
Hence we get the desired result.
\end{proof}

Recall from Section \ref{def of N(c,lz)} that by definition, 
$$N(\bfc,T_\lz)=\lan M(\bfc_-)\ran*\lan M(\bfc_0)\ran*H_\lz*\lan M(\bfc_+)\ran=\sum_{\mu\geq\lz} K_{\lz\mu} N(\bfc,t_\mu).$$
\begin{proposition}
For $(\bfc,t_\lz)\in\cG^a$, we have
\begin{align}
\fkm^{\oz(\bfc,t_\lz)}&=N(\bfc,T_\lz)+\sum_{(\bfc',t_{\lz'})\in\cG\atop(\bfc',|\lz'|)\prec(\bfc,|\lz|)}\phi_{(\bfc,t_\lz)}^{(\bfc',t_{\lz'})}(v)N(\bfc',t_{\lz'})\notag\\
&=\sum_{\mu\geq\lz} K_{\lz\mu} N(\bfc,t_\mu)+\sum_{(\bfc',t_{\lz'})\in\cG\atop(\bfc',|\lz'|)\prec(\bfc,|\lz|)}\phi_{(\bfc,t_\lz)}^{(\bfc',t_{\lz'})}(v)N(\bfc',t_{\lz'}),\notag
\end{align}
with $\phi_{(\bfc,t_\lz)}^{(\bfc',t_{\lz'})}\in\cZ$, where $K_{\lz\mu}$ is the Kostka number.
\end{proposition}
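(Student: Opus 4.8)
The plan is to expand the factorization $\fkm^{\oz(\bfc,t_\lz)}=\fkm^{\oz(\bfc_-)}*(\fkm^{\oz(\bfc_0)}*\fkm^{\oz(\lz\dz)})*\fkm^{\oz(\bfc_+)}$ by the monomial lemmas just proved, multiply everything out in the basis $\{N(\bfc',t_{\lz'})\}$ using Proposition~\ref{multiofN}, and track the order $\preceq$ of Definition~\ref{order}. Integrality is then automatic: $\fkm^{\oz(\bfc,t_\lz)}$ is a product of divided powers $u_i^{(a)}$, hence lies in $\cC^*_\cZ\subseteq\bm{\cH}^0_\cZ$, which has $\cZ$-basis $\{N(\bfc',t_{\lz'})\}$, so every coefficient that appears is automatically in $\cZ$. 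Moreover, recalling $H_\lz=\sum_{\mu}K_{\lz\mu}S_\mu$ with $K_{\lz\lz}=1$ and $K_{\lz\mu}=0$ unless $\mu\geq\lz$, we get $N(\bfc,t'_\lz)=\sum_{\mu\geq\lz}K_{\lz\mu}N(\bfc,t_\mu)$; hence the two displayed formulas are equivalent and it suffices to prove the first.

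By Lemma~\ref{momomial for proj}, $\fkm^{\oz(\bfc_-)}=\lan M(\bfc_-)\ran$ plus a $\cZ$-combination of $N(\bfc',t_{\lz'})$ with $\bfc'_->_L\bfc_-$, and dually $\fkm^{\oz(\bfc_+)}$ by Lemma~\ref{monomial for inj}; by Lemma~\ref{monomial for regular}, $\fkm^{\oz(\bfc_0)}*\fkm^{\oz(\lz\dz)}=\lan M(\bfc_0)\ran*H_\lz+R$, where $R$ is a $\cZ$-combination of terms of three kinds: (i) $\lan M(\bfc'_0)\ran*H_\lz$ with $\bfc'_0<_G\bfc_0$; (ii) $N(\bfc'',t_{\lz''})$ with $\bfc''_\pm=0$, $|\lz''|<|\lz|$; (iii) $N(\bfc''',t_\mu)$ with $\bfc'''_->0$ and $\bfc'''_+>0$. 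Multiplying the three expansions, the product of the leading terms is $\lan M(\bfc_-)\ran*\lan M(\bfc_0)\ran*H_\lz*\lan M(\bfc_+)\ran=N(\bfc,t'_\lz)$ by definition, and it remains to check that each remaining summand (in which not all three factors are leading) is a $\cZ$-combination of $N(\bfc',t_{\lz'})$ with $(\bfc',|\lz'|)\prec(\bfc,|\lz|)$.

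For this I would run a short case analysis. First, if the preprojective factor is a correction, then in every resulting product of basis elements the leftmost factor has preprojective part $>_L\bfc_-$ while the rightmost factor (coming from $\fkm^{\oz(\bfc_+)}$) has preinjective part $\geq_L\bfc_+$, so repeated application of Proposition~\ref{multiofN} yields only $N(\bfc',\cdot)$ with $\bfc'_->_L\bfc_-$ and $\bfc'_+\geq_L\bfc_+$: clause (a) of Definition~\ref{order}. Symmetrically, if the preprojective factor is leading but the preinjective factor is a correction, one gets $\bfc'_+>_L\bfc_+$, $\bfc'_-\geq_L\bfc_-$: again clause (a). Finally, suppose both outer factors are leading and the regular factor contributes a term of $R$. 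For kind (i) the product is literally $\lan M(\bfc_-)\ran*\lan M(\bfc'_0)\ran*H_\lz*\lan M(\bfc_+)\ran=N((\bfc_-,\bfc'_0,\bfc_+),t'_\lz)$, a $\cZ$-combination of $N((\bfc_-,\bfc'_0,\bfc_+),t_\mu)$ with $\mu\geq\lz$, which lies $\prec(\bfc,|\lz|)$ by clause (c) since $\bfc'_0<_G\bfc_0$. For kind (ii) it is $N((\bfc_-,\bfc''_0,\bfc_+),t_{\lz''})$, which is $\prec(\bfc,|\lz|)$ by clause (b) since $|\lz''|<|\lz|$. For kind (iii), writing $N(\bfc''',t_\mu)=\lan M(\bfc'''_-)\ran*\lan M(\bfc'''_0)\ran*S_\mu*\lan M(\bfc'''_+)\ran$ and using that $\lan M(\bfc_-)\ran*\lan M(\bfc'''_-)\ran$ lies, by the ``moreover'' part of Corollary~\ref{preprojdecomp} and because $\bfc'''_-\neq 0$, in the span of $\lan M(\bfd_-)\ran$ with $\bfd_->_L\bfc_-$, and likewise $\lan M(\bfc'''_+)\ran*\lan M(\bfc_+)\ran$ lies, by Corollary~\ref{preinjdecomp} and $\bfc'''_+\neq 0$, in the span of $\lan M(\bfd_+)\ran$ with $\bfd_+>_L\bfc_+$, the product becomes a $\cZ$-combination of $N((\bfd_-,\bfc'''_0,\bfd_+),t_\mu)$ with $\bfd_->_L\bfc_-$ and $\bfd_+>_L\bfc_+$: clause (a) once more.

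I expect this last subcase (iii) to be the only delicate point: it is exactly the situation where the homogeneous-regular corrections acquire nonzero preprojective \emph{and} preinjective parts, and the reason they stay strictly below $(\bfc,|\lz|)$ is precisely the strict-monotonicity (``moreover'') clauses of Corollaries~\ref{preprojdecomp} and \ref{preinjdecomp}, invoked once on the left and once on the right; everywhere else Proposition~\ref{multiofN} and the very definition of $N(\cdot,\cdot)$ suffice. Assembling the cases gives $\fkm^{\oz(\bfc,t_\lz)}=N(\bfc,t'_\lz)+\sum_{(\bfc',|\lz'|)\prec(\bfc,|\lz|)}\phi_{(\bfc,t_\lz)}^{(\bfc',t_{\lz'})}(v)N(\bfc',t_{\lz'})$ with $\phi_{(\bfc,t_\lz)}^{(\bfc',t_{\lz'})}\in\cZ$, which is the first formula; the second then follows from the expansion of $N(\bfc,t'_\lz)$ recorded at the outset.
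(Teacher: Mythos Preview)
Your proof is correct and follows essentially the same route as the paper's: expand $\fkm^{\oz(\bfc_-)}$, $\fkm^{\oz(\bfc_0)}*\fkm^{\oz(\lz\dz)}$, $\fkm^{\oz(\bfc_+)}$ by Lemmas~\ref{momomial for proj}, \ref{monomial for regular}, \ref{monomial for inj}, multiply, and sort the cross terms into the five cases the paper labels (1)--(5); your two ``outer correction'' cases are the paper's case (2), and your kinds (i), (ii), (iii) are its (3), (4), (5), handled by exactly the same tools (Proposition~\ref{multiofN} for (2), the definition of $N$ for (3)--(4), and the ``moreover'' clauses of Corollaries~\ref{preprojdecomp}--\ref{preinjdecomp} for (5)). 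Your upfront observation that $\fkm^{\oz(\bfc,t_\lz)}\in\cC^*_\cZ\subseteq\bm{\cH}^0_\cZ$ forces all coefficients into $\cZ$ is a nice way to avoid tracking integrality through the case analysis.
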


\begin{proof}

By Lemma \ref{momomial for proj}, Lemma \ref{monomial for inj}, Lemma \ref{monomial for regular}, $\fkm^{\oz(\bfc,t_\lz)}$ can be written as a $\cZ$-sum of terms of following kinds:
\begin{enumerate}
  \item [(1)] The leading term $\lan M(\bfc_-)\ran*\lan M(\bfc_0)\ran*H_\lz*\lan M(\bfc_+)\ran=N(\bfc,T_\lz)$ with coefficient $1$.
  \item [(2)] $N(\bfc^1,t_{\lz^1})*N(\bfc^2,t_{\lz^2})*N(\bfc^3,t_{\lz^3})$ with $\bfc^1_->_L\bfc_-$ or $\bfc^3_+>_L\bfc_+$. By Proposition \ref{multiofN} and Definition \ref{order}, this is a $\cZ$-sum of $N(\bfc',t_{\lz'})$ with $\bfc'_\pm>_L\bfc_\pm$.

  \item [(3)] $\lan M(\bfc_-)\ran*\lan M(\bfc'_0)\ran*H_\lz*\lan M(\bfc_+)\ran$ with $\bfc'_0<_G\bfc_0$.
  \item [(4)] $\lan M(\bfc_-)\ran*\lan M(\bfc''_0)\ran*H_{\lz''}*\lan M(\bfc_+)\ran$ with $|\lz''|<|\lz|$.
  \item [(5)] $\lan M(\bfc_-)\ran*N(\bfc''',t_{\mu})*\lan M(\bfc_+)\ran$ with $\bfc'''_->0,\bfc'''_+>0$. By Corollary \ref{preprojdecomp} and Corollary \ref{preinjdecomp}, this is a $\cZ$-sum of $N(\bfc',t_{\lz'})$ with $\bfc'_->_L\bfc_-$ and $\bfc'_+>_L\bfc_+$.
\end{enumerate}
By Definition \ref{order}, the proposition follows.

\end{proof}

Since $(K_{\lz\mu})_{\lz,\mu}$ is upper-triangular with entries in the diagonal equal to $1$, we have the following corollary.
\begin{corollary}
For $(\bfc,t_\lz)\in\cG^a$, we have
$$\fkm^{\oz(\bfc,t_\lz)}=N(\bfc,t_\lz)+\sum_{(\bfc',t_{\lz'})\in\cG\atop(\bfc',t_{\lz'})\prec(\bfc,t_\lz)}\phi_{(\bfc,t_\lz)}^{(\bfc',t_{\lz'})}(v)N(\bfc',t_{\lz'}),$$
with $\phi_{(\bfc,t_\lz)}^{(\bfc',t_{\lz'})}\in\cZ$, such that $\phi_{(\bfc,t_\lz)}^{(\bfc,t_\mu)}=K_{\lz\mu},\mu\geq\lz$.
\end{corollary}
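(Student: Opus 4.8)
The plan is to deduce this corollary directly from the preceding proposition, which already establishes that
$$\fkm^{\oz(\bfc,t_\lz)}=\sum_{\mu\geq\lz} K_{\lz\mu} N(\bfc,t_\mu)+\sum_{(\bfc',t_{\lz'})\in\cG,\ (\bfc',|\lz'|)\prec(\bfc,|\lz|)}\phi_{(\bfc,t_\lz)}^{(\bfc',t_{\lz'})}(v)N(\bfc',t_{\lz'}),$$
with all coefficients in $\cZ$. The only thing to check is that each $(\bfc,t_\mu)$ appearing with $\mu\geq\lz$ but $\mu\neq\lz$, together with every $(\bfc',t_{\lz'})$ with $(\bfc',|\lz'|)\prec(\bfc,|\lz|)$, satisfies $(\bfc',t_{\lz'})\prec(\bfc,t_\lz)$ in the refined order on $\cG_\nu$ of Definition \ref{order}, and that the diagonal coefficient is $K_{\lz\lz}=1$.

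First I would recall that by Definition \ref{order}, for indices in $\cG_\nu$ one has $(\bfc',t_{\lz'})\prec(\bfc,t_\lz)$ whenever either $(\bfc',|\lz'|)\prec(\bfc,|\lz|)$ in $\tilde{\cG}_\nu$, or $(\bfc',|\lz'|)=(\bfc,|\lz|)$ and $\lz'>\lz$ in the lexicographic order on partitions. The terms $N(\bfc',t_{\lz'})$ in the second sum of the proposition satisfy the first alternative by construction, hence lie below $(\bfc,t_\lz)$. For the terms $N(\bfc,t_\mu)$ with $\mu\geq\lz$, note that $(\bfc,|\mu|)=(\bfc,|\lz|)$ since $\mu\geq\lz$ forces $|\mu|=|\lz|$, so these satisfy the second alternative exactly when $\mu>\lz$; and for $\mu=\lz$ we get the leading term. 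Thus collecting the $\mu=\lz$ term gives coefficient $K_{\lz\lz}=1$ on $N(\bfc,t_\lz)$, and all other terms are strictly below $(\bfc,t_\lz)$, which yields the stated triangular form with $\phi_{(\bfc,t_\lz)}^{(\bfc,t_\mu)}=K_{\lz\mu}$ for $\mu\geq\lz$.

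There is essentially no obstacle here: the corollary is a bookkeeping consequence of the proposition once one observes that the unitriangularity of the Kostka matrix $(K_{\lz\mu})_{\lz,\mu}$ (upper triangular with $1$'s on the diagonal in the lexicographic order on partitions) is compatible with the way partitions enter the order on $\cG_\nu$. The one point that deserves a sentence is why the partition order used in Definition \ref{order} (``lexicographic, which is also the order of Specht modules'') is the same order in which the Kostka matrix is unitriangular; this is standard — $K_{\lz\mu}=0$ unless $\mu\unrhd\lz$ in dominance, hence unless $\mu\geq\lz$ lexicographically, and $K_{\lz\lz}=1$ — so I would simply cite this fact, e.g. from \cite{Macdonald_Symmetric_functions_and_Hall_polynomials}. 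The proof is therefore a short paragraph: invoke the proposition, split the $\mu\geq\lz$ sum into $\mu=\lz$ and $\mu>\lz$, apply Definition \ref{order} to see every non-leading term is $\prec(\bfc,t_\lz)$, and read off the diagonal coefficient.
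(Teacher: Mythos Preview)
Your proposal is correct and follows exactly the same approach as the paper: the paper derives the corollary with the single sentence ``Since $(K_{\lz\mu})_{\lz,\mu}$ is upper-triangular with entries in the diagonal equal to $1$, we have the following corollary,'' and your argument simply spells out this sentence by matching the Kostka triangularity to the partition part of the order in Definition~\ref{order}.
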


\subsection{PBW bases}
We now use the same method in Section \ref{PBW basis of cyclic quiver} to construct a PBW basis $\{E(\bfc,t_\lz)|(\bfc,t_\lz)\in\cG^a\}$ for $\cC^*_\cZ$ inductively.

\begin{definition}\label{def of E}
For any $\nu\in\bbN I$, if $(\bfc,t_\lz)$ is minimial in $\cG^a_\nu$, put
$$E(\bfc,t_\lz)=\fkm^{\oz(\bfc,t_\lz)}\in\cC^*_\cZ.$$
In fact, $(\bfc,t_\lz)$ is minimal if and only if $M(\bfc)$ is semisimple and $t_\lz=0$.

Assume that $E(\bfc',t_{\lz'})\in\cC^*_\cZ$ have been defined for all $(\bfc',t_{\lz'})\in\cG^a_\nu$ with $(\bfc',t_{\lz'})\prec(\bfc,t_\lz)$.
Then we define
$$E(\bfc,t_\lz)=\fkm^{\oz(\bfc,t_\lz)}-\sum_{(\bfc',t_{\lz'})\in\cG^a_\nu\atop (\bfc',t_{\lz'})\prec(\bfc,t_\lz)}\phi_{(\bfc,t_\lz)}^{(\bfc',t_{\lz'})}(v)E(\bfc',t_{\lz'})\in\cC^*_\cZ.$$
In other words, we always have
$$\fkm^{\oz(\bfc,t_\lz)}=E(\bfc,t_\lz)+\sum_{(\bfc',t_{\lz'})\in\cG^a_\nu\atop(\bfc',t_{\lz'})\prec(\bfc,t_\lz)}\phi_{(\bfc,t_\lz)}^{(\bfc',t_{\lz'})}(v)E(\bfc',t_{\lz'})$$
for any $(\bfc,t_\lz)\in\cG^a$.
\end{definition}

By construction, we have the following lemma.

\begin{lemma}\label{E in N}
For $(\bfc,t_\lz)$ in $\cG^a$, we have
$$E(\bfc,t_\lz)=N(\bfc,t_\lz)+\sum_{(\bfc',t_{\lz'})\in\cG_\nu\setminus\cG^a_\nu\atop (\bfc',t_{\lz'})\prec(\bfc,t_\lz)}b_{(\bfc,t_\lz)}^{(\bfc',t_{\lz'})}(v)N(\bfc',t_{\lz'}),$$
with $b_{(\bfc,t_\lz)}^{(\bfc',t_{\lz'})}\in\cZ$.
\end{lemma}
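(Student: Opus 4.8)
The plan is to combine the monomial-to-PBW expansion in Definition~\ref{def of E} with the Kostka-triangularity expansion of $\fkm^{\oz(\bfc,t_\lz)}$ in terms of the $N(\bfc',t_{\lz'})$, and then invert the Kostka matrix. First I would recall that by Definition~\ref{def of E} we have, for a fixed $\nu\in\bbN I$ and $(\bfc,t_\lz)\in\cG^a_\nu$,
\[
\fkm^{\oz(\bfc,t_\lz)}=E(\bfc,t_\lz)+\sum_{(\bfc',t_{\lz'})\in\cG^a_\nu,\,(\bfc',t_{\lz'})\prec(\bfc,t_\lz)}\phi_{(\bfc,t_\lz)}^{(\bfc',t_{\lz'})}(v)E(\bfc',t_{\lz'}),
\]
with all $\phi\in\cZ$. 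On the other hand, by the Corollary just preceding this lemma, the same monomial satisfies
\[
\fkm^{\oz(\bfc,t_\lz)}=N(\bfc,t_\lz)+\sum_{(\bfc',t_{\lz'})\prec(\bfc,t_\lz)}\phi_{(\bfc,t_\lz)}^{(\bfc',t_{\lz'})}(v)N(\bfc',t_{\lz'}),
\]
where the sum now ranges over all $(\bfc',t_{\lz'})\in\cG_\nu$ (not just aperiodic ones) and the coefficient $\phi_{(\bfc,t_\lz)}^{(\bfc,t_\mu)}=K_{\lz\mu}$ for $\mu\geq\lz$.

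Next I would argue by induction on the partial order $\preceq$ restricted to $\cG^a_\nu$. If $(\bfc,t_\lz)$ is minimal, then $E(\bfc,t_\lz)=\fkm^{\oz(\bfc,t_\lz)}$, and minimality forces $M(\bfc)$ semisimple and $t_\lz=0$; in that case the Corollary reads $\fkm^{\oz(\bfc,0)}=N(\bfc,0)+\sum_{(\bfc',t_{\lz'})\prec(\bfc,0)}(\cdots)N(\bfc',t_{\lz'})$, which is exactly the claimed form with $b^{(\bfc',t_{\lz'})}_{(\bfc,0)}\in\cZ$, noting that all $(\bfc',t_{\lz'})\prec(\bfc,0)$ are necessarily in $\cG_\nu\setminus\cG^a_\nu$ (there is nothing strictly below a minimal aperiodic element inside $\cG^a_\nu$, and the diagonal term $K_{\lz\mu}$ contributes nothing beyond $N(\bfc,0)$ when $\lz=0$). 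For the inductive step, substitute the inductive hypothesis $E(\bfc',t_{\lz'})=N(\bfc',t_{\lz'})+\sum(\cdots)N(\bfc'',t_{\lz''})$ (sum over $(\bfc'',t_{\lz''})\in\cG_\nu\setminus\cG^a_\nu$ with $(\bfc'',t_{\lz''})\prec(\bfc',t_{\lz'})$) into the first display for $\fkm^{\oz(\bfc,t_\lz)}$, subtract from the second display, and solve for $E(\bfc,t_\lz)$. The point is that the only $N(\bfc',t_{\lz'})$ with $(\bfc',t_{\lz'})$ aperiodic and $\preceq(\bfc,t_\lz)$ that can survive with a nonzero coefficient are those of the form $N(\bfc,t_\mu)$ with $\mu\geq\lz$, coming from the Kostka block on the diagonal; since the Kostka matrix $(K_{\lz\mu})_{\lz,\mu}$ is upper-unitriangular it is invertible over $\bbZ$, so after moving all such diagonal-block terms with $\mu>\lz$ to the other side (they have been handled by induction on $\mu$ within the same $\bfc$) we obtain $E(\bfc,t_\lz)=N(\bfc,t_\lz)+\sum b^{(\bfc',t_{\lz'})}_{(\bfc,t_\lz)}(v)N(\bfc',t_{\lz'})$ with the sum over non-aperiodic indices strictly below $(\bfc,t_\lz)$ and all $b\in\cZ$.

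The main obstacle I anticipate is bookkeeping the index sets precisely: one must check that in the inductive expansion no aperiodic $N(\bfc',t_{\lz'})$ other than the $N(\bfc,t_\mu)$ with $\mu\ge\lz$ appears, i.e.\ that whenever $(\bfc',t_{\lz'})\in\cG^a_\nu$ with $\bfc'\ne\bfc$ or $|\lz'|\ne|\lz|$ occurs among the terms of $\fkm^{\oz(\bfc,t_\lz)}$ or of the inductively substituted $E(\bfc',t_{\lz'})$, its contribution is exactly cancelled by the $\sum\phi E(\bfc',t_{\lz'})$ term of Definition~\ref{def of E}. This is really a consequence of the fact that both $\{E(\bfc',t_{\lz'})\}$ and $\{N(\bfc',t_{\lz'})\}$, as $(\bfc',t_{\lz'})$ ranges over $\cG^a_\nu$, together with $\{N(\bfc',t_{\lz'})\colon(\bfc',t_{\lz'})\in\cG_\nu\setminus\cG^a_\nu\}$ as a complement, sit inside the same filtered space, and the change-of-basis from the $\fkm$'s to the $E$'s was defined precisely to kill the aperiodic lower terms; so the remaining lower terms must be indexed by $\cG_\nu\setminus\cG^a_\nu$. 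A clean way to phrase this is to note that $E(\bfc,t_\lz)-N(\bfc,t_\lz)\in\cC^*_\cZ\cap\operatorname{span}_\cZ\{N(\bfc',t_{\lz'})\colon(\bfc',t_{\lz'})\prec(\bfc,t_\lz)\}$ by the two displays above and the inductive hypothesis, and then that this difference, being in the composition algebra while the monomial expansion shows it involves no aperiodic $N$ beyond the Kostka block which has already been absorbed, must lie in the span of the non-aperiodic $N(\bfc',t_{\lz'})$ with $(\bfc',t_{\lz'})\prec(\bfc,t_\lz)$. The coefficients are in $\cZ$ throughout because every expansion invoked ($\fkm^{\oz(\bfc,t_\lz)}$ in the $N$'s, the definition of $E$, the inductive hypothesis, and the inverse Kostka matrix) has coefficients in $\cZ=\bbZ[v,v^{-1}]$.
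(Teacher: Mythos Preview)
Your approach is essentially the same as the paper's (the paper simply says ``By construction''): induct on $\preceq$ in $\cG^a_\nu$, plug the inductive hypothesis $E(\bfc',t_{\lz'})=N(\bfc',t_{\lz'})+(\text{non-aperiodic terms})$ into the defining formula for $E(\bfc,t_\lz)$, and cancel against the monomial-to-$N$ expansion.

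However, you are making the argument harder than it is. The coefficients $\phi_{(\bfc,t_\lz)}^{(\bfc',t_{\lz'})}$ appearing in Definition~\ref{def of E} are \emph{by definition} the very same $\phi_{(\bfc,t_\lz)}^{(\bfc',t_{\lz'})}$ that occur in the Corollary's expansion
\[
\fkm^{\oz(\bfc,t_\lz)}=N(\bfc,t_\lz)+\sum_{(\bfc',t_{\lz'})\prec(\bfc,t_\lz)}\phi_{(\bfc,t_\lz)}^{(\bfc',t_{\lz'})}(v)\,N(\bfc',t_{\lz'}),
\]
restricted to aperiodic indices. Hence when you substitute $E(\bfc',t_{\lz'})=N(\bfc',t_{\lz'})+(\text{non-aperiodic})$ into
\[
E(\bfc,t_\lz)=\fkm^{\oz(\bfc,t_\lz)}-\sum_{(\bfc',t_{\lz'})\in\cG^a_\nu,\,\prec}\phi_{(\bfc,t_\lz)}^{(\bfc',t_{\lz'})}E(\bfc',t_{\lz'}),
\]
\emph{every} aperiodic $N(\bfc',t_{\lz'})$ cancels on the nose, not just those in the Kostka block. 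There is no need to invert the Kostka matrix or to single out the indices $(\bfc,t_\mu)$ with $\mu>\lz$ for special treatment: they are aperiodic indices strictly below $(\bfc,t_\lz)$ like any other, and they are eliminated by the same mechanism. Your ``main obstacle'' paragraph is chasing a difficulty that is not there; once you recognise that the two $\phi$'s coincide, the lemma is a one-line computation.
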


This shows that the leading term of $E(\bfc,t_\lz)$ is $N(\bfc,t_\lz)$, and this is why we say $\{E(\bfc,t_\lz)|(\bfc,t_\lz)\in\cG^a\}$ is of PBW-type.
Next we prove that $\{E(\bfc,t_\lz)|(\bfc,t_\lz)\in\cG^a\}$ is a $\cZ$-basis of $\cC^*_\cZ$.
\begin{lemma}\label{E is Qbasis}
The set $\{E(\bfc,t_\lz)|(\bfc,t_\lz)\in\cG^a\}$ is a $\bbQ(v)$-basis of $\cC^*$.
\end{lemma}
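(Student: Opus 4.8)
The plan is to deduce Lemma \ref{E is Qbasis} purely from the triangularity already established, together with the spanning and independence properties of the two families that sit on either side of the PBW basis. First I would recall that, by Lemma \ref{Lemma: tN is basis}, the set $\{N(\bfc,t_\lz)\mid(\bfc,t_\lz)\in\cG\}$ is a $\bbQ(v_q)$-basis of $\cH^0_q$; via the generic form $\bm{\cH}^0$ and its subalgebra $\cC^*$, the family $\{N(\bfc,t_\lz)\mid(\bfc,t_\lz)\in\cG\}$ is linearly independent over $\bbQ(v)$ in $\bm{\cH}^0$. Next I would invoke Lemma \ref{E in N}: for each $(\bfc,t_\lz)\in\cG^a$ we have
\[
E(\bfc,t_\lz)=N(\bfc,t_\lz)+\sum_{(\bfc',t_{\lz'})\in\cG_\nu\setminus\cG^a_\nu,\ (\bfc',t_{\lz'})\prec(\bfc,t_\lz)}b_{(\bfc,t_\lz)}^{(\bfc',t_{\lz'})}(v)N(\bfc',t_{\lz'}),
\]
so the transition matrix expressing the $E$'s in terms of the $N$'s is "unitriangular with respect to $\preceq$": the diagonal entry is $1$ and all off-diagonal entries involve only strictly smaller indices, which moreover lie in the complementary set $\cG\setminus\cG^a$. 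Since $\preceq$ restricted to each finite set $\cG_\nu$ is a partial order with no infinite descending chains, this matrix is invertible over $\bbQ(v)$ (indeed over $\cZ$), whence $\{E(\bfc,t_\lz)\mid(\bfc,t_\lz)\in\cG^a\}$ is linearly independent over $\bbQ(v)$.

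The remaining point is that these elements span $\cC^*$. For this I would argue that the monomials $\fkm^{\oz(\bfc,t_\lz)}$, $(\bfc,t_\lz)\in\cG^a$, already span $\cC^*$ over $\bbQ(v)$: every generator $u_i^{(m)}$ is itself such a monomial (it equals $\fkm^{\oz(mi)}$ up to the identification), and more importantly the work of Lin--Xiao--Zhang (cited in the excerpt, e.g.\ in the proof of Lemma \ref{Lemma: tN is basis} via Proposition 7.2 and Section 9.2 of \cite{Lin_Xiao_Zhang_Representations_of_tame_quivers_and_affine_canonical_bases}) shows that $\cC^*$ is linearly spanned by the $N(\bfc,t_\lz)$ with $\bfc$ aperiodic, i.e.\ by $\{N(\bfc,t_\lz)\mid(\bfc,t_\lz)\in\cG^a\}$; aperiodicity enters exactly because the periodic $N$'s are redundant in the composition subalgebra. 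Granting that $\{N(\bfc,t_\lz)\mid(\bfc,t_\lz)\in\cG^a\}$ spans $\cC^*$, the unitriangular relation of Lemma \ref{E in N} lets us invert and express each $N(\bfc,t_\lz)$, $(\bfc,t_\lz)\in\cG^a$, as a $\bbQ(v)$-combination of the $E(\bfc',t_{\lz'})$ with $(\bfc',t_{\lz'})\preceq(\bfc,t_\lz)$ (and $(\bfc',t_{\lz'})\in\cG^a$), so the $E$'s span $\cC^*$ as well. Combined with the independence from the previous paragraph, this gives that $\{E(\bfc,t_\lz)\mid(\bfc,t_\lz)\in\cG^a\}$ is a $\bbQ(v)$-basis of $\cC^*$.

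The main obstacle I anticipate is the spanning direction, specifically the reduction to aperiodic indices: one must know that a monomial $\fkm^{\oz(\bfc,t_\lz)}$ (or any element of $\cC^*$) can be rewritten as a $\cZ$-combination of $N(\bfc',t_{\lz'})$ with $\bfc'$ aperiodic. This is not formal — it is precisely the content imported from \cite{Lin_Xiao_Zhang_Representations_of_tame_quivers_and_affine_canonical_bases} (and, for the cyclic-quiver input, from \cite{Deng_Du_Xiao_Generic_extensions_and_canonical_bases_for_cyclic_quivers}), where aperiodicity of the surviving basis elements is established using the structure of the composition algebra inside the full Ringel--Hall algebra. Everything else — invertibility of a finite unitriangular matrix, passing between the $v_q$-specialized and generic pictures — is routine and already packaged in the lemmas of Sections 6--9. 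I would therefore write the proof as: (i) cite Lemma \ref{E in N} for unitriangularity; (ii) cite Lemma \ref{Lemma: tN is basis} plus the aperiodic-spanning statement for $\cC^*$; (iii) conclude by inverting the transition matrix over $\bbQ(v)$ within each graded piece $\cC^*_\nu$.
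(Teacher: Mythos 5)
The linear-independence half of your argument matches the paper: Lemma~\ref{E in N} expresses each $E(\bfc,t_\lz)$ as $N(\bfc,t_\lz)$ plus a correction supported on $\cG_\nu\setminus\cG^a_\nu$, and since the $N$'s form a basis of $\cH^0$ the aperiodic $E$'s are independent. Where the proposal breaks down is the spanning step, and this is exactly the part the paper treats differently.

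You claim that the results imported from Lin--Xiao--Zhang already give that $\cC^*$ is spanned by $\{N(\bfc,t_\lz)\mid(\bfc,t_\lz)\in\cG^a\}$, i.e.\ by the \emph{aperiodic} $N$'s alone. That is not what the paper cites: the citation (in the proof of Lemma~\ref{Lemma: tN is basis}) only gives that $\cC^*$ lies in the span of \emph{all} $N(\bfc,t_\lz)$, $(\bfc,t_\lz)\in\cG$, periodic indices included. The statement that the periodic $N$'s can be dropped when working inside $\cC^*$ is precisely Lemma~\ref{periodic not in C}, which in the paper is \emph{deduced from} Lemma~\ref{E is Qbasis}, so invoking it here is circular. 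Moreover, even granting your spanning claim, the inversion you propose does not go through: Lemma~\ref{E in N} has the form $E(\bfc,t_\lz)=N(\bfc,t_\lz)+\sum b\,N(\bfc',t_{\lz'})$ with the correction terms lying in $\cG\setminus\cG^a$, so solving for the aperiodic $N(\bfc,t_\lz)$ leaves you with $E(\bfc,t_\lz)$ minus a sum of \emph{periodic} $N$'s, which are not expressible via other $E$'s by this relation. There is no square unitriangular matrix on the aperiodic index set to invert.

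The paper avoids all of this with a dimension count, which your proposal does not mention: by Lusztig's Theorem~4.16 in \cite{Lusztig_Affine_quivers_and_canonical_bases}, $|\cG^a_\nu|=\dim\bff_\nu=\dim\cC^*_\nu$ for each $\nu$, so the $|\cG^a_\nu|$ linearly independent elements $E(\bfc,t_\lz)$ of degree $\nu$, all lying in $\cC^*_\nu$, automatically span it. This is the missing idea in your proposal; without it (or without an independent proof that $\cC^*\cap\mathbf{P}=0$), the spanning direction does not close.
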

\begin{proof}
By Lemma \ref{E in N}, $\{E(\bfc,t_\lz)|(\bfc,t_\lz)\in\cG^a\}$ is linearly independent.
By Theorem 4.16 in \cite{Lusztig_Affine_quivers_and_canonical_bases}, $|\cG^a_\nu|=\dime\bff_\nu$.
\end{proof}
\begin{lemma}\label{periodic not in C}
Let $\mathbf{P}$ be the $\bbQ(v)$-linear subspace of $\cH^0$ spanned by $\{N(\bfc,t_\lz)|(\bfc,t_\lz)\in\cG\setminus\cG^a\}$. Then $\mathbf{P}\cap\cC^*=\{0\}$.
\end{lemma}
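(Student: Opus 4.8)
The plan is to show that a nonzero element lying in the span $\mathbf{P}$ of the ``periodic'' basis vectors cannot simultaneously lie in $\cC^{*}$, by exhibiting a separating linear functional. The natural tool is the non-degenerate bilinear form $(-,-)$ on $\cH^{0}$ (extending the one on $\cC^{*}$), together with the almost-orthogonality of the basis $\{N(\bfc,t_{\lz})\}$ established in Proposition \ref{Proposition: almost orthog}. First I would fix a dimension vector $\nu$ and work inside the finite-dimensional graded piece $\cH^{0}_{q,\nu}$, so that everything reduces to finite linear algebra over $\bbQ(v_{q})$; by Proposition \ref{Proposition: almost orthog} the Gram matrix of $\{N(\bfc,t_{\lz})\mid(\bfc,t_{\lz})\in\cG_{\nu}\}$ is of the form (identity) $+$ (something in $v^{-1}\bbQ[[v^{-1}]]\cap\bbQ(v)$), hence invertible; in particular the $N$'s are linearly independent and the form restricted to $\cH^{0}_{q,\nu}$ is non-degenerate.

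The key step is the observation that $\cC^{*}_{\nu}$ is spanned by $\{E(\bfc,t_{\lz})\mid(\bfc,t_{\lz})\in\cG^{a}_{\nu}\}$ (Lemma \ref{E is Qbasis}), and by Lemma \ref{E in N} each $E(\bfc,t_{\lz})$ is $N(\bfc,t_{\lz})$ plus a $\cZ$-combination of $N(\bfc',t_{\lz'})$ with $(\bfc',t_{\lz'})\in\cG_{\nu}\setminus\cG^{a}_{\nu}$ and $(\bfc',t_{\lz'})\prec(\bfc,t_{\lz})$. So $\cC^{*}_{\nu}$ is contained in the subspace $U$ spanned by $\{N(\bfc,t_{\lz})\mid(\bfc,t_{\lz})\in\cG^{a}_{\nu}\}\cup\{N(\bfc',t_{\lz'})\mid(\bfc',t_{\lz'})\in\cG_{\nu}\setminus\cG^{a}_{\nu}\}$ — which is of course everything — so a dimension count alone does not finish it. Instead I would argue as follows: suppose $0\neq x\in\mathbf{P}\cap\cC^{*}$, and write $x=\sum a_{(\bfc',t_{\lz'})}N(\bfc',t_{\lz'})$ over periodic indices only; choose $(\bfc^{0},t_{\lz^{0}})$ periodic with $a_{(\bfc^{0},t_{\lz^{0}})}\neq 0$ and maximal for $\prec$ among such. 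On the other hand, writing $x=\sum b_{(\bfc,t_{\lz})}E(\bfc,t_{\lz})$ over aperiodic indices and substituting the triangular expansion of Lemma \ref{E in N}, the coefficient of $N(\bfc^{0},t_{\lz^{0}})$ in $x$ is a $\cZ$-combination of the $b$'s indexed by aperiodic $(\bfc,t_{\lz})\succ(\bfc^{0},t_{\lz^{0}})$; but by Proposition \ref{Lemma: geometric property of the order} (the geometric meaning of $\prec$) together with the aperiodicity constraint, one checks there is no aperiodic index strictly above a given periodic index in the relevant sense — more precisely, any $N(\bfc,t_{\lz})$ appearing in the $E$-expansion with a nonzero coefficient and projecting onto $N(\bfc^{0},t_{\lz^{0}})$ forces an aperiodic index weakly above $(\bfc^{0},t_{\lz^{0}})$, and since $(\bfc^{0},t_{\lz^{0}})$ itself is periodic this cannot happen with equality, giving $a_{(\bfc^{0},t_{\lz^{0}})}=0$, a contradiction.

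I expect the main obstacle to be making the last combinatorial comparison airtight: one must verify that the partial order $\prec$ never places an aperiodic index strictly below a periodic index in a way that would let the periodic $N(\bfc^{0},t_{\lz^{0}})$ be ``captured'' from above by aperiodic $E$'s. The cleanest route is probably to use Lusztig's theorem (invoked in Lemma \ref{E is Qbasis}) that $|\cG^{a}_{\nu}|=\dim\bff_{\nu}=\dim\cC^{*}_{\nu}$: since the $E(\bfc,t_{\lz})$ are linearly independent and number $\dim\cC^{*}_{\nu}$, they form a basis of $\cC^{*}_{\nu}$, and hence $\cC^{*}_{\nu}$ is precisely the image of the injection determined by the unitriangular (in the $\prec$-order, with leading term $N(\bfc,t_{\lz})$) change-of-basis matrix from $E$'s to $N$'s; any element of $\cC^{*}_{\nu}$ written in the $N$-basis must therefore have, whenever its $(\bfc^{0},t_{\lz^{0}})$-coefficient is nonzero and $(\bfc^{0},t_{\lz^{0}})$ is $\prec$-maximal among its support, the index $(\bfc^{0},t_{\lz^{0}})$ be aperiodic (it must equal the leading index of some $E(\bfc,t_{\lz})$, i.e.\ $(\bfc^{0},t_{\lz^{0}})=(\bfc,t_{\lz})\in\cG^{a}$). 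This immediately contradicts $x\in\mathbf{P}$, and the proof is complete. If that abstract argument turns out to need a genuinely new triangularity input beyond Lemma \ref{E in N}, the fallback is to do the bilinear-form computation directly, pairing $x$ against a suitable dual-basis element built from the Gram matrix of the $N$'s, but I anticipate the linear-algebra route above suffices.
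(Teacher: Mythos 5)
Your final ``cleanest route'' does reach a valid conclusion, and it is in the same spirit as the paper's own proof, which simply notes that the statement follows directly from Lemma~\ref{Lemma: tN is basis}, Lemma~\ref{E in N} and Lemma~\ref{E is Qbasis}. However, your middle paragraph rests on a false premise, and the $\prec$-maximality machinery in your closing argument is more than is needed.

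The false premise: you assert that ``there is no aperiodic index strictly above a given periodic index in the relevant sense.'' This is not true. In the partial order $\prec$ of Definition~\ref{order}, case (c) compares indices via the geometric degeneration order $\leq_G$ inside the non-homogeneous tubes, and within a cyclic-quiver tube an aperiodic module frequently degenerates to a periodic one (for instance, in a rank-$2$ tube, $S_1[3]\oplus S_2[1]$ degenerates to the periodic module $S_1[2]\oplus S_2[2]$); one can also produce such pairs via cases (a) and (b). So an aperiodic index $(\bfc,t_\lz)$ can certainly sit $\prec$-strictly above a periodic one, and the deduction $a_{(\bfc^0,t_{\lz^0})}=0$ in that paragraph does not follow. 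Your instinct that ``the main obstacle'' lies there is correct, but the remedy is not extra combinatorics.

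What actually closes the argument is a coefficient extraction that needs no maximality, no appeal to the order $\prec$, and no bilinear form or Gram matrix. Fix a graded piece $\nu$ and suppose $x\in\mathbf{P}\cap\cC^*$ has degree $\nu$. By Lemma~\ref{E is Qbasis}, write $x=\sum_{(\bfc,t_\lz)\in\cG^a_\nu}b_{(\bfc,t_\lz)}E(\bfc,t_\lz)$. By Lemma~\ref{E in N}, each $E(\bfc,t_\lz)$ equals $N(\bfc,t_\lz)$ plus a $\cZ$-linear combination of $N(\bfc',t_{\lz'})$ with $(\bfc',t_{\lz'})\in\cG_\nu\setminus\cG^a_\nu$; the crucial point, which your proposal never isolates, is that \emph{all} of the correction terms are periodic. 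Hence, when $x$ is rewritten in the basis $\{N(\bfc,t_\lz)\}$ of $\cH^0_q$ (Lemma~\ref{Lemma: tN is basis}), the coefficient of each aperiodic $N(\bfc,t_\lz)$ is exactly $b_{(\bfc,t_\lz)}$, with no contribution from any other summand. Since $x\in\mathbf{P}$ means precisely that all aperiodic $N$-coefficients of $x$ vanish, we get $b_{(\bfc,t_\lz)}=0$ for every $(\bfc,t_\lz)\in\cG^a_\nu$, so $x=0$. The content of the paper's one-line proof is exactly that the lower-order terms in Lemma~\ref{E in N} are indexed only by $\cG_\nu\setminus\cG^a_\nu$; once that is in hand, the lemma is immediate, and neither the order structure of $\prec$ nor the almost-orthogonality from Proposition~\ref{Proposition: almost orthog} plays any role.
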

\begin{proof}
This is a direct result from Lemma \ref{Lemma: tN is basis}, Lemma \ref{E in N} and Lemma \ref{E is Qbasis}.
\end{proof}
\begin{corollary}
$\{E(\bfc,t_\lz)|(\bfc,t_\lz)\in\cG^a\}$ is independent of the choice of $\oz_\pi,\pi\in\Pi^a$ for each non-homogeneous tube.
\end{corollary}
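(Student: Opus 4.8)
The plan is to fix everything except the systems of distinguished monomials and compare the two resulting PBW families. Concretely, choose two systems $\{\oz_\pi\mid\pi\in\Pi^a\}$ and $\{\tilde{\oz}_\pi\mid\pi\in\Pi^a\}$ of distinguished monomials (one such system for each non-homogeneous tube), and let $\{E(\bfc,t_\lz)\}$ and $\{\tilde{E}(\bfc,t_\lz)\}$ be the two families produced by Definition \ref{def of E}. The goal is to show $E(\bfc,t_\lz)=\tilde{E}(\bfc,t_\lz)$ for every $(\bfc,t_\lz)\in\cG^a$.

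First I would record that each $E(\bfc,t_\lz)$ lies in $\cC^*_\cZ$ regardless of the system used. This is built into Definition \ref{def of E}: the monomial $\fkm^{\oz(\bfc,t_\lz)}=\fkm^{\oz(\bfc_-)}*\fkm^{\oz(\bfc_0)}*\fkm^{\oz(\lz\dz)}*\fkm^{\oz(\bfc_+)}$ is, by its very construction, a product of divided powers $u_i^{(a)}$ and hence lies in $\cC^*_\cZ$; and $E(\bfc,t_\lz)$ is obtained from such monomials by the inductive subtraction of previously constructed elements $E(\bfc',t_{\lz'})$, so it stays in $\cC^*_\cZ\subseteq\cC^*$. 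The same argument applies to $\tilde{E}(\bfc,t_\lz)$.

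Next I would invoke Lemma \ref{E in N}. For $(\bfc,t_\lz)\in\cG^a_\nu$ it gives $E(\bfc,t_\lz)=N(\bfc,t_\lz)+\sum b_{(\bfc,t_\lz)}^{(\bfc',t_{\lz'})}(v)N(\bfc',t_{\lz'})$, the sum running over $(\bfc',t_{\lz'})\in\cG_\nu\setminus\cG^a_\nu$, and similarly $\tilde{E}(\bfc,t_\lz)=N(\bfc,t_\lz)+\sum\tilde{b}_{(\bfc,t_\lz)}^{(\bfc',t_{\lz'})}(v)N(\bfc',t_{\lz'})$. The key observation is that the elements $N(\bfc',t_{\lz'})$ — built from $\lan M(\bfc'_-)\ran$, $\lan M(\bfc'_0)\ran$, $S_{\lz'}$ and $\lan M(\bfc'_+)\ran$ — do not involve the distinguished monomials $\oz_\pi$ at all, so they are the same for both systems. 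Subtracting therefore cancels the common leading term $N(\bfc,t_\lz)$ and leaves
$$E(\bfc,t_\lz)-\tilde{E}(\bfc,t_\lz)\in\mathbf{P},$$
where $\mathbf{P}$ is the $\bbQ(v)$-span of $\{N(\bfc',t_{\lz'})\mid(\bfc',t_{\lz'})\in\cG\setminus\cG^a\}$ of Lemma \ref{periodic not in C}.

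Finally, since by the first step $E(\bfc,t_\lz)-\tilde{E}(\bfc,t_\lz)$ also lies in $\cC^*$, and Lemma \ref{periodic not in C} asserts $\mathbf{P}\cap\cC^*=\{0\}$, we conclude $E(\bfc,t_\lz)=\tilde{E}(\bfc,t_\lz)$, which is exactly the claim. I do not expect any genuine obstacle here: all of the real content resides in the already-established statements, and the proof amounts to unwinding Definition \ref{def of E}. The one subtlety worth keeping straight is the absence of circularity: the subspace $\mathbf{P}$, the family $\{N(\bfc,t_\lz)\}$, its linear independence (Lemma \ref{Lemma: tN is basis}), and the dimension count $|\cG^a_\nu|=\dim\bff_\nu$ underpinning Lemma \ref{E is Qbasis} are all manifestly independent of the choice of $\oz_\pi$; hence once $\mathbf{P}\cap\cC^*=\{0\}$ has been established using any one admissible system, it may legitimately be applied to the difference arising from any two systems.
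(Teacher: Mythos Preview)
Your proposal is correct and follows essentially the same route as the paper's own proof: both arguments apply Lemma \ref{E in N} to each of the two systems, subtract to land in $\mathbf{P}\cap\cC^*$, and then invoke Lemma \ref{periodic not in C}. Your added remarks on non-circularity are accurate and make explicit what the paper leaves implicit.
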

\begin{proof}
Suppose we used another choice of $\oz_\pi,\pi\in\Pi^a$ for each non-homogeneous tube to construct a new basis $\{E'(\bfc,t_\lz)|(\bfc,t_\lz)\in\cG^a\}$ by the same method.
Then it should have the same property as in Lemma \ref{E in N}.
Hence, $E'(\bfc,t_\lz)-E(\bfc,t_\lz)\in\mathbf{P}\cap\cC^*$.
By Lemma \ref{periodic not in C}, we have
$E'(\bfc,t_\lz)=E(\bfc,t_\lz).$
\end{proof}
\begin{proposition}\label{E is integral basis}
The set $\{E(\bfc,t_\lz)|(\bfc,t_\lz)\in\cG^a\}$ is a $\cZ$-basis of $\cC^*_\cZ$.
\end{proposition}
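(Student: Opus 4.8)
The plan is to show that $\{E(\bfc,t_\lz)\mid(\bfc,t_\lz)\in\cG^a\}$ spans $\cC^*_\cZ$ over $\cZ$; combined with the $\bbQ(v)$-linear independence already established in Lemma \ref{E is Qbasis}, this will give that it is a $\cZ$-basis. First I would recall that by Definition \ref{def of E} each $E(\bfc,t_\lz)$ lies in $\cC^*_\cZ$ and that the transition matrix between $\{\fkm^{\oz(\bfc,t_\lz)}\}$ and $\{E(\bfc,t_\lz)\}$ is unitriangular with respect to the partial order $\preceq$ and has entries in $\cZ$; hence the $\cZ$-span of $\{E(\bfc,t_\lz)\mid(\bfc,t_\lz)\in\cG^a\}$ equals the $\cZ$-span of the monomials $\{\fkm^{\oz(\bfc,t_\lz)}\mid(\bfc,t_\lz)\in\cG^a\}$. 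So it suffices to prove that these monomials $\fkm^{\oz(\bfc,t_\lz)}$ already $\cZ$-span $\cC^*_\cZ$.

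The key point is then a counting/triangularity argument inside the fixed-degree piece. Fix $\nu\in\bbN I$. On one hand, $\cC^*_\cZ$ restricted to degree $\nu$ is a $\cZ$-submodule of $\cH^0_\cZ$ (more precisely of its degree-$\nu$ part), spanned by all monomials $\fkm^\oz$ with $|\fkm^\oz|=\nu$. On the other hand, by Lemma \ref{E in N} each $E(\bfc,t_\lz)$ has leading term $N(\bfc,t_\lz)$ with lower terms among the $N(\bfc',t_{\lz'})$ with $(\bfc',t_{\lz'})$ periodic and $\prec(\bfc,t_\lz)$; since $\{N(\bfc,t_\lz)\mid(\bfc,t_\lz)\in\cG\}$ is a $\bbQ(v_q)$-basis of $\cH^0_q$ (Lemma \ref{Lemma: tN is basis}) and, over the generic ring, the formal module $\bm{\cH}^0_\cZ$ is $\cZ$-free on these symbols, the $\cZ$-submodule generated by $\{E(\bfc,t_\lz)\mid(\bfc,t_\lz)\in\cG^a_\nu\}$ is a free direct summand of $\bm{\cH}^0_\cZ$ of rank $|\cG^a_\nu| = \dim\bff_\nu$ (using Lusztig's dimension count as in Lemma \ref{E is Qbasis}). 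Since $\cC^*_\cZ\otimes_\cZ\bbQ(v)=\cC^*$ has degree-$\nu$ dimension exactly $\dim\bff_\nu$, and $\cC^*_\cZ$ is known (from $\Phi(\bff_\cZ)=\cC^*_\cZ$, stated in Section \ref{1}) to be a $\cZ$-form whose degree-$\nu$ part is free of that rank, a free submodule of full rank inside it that is a direct summand of the ambient free module must be all of it.

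To make the last step airtight I would argue as follows. Let $L$ be the $\cZ$-span of $\{E(\bfc,t_\lz)\mid(\bfc,t_\lz)\in\cG^a_\nu\}$, so $L\subseteq(\cC^*_\cZ)_\nu$. Both $L$ and $(\cC^*_\cZ)_\nu$ are finitely generated free $\cZ$-modules (being submodules of the free module $(\bm{\cH}^0_\cZ)_\nu$, and $\cZ$ is a PID) of the same rank $\dim\bff_\nu$, and $L$ is a direct summand of $(\bm{\cH}^0_\cZ)_\nu$ because $\{E(\bfc,t_\lz)\mid(\bfc,t_\lz)\in\cG^a_\nu\}$ extends, via Lemma \ref{E in N} and Lemma \ref{Lemma: tN is basis}, to a $\cZ$-basis of $(\bm{\cH}^0_\cZ)_\nu$ (the complement being spanned by the periodic $N(\bfc',t_{\lz'})$). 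A direct summand of a free $\cZ$-module that has full rank inside another free submodule $(\cC^*_\cZ)_\nu$ of the same rank must coincide with it: indeed the quotient $(\cC^*_\cZ)_\nu/L$ is a torsion $\cZ$-module which embeds in the torsion-free $(\bm{\cH}^0_\cZ)_\nu/L$, hence is zero. Therefore $L=(\cC^*_\cZ)_\nu$, and summing over $\nu$ gives the claim.

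I expect the main obstacle to be justifying that $\cC^*_\cZ$ really sits inside the generic algebra $\bm{\cH}^0_\cZ$ as the $\cZ$-span of the monomials, i.e. that the integral form behaves well under the generic (field-independent) structure constants; this is where Proposition \ref{multiofN} (integrality and genericity of the structure constants $\psi^{(\bfc,t_\lz)}_{(\bfc^1,t_{\lz^1}),(\bfc^2,t_{\lz^2})}$) together with Lemma \ref{E in N} does the work, and I would cite the decomposition $\bm{\cH}^0_\cZ=\bm{\cH}^*_\cZ(\cP)\otimes_\cZ\bm{\cH}^*_\cZ(\cR_{\rm nh})\otimes_\cZ\cZ[H_1,H_2,\dots]\otimes_\cZ\bm{\cH}^*_\cZ(\cI)$ to see freeness of $(\bm{\cH}^0_\cZ)_\nu$. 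Once that ambient freeness and the unitriangularity of Lemma \ref{E in N} are in place, the rest is the elementary PID argument above.
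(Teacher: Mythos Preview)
Your argument is essentially correct and close in spirit to the paper's, but there is one factual slip and the route is more roundabout than necessary.

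The slip: $\cZ=\bbZ[v,v^{-1}]$ is \emph{not} a PID (it has Krull dimension $2$; for instance $(0)\subsetneq(2)\subsetneq(2,v-1)$ is a chain of primes). Fortunately your argument does not actually need this. Freeness of $L$ is immediate because the $E(\bfc,t_\lz)$ are $\bbQ(v)$-linearly independent, and freeness of $(\cC^*_\cZ)_\nu$ follows from $\Phi(\bff_\cZ)=\cC^*_\cZ$. The torsion/torsion-free step works over any integral domain, so once you drop the PID parenthetical the proof stands.

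The paper's proof is shorter and more explicit. Instead of an abstract rank/direct-summand argument, it takes an arbitrary monomial $\fkm^\oz$, expands it as $\sum_{(\bfc,t_\lz)\in\cG}\psi^{(\bfc,t_\lz)}_\oz N(\bfc,t_\lz)$ with $\psi^{(\bfc,t_\lz)}_\oz\in\cZ$ (Proposition~\ref{multiofN}), and observes that
\[
\fkm^\oz-\sum_{(\bfc,t_\lz)\in\cG^a}\psi^{(\bfc,t_\lz)}_\oz\,E(\bfc,t_\lz)\in\mathbf{P}\cap\cC^*=\{0\}
\]
by Lemma~\ref{E in N} and Lemma~\ref{periodic not in C}. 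This gives directly that every monomial lies in the $\cZ$-span of the $E$'s, with the coefficients written down explicitly. Your direct-summand argument and the paper's use of $\mathbf{P}\cap\cC^*=0$ encode the same fact (that $\{E(\bfc,t_\lz)\}_{(\bfc,t_\lz)\in\cG^a}\cup\{N(\bfc',t_{\lz'})\}_{(\bfc',t_{\lz'})\in\cG\setminus\cG^a}$ is a $\cZ$-basis of $(\bm{\cH}^0_\cZ)_\nu$ and $\cC^*$ meets the periodic span trivially), but the paper's formulation avoids the detour through module theory and yields the expansion coefficients for free.
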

\begin{proof}
By Proposition \ref{multiofN}, for any monomial $\fkm^\oz$ in $\cC^*_\cZ$,
$$\fkm^\oz=\sum_{(\bfc,t_\lz)\in\cG}\psi^{(\bfc,t_\lz)}_\oz(v)N(\bfc,t_\lz)$$
with $\psi^{(\bfc,t_\lz)}_\oz\in\cZ$. Then
$$\fkm^\oz-\sum_{(\bfc,t_\lz)\in\cG^a}\psi^{(\bfc,t_\lz)}_\oz(v)E(\bfc,t_\lz)\in\mathbf{P}\cap\cC^*$$
implies that $$\fkm^\oz=\sum_{(\bfc,t_\lz)\in\cG^a}\psi^{(\bfc,t_\lz)}_\oz(v)E(\bfc,t_\lz),$$
which means that any monomial is a $\cZ$-linear combination of elements in $\{E(\bfc,t_\lz)|(\bfc,t_\lz)\in\cG^a\}$.
\end{proof}
\begin{corollary}
The set $\{\fkm^{\oz(\bfc,t_\lz)}|(\bfc,t_\lz)\in\cG^a\}$ is a $\cZ$-basis of $\cC^*_\cZ$.
\end{corollary}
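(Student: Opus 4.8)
The plan is to deduce this immediately from the preceding proposition, that $\{E(\bfc,t_\lz)\mid(\bfc,t_\lz)\in\cG^a\}$ is a $\cZ$-basis of $\cC^*_\cZ$, together with the unitriangular change-of-basis relation recorded in Definition~\ref{def of E}. First I would note that each monomial $\fkm^{\oz(\bfc,t_\lz)}$ already lies in $\cC^*_\cZ$: by construction it is a finite product of divided powers $u_i^{(a)}$ of the generators, hence belongs to the integral form.

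Next, recall from Definition~\ref{def of E} that for every $(\bfc,t_\lz)\in\cG^a$ with $D(\bfc,t_\lz)=\nu$ one has
$$\fkm^{\oz(\bfc,t_\lz)}=E(\bfc,t_\lz)+\sum_{(\bfc',t_{\lz'})\in\cG^a_\nu,\ (\bfc',t_{\lz'})\prec(\bfc,t_\lz)}\phi_{(\bfc,t_\lz)}^{(\bfc',t_{\lz'})}(v)\,E(\bfc',t_{\lz'}),$$
with all coefficients $\phi_{(\bfc,t_\lz)}^{(\bfc',t_{\lz'})}$ in $\cZ$. Choosing any total refinement of the partial order $\preceq$ on the finite set $\cG^a_\nu$, this exhibits the transition matrix from $\{E(\bfc,t_\lz)\}_{(\bfc,t_\lz)\in\cG^a_\nu}$ to $\{\fkm^{\oz(\bfc,t_\lz)}\}_{(\bfc,t_\lz)\in\cG^a_\nu}$ as a unitriangular matrix with entries in $\cZ$. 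A unitriangular matrix over the commutative ring $\cZ$ is invertible, and its inverse is again unitriangular with entries in $\cZ$; hence, conversely, each $E(\bfc,t_\lz)$ is a $\cZ$-linear combination of the $\fkm^{\oz(\bfc',t_{\lz'})}$ with $(\bfc',t_{\lz'})\preceq(\bfc,t_\lz)$, carried out grading piece by grading piece over $\nu\in\bbN I$.

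Combining this with the previous proposition, the set $\{\fkm^{\oz(\bfc,t_\lz)}\mid(\bfc,t_\lz)\in\cG^a\}$ spans $\cC^*_\cZ$ over $\cZ$ and is related to the $\cZ$-basis $\{E(\bfc,t_\lz)\}$ by an invertible $\cZ$-matrix, so it is itself a $\cZ$-basis, which proves the corollary. There is essentially no obstacle here; the only point needing care is the finiteness of each $\cG^a_\nu$ (which holds since $|\cG^a_\nu|=\dime\bff_\nu$, as used in Lemma~\ref{E is Qbasis}), since this is what makes the passage "unitriangular over $\cZ$ $\Rightarrow$ invertible over $\cZ$" legitimate on each homogeneous component.
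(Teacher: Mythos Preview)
Your proposal is correct and is precisely the argument the paper has in mind: the corollary is stated without proof immediately after the proposition that $\{E(\bfc,t_\lz)\}$ is a $\cZ$-basis, and the unitriangular transition relation of Definition~\ref{def of E} (which the paper invokes again verbatim at the start of Section~\ref{Section: construction of C(c,lz)}) is exactly what makes it immediate. Your remarks that the monomials lie in $\cC^*_\cZ$ by construction and that $\cG^a_\nu$ is finite are the only points to check, and both are already available.
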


We say that $\{E(\bfc,t_\lz)|(\bfc,t_\lz)\in\cG^a\}$ is the PBW basis and  $\{\fkm^{\oz(\bfc,t_\lz)}|(\bfc,t_\lz)\in\cG^a\}$ is a monomial basis of $\cC^*_\cZ$.

Moreover, let  $\mathbf{P}_\cZ$ be the $\cZ$-submodule of $\cH^0_\cZ$ spanned by $\{N(\bfc,t_\lz)|(\bfc,t_\lz)\in\cG\setminus\cG^a\}$, then by Lemma \ref{E in N}, Lemma \ref{periodic not in C} and proposition \ref{E is integral basis}, we have
$$\cH^0_\cZ=\cC^*_\cZ\oplus\mathbf{P}_\cZ$$
as $\cZ$-modules and
$$E(\bfc,t_\lz)=N(\bfc,t_\lz) \,\mod \mathbf{P}_\cZ$$
for $(\bfc,t_\lz)\in\cG^a$.

\subsection{A bar-invariant basis}\label{Section: construction of C(c,lz)}
By Definition \ref{def of E}, the transition matrix between the monomial basis $\{\fkm^{\oz(\bfc,t_\lz)}|(\bfc,t_\lz)\in\cG^a\}$ and the PBW basis $\{E(\bfc,t_\lz)|(\bfc,t_\lz)\in\cG^a\}$ is uppertriangular in $\cZ$ with entries in the diagonal equal to $1$. So we have
$$E(\bfc,t_\lz)=\fkm^{\oz(\bfc,t_\lz)}+\sum_{(\bfc',t_{\lz'})\in\cG^a\atop(\bfc',t_{\lz'})\prec(\bfc,t_\lz)}\eta_{(\bfc,t_\lz)}^{(\bfc',t_{\lz'})}(v)\fkm^{\oz(\bfc',t_{\lz'})}$$
for $(\bfc,t_\lz)\in\cG^a$, with $\eta_{(\bfc,t_\lz)}^{(\bfc',t_{\lz'})}\in\cZ$.

Recall from Section \ref{Section: The bar involution and the canonical basis} that the bar involution $\bar{\cdot}$ is a $\bbQ$-algebra automorphism on $\bff\cong\cC^*$, and clearly all monomials are bar-invariant.
Applying the bar involution to the both sides, then we get
$$\overline{E(\bfc,t_\lz)}=\fkm^{\oz(\bfc,t_\lz)}+\sum_{(\bfc',t_{\lz'})\in\cG^a\atop(\bfc',t_{\lz'})\prec(\bfc,t_\lz)}\overline{\eta}_{(\bfc,t_\lz)}^{(\bfc',t_{\lz'})}(v)\fkm^{\oz(\bfc',t_{\lz'})}$$
for $(\bfc,t_{\lz})\in\cG^a$.
Thus we have
$$\overline{E(\bfc,t_\lz)}=E(\bfc,t_\lz)+\sum_{(\bfc',t_{\lz'})\in\cG^a\atop(\bfc',t_{\lz'})\prec(\bfc,t_\lz)}\zeta_{(\bfc,t_\lz)}^{(\bfc',t_{\lz'})}(v)E(\bfc',t_{\lz'})$$
for $(\bfc,t_{\lz})\in\cG^a$, with $\zeta_{(\bfc,t_\lz)}^{(\bfc',t_{\lz'})}\in\cZ$.

By Section 7.10 in \cite{Lusztig_Canonical_bases_arising_from_quantized_enveloping_algebra}, the system
$$g^{(\bfc',t_{\lz'})}_{(\bfc,t_{\lz})}=\sum_{(\bfc'',t_{\lz''})\in\cG^a,(\bfc',t_{\lz'})\preceq(\bfc'',t_{\lz''})\preceq(\bfc,t_\lz)}\zeta^{(\bfc',t_{\lz'})}_{(\bfc'',t_{\lz''})}\overline{g}^{(\bfc'',t_{\lz''})}_{(\bfc,t_\lz)}$$ for $(\bfc',t_{\lz'})\preceq(\bfc,t_\lz)\in\cG^a$,
has a uinque solution satisfying $g^{(\bfc,t_\lz)}_{(\bfc,t_\lz)}=1,g^{(\bfc',t_{\lz'})}_{(\bfc,t_\lz)}\in v^{-1}\bbZ[v^{-1}]$ for $(\bfc',t_{\lz'})\preceq(\bfc,t_\lz)$.
Let
$$C(\bfc,t_\lz)=E(\bfc,t_\lz)+\sum_{(\bfc',t_{\lz'})\in\cG^a,(\bfc',t_{\lz'})\prec(\bfc,t_\lz)}g^{(\bfc',t_{\lz'})}_{(\bfc,t_\lz)}(v)E(\bfc',t_{\lz'}).$$

Therefore by construction, we have
$$\overline{C{(\bfc,t_\lambda)}}=C{(\bfc,t_\lambda)}$$
for all $(\bfc,t_\lz)\in\cG^a$. We call the set $\{C{(\bfc,t_\lambda)}|(\bfc,t_\lambda)\in\mathcal{G}^a\}$ the bar-invariant basis.
Moreover, we have an important relation between the monomial basis and the bar-invariant basis:
\begin{equation}\label{eqn: monomial and bar inv}
\fkm^{\oz(\bfc,t_\lz)}=C(\bfc,t_\lz)+\sum_{(\bfc',t_{\lz'})\in\cG^a\atop(\bfc',t_{\lz'})\preceq(\bfc,t_\lz)}\tilde{\az}^{(\bfc',t_{\lz'})}_{(\bfc,t_\lz)}(v)C(\bfc',t_{\lz'}).
\end{equation}

\begin{proposition}\label{proposition: E satisfy three conditions}
The set $\{E{(\bfc,t_\lambda)}|(\bfc,t_\lambda)\in\mathcal{G}^a\}$ is a $\cZ$-basis of $\mathcal{C}^{\ast}(Q)_{\cZ}$
satisfying the following conditions.
\begin{enumerate}
  \item[(1)]$\overline{E(\bfc,t_\lz)}=E(\bfc,t_\lz)+\sum_{(\bfc',t_{\lz'})\in\cG^a\atop(\bfc',t_{\lz'})\prec(\bfc,t_\lz)}\zeta_{(\bfc,t_\lz)}^{(\bfc',t_{\lz'})}(v)E(\bfc',t_{\lz'}),\zeta_{(\bfc,t_\lz)}^{(\bfc',t_{\lz'})}\in\cZ.$
  \item[(2)]$(E{(\bfc,t_\lambda)},E{(\bfc',t_{\lambda'})})\in\delta_{(\bfc,t_\lambda),(\bfc',t_{\lambda'})}+v^{-1}\mathbb{Q}[[v^{-1}]]\cap\mathbb{Q}(v)$.
\end{enumerate}
\end{proposition}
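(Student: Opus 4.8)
The plan is as follows. That $\{E(\bfc,t_\lz)\mid(\bfc,t_\lz)\in\cG^a\}$ is a $\cZ$-basis of $\cC^*(Q)_\cZ$ has already been established, so only conditions (1) and (2) remain. Condition (1) is essentially contained in Section \ref{Section: construction of C(c,lz)}: every divided power $\thz_i^{(m)}$, hence every monomial $\fkm^{\oz(\bfc,t_\lz)}$, is fixed by the bar involution, and by Definition \ref{def of E} the transition matrix from $\{\fkm^{\oz(\bfc,t_\lz)}\}$ to $\{E(\bfc,t_\lz)\}$ is upper unitriangular over $\cZ$ with respect to $\prec$; applying the bar involution to $\fkm^{\oz(\bfc,t_\lz)}=E(\bfc,t_\lz)+\sum_{(\bfc',t_{\lz'})\prec(\bfc,t_\lz)}\eta^{(\bfc',t_{\lz'})}_{(\bfc,t_\lz)}(v)E(\bfc',t_{\lz'})$ and solving the resulting triangular system by downward induction on $\prec$ yields (1) with all $\zeta^{(\bfc',t_{\lz'})}_{(\bfc,t_\lz)}\in\cZ$.

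For (2) I would work inside $\cH^0_q$ and use Proposition \ref{Proposition: almost orthog}, which in fact says more than almost orthogonality: $(N(\bfc,t_\lz),N(\bfc',t_{\lz'}))$ vanishes unless $\bfc=\bfc'$, and for $\bfc=\bfc'$ it equals a factor in $1+v^{-1}\bbQ[[v^{-1}]]$ times $(S_\lz,S_{\lz'})\in\delta_{\lz\lz'}+v^{-1}\bbQ[[v^{-1}]]$. By Lemma \ref{E in N} one may write $E(\bfc,t_\lz)=N(\bfc,t_\lz)+p_{(\bfc,t_\lz)}$, where $p_{(\bfc,t_\lz)}$ is a $\cZ$-combination of $N(\bfc',t_{\lz'})$ with $\bfc'$ periodic, so in particular $\bfc'\neq\bfc$. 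Pairing the two expansions, and using that an aperiodic label never meets a periodic one so that the pairing of $N$'s is block-diagonal in the labels, every mixed term drops out and
$$(E(\bfc,t_\lz),E(\bfc',t_{\lz'}))=(N(\bfc,t_\lz),N(\bfc',t_{\lz'}))+(p_{(\bfc,t_\lz)},p_{(\bfc',t_{\lz'})}).$$
The first summand is already $\delta_{\bfc,\bfc'}\delta_{\lz,\lz'}$ modulo $v^{-1}\bbQ[[v^{-1}]]$, exactly the diagonal prescribed in (2).

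Everything therefore reduces to $(p_{(\bfc,t_\lz)},p_{(\bfc',t_{\lz'})})\in v^{-1}\bbQ[[v^{-1}]]\cap\bbQ(v)$, which I expect to be the main obstacle. Since the periodic $N$'s are themselves almost orthonormal, the $v\to\infty$ leading term of that pairing is $\sum b^{(\bfd,t_\mu)}_{(\bfc,t_\lz)}(v)\,b^{(\bfd,t_\mu)}_{(\bfc',t_{\lz'})}(v)$, summed over the periodic indices $(\bfd,t_\mu)$ common to both corrections; so it suffices to sharpen Lemma \ref{E in N} to the assertion that each coefficient $b^{(\bfc',t_{\lz'})}_{(\bfc,t_\lz)}$ lies in $v^{-1}\bbZ[v^{-1}]$, i.e. that $E(\bfc,t_\lz)\equiv N(\bfc,t_\lz)$ modulo $v^{-1}$ times the $\bbZ[v^{-1}]$-lattice spanned by $\{N(\bfc',t_{\lz'})\}$. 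I would extract this from the construction in Definition \ref{def of E} by an induction along $\prec$: expand $\fkm^{\oz(\bfc,t_\lz)}$ in the $N$-basis, observe that the Kostka leading terms $\sum_{\mu\ge\lz}K_{\lz\mu}N(\bfc,t_\mu)$ are absorbed by the $E(\bfc,t_\mu)$ subtracted in the recursion (so their residual periodic contribution is $v^{-1}$-small by the inductive hypothesis), and then control the $v$-degrees of the remaining Hall polynomials together with those of the structure constants $\psi$ from Proposition \ref{multiofN}, checking that the periodic part never picks up a non-negative power of $v$. This degree bookkeeping is the real work; it is the affine counterpart of the fact that in finite type the PBW basis equals the module basis, which trivializes (2) there.
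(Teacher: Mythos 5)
Your proofs of the basis claim and of condition~(1) match the paper: the basis claim is exactly the proposition preceding the corollary on monomial bases, and~(1) is derived in Section~\ref{Section: construction of C(c,lz)} by bar-fixing the monomials and inverting the upper-unitriangular transition matrix, which is what you do. Your reduction of~(2) is also sound: by the vanishing $\dz_{\bfc\bfc'}$ in Proposition~\ref{Proposition: almost orthog}, the aperiodic leading term of $E(\bfc,t_\lz)$ is orthogonal to all the periodic corrections, so the cross terms drop out and everything reduces to showing that the periodic coefficients $b^{(\bfc',t_{\lz'})}_{(\bfc,t_\lz)}$ in Lemma~\ref{E in N} lie in $v^{-1}\bbZ[v^{-1}]$ rather than merely in $\cZ$.

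The gap is in how you propose to prove that sharpened bound. You sketch an ``elementary degree bookkeeping'' through Definition~\ref{def of E} and Proposition~\ref{multiofN}, admitting that this ``is the real work,'' but you never carry it out, and there is no reason to expect the Hall-polynomial structure constants $\phi$ and $\psi$ to cooperate: already the leading Kostka coefficients sit in degree~$0$, and the recursion controls only the \emph{aperiodic} residuals, not the periodic ones. The paper does not attempt this bookkeeping. Instead it defers condition~(2) to Section~\ref{Chapter: coincidence of two bases}: it proves geometrically (via the trace map, the closure estimate of Proposition~\ref{Lemma: geometric property of the order}, and almost orthogonality of the simple perverse sheaves) that each $B(\bfc,t_\lz)$ expands in the $N$-basis with coefficients in $v^{-1}\bbZ[v^{-1}]$ (equation~(\ref{Equation: B in N})); Corollary~\ref{Corollary: E in B} then identifies $E$ with the element $\tilde{E}$ built from these $B$-expansions, forcing $b^{(\bfc',t_{\lz'})}_{(\bfc,t_\lz)}\in v^{-1}\bbZ[v^{-1}]$, after which~(2) follows as you describe. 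So the paper's proof of~(2) is logically downstream of the perverse-sheaf construction in Sections~10--11, and an elementary replacement along the lines you suggest would be a genuinely new argument, not a routine computation; as written your proposal leaves the crucial step unestablished.
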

\begin{theorem}\label{Theorem: C satisfy three conditions}
The set $\{C{(\bfc,t_\lambda)}|(\bfc,t_\lambda)\in\mathcal{G}^a\}$ is a $\cZ$-basis of $\mathcal{C}^{\ast}(Q)_{\cZ}$
satisfying the following conditions.
\begin{enumerate}
  \item[(1)]$\overline{C{(\bfc,t_\lambda)}}=C{(\bfc,t_\lambda)}$;
  \item[(2)]$(C{(\bfc,t_\lambda)},C{(\bfc',t_{\lambda'})})\in\delta_{(\bfc,t_\lambda),(\bfc',t_{\lambda'})}+v^{-1}\mathbb{Q}[[v^{-1}]]\cap\mathbb{Q}(v)$.
\end{enumerate}
\end{theorem}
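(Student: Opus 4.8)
The plan is to deduce all three assertions of the theorem --- that $\{C(\bfc,t_\lz)\mid(\bfc,t_\lz)\in\cG^a\}$ is a $\cZ$-basis of $\cC^*(Q)_\cZ$, that it is bar-invariant, and that it is almost orthogonal --- formally from the already-established properties of the PBW basis in Proposition \ref{proposition: E satisfy three conditions}, together with the triangular relation
\[C(\bfc,t_\lz)=E(\bfc,t_\lz)+\sum_{(\bfc',t_{\lz'})\in\cG^a,\,(\bfc',t_{\lz'})\prec(\bfc,t_\lz)}g^{(\bfc',t_{\lz'})}_{(\bfc,t_\lz)}(v)\,E(\bfc',t_{\lz'})\]
coming from the construction in Section \ref{Section: construction of C(c,lz)}, where $g^{(\bfc',t_{\lz'})}_{(\bfc,t_\lz)}\in v^{-1}\bbZ[v^{-1}]$ by Lusztig's lemma (Section 7.10 of \cite{Lusztig_Canonical_bases_arising_from_quantized_enveloping_algebra}) and $g^{(\bfc,t_\lz)}_{(\bfc,t_\lz)}=1$.

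First I would dispose of the $\cZ$-basis claim. Order $\cG^a$ so as to refine the partial order $\preceq$; since $\prec$ only relates elements lying in a common $\cG^a_\nu$, the displayed relation shows that within each $\cG^a_\nu$ (a finite set) the transition between $\{E(\bfc,t_\lz)\}$ and $\{C(\bfc,t_\lz)\}$ is a finite unitriangular matrix with all entries in $\cZ$, hence invertible over $\cZ$. As $\{E(\bfc,t_\lz)\mid(\bfc,t_\lz)\in\cG^a\}$ is a $\cZ$-basis of $\cC^*_\cZ$ by Proposition \ref{proposition: E satisfy three conditions}, so is $\{C(\bfc,t_\lz)\mid(\bfc,t_\lz)\in\cG^a\}$. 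Assertion (1), $\overline{C(\bfc,t_\lz)}=C(\bfc,t_\lz)$, is exactly the property for which the coefficients $g^{(\bfc',t_{\lz'})}_{(\bfc,t_\lz)}$ were chosen; it was already recorded immediately after their construction, so nothing further is needed here.

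For assertion (2) I would compute the inner product directly. Write $a,a'$ for two index pairs in $\cG^a$ and abbreviate the transition coefficients by $g^b_a$ (so $g^a_a=1$ and $g^b_a\in v^{-1}\bbZ[v^{-1}]$ for $b\prec a$). Then
\[(C(a),C(a'))=\sum_{b\preceq a,\;b'\preceq a'}g^b_a\,g^{b'}_{a'}\,(E(b),E(b')),\]
a finite sum, hence an element of $\bbQ(v)$. By Proposition \ref{proposition: E satisfy three conditions}(2), $(E(b),E(b'))\in\delta_{b,b'}+v^{-1}\bbQ[[v^{-1}]]\cap\bbQ(v)$, and in particular $(E(b),E(b'))\in\bbQ[[v^{-1}]]$ always. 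The unique term with neither $b\prec a$ nor $b'\prec a'$ is $(b,b')=(a,a')$, and it equals $(E(a),E(a'))\in\delta_{a,a'}+v^{-1}\bbQ[[v^{-1}]]$. In every other term at least one of the factors $g^b_a$, $g^{b'}_{a'}$ lies in $v^{-1}\bbZ[v^{-1}]$ while the other lies in $\bbZ[v^{-1}]$, so (multiplying into $(E(b),E(b'))\in\bbQ[[v^{-1}]]$) the term lies in $v^{-1}\bbQ[[v^{-1}]]$. Summing over the finitely many terms gives $(C(a),C(a'))\in\delta_{a,a'}+v^{-1}\bbQ[[v^{-1}]]\cap\bbQ(v)$, which is (2).

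The argument is essentially $v$-adic bookkeeping once Proposition \ref{proposition: E satisfy three conditions} is in hand, and I do not expect a genuine obstacle at this stage. The one point that must not be overlooked is that Lusztig's lemma really delivers the coefficients $g^{(\bfc',t_{\lz'})}_{(\bfc,t_\lz)}$ in $v^{-1}\bbZ[v^{-1}]$ and not merely in $\cZ$: positive powers of $v$ would destroy the estimate in the off-diagonal terms of the computation above. The substantive work was therefore already carried out in establishing Proposition \ref{proposition: E satisfy three conditions} --- ultimately the almost-orthogonality of the $N(\bfc,t_\lz)$ in Proposition \ref{Proposition: almost orthog} and the unitriangular relation between the $E(\bfc,t_\lz)$ and the $N(\bfc,t_\lz)$ in Lemma \ref{E in N} --- after which the present theorem follows as above.
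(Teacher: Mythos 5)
The reduction you carry out---from the triangular relation between $C$ and $E$ plus Proposition \ref{proposition: E satisfy three conditions}(2) to Theorem \ref{Theorem: C satisfy three conditions}(2)---is correct bookkeeping, and your treatment of the $\cZ$-basis claim and of condition (1) matches the paper exactly. But the argument rests on a false premise, namely that Proposition \ref{proposition: E satisfy three conditions}(2) is "already established" and in particular that it follows from Proposition \ref{Proposition: almost orthog} together with Lemma \ref{E in N}. It does not. Lemma \ref{E in N} only gives $E(\bfc,t_\lz)=N(\bfc,t_\lz)+\sum b_{(\bfc,t_\lz)}^{(\bfc',t_{\lz'})}(v)N(\bfc',t_{\lz'})$ with $b_{(\bfc,t_\lz)}^{(\bfc',t_{\lz'})}\in\cZ$, not in $v^{-1}\bbZ[v^{-1}]$. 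With $b$'s merely in $\cZ$, the diagonal contributions $\bigl(b_{(\bfc,t_\lz)}^{(\bfc',t_{\lz'})}\bigr)^2\,(N(\bfc',t_{\lz'}),N(\bfc',t_{\lz'}))$ can produce nonzero constant (or even positive-degree) terms, so the almost-orthogonality of the $N$'s alone does not yield almost-orthogonality of the $E$'s. You correctly flagged that the $g$-coefficients from Lusztig's lemma land in $v^{-1}\bbZ[v^{-1}]$, but the analogous fact for the $b$'s is exactly what is \emph{not} available at this point in the paper.

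In fact the paper says explicitly, immediately after stating Proposition \ref{proposition: E satisfy three conditions} and Theorem \ref{Theorem: C satisfy three conditions}, that only their parts (1) are proved by the construction and that the parts (2) are deferred to Section \ref{Chapter: coincidence of two bases}. There, the sharpening $b_{(\bfc,t_\lz)}^{(\bfc',t_{\lz'})}\in v^{-1}\bbZ[v^{-1}]$ is obtained by comparing $E(\bfc,t_\lz)$ with the element $\tilde E(\bfc,t_\lz)$ built from the perverse-sheaf classes $B(\bfc,t_\lz)$ (Corollary \ref{Corollary: E in B}, which uses the trace map and Proposition \ref{Lemma: geometric property of the order}); and Theorem \ref{Theorem: C satisfy three conditions}(2) is deduced not from Proposition \ref{proposition: E satisfy three conditions}(2) but from the identity $C(\bfc,t_\lz)=B(\bfc,t_\lz)$ and the known almost-orthogonality of the canonical basis. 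So your argument, while a valid implication, does not avoid the geometric machinery: it simply shifts the burden onto Proposition \ref{proposition: E satisfy three conditions}(2), whose proof requires precisely the same ingredients. To make the proposal genuinely close the gap you would need an independent, purely algebraic argument that the coefficients in Lemma \ref{E in N} lie in $v^{-1}\bbZ[v^{-1}]$, and that is where the real difficulty sits.
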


The condition (1) of Proposition \ref{proposition: E satisfy three conditions} and Theorem \ref{Theorem: C satisfy three conditions} are proved by the construction above, we leave the rest of the proof in Section \ref{Chapter: coincidence of two bases} by showing that $\{C(\bfc,t_\lz)|(\bfc,t_\lz)\in\cG^a\}$ is in fact the canonical basis $\bfB$ of $\bff$.

\section{Lusztig's geometric construction and the canonical basis}
We give a review of Lusztig's geometric construction of the canonical basis. We refer to \cite{Lusztig_Introduction_to_quantum_groups} for basic constructions and \cite{Li_Notes_on_affine_canonical_and_monomial_bases} for more details about affine canonical basis.
\subsection{Representation spaces and flag varieties}
Let $V=\oplus_{i\in I} V_i$ be an $I$-graded vector space over an algebraic closed field $k$.
Denote $\udim V=\sum_{i\in I}\dime V_ii\in\bbN I$.
Similarly to Section \ref{Geometric property of the order}, we define
$$\bbE_V=\oplus_{h\in H}\Hom(V_{s(h)},V_{t(h)}), \GL_V=\oplus_{i\in I}\GL(V_i).$$
The $\GL_V$-action on $\bbE_V$ is also defined by
$$g.x=((g.x)_h)_{h\in H},(g.x)_h=g_{t(h)}x_hg^{-1}_{s(h)}$$
for any $g=(g_i)_{i\in I}\in\GL_V,x=(x_h)_{h\in H}\in\bbE_V$.

Given any $\nu\in\bbN I$, let $\cS_{\nu}$ be the set consisting of all pairs of sequences defined in Section \ref{Section: def of monomial}:
$$\oz=(\underline{i},\underline{a})=((i_1,\dots,i_t),(a_1,\dots,a_t))$$
such that $\sum_ma_mi_m=\nu$.

Fix an $I$-graded vector space $V$ such that $\udim V=\nu$. For any $\oz\in\cS_{\nu}$, we say that a flag of subspaces
$$V^\bullet=(V=V^0\supseteq V^1\supseteq\cdots\supseteq V^t=0)$$
is of type $\oz$ if $\udim V^{m-1}/V^m=a_mi_m$ for all $m=1,2,\cdots,t$.

Let $\cF_\oz$ be the flag variety consisting of all flags of type $\oz$. Then $\GL_V$ acts on $\cF_\oz$ naturally.

Given $x\in\bbE_V, V^\bullet\in\cF_\oz$, we say that $V^\bullet$ is $x$-stable if $x_h(V^m_{s(h)})\subseteq V^m_{t(h)}$ for any $h\in H$ and $m=1,2,\cdots,t$.
We define $\tcF_\oz$ to be the variety consisting of all pairs $(x,V^\bullet)\in\bbE_V\times\cF_\oz$ such that $V^\bullet$ is $x$-stable. So $\GL_V$ acts on $\tcF_\oz$ naturally.

\subsection{Perverse sheaves}

Fix a prime $l$ that is invertible in $k$.
Let $\overline{\bbQ}_l$ be the algebraic closure of the field of $l$-adic numbers.
Given any algebraic variety $X$ over $k$,
denote by $\cD(X)$ the bounded derived category of complexes of $l$-adic sheaves on $X$.
Let $\cM(X)$ be the full subcategory of $\cD(X)$ consisting of all perverse sheaves on $X$.

Let $G$ be a connected algebraic group. Assume that $G$ acts on $X$ algebraically.
By $\cD_G(X)$ we denote the full subcategory of $\cD(X)$ consisting of all $G$-equivariant complexes over $X$.
By $\cM_G(X)$ we denote the full subcategory of $\cM(X)$ consisting of all $G$-equivariant perverse sheaves over $X$.

Let $\mathbf{1}_{\tcF_\oz}$ be the constant sheaf on $\tcF_\oz$.
Let $\pi_\oz:\tcF_\oz\ra\bbE_V$ be the first projection $(x,V^\bullet)\mapsto x$.
By the decomposition theorem of Beilinson, Bernstein and Deligne (\cite{BBD_Faisceaux_pervers}), the Lusztig sheaf
$$L_\oz:=(\pi_\oz)_!\mathbf{1}_{\tcF_\oz}[d_\oz](\frac{d_\oz}{2})$$
is a $\GL_V$-equivariant semisimple complex on $\bbE_V$, where $[d]$ is the shift by $d$ of complexes and $(\frac{d}{2})$ is the Tate twist, $d_\oz=\dime \tcF_\oz$.

Let $\cP_V$ be the subcategory of $\cM_{\GL_V}(\bbE_V)$ consisting of direct sums of simple perverse sheaves, which are direct summands of $L_\oz[r](\frac{r}{2})$ for some $\oz\in\cS_{\udim V}$ and $r\in\bbZ$.

Let $\cQ_V$ be the full subcategory of $\cD_{\GL_V}(\bbE_V)$ whose objects are the complexes that are isomorphic to finite direct sums of complexes of the form $L[d](\frac{d}{2})$ for various $L\in\cP_V$ and $d\in\bbZ$.

Let $K_{\udim V}=K_V=K(\cQ_V)$ be the Grothendieck group of $\cQ_V$. Define $$(-v)[L]=[L[1](\frac{1}{2})],(-v^{-1})[L]=[L[-1](-\frac{1}{2})].$$
Then $K_V$ is a free $\cZ$-module, where $\cZ=\bbZ[v,v^{-1}]$. Define
$$K(\cQ)=\bigoplus_{\nu\in\bbN I}K_\nu.$$

\subsection{Lusztig's induction functors}
Let $W\subset V$ be an $I$-graded subspace. Let $T=V/W$ and $p:V\ra T$ be the natural projection.

Given any $x\in\bbE_V$, $W$ is called $x$-stable if $x_h(W_{s(h)})\subseteq W_{t(h)}$ for all $h\in H$.

If $W$ is $x$-stable, it induces two elements $x_W$ and $x_T$ in $\bbE_W$ and $\bbE_T$ respectively such that $(x_W)_h$ is the restriction of $x_h$ to $W$ and $p_{t(h)}x_h=(x_T)_h p_{s(h)}$ for all $h\in H$.

We consider the following diagram
$$\bbE_T\times\bbE_W\stackrel{q_1}{\longleftarrow}\bbE'\stackrel{q_2}{\lra}\bbE''\stackrel{q_3}{\lra}\bbE_V,$$
where $\bbE''=\{(x,V')|V'$ is $x$-stable, $\udim V'=\udim W\}$; $\bbE'$ is the variety consisting of all quadruples $(x,V',r',r'')$ such that $(x,V')\in\bbE''$, $r':V/V'\ra T$ and $r'':V'\ra W$ are graded linear isomorphisms; $q_2,q_3$ are natural projections and $q_1(x,V',r',r'')=(y',y'')$ such that $y'_h=r'_{t(h)}(x_{V/V'})_h(r')^{-1}_{s(h)}$ and $y''_h=r''_{t(h)}(x_{V'})_h(r'')^{-1}_{s(h)}$ for all $h\in H$.

Given any $L'\in\cQ_T,L''\in\cQ_W$, we set
$$L'\circ L''=(q_3)_!(q_2)_\flat q_1^*(L'\boxtimes L'')[d_1-d_2](\frac{d_1-d_2}{2}),$$
where $q_2^*$ is an equivalence and $(q_2)_\flat$ is the inverse of $q_2^*$, $d_1$ is the dimension of the fibres of $q_1$ and $d_2$ is the dimension of the fibres of $q_2$.

The operation $\circ$ induces a multiplication $\circ$ on $K(\cQ)$ such that $[L']\circ[L'']=[L'\circ L'']$. This makes $K(\cQ)$ a $\cZ$-algebra.

Lusztig shows that $K(\cQ)$ is isomorphic to $\bff_\cZ$ by mapping $L_{\oz}$ to $\fkm^\oz$ (Chapter 13, \cite{Lusztig_Introduction_to_quantum_groups}).

\subsection{The geometric constructions of monomial and canonical basis}
The monomial basis $\{F(\bfa,\bm{\lz})|(\bfa,\bm{\lz})\in\Delta\}$ given in \cite{Li_Notes_on_affine_canonical_and_monomial_bases} coincides the monomial basis $\{\fkm^{\oz(\bfc,t_\lz)}|(\bfc,t_\lz)\in\cG^a\}$ constructed in Section \ref{Section: Monomial basis for quiver case} under the identification $K(\cQ)\cong\bff_\cZ$.

By \cite{Li_Notes_on_affine_canonical_and_monomial_bases}, we have
\begin{equation}\label{equation: monomial and canonical in sheaves}
F(\bfa,\bm{\lz})=L_{\oz(\bfc,t_\lz)}=\oplus_{\mu\geq\lz} K_{\lz\mu}\IC(X(\bfc,|\lz|),\cL_\mu) \oplus B,
\end{equation}
where $\IC(X(\bfc,m),\cL_\mu)$ is the intersection cohomology sheaf with support $\overline{X(\bfc,m)}$ and local system $\cL_\mu$ such that $\mu\vdash m$, and ${\rm supp\,}B\subset \overline{X(\bfc,|\lz|)}\setminus X(\bfc,|\lz|)$.

It is also proved in \cite{Li_Notes_on_affine_canonical_and_monomial_bases} that any simple perverse sheaf in $\cP_V$ is isomorphic to $\IC(X(\bfc,|\lz|),\cL_\lz)$ for some $(\bfc,t_\lz)\in\cG^a_{\udim V}$.
Let $\bfB_{IC}$ be the set of all $B(\bfc,t_\lz)=[\IC(X(\bfc,|\lz|),\cL_\lz)]$ in $K(\cQ)$ for various $(\bfc,t_\lz)\in\cG^a$. Then we have the following lemma.

\begin{lemma}[\cite{Li_Notes_on_affine_canonical_and_monomial_bases}]
Under the identification $\bff_\cZ\cong K(\cQ)$, we have $\bfB=\bfB_{IC}$.
\end{lemma}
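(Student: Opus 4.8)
The plan is to read the identity off from Lusztig's geometric realization of the canonical basis, combined with the parametrization of the simple perverse sheaves recalled just above. Under $\bff_\cZ\cong K(\cQ)$, Lusztig's general theorem (\cite{Lusztig_Quivers_perverse_sheaves_and_the_quantized_enveloping_algebras}, \cite{Lusztig_Canonical_bases_arising_from_quantized_enveloping_algebra}) identifies the canonical basis $\bfB$ with the set of classes $[L]$, where $L$ ranges over the simple objects of $\bigcup_V\cP_V$. By the statement immediately preceding the lemma (due to \cite{Li_Notes_on_affine_canonical_and_monomial_bases}), for an affine quiver every such $L$ is isomorphic to $\IC(X(\bfc,|\lz|),\cL_\lz)$ for a unique $(\bfc,t_\lz)\in\cG^a_{\udim V}$, and these exhaust $\bigcup_V\cP_V$. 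Hence the set of classes $[L]$ is exactly $\bfB_{IC}$, and $\bfB=\bfB_{IC}$ follows.

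To make the reduction self-contained one recalls why Lusztig's geometric $[\IC]$'s constitute the canonical basis, which rests on three standard inputs. First, Verdier duality — normalized with the half Tate twists so as to fix each simple perverse sheaf in $\cP_V$ — induces the bar involution on $K(\cQ)\cong\bff_\cZ$, so every $B(\bfc,t_\lz)$ is bar-invariant. Second, the Euler pairing on $K(\cQ)$ corresponds to Lusztig's form $(-,-)$ on $\bff$, and on simple perverse sheaves it is almost orthonormal, $(B(\bfc,t_\lz),B(\bfc',t_{\lz'}))\in\delta_{(\bfc,t_\lz),(\bfc',t_{\lz'})}+v^{-1}\bbZ[[v^{-1}]]$, by the parity constraints on the objects of $\cP_V$; thus $\bfB_{IC}\subset\cB=\bfB\sqcup(-\bfB)$. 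Third, the decomposition theorem applied to $L_{\oz(\bfc,t_\lz)}$, in the form of equation (\ref{equation: monomial and canonical in sheaves}) together with the fact, established in Section~\ref{Geometric property of the order} (the corollary following Proposition~\ref{Lemma: geometric property of the order}), that $\preceq$ refines the closure order, yields a unitriangular expansion
$$\fkm^{\oz(\bfc,t_\lz)}=\sum_{\mu\geq\lz}K_{\lz\mu}B(\bfc,t_\mu)+\sum_{(\bfc',t_{\lz'})\prec(\bfc,t_\lz)}p^{(\bfc',t_{\lz'})}_{(\bfc,t_\lz)}(v)B(\bfc',t_{\lz'})$$
with all coefficients in $\bbN[v,v^{-1}]$ and leading coefficient $K_{\lz\lz}=1$. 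Since the monomials $\fkm^{\oz}$ generate the $\bbZ[v^{-1}]$-lattice used to define $\bfB$, this positivity pins the signs and forces $\bfB_{IC}=\bfB$, rather than a set with some signs reversed.

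The genuinely hard part — the one actually carried out in \cite{Li_Notes_on_affine_canonical_and_monomial_bases} — is not the formal argument above but its key input, namely that for an affine quiver every simple perverse sheaf in $\cP_V$ really is an $\IC(X(\bfc,|\lz|),\cL_\lz)$ with $(\bfc,t_\lz)\in\cG^a$. The delicate point is the homogeneous-regular direction: one must show that over the open piece $X(\bfc,|\lz|)$ the Lusztig sheaf contributes exactly the summands $\IC(X(\bfc,|\lz|),\cL_\mu)$ with $\mu\vdash|\lz|$, that the relevant local systems $\cL_\mu$ are precisely those attached to the irreducible $\mathfrak{S}_{|\lz|}$-characters through the permutation action on the homogeneous tubes — this is where Propositions~\ref{Character:Kostka} and~\ref{Character: permutation module} and the Kostka change of basis enter — and finally that nothing is missed or overcounted, which is forced by the dimension equality $|\cG^a_\nu|=\dime\bff_\nu$ of Theorem~4.16 in \cite{Lusztig_Affine_quivers_and_canonical_bases}. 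Granting this, the chain of identifications in the first paragraph closes and $\bfB=\bfB_{IC}$.
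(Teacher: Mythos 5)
Your proposal is correct and takes essentially the same approach as the paper: both reduce the lemma to (i) Lusztig's general theorem that, under $\bff_\cZ\cong K(\cQ)$, the canonical basis $\bfB$ is the set of classes of simple perverse sheaves in $\bigcup_V\cP_V$, and (ii) the parametrization from \cite{Li_Notes_on_affine_canonical_and_monomial_bases} of those simple perverse sheaves as $\IC(X(\bfc,|\lz|),\cL_\lz)$ with $(\bfc,t_\lz)\in\cG^a$. The paper simply cites Proposition~5.3 of \cite{Li_Notes_on_affine_canonical_and_monomial_bases}, while you additionally unpack the standard ingredients behind Lusztig's geometric--algebraic comparison (Verdier duality giving bar-invariance, parity giving almost orthonormality, and decomposition-theorem positivity pinning the sign of each class so that one lands in $\bfB$ rather than $-\bfB$); that expansion is accurate and consistent with what the cited reference establishes.
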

\begin{proof}
See Proposition 5.3 in \cite{Li_Notes_on_affine_canonical_and_monomial_bases}.
\end{proof}

So the image of equation (\ref{equation: monomial and canonical in sheaves}) in the Grothendieck group $K(\cQ)$ is
\begin{equation}\label{equation: monomial and canonical in bff}
\fkm^{\oz(\bfc,t_\lz)}=\sum_{\mu\geq\lz}K_{\lz\mu}B(\bfc,t_\mu)+\sum_{(\bfc',t_{\lz'})\in\cG^a\atop\overline{X(\bfc',|\lz'|)}\subset\overline{X(\bfc,|\lz|)}\setminus X(\bfc,|\lz|)}\az^{(\bfc',t_{\lz'})}_{(\bfc,t_\lz)}B(\bfc',t_{\lz'}).
\end{equation}

\subsection{Trace map}\label{Section: trace map}
For a perverse sheaf $L$ in $\cP_V$, the specialization $L\mapsto\Phi_q([L])$ is the trace map introduced in \cite{Kiehl_Weissauer_Weil_conjectures_perverse_sheaves_and_l'adic_Fourier_transform}.
Also in \cite{Lusztig_Canonical_bases_and_Hall_algebras}, Lusztig gives a way to compute $\Phi_q([L])\in\cH^*_q$ as follows.

Let the ground field $k=\overline{\bbF}_q$. An $\bbF_q$-linear isomorphism $\fkf:V\ra V$ on a $k$-vector space $V$ is called a Frobenius map if it satisfies the following conditions.
\begin{enumerate}
\item[(1)] $\fkf(yv)=y^q \fkf(v)$, $v\in V$, $y\in k$.
\item[(2)] For any $v\in V$, $\fkf^t(v)=v$ for some $t\geq 1$.
\end{enumerate}
Given a Frobenius map $\fkf:V\ra V$, $V^{\fkf}=\{v\in V|\fkf(v)=v\}$ is naturally a $\bbF_q$-vector space.

Now let $V=\oplus_{i\in I}V_i$ be an $I$-graded $k$-vector space and $\fkf:V\ra V$ be a Frobenius map such that $\fkf(V_i)=V_i$.
Then there is a naturally induced Frobenius map $\fkf:\bbE_V\ra \bbE_V$ such that $\fkf(x)_h(\fkf(v))=\fkf(x_h(v))$ for any $x\in\bbE_V, h\in H$ and $v\in V_{s(h)}$.

For a perverse sheaf $L$ in $\cP_V$, $\fkf:\bbE_V\ra \bbE_V$ induces a canonical isomorphism $\fkf^*L\cong L$.
This induces a linear map $H^j(L)|_x=H^j(L)|_{\fkf(x)}=H^j(\fkf^*L)|_x\cong H^j(L)|_x$ for $x\in\bbE_V^{\fkf}$.
Taking the trace of this linear map times $(-1)^j$ and summing over $j$, we obtain an element $\chi_{L,q}(x)\in\overline{\bbQ}_l$.
We thus have a function $\chi_{L,q}:\bbE_V^{\fkf}\ra\overline{\bbQ}_l$.

Note that for any $x\in\bbE_V^{\fkf}$, $(V^{\fkf},x)$ is a representation of $Q$ over $\bbF_q$, $\chi_{L,q}$ can be regarded as an element in $\cH^*_q$ by identifying the characteristic function $\chi_{[M]}$ of the orbit of $M$ with $\lan M\ran$ in $\cH^*_q$. Under the field isomorphism $\overline{\bbQ}_l\cong\bbC$, Lusztig showed that $\chi_{L,q}=\Phi_q([L])$.

\section{Coincidence of the canonical basis and bar-invariant basis}\label{Chapter: coincidence of two bases}
\subsection{}
In this section, we always identify $K(\cQ)$, $\bff_\cZ$, $\bfU^+_\cZ$ and $\cC^*_\cZ$.

\begin{lemma}
For $(\bfc,t_\lz)\in\cG^a$, we have
\begin{equation}\label{Equation: B in N}
B(\bfc,t_\lz)=N(\bfc,t_\lz)+\sum_{(\bfc',t_{\lz'})\in\cG\atop(\bfc',t_{\lz'})\prec(\bfc,t_\lz)}a^{(\bfc',t_{\lz'})}_{(\bfc,t_\lz)}(v)N(\bfc',t_{\lz'})
\end{equation}
with $a^{(\bfc',t_{\lz'})}_{(\bfc,t_\lz)}\in v^{-1}\bbZ[v^{-1}]$.
\end{lemma}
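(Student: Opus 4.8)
The plan is to derive \eqref{Equation: B in N} in two stages: first obtain triangularity with respect to $\preceq$ with coefficients merely in $\cZ$, then sharpen those coefficients to lie in $v^{-1}\bbZ[v^{-1}]$. For the first stage I would start from the geometric identity \eqref{equation: monomial and canonical in bff}, which writes $\fkm^{\oz(\bfc,t_\lz)}$ as $\sum_{\mu\geq\lz}K_{\lz\mu}B(\bfc,t_\mu)$ plus a combination of $B(\bfc',t_{\lz'})$ with $\overline{X(\bfc',|\lz'|)}\subset\overline{X(\bfc,|\lz|)}\setminus X(\bfc,|\lz|)$. Every index occurring there is $\preceq(\bfc,t_\lz)$: for the Kostka terms this is the last clause of Definition~\ref{order} (with equality only at $\mu=\lz$, where $K_{\lz\lz}=1$), while for the remaining terms one has $X(\bfc',|\lz'|)\subsetneq\overline{X(\bfc,|\lz|)}$, so $(\bfc',|\lz'|)<(\bfc,|\lz|)$ in the geometric order and hence $(\bfc',t_{\lz'})\prec(\bfc,t_\lz)$ by the corollary that $\preceq$ refines $\leq$. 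Since $\cG^a_\nu$ is finite, the family $\{\fkm^{\oz(\bfc',t_{\lz'})}\}_{(\bfc',t_{\lz'})\preceq(\bfc,t_\lz)}$ is related to $\{B(\bfc',t_{\lz'})\}$ by a unitriangular $\cZ$-matrix, which I invert to get $B(\bfc,t_\lz)=\fkm^{\oz(\bfc,t_\lz)}+\sum_{(\bfc',t_{\lz'})\prec(\bfc,t_\lz)}(\cZ)\,\fkm^{\oz(\bfc',t_{\lz'})}$. Substituting the $N$-expansion of each monomial basis element furnished by the corollary in Section~\ref{Section: Monomial basis for quiver case} and using transitivity of $\prec$ then yields \eqref{Equation: B in N} with all $a^{(\bfc',t_{\lz'})}_{(\bfc,t_\lz)}\in\cZ$.

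The second stage is the heart of the matter. I would invoke the trace map of Section~\ref{Section: trace map}: for $q$ large, $\Phi_q\bigl(B(\bfc,t_\lz)\bigr)=\chi_{\IC(X(\bfc,|\lz|),\cL_\lz),q}$ in $\cH^*_q$, and I expand this in the basis $\{\langle M\rangle\}$. Over the open dense stratum $X(\bfc,|\lz|)$ the sheaf is $\cL_\lz$ placed in the extreme cohomological degree; matching $\cL_\lz$ with the Specht-module local system (Corollary~\ref{Character: Specht module}) identifies the part of $\Phi_q(B(\bfc,t_\lz))$ supported there with $N(\bfc,t_\lz)=\langle M(\bfc_-)\rangle*\langle M(\bfc_0)\rangle*S_\lz*\langle M(\bfc_+)\rangle$. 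Over the boundary $\overline{X(\bfc,|\lz|)}\setminus X(\bfc,|\lz|)=\bigcup_{(\bfc',m')\prec(\bfc,|\lz|)}\bigl(Y(\bfc',m')\cap\overline{X(\bfc,|\lz|)}\bigr)$, the support conditions of the intersection cohomology sheaf, together with the purity of its stalks and the Tate-twist normalization, force the coefficient of each $\langle M\rangle$ with $M$ in the boundary to lie in $v_q^{-1}\bbZ[v_q^{-1}]$. It then remains to transfer this bound from the $\langle M\rangle$-expansion to the $N(\bfc',t_{\lz'})$-expansion. Here I would use the vanishings $\Hom(\cR_{\rm h},\cP)=\Hom(\cR_{\rm h},\cR_{\rm{nh}})=\Hom(\cI,\cP\oplus\cR)=0$ and $\Ext^1(\cP\oplus\cR_{\rm{nh}},\cR_{\rm h})=\Ext^1(\cP\oplus\cR,\cI)=0$, so that Lemma~\ref{M^(m)} gives $N(\bfc',t_{\lz'})=\sum_{R'}B^{[R']}_{\lz'}\,v^{\dim R'-\dime\End R'}\langle M(\bfc'_-)\oplus M(\bfc'_0)\oplus R'\oplus M(\bfc'_+)\rangle$; consequently the change of basis between $\{\langle M\rangle : \cO_M\subset\overline{X(\bfc,|\lz|)}\}$ and $\{N(\bfc',t_{\lz'}) : (\bfc',t_{\lz'})\preceq(\bfc,t_\lz)\}$ is triangular with $1$'s on the diagonal, and its off-diagonal entries lie in $v_q^{-1}\bbZ[v_q^{-1}]$ because the expansion of $S_\mu$ in the $\langle R\rangle$-basis is $v^{-1}$-triangular (the Schur-function input behind Lemma~\ref{Lemma: almost orthog of S}). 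Letting $q$ vary then yields $a^{(\bfc',t_{\lz'})}_{(\bfc,t_\lz)}\in v^{-1}\bbZ[v^{-1}]$.

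The step I expect to be the main obstacle is precisely this last transfer: the estimate produced by the IC-sheaf support conditions is naturally a statement about coefficients in the $\langle M\rangle$-basis, whereas the assertion concerns the $N$-basis, which bundles together several $\langle M\rangle$'s carrying a priori unbounded powers of $v$, so one must check carefully that the triangular change of basis does not reintroduce non-negative powers. A purely algebraic alternative for the second stage is an induction along $\preceq$: pairing $B(\bfc,t_\lz)$ with each aperiodic $B(\bfc'',t_{\lz''})\prec(\bfc,t_\lz)$ and using the almost-orthonormality of the canonical basis $\bfB$ on one side against the almost-orthogonality of $\{N(\bfc,t_\lz)\}$ (Proposition~\ref{Proposition: almost orthog}) and the inductive hypothesis on the other side pins down the aperiodic coefficients, after which bar-invariance of $B(\bfc,t_\lz)$ (together with the bar-triangularity of the $N$-system, as established through the PBW basis in Section~\ref{Section: construction of C(c,lz)}) constrains the remaining, periodic, coefficients. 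In either presentation the content of stage two is exactly Lusztig's characterization of the canonical basis transplanted to the extended composition algebra $\cH^0$, and that is where the real work lies.
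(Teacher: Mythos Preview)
Your Stage~1 is correct and is essentially the paper's argument reorganised: the paper also combines \eqref{equation: monomial and canonical in bff} with the monomial $N$-expansion and Kostka unitriangularity, though it first invokes the trace map together with Proposition~\ref{Lemma: geometric property of the order} to restrict the $N$-support of $B(\bfc,t_\lz)$ to indices with $(\bfc',|\lz'|)\preceq(\bfc,|\lz|)$ before comparing. Your route through inverting the monomial--to--$B$ matrix is a legitimate variant.

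Stage~2 is where you diverge from the paper, and both of your alternatives have gaps. In Alternative~A, the two families $\{\langle M\rangle:\cO_M\subset\overline{X(\bfc,|\lz|)}\}$ and $\{N(\bfc',t_{\lz'}):(\bfc',t_{\lz'})\preceq(\bfc,t_\lz)\}$ are not related by any square change of basis: for fixed $(\bfc',m')$ there are many isomorphism classes $M(\bfc')\oplus R'$ with $R'\in\cR_{\rm h}$ of dimension $m'\dz$, but only the partitions of $m'$ index the $N$'s, so your ``triangular with $1$'s on the diagonal'' claim does not even type-check. In Alternative~B, the inductive pairing $(B(\bfc,t_\lz),B(\bfc'',t_{\lz''}))$ expands through $(B(\bfc,t_\lz),N_{y})$ for all $y$, each of which still contains the uncontrolled $a^{(\bfc',t_{\lz'})}_{(\bfc,t_\lz)}$ via the off-diagonal $(N_{y'},N_y)$, so the recursion does not close as written; and the proposed use of bar-invariance for the periodic coefficients is ill-posed, since the bar involution is defined on $\cC^*\cong\bff$, not on $\cH^0$, so $\overline{N(\bfc',t_{\lz'})}$ has no meaning when $(\bfc',t_{\lz'})\in\cG\setminus\cG^a$.

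The paper's Stage~2 is a single positivity step and avoids all of this. Once Stage~1 gives $B(\bfc,t_\lz)=N(\bfc,t_\lz)+\sum_{(\bfc',t_{\lz'})\prec(\bfc,t_\lz)}a_{(\bfc',t_{\lz'})}N(\bfc',t_{\lz'})$ with $a_{(\bfc',t_{\lz'})}\in\cZ$, compute $(B(\bfc,t_\lz),B(\bfc,t_\lz))$. By almost-orthonormality of $\bfB$ this lies in $1+v^{-1}\bbQ[[v^{-1}]]$; by Proposition~\ref{Proposition: almost orthog} it equals $1+\sum_{(\bfc',t_{\lz'})}a_{(\bfc',t_{\lz'})}^2$ modulo terms of strictly lower $v$-degree. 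If $d=\max_{(\bfc',t_{\lz'})}\deg_v a_{(\bfc',t_{\lz'})}\geq 0$, the coefficient of $v^{2d}$ in $\sum a^2$ is a strictly positive integer, contradicting membership in $1+v^{-1}\bbQ[[v^{-1}]]$. Hence every $a_{(\bfc',t_{\lz'})}\in v^{-1}\bbZ[v^{-1}]$, periodic and aperiodic alike.
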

\begin{proof}
By Section \ref{Section: trace map}, when specialized to $k=\bbF_q$, $\Phi_q(B(\bfc,t_\lz))$ is always a $\bbQ(v_q)$-sum of some $N(\bfc',t_{\lz'})$ with $Y(\bfc',|\lz'|)\cap\overline{X(\bfc,|\lz|)}\neq\emptyset$. Therefore, by Proposition \ref{Lemma: geometric property of the order}, we have $(\bfc',|\lz'|)\preceq(\bfc,|\lz|)$. That is, for any $q$, the coefficient of $N(\bfc',t_{\lz'})$ with $(\bfc',|\lz'|)\npreceq(\bfc,|\lz|)$ is always zero.
So as an element in $\cH^0_\cZ$, $B(\bfc,t_\lz)$ is a $\cZ$-sum of some $N(\bfc',t_{\lz'})$ with $(\bfc',|\lz'|)\prec(\bfc,|\lz|)$.

Therefore by equation (\ref{equation: monomial and canonical in bff}),
$$\fkm^{\oz(\bfc,t_\lz)}-\sum_{\mu\geq\lz}K_{\lz\mu}B(\bfc,t_\mu)\in\cZ-{\rm span}\{N(\bfc',t_{\lz'})|(\bfc',t_{\lz'})\in\cG,(\bfc',|\lz'|)\prec(\bfc,|\lz|)\}.$$

Recall that
$$\fkm^{\oz(\bfc,t_\lz)}=\sum_{\mu\geq\lz} K_{\lz\mu} N(\bfc,t_\mu)+\sum_{(\bfc',t_{\lz'})\in\cG\atop(\bfc',|\lz'|)\prec(\bfc,|\lz|)}\phi_{(\bfc,t_\lz)}^{(\bfc',t_{\lz'})}(v)N(\bfc',t_{\lz'}).$$
So for all $(\bfc,t_\lz)\in\cG^a$, we have
$$B(\bfc,t_\lz)=N(\bfc,t_\lz)+\sum_{(\bfc',t_{\lz'})\in\cG\atop(\bfc',t_{\lz'})\prec(\bfc,t_\lz)}a^{(\bfc',t_{\lz'})}_{(\bfc,t_\lz)}(v)N(\bfc',t_{\lz'}),$$
with $a^{(\bfc',t_{\lz'})}_{(\bfc,t_\lz)}\in\cZ$.

By the almost orthogonality of $\{B(\bfc,t_\lz)|(\bfc,t_\lz)\in\cG^a\}$ and $\{N(\bfc,t_\lz)|(\bfc,t_\lz)\in\cG\}$ (see Section \ref{Section: Almost orthogonality of N}), we have $a^{(\bfc',t_{\lz'})}_{(\bfc,t_\lz)}\in v^{-1}\bbZ[v^{-1}]$.

\end{proof}

\begin{corollary}\label{Corollary: E in B}
For $(\bfc,t_\lz)\in\cG^a$, we have
$$E(\bfc,t_\lz)=B(\bfc,t_\lz)+\sum_{(\bfc',t_{\lz'})\in\cG^a\atop(\bfc',t_{\lz'})\prec(\bfc,t_\lz)}\ta_{(\bfc,t_\lz)}^{(\bfc',t_{\lz'})}(v)B(\bfc',t_{\lz'}),$$ with $\ta_{(\bfc,t_\lz)}^{(\bfc',t_{\lz'})}\in v^{-1}\bbZ[v^{-1}]$.
\end{corollary}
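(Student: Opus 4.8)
The plan is to derive Corollary~\ref{Corollary: E in B} directly from the previous lemma by comparing the two triangular expansions that are already available to us: the expansion of $E(\bfc,t_\lz)$ in the basis $\{N(\bfc',t_{\lz'})\}$ from Lemma~\ref{E in N}, and the expansion of $B(\bfc,t_\lz)$ in the same basis from equation~\eqref{Equation: B in N}. Both are unitriangular with respect to the order $\preceq$ on $\cG^a_\nu$ (after restricting attention to the aperiodic indices in the $B$-expansion, since by Lemma~\ref{periodic not in C} any element of $\cC^*_\cZ$, in particular $B(\bfc,t_\lz)$ and $E(\bfc,t_\lz)$, has its $N$-coordinates supported only on $\cG^a$ once we pass to the quotient by $\mathbf{P}$). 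So the substantive content is purely formal inversion of a unitriangular matrix over $\cZ$, together with tracking where the coefficients land.

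First I would record that $\{E(\bfc,t_\lz)\}$ and $\{B(\bfc,t_\lz)\}$ are both $\cZ$-bases of $\cC^*_\cZ$: for $E$ this is the proposition in Section~9.7, and for $B$ it is the identification $\bfB=\bfB_{IC}$ together with $\bfB$ being a $\cZ$-basis of $\bff_\cZ\cong\cC^*_\cZ$. Next, since both bases are indexed by $\cG^a_\nu$ and both expand unitriangularly into the (larger, but after projecting modulo $\mathbf{P}$ effectively $\cG^a$-indexed) collection $\{N(\bfc',t_{\lz'})\}$ with respect to $\preceq$, the change-of-basis matrix from $\{E\}$ to $\{B\}$ is itself unitriangular over $\cZ$ with respect to $\preceq$. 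Concretely, writing $E(\bfc,t_\lz)=N(\bfc,t_\lz)+\sum_{(\bfc',t_{\lz'})\prec(\bfc,t_\lz)}b^{(\bfc',t_{\lz'})}_{(\bfc,t_\lz)}N(\bfc',t_{\lz'})$ and inverting \eqref{Equation: B in N} to get $N(\bfc,t_\lz)=B(\bfc,t_\lz)+\sum_{(\bfc',t_{\lz'})\prec(\bfc,t_\lz)}(\cdot)B(\bfc',t_{\lz'})$ with coefficients in $v^{-1}\bbZ[v^{-1}]$, substitution yields
$$E(\bfc,t_\lz)=B(\bfc,t_\lz)+\sum_{(\bfc',t_{\lz'})\in\cG^a\atop(\bfc',t_{\lz'})\prec(\bfc,t_\lz)}\ta^{(\bfc',t_{\lz'})}_{(\bfc,t_\lz)}(v)B(\bfc',t_{\lz'}).$$
The key point for the coefficient bound is that the inverse of a unitriangular matrix with strictly-below-diagonal entries in $v^{-1}\bbZ[v^{-1}]$ again has strictly-below-diagonal entries in $v^{-1}\bbZ[v^{-1}]$ (each such entry is a finite $\bbZ$-linear combination of products of the original entries, hence lies in $v^{-1}\bbZ[v^{-1}]$), and that the off-diagonal entries $b^{(\bfc',t_{\lz'})}_{(\bfc,t_\lz)}$ of the $E$-to-$N$ matrix lie in $\cZ$; the product of something in $\cZ$ with something in $v^{-1}\bbZ[v^{-1}]$, after re-expanding, still produces only $v^{-1}\bbZ[v^{-1}]$-coefficients once one sums over all intermediate indices—but care is needed: a priori $b^{(\bfc',t_{\lz'})}_{(\bfc,t_\lz)}$ is only in $\cZ$, not in $v^{-1}\bbZ[v^{-1}]$.

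The main obstacle, therefore, is precisely pinning down that the resulting $\ta^{(\bfc',t_{\lz'})}_{(\bfc,t_\lz)}$ lie in $v^{-1}\bbZ[v^{-1}]$ and not merely in $\cZ$. I expect this to follow from the almost-orthogonality inputs already assembled—Proposition~\ref{Proposition: almost orthog} for the $N$'s, condition (2) of Proposition~\ref{proposition: E satisfy three conditions} for the $E$'s, and the corresponding almost-orthogonality of the canonical basis $\bfB$—rather than from bare triangularity. The cleanest route: from $E(\bfc,t_\lz)=\sum_{(\bfc',t_{\lz'})\preceq(\bfc,t_\lz)}\ta^{(\bfc',t_{\lz'})}_{(\bfc,t_\lz)}B(\bfc',t_{\lz'})$ with $\ta^{(\bfc,t_\lz)}_{(\bfc,t_\lz)}=1$ and all $\ta\in\cZ$, pair both sides against $B(\bfc',t_{\lz'})$ for $(\bfc',t_{\lz'})\prec(\bfc,t_\lz)$; using $(B(\bfc'',t_{\lz''}),B(\bfc',t_{\lz'}))\in\dz+v^{-1}\bbQ[[v^{-1}]]$ and $(E(\bfc,t_\lz),B(\bfc',t_{\lz'}))\in v^{-1}\bbQ[[v^{-1}]]$ (the latter because $E$ expands into $N$'s strictly below $(\bfc,t_\lz)$ plus the leading $N(\bfc,t_\lz)$ whose pairing with $B(\bfc',t_{\lz'})$ is itself in $v^{-1}\bbQ[[v^{-1}]]$ by the $B$-to-$N$ expansion), one solves for $\ta^{(\bfc',t_{\lz'})}_{(\bfc,t_\lz)}$ recursively downward in $\preceq$ and concludes each lies in $\cZ\cap v^{-1}\bbQ[[v^{-1}]]=v^{-1}\bbZ[v^{-1}]$. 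This recursive pairing argument is the standard Lusztig-type extraction and I would write it out carefully; everything else is bookkeeping with the already-established unitriangularity.
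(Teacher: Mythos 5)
Your proposed argument runs into a circularity that is not merely a matter of bookkeeping. The obstacle you flag — that the off-diagonal coefficients $b_{(\bfc,t_\lz)}^{(\bfc',t_{\lz'})}$ in $E=N+\sum bN$ are only known to lie in $\cZ$, not in $v^{-1}\bbZ[v^{-1}]$ — is real, but the route you propose to get around it does not escape it. To conclude $(E(\bfc,t_\lz),B(\bfc',t_{\lz'}))\in v^{-1}\bbQ[[v^{-1}]]$ you must control all the terms $b_{(\bfc,t_\lz)}^{(\bfc'',t_{\lz''})}\cdot(N(\bfc'',t_{\lz''}),B(\bfc',t_{\lz'}))$; each pairing $(N,B)$ does lie in $v^{-1}\bbQ[[v^{-1}]]$, but $b\in\cZ$ can contain arbitrarily large positive powers of $v$, so the product need not lie in $v^{-1}\bbQ[[v^{-1}]]$. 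The containment $b\in v^{-1}\bbZ[v^{-1}]$, and likewise the almost-orthogonality of $\{E(\bfc,t_\lz)\}$ (condition (2) of Proposition~\ref{proposition: E satisfy three conditions}), are \emph{consequences} of Corollary~\ref{Corollary: E in B} — the paper derives both of them from the corollary at the very end of Section~11. Invoking either one in the proof of the corollary is circular.

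The paper sidesteps this by never trying to bound $(E,B)$ directly. Instead it runs the inductive cancellation of Definition~\ref{def of E} on the $B$-side rather than on the monomial side: starting from equation~\eqref{Equation: B in N}, which already has coefficients $a\in v^{-1}\bbZ[v^{-1}]$, one defines $\tilde E(\bfc,t_\lz)$ by recursively subtracting the $a^{(\bfc',t_{\lz'})}\tilde E(\bfc',t_{\lz'})$ for aperiodic $(\bfc',t_{\lz'})\prec(\bfc,t_\lz)$. Because $v^{-1}\bbZ[v^{-1}]$ is closed under addition and multiplication, the $\tilde E$-to-$B$ transition produced this way automatically has off-diagonal entries in $v^{-1}\bbZ[v^{-1}]$, and by construction $\tilde E$ has $N$-expansion with off-diagonal support in $\cG\setminus\cG^a$. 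Hence $E-\tilde E\in\mathbf{P}\cap\cC^*$, which is $\{0\}$ by Lemma~\ref{periodic not in C}, giving $E=\tilde E$. Only at that point, comparing the $N$-expansions of $E$ and $\tilde E$, does one conclude $b\in v^{-1}\bbZ[v^{-1}]$; the almost-orthogonality of $\{E\}$ follows afterwards. If you want to repair your proof you should adopt this construction of $\tilde E$ rather than the pairing argument.
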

\begin{proof}
By doing the same operation in Definition \ref{def of E} on equations (\ref{Equation: B in N}) for various $(\bfc,t_\lz)\in\cG^a$, we can inductively get

$$
\tilde{E}(\bfc,t_\lz)=B(\bfc,t_\lz)+\sum_{(\bfc',t_{\lz'})\in\cG^a\atop(\bfc',t_{\lz'})\prec(\bfc,t_\lz)}\ta_{(\bfc,t_\lz)}^{(\bfc',t_{\lz'})}(v)B(\bfc',t_{\lz'})
$$
for some $\ta_{(\bfc,t_\lz)}^{(\bfc',t_{\lz'})}\in v^{-1}\bbZ[v^{-1}]$, such that $\{\tilde{E}(\bfc,t_\lz)|(\bfc,t_\lz)\in\cG^a\}$ is a $Q(v)$-basis of $\cC^*$, and
\begin{equation}\label{Equation: operation on B in N}
\tilde{E}(\bfc,t_\lz)=N(\bfc,t_\lz)+\sum_{(\bfc',t_{\lz'})\in\cG\setminus\cG^a\atop(\bfc',t_{\lz'})\prec(\bfc,t_\lz)}f_{(\bfc,t_\lz)}^{(\bfc',t_{\lz'})}(v)N(\bfc',t_{\lz'}),
\end{equation}
for some $f_{(\bfc,t_\lz)}^{(\bfc',t_{\lz'})}\in v^{-1}\bbZ[v^{-1}]$.

 Thus
$$
E(\bfc,t_\lz)-\tilde{E}(\bfc,t_\lz)\in\mathbf{P}\cap\cC^*.
$$
By Lemma \ref{periodic not in C}, $$E(\bfc,t_\lz)=\tilde{E}(\bfc,t_\lz),$$
which proves the corollary.
\end{proof}

Now we give the main theorem of this paper, showing that the bar-invariant basis $\{C(\bfc,t_\lz)|(\bfc,t_\lz)\in\cG^a\}$ is the canonical basis $\bfB=\{B(\bfc,t_\lz)|(\bfc,t_\lz)\in\cG^a\}$ of $\bff$, and the bijection is clear.

\begin{theorem}
For all $(\bfc,t_\lz)\in\cG^a$, we have $C(\bfc,t_\lz)=B(\bfc,t_\lz)$.
\end{theorem}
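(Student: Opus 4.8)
The plan is to deduce the identity from two unitriangular relations that are already in hand, reducing everything to the uniqueness part of Lusztig's elementary linear-algebra lemma. First I would recall that, by the construction in Section \ref{Section: construction of C(c,lz)}, the element $C(\bfc,t_\lz)$ is the unique element of $\cC^*_\cZ$ of the form
\[
C(\bfc,t_\lz)=E(\bfc,t_\lz)+\sum_{(\bfc',t_{\lz'})\in\cG^a,\,(\bfc',t_{\lz'})\prec(\bfc,t_\lz)}g^{(\bfc',t_{\lz'})}_{(\bfc,t_\lz)}(v)\,E(\bfc',t_{\lz'}),\qquad g^{(\bfc',t_{\lz'})}_{(\bfc,t_\lz)}\in v^{-1}\bbZ[v^{-1}],
\]
with $\overline{C(\bfc,t_\lz)}=C(\bfc,t_\lz)$, while Corollary \ref{Corollary: E in B} gives
\[
E(\bfc,t_\lz)=B(\bfc,t_\lz)+\sum_{(\bfc',t_{\lz'})\in\cG^a,\,(\bfc',t_{\lz'})\prec(\bfc,t_\lz)}\ta_{(\bfc,t_\lz)}^{(\bfc',t_{\lz'})}(v)\,B(\bfc',t_{\lz'}),\qquad \ta_{(\bfc,t_\lz)}^{(\bfc',t_{\lz'})}\in v^{-1}\bbZ[v^{-1}].
\]

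Next I would invert the second relation inside each finite poset $\cG^a_\nu$: the transition matrix from $\{B(\bfc,t_\lz)\}$ to $\{E(\bfc,t_\lz)\}$ is unitriangular with off-diagonal entries in $v^{-1}\bbZ[v^{-1}]$, and since $v^{-1}\bbZ[v^{-1}]$ is closed under addition and multiplication and its strictly triangular part is nilpotent, the inverse matrix has the same shape, so that $B(\bfc,t_\lz)=E(\bfc,t_\lz)+\sum_{(\bfc',t_{\lz'})\prec(\bfc,t_\lz)}b_{(\bfc,t_\lz)}^{(\bfc',t_{\lz'})}(v)E(\bfc',t_{\lz'})$ with all $b_{(\bfc,t_\lz)}^{(\bfc',t_{\lz'})}\in v^{-1}\bbZ[v^{-1}]$. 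Under the identifications $\bff_\cZ\cong K(\cQ)\cong\cC^*_\cZ$ the set $\{B(\bfc,t_\lz)\}=\bfB=\bfB_{IC}$ is the canonical basis of $\bff$, so each $B(\bfc,t_\lz)$ is bar-invariant. Hence $\{B(\bfc,t_\lz)\}$ satisfies exactly the two conditions — bar-invariance, and a $v^{-1}\bbZ[v^{-1}]$-unitriangular expansion in the PBW basis $\{E(\bfc,t_\lz)\}$ with respect to $\preceq$ — that characterize $\{C(\bfc,t_\lz)\}$ uniquely by Section 7.10 of \cite{Lusztig_Canonical_bases_arising_from_quantized_enveloping_algebra}. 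Therefore $C(\bfc,t_\lz)=B(\bfc,t_\lz)$.

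Equivalently, and perhaps more transparently, I would compose the two displayed relations to obtain $C(\bfc,t_\lz)=B(\bfc,t_\lz)+\sum_{(\bfc',t_{\lz'})\prec(\bfc,t_\lz)}h^{(\bfc',t_{\lz'})}_{(\bfc,t_\lz)}(v)B(\bfc',t_{\lz'})$ with $h^{(\bfc',t_{\lz'})}_{(\bfc,t_\lz)}\in v^{-1}\bbZ[v^{-1}]$, then apply the bar involution using $\overline{C(\bfc,t_\lz)}=C(\bfc,t_\lz)$ and $\overline{B(\bfc',t_{\lz'})}=B(\bfc',t_{\lz'})$, and compare coefficients against the $\bbQ(v)$-basis $\{B(\bfc,t_\lz)\}$: each $h^{(\bfc',t_{\lz'})}_{(\bfc,t_\lz)}$ must then be bar-invariant, hence lies in $v^{-1}\bbZ[v^{-1}]\cap v\bbZ[v]=\{0\}$ whenever $(\bfc',t_{\lz'})\prec(\bfc,t_\lz)$, which again forces $C(\bfc,t_\lz)=B(\bfc,t_\lz)$.

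As for where the work lies: essentially all the substantive content has already been carried out in the earlier sections — the geometric property of the order (Proposition \ref{Lemma: geometric property of the order}), the almost orthogonality of $\{N(\bfc,t_\lz)\}$ (Proposition \ref{Proposition: almost orthog}), and above all Corollary \ref{Corollary: E in B}, which expresses the PBW basis through the IC-sheaf basis with coefficients in $v^{-1}\bbZ[v^{-1}]$. The present statement is then a purely formal consequence, and the only points needing care are that the two triangular systems are compatible (both are set up with respect to the same partial order $\preceq$ on $\cG^a$, so this is automatic) and the elementary fact that inverting a unitriangular matrix over $\cZ$ preserves membership of the off-diagonal entries in $v^{-1}\bbZ[v^{-1}]$.
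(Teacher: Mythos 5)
Your proposal is correct, and its second formulation (composing the two $v^{-1}\bbZ[v^{-1}]$-unitriangular relations $C\leadsto E$ and $E\leadsto B$, then applying the bar involution to force the off-diagonal coefficients into $v^{-1}\bbZ[v^{-1}]\cap v\bbZ[v]=\{0\}$) is precisely the paper's argument. The first formulation via Lusztig's uniqueness lemma is only a cosmetic repackaging of the same linear algebra, so the two coincide in substance.
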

\begin{proof}
By Section \ref{Section: construction of C(c,lz)} and Corollary \ref{Corollary: E in B}, we already have
$$C(\bfc,t_\lz)=B(\bfc,t_\lz)+\sum_{(\bfc',t_{\lz'})\in\cG^a\atop(\bfc',t_{\lz'})\prec(\bfc,t_\lz)}\tilde{b}_{(\bfc,t_\lz)}^{(\bfc',t_\lz')}(v)B(\bfc',t_{\lz'}),$$
for some $\tilde{b}_{(\bfc,t_\lz)}^{(\bfc',t_{\lz'})}\in v^{-1}\bbZ[v^{-1}]$.
Applying bar-involution to both sides, all $\tilde{b}_{(\bfc,t_\lz)}^{(\bfc',t_{\lz'})}=0$ and $B(\bfc,t_\lz)=C(\bfc,t_\lz)$ since all $B(\bfc,t_\lz)$ and $C(\bfc,t_\lz)$ are bar-invariant.
\end{proof}

Note that we not only prove that $$\{C(\bfc,t_\lz)|(\bfc,t_\lz)\in\cG^a\}=\{B(\bfc,t_\lz)|(\bfc,t_\lz)\in\cG^a\}$$ as sets, but also show that $$C(\bfc,t_\lz)=B(\bfc,t_\lz)$$ for any $(\bfc,t_\lz)\in\cG^a$.

By the almost orthogonality of $\{B(\bfc,t_\lz)|(\bfc,t_\lz)\in\cG^a\}$, the condition (2) of Theorem \ref{Theorem: C satisfy three conditions} follows.

Recall that in Lemma \ref{E in N}, for $(\bfc,t_\lz)\in\cG^a$, we have
$$E(\bfc,t_\lz)=N(\bfc,t_\lz)+\sum_{(\bfc',t_{\lz'})\in\cG_\nu\setminus\cG^a_\nu\atop (\bfc',t_{\lz'})\prec(\bfc,t_\lz)}b_{(\bfc,t_\lz)}^{(\bfc',t_{\lz'})}(v)N(\bfc',t_{\lz'}),$$
with $b_{(\bfc,t_\lz)}^{(\bfc',t_{\lz'})}\in\cZ$.
By Corollary \ref{Corollary: E in B}, $f_{(\bfc,t_\lz)}^{(\bfc',t_{\lz'})}=b_{(\bfc,t_\lz)}^{(\bfc',t_{\lz'})}$, so $b_{(\bfc,t_\lz)}^{(\bfc',t_{\lz'})}\in v^{-1}\bbZ[v^{-1}]$.
By the almost orthogonality of $\{N(\bfc,t_\lz)|(\bfc,t_\lz)\in\cG\}$, the PBW basis $\{E(\bfc,t_\lz)|(\bfc,t_\lz)\in\cG^a\}$ is almost orthogonal.
This completes the proof of Proposition \ref{proposition: E satisfy three conditions}.

\section{Another algorithm to compute the canonical basis}

In this section, we will give an easier algebraic algorithm to compute $C(\bfc,t_\lz)$, which relies on the proof of Theorem \ref{Theorem: C satisfy three conditions}.

Back to the construction of the monomial basis, we have for $(\bfc,t_\lz)\in\cG^a$, 
$$\fkm^{\oz(\bfc,t_\lz)}=N(\bfc,t_\lz)+\sum_{(\bfc',t_{\lz'})\in\cG\atop(\bfc',t_{\lz'})\prec(\bfc,t_\lz)}\phi_{(\bfc,t_\lz)}^{(\bfc',t_{\lz'})}(v)N(\bfc',t_{\lz'}),$$

For a polynomial $\phi(v)=\sum_{i\in\bbZ}\phi_i v^i\in\cZ$, let ${^+\phi}(v)=\phi_0+\sum_{i>0}\phi_i(v^i+v^{-i})$, then $\overline{^+\phi}={^+\phi}$ and $\phi-{^+\phi}\in v^{-1}\bbZ[v^{-1}]$.

Now fix $(\bfc,t_\lz)\in\cG^a_\nu$. For a maximal $(\bfc',t_{\lz'})\in\{(\bfc'',t_{\lz''})\in\cG^a_\nu|(\bfc'',t_{\lz''})\prec(\bfc,t_\lz)\}$, we have
\begin{eqnarray}
&&\fkm^{\oz(\bfc,t_\lz)}-{^+\phi}_{(\bfc,t_\lz)}^{(\bfc',t_{\lz'})}(v)\fkm^{\oz(\bfc',t_{\lz'})}\notag\\
&=&N(\bfc,t_\lz)+(\phi_{(\bfc,t_\lz)}^{(\bfc',t_{\lz'})}-{^+\phi}_{(\bfc,t_\lz)}^{(\bfc',t_{\lz'})})(v)N(\bfc',t_{\lz'})\notag\\
&&+\sum_{(\bfc'',t_{\lz''})\in\cG\atop(\bfc'',t_{\lz''})\prec(\bfc,t_\lz),(\bfc'',t_{\lz''})\neq(\bfc',t_{\lz'})}\varphi_{(\bfc,t_\lz)}^{(\bfc'',t_{\lz''})}(v)N(\bfc'',t_{\lz''}),\notag
\end{eqnarray}
where the left side is bar-invariant and the coefficient of $N(\bfc',t_{\lz'})$ in the right side is in $v^{-1}\bbZ[v^{-1}]$.

Whenever the right side has a term as the form $\tilde{\varphi}(v)N(\bfc',t_{\lz'})$ for $(\bfc',t_{\lz'})\in\cG^a$ with $(\bfc',t_{\lz'})\prec (\bfc,t_\lz)$ such that $\tilde{\varphi}(v)\not\in v^{-1}\bbZ[v^{-1}]$, let $(\bfc',t_{\lz'})$ be maximal with this property, we minus $^+\tilde{\varphi}(v)\fkm^{\oz(\bfc',t_{\lz'})}$ for both sides.
Continue this process until all coefficients of $N(\bfc',t_{\lz'}),(\bfc',t_{\lz'})\in\cG^a,(\bfc',t_{\lz'})\prec(\bfc,t_\lz)$ in the right side are in $v^{-1}\bbZ[v^{-1}]$.

In this way, we can get
\begin{align}\label{G(c,lz)}
&\fkm^{\oz(\bfc,t_\lz)}+\sum_{(\bfc',t_{\lz'})\in\cG^a\atop(\bfc',t_{\lz'})\prec(\bfc,t_\lz)}r_{(\bfc,t_\lz)}^{(\bfc',t_{\lz'})}(v)\fkm^{\oz(\bfc',t_{\lz'})}\notag\\
&=N(\bfc,t_\lz)+\sum_{(\bfc',t_{\lz'})\in\cG^a\atop(\bfc',t_{\lz'})\prec(\bfc,t_\lz)}s_{(\bfc,t_\lz)}^{(\bfc',t_{\lz'})}(v)N(\bfc',t_{\lz'})+\sum_{(\bfc',t_{\lz'})\in\cG\setminus\cG^a\atop(\bfc',t_{\lz'})\prec(\bfc,t_\lz)}\tilde{s}_{(\bfc,t_\lz)}^{(\bfc',t_{\lz'})}(v)N(\bfc',t_{\lz'})\notag
\end{align}
where all $r_{(\bfc,t_\lz)}^{(\bfc',t_{\lz'})}\in\cZ$ are bar-invariant and all $s_{(\bfc,t_\lz)}^{(\bfc',t_{\lz'})}$ are in $v^{-1}\bbZ[v^{-1}]$, while $\tilde{s}_{(\bfc,t_\lz)}^{(\bfc',t_{\lz'})}\in\cZ$ is not necessarily in $v^{-1}\bbZ[v^{-1}]$ for now (although later we can prove  that $\tilde{s}_{(\bfc,t_\lz)}^{(\bfc',t_{\lz'})}\in v^{-1}\bbZ[v^{-1}]$ by Proposition \ref{G=C} and almost-orthogonality).

Denote this element by
$$G(\bfc,t_\lz)=\fkm^{\oz(\bfc,t_\lz)}+\sum_{(\bfc',t_{\lz'})\in\cG^a\atop(\bfc',t_{\lz'})\prec(\bfc,t_\lz)}r_{(\bfc,t_\lz)}^{(\bfc',t_{\lz'})}(v)\fkm^{\oz(\bfc,t_\lz)}.$$

\begin{proposition}\label{G=C}
For all $(\bfc,t_\lz)\in\cG^a$, we have $G(\bfc,t_\lz)=C(\bfc,t_\lz)$.
\end{proposition}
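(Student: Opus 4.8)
The plan is to show that $G(\bfc,t_\lz)$ satisfies the two defining properties of $C(\bfc,t_\lz)$ in the sense of Lusztig's lemma (Section 7.10 of \cite{Lusztig_Canonical_bases_arising_from_quantized_enveloping_algebra}): namely that $G(\bfc,t_\lz)$ is bar-invariant, and that when expanded in the PBW basis $\{E(\bfc',t_{\lz'})\}$ the coefficient of $E(\bfc,t_\lz)$ is $1$ and all other coefficients lie in $v^{-1}\bbZ[v^{-1}]$. Since $C(\bfc,t_\lz)$ is characterized uniquely by these two properties, this forces $G(\bfc,t_\lz)=C(\bfc,t_\lz)$.

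\textbf{Step 1 (bar-invariance).} By construction, $G(\bfc,t_\lz)$ is an explicit $\cZ$-linear combination of monomials $\fkm^{\oz(\bfc',t_{\lz'})}$ with coefficients $r_{(\bfc,t_\lz)}^{(\bfc',t_{\lz'})}$ that are bar-invariant (the notation $^+\phi$ was set up precisely so that $\overline{^+\phi}={^+\phi}$). Each monomial $\fkm^{\oz(\bfc',t_{\lz'})}$ is itself bar-invariant, since it is a product of divided powers $u_i^{(a)}$ and each such element is fixed by the bar involution. Hence $\overline{G(\bfc,t_\lz)}=G(\bfc,t_\lz)$; I should remark that the process terminates because at each stage we strictly decrease in the finite poset $(\cG^a_\nu,\preceq)$.

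\textbf{Step 2 (triangularity in the PBW basis).} I would rewrite the final displayed expansion
$$G(\bfc,t_\lz)=N(\bfc,t_\lz)+\sum_{(\bfc',t_{\lz'})\in\cG^a,\,(\bfc',t_{\lz'})\prec(\bfc,t_\lz)}s_{(\bfc,t_\lz)}^{(\bfc',t_{\lz'})}(v)N(\bfc',t_{\lz'})+\sum_{(\bfc',t_{\lz'})\in\cG\setminus\cG^a,\,(\bfc',t_{\lz'})\prec(\bfc,t_\lz)}\tilde{s}_{(\bfc,t_\lz)}^{(\bfc',t_{\lz'})}(v)N(\bfc',t_{\lz'})$$
in terms of $E$'s. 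Since $G(\bfc,t_\lz)\in\cC^*$ and, by Lemma \ref{E in N} together with Lemma \ref{periodic not in C}, the span of $\{N(\bfc',t_{\lz'})\mid(\bfc',t_{\lz'})\in\cG\setminus\cG^a\}$ meets $\cC^*$ only in $0$, the periodic terms $\tilde{s}$ must combine with the aperiodic ones so that $G(\bfc,t_\lz)$ actually lies in the $\cZ$-span of $\{E(\bfc',t_{\lz'})\}$; inverting the unitriangular relation of Lemma \ref{E in N} yields $G(\bfc,t_\lz)=E(\bfc,t_\lz)+\sum_{(\bfc',t_{\lz'})\prec(\bfc,t_\lz)}s_{(\bfc,t_\lz)}^{(\bfc',t_{\lz'})}(v)E(\bfc',t_{\lz'})$ with the same coefficients $s$, which lie in $v^{-1}\bbZ[v^{-1}]$.

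\textbf{Step 3 (conclusion).} Now $G(\bfc,t_\lz)$ is bar-invariant, has leading term $E(\bfc,t_\lz)$ with coefficient $1$, and all lower coefficients in $v^{-1}\bbZ[v^{-1}]$. By the uniqueness in Lusztig's linear-algebra lemma (Section 7.10 of \cite{Lusztig_Canonical_bases_arising_from_quantized_enveloping_algebra}), applied to the unitriangular bar-action on $\{E(\bfc',t_{\lz'})\}$ established in Proposition \ref{proposition: E satisfy three conditions}(1), there is exactly one such element, and it is $C(\bfc,t_\lz)$ by its definition in Section \ref{Section: construction of C(c,lz)}. Therefore $G(\bfc,t_\lz)=C(\bfc,t_\lz)$. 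The main obstacle I anticipate is Step 2: one must argue carefully that the periodic $N$-terms appearing in the expansion of $G(\bfc,t_\lz)$ do not obstruct membership in $\cZ\text{-span}\{E(\bfc',t_{\lz'})\}$ and that passing from the $N$-expansion to the $E$-expansion preserves the property that the off-diagonal coefficients lie in $v^{-1}\bbZ[v^{-1}]$ — this is exactly where Lemma \ref{periodic not in C} and the unitriangularity of Lemma \ref{E in N} are essential.
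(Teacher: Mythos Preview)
Your proof is correct and uses the same two key lemmas as the paper (Lemma \ref{E in N} and Lemma \ref{periodic not in C}), but your Step 2 is organized more directly than the paper's argument. The paper instead constructs auxiliary elements $\hat{E}(\bfc,t_\lz)$ from the $G$'s by the same inductive subtraction as in Definition \ref{def of E}, shows $\hat{E}-E\in\mathbf{P}\cap\cC^*=0$ so that $\hat{E}=E$, deduces $E=G+\sum\gamma\,G$ with $\gamma\in v^{-1}\bbZ[v^{-1}]$, substitutes this into $C=E+\sum g\,E$ to obtain $C=G+\sum\kappa\,G$ with $\kappa\in v^{-1}\bbZ[v^{-1}]$, and finally kills the $\kappa$'s by bar-invariance. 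Your observation that, because Lemma \ref{E in N} has the form $E=N_{\mathrm{aperiodic}}+(\text{periodic }N\text{'s only})$, the $E$-coefficients of any element of $\cC^*$ coincide with its aperiodic $N$-coefficients, shortcuts this detour: it immediately yields $G=E(\bfc,t_\lz)+\sum_{\prec} s\,E$ with the original $s\in v^{-1}\bbZ[v^{-1}]$, after which Lusztig's uniqueness finishes. The two routes are equivalent in content; yours just avoids introducing $\hat{E}$.
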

\begin{proof}
Since $s_{(\bfc,t_\lz)}^{(\bfc',t_{\lz'})}\in v^{-1}\bbZ[v^{-1}]$, by doing the same operation in Definition \ref{def of E} on $G(\bfc,t_\lz)$, we can inductively get
$$
\hat{E}(\bfc,t_\lz)=G(\bfc,t_\lz)+\sum_{(\bfc',t_{\lz'})\in\cG^a\atop(\bfc',t_{\lz'})\prec(\bfc,t_\lz)}\gamma_{(\bfc,t_\lz)}^{(\bfc',t_{\lz'})}(v)G(\bfc',t_{\lz'})
$$
with $\gamma_{(\bfc,t_\lz)}^{(\bfc',t_{\lz'})}\in v^{-1}\bbZ[v^{-1}]$, such that $\tilde{E}(\bfc,t_\lz)$ is in $\cC^*$ for any $(\bfc,t_\lz)\in\cG^a$, and
$$
\hat{E}(\bfc,t_\lz)=N(\bfc,t_\lz)+\sum_{(\bfc',t_{\lz'})\in\cG\setminus\cG^a\atop(\bfc',t_{\lz'})\prec(\bfc,t_\lz)}\psi_{(\bfc,t_\lz)}^{(\bfc',t_{\lz'})}(v)N(\bfc',t_{\lz'})
$$
with $\psi_{(\bfc,t_\lz)}^{(\bfc',t_{\lz'})}\in\cZ$.

Hence, $\hat{E}(\bfc,t_\lz)-E(\bfc,t_\lz)\in\mathbf{P}\cap\cC^*$. By Lemma \ref{periodic not in C}, we have $\hat{E}(\bfc,t_\lz)=E(\bfc,t_\lz)$.
Recall that
$$C(\bfc,t_\lz)=E(\bfc,t_\lz)+\sum_{(\bfc',t_{\lz'})\in\cG^a\atop(\bfc',t_{\lz'})\prec(\bfc,t_\lz)}g^{(\bfc',t_{\lz'})}_{(\bfc,t_\lz)}(v)E(\bfc',t_{\lz'}),$$
with $g^{(\bfc',t_{\lz'})}_{(\bfc,t_\lz)}\in v^{-1}\bbZ[v^{-1}]$. Then
$$C(\bfc,t_\lz)=G(\bfc,t_\lz)+\sum_{(\bfc',t_{\lz'})\in\cG^a\atop(\bfc',t_{\lz'})\prec(\bfc,t_\lz)}\kappa^{(\bfc',t_{\lz'})}_{(\bfc,t_\lz)}(v)G(\bfc',t_{\lz'}),$$
with $\kappa^{(\bfc',t_{\lz'})}_{(\bfc,t_\lz)}\in v^{-1}\bbZ[v^{-1}]$.
Applying bar-involution to both sides, all $\kappa^{(\bfc',t_\lz')}_{(\bfc,t_\lz)}=0$ and $G(\bfc,t_\lz)=C(\bfc,t_\lz)$, since all $G(\bfc,t_\lz),C(\bfc,t_\lz)$ are bar-invariant.
\end{proof}

\bibliography{Tame_quivers_and_affine_bases}

\end{document}